\documentclass{amsart}

\usepackage{latexsym,amsfonts,amssymb,exscale,enumerate}
\usepackage{amsmath,amsthm,amscd}
\usepackage{hyperref}

\addtolength{\hoffset}{-1.6cm}
\addtolength{\textwidth}{3cm}


%

\input xy
\usepackage[all]{xy}
\xyoption{line}
\xyoption{arrow}
\xyoption{color}
\SelectTips{cm}{}

\usepackage{tikz}
\usetikzlibrary{decorations.markings}
\usetikzlibrary{decorations.pathreplacing}
\newcommand{\hackcenter}[1]{
 \xy (0,0)*{#1}; \endxy}

\tikzset{->-/.style={decoration={
  markings,
  mark=at position #1 with {\arrow{>}}},postaction={decorate}}}

\tikzset{middlearrow/.style={
        decoration={markings,
            mark= at position 0.5 with {\arrow{#1}} ,
        },
        postaction={decorate}
    }
}


\usepackage{graphicx}
\usepackage{color}


\def\P{\mathsf{P}}
\def\Q{\mathsf{Q}}

\def\W{W_{1+\infty}}
\def\tW{\widetilde{W}_{1+\infty}}
\def\id{\mathrm{id}}
\def\Id{\mathrm{Id}}


\theoremstyle{plain}
\newtheorem{theorem}{Theorem}
\newtheorem{corollary}[theorem]{Corollary}
\newtheorem{proposition}[theorem]{Proposition}
\newtheorem{lemma}[theorem]{Lemma}

\newtheorem{example}[theorem]{Example}

\theoremstyle{definition}

\theoremstyle{remark}
\newtheorem{remark}[theorem]{Remark}



\newcommand{\Ucat}{\cal{U}}

\newcommand{\maps}{\colon}



\newcommand{\refequal}[1]{\xy {\ar@{=}^{#1}
(-1,0)*{};(1,0)*{}};
\endxy}


\hfuzz=6pc

\newcommand{\cat}[1]{\ensuremath{\mbox{\bfseries {\upshape {#1}}}}}

\newcommand{\To}{\Rightarrow}

\renewcommand{\to}{\rightarrow}


\hfuzz=6pc




\def\Ind{{\mathrm{Ind}}}

\def\dmod{{\mathrm{-mod}}}   

\def\Id{\mathrm{Id}}

\def\mf{\mathfrak}
\def\shuffle{\,\raise 1pt\hbox{$\scriptscriptstyle\cup{\mskip
               -4mu}\cup$}\,}


\numberwithin{equation}{section}


%
%
%
%
%
%

%


\def\emph#1{{\sl #1\/}}

%
%


\let\tilde=\widetilde


\let\theta=\vartheta
\let\epsilon=\varepsilon


\usepackage{bbm}
\def\C{{\mathbb{C}}}
\def\N{{\mathbbm N}}

\def\Z{{\mathbbm Z}}
\def\H{{\mathcal{H}}}


\def\cal#1{\mathcal{#1}}%
\def\1{\mathbbm{1}}%
\def\nn{\notag}

\def\la{\langle}
\def\ra{\rangle}


%
%

%
%
%














\newcommand{\lowrru}[1]{\xybox{%
  (-8,0)*{};
  (8,0)*{};
  (-6,-18)*{};(6,-9)*{} **\crv{(-6,-13) & (6,-15)} ?(1)*\dir{>};
  (6,-9)*{};(6,0)*{}  **\dir{-} ?(.3)*\dir{ }+(2,0)*{\scs {\bf j}};
}}

\newcommand{\lowllu}[1]{\xybox{%
  (-8,0)*{};
  (8,0)*{};
  (6,-18)*{};(-6,-9)*{} **\crv{(6,-13) & (-6,-15)} ?(1)*\dir{>};
  (-6,-9)*{};(-6,0)*{}  **\dir{-} ?(.3)*\dir{ }+(-2,0)*{\scs {\bf j}};
}}

\newcommand{\bbdl}[1]{\xybox{%
  (2,0);(0,-8) **\crv{(2,-2)&(0,-6)}; ?(.5)*\dir{>}
}}
\newcommand{\bbdlu}[1]{\xybox{%
  (2,0);(0,-8) **\crv{(2,-2)&(0,-6)}; ?(.5)*\dir{<}
}}
\newcommand{\bbdr}[1]{\xybox{%
  (-2,0);(0,-8) **\crv{(-2,-2)&(0,-6)}; ?(.5)*\dir{>}
}}
\newcommand{\bbdru}[1]{\xybox{%
  (-2,0);(0,-8) **\crv{(-2,-2)&(0,-6)}; ?(.5)*\dir{<}
}}



%






\DeclareGraphicsExtensions{.pdf,.eps}
\usepackage{psfrag}
\DeclareGraphicsRule{.tif}{png}{.png}{`convert #1 `basename #1 .tif`.png}

\clubpenalty = 10000
\widowpenalty = 10000


\usepackage{bbm}

\def\cal#1{\mathcal{#1}}

\def \k {\mathbbm{k}}
\def \Z {\mathbbm{Z}}

\def \N {\mathbbm{N}}

\def \C {\mathcal{C}}
\def \Tr{\operatorname{Tr}}

\def \Span{\operatorname{Span}}
\def \Ob{\operatorname{Ob}}

\def \Id {{\rm Id}}

\def\k{\mathbbm{k}}




\newcommand\nc{\newcommand}
\nc\rnc{\renewcommand}
\nc\Kar{\operatorname{Kar}}
\nc\End{\operatorname{End}}
\nc\modQ {{\mathbb Q}}
\nc\modZ {{\mathbb Z}}
\nc\simeqto{\overset{\simeq}{\longrightarrow }}
\nc\modC {{\mathcal C}}
\nc\modD {{\mathcal D}}
\nc\K{\mathcal {K}}
\nc\CC{\mathbf{C}}

\newcommand{\scs}{\scriptstyle}


%
%
\nc\calU{\mathcal{U}}
\nc\cU{\calU}
\nc\col{\colon\thinspace}
\nc\calA{\mathcal{A}}
\nc\Ab{\mathbf{Ab}}
\nc\Ko{K_0}
\nc\TrhorCC{\Tr^{\mathrm{hor}}(\CC)}
\nc\AdCat{\mathbf{AdCat}}
\nc\TrCC{\Tr(\CC)}
\nc\Udot{\dot{\mathcal{U}}}
\nc\diag{\mathrm{d}}
\nc\modU {\mathcal{U}}
\nc\bfU{\mathbf{U}}
\nc\dU{\dot{\mathbf U}}
\nc\dUZ{{_\modZ\dot{\mathbf U}}}
\nc\UZ{{_\modZ \mathbf U} }
\nc\fsl{\mathfrak{sl}}
\nc\Uaa{{\bf U} (\mathfrak{sl}_2\otimes \Q[t,t^{-1}])}
\nc\UZslt{{_\modZ\mathbf{U}} (\mathfrak{sl}_2\otimes \Q[t])}
\nc\UdZslt{{_\modZ\dot{\mathbf{U}}} (\mathfrak{sl}_2\otimes \Q[t])}
\nc\LL{L^+\fsl_2}
\nc\UL{\mathbf U(\LL)}
\nc\UZL{\UZ(\LL)}
\nc\dUZL{\dUZ(\LL)}
\nc\dUL{\dU(\LL)}
\nc{\im}{\rm im}
\nc\Kom{\rm Kom}
\nc\GL{\rm{GL}}
\nc\g{\mathfrak{g}}

\nc\tG{\tilde{G}} \nc\tE{\tilde{E}}
\nc\Vect{\rm Vect}
\nc{\Gras}{{\rm {Gr}}}
\nc\FMod{\rm FMod}
\nc\yto[1]{\underset{#1}{\to}}
\nc\Ear{\yto{E}}

 



\def\l{\lambda}

\allowdisplaybreaks



\title{
W-algebras from Heisenberg categories
}

\begin{document}
\setcounter{tocdepth}{1}

\author{Sabin Cautis}
\email{cautis@math.ubc.ca}
\address{Department of Mathematics\\ University of British Columbia \\ Vancouver, Canada}

\author{Aaron D. Lauda}
\email{lauda@usc.edu}
\address{Department of Mathematics\\ University of Southern California \\ Los Angeles, CA}

\author{Anthony M. Licata}
\email{anthony.licata@anu.edu.au}
\address{Mathematical Sciences Institute\\ Australian National University \\ Canberra, Australia}

\author{Joshua Sussan}
\email{jsussan@mec.cuny.edu}
\address{Department of Mathematics \\ CUNY Medgar Evers \\ Brooklyn, NY}


\begin{abstract}
The trace (or zeroth Hochschild homology) of Khovanov's Heisenberg category is identified with a quotient of the algebra $W_{1+\infty}$. This induces an action of $W_{1+\infty}$ on symmetric functions.
\end{abstract}

\maketitle

\tableofcontents

\section{Introduction}

The algebra $\mbox{Sym}$ of symmetric functions in infinitely many variables is important in classical representation theory in part because it describes the characters of representations of symmetric groups in characteristic 0.  More precisely, there are isomorphisms
$$ \mbox{Sym} \cong \bigoplus_{n=0}^\infty Z(\mathbb{C}[S_n]) \cong \bigoplus_{n=0}^\infty K_0(\mathbb{C}[S_n]\dmod),$$
where $Z(\mathbb{C}[S_n])$ is the center of the group algebra, and $K_0(\mathbb{C}[S_n]\dmod)$ is the Grothendieck group of the category of finite dimensional representations. Since $Z(\mathbb{C}[S_n])$ is isomorphic to the center of the category $\mathbb{C}[S_n]\dmod$, the rightmost isomorphism says that two basic decategorifications of the categories $\mathbb{C}[S_n]\dmod$, the Grothendieck group and the center, are isomorphic to each other.

It is fruitful to consider a structure of interest for symmetric functions and then try to interpret it in the language of symmetric groups. For example, Gessinger realized the Hopf algebra structure on $\mbox{Sym}$ by considering the maps on characters induced by induction and restriction~\cite{G}.  Closely related to this Hopf algebra structure is the fact that $\mbox{Sym}$ is the underlying vector space of the canonical Fock space representation of the Heisenberg algebra $\mathfrak{h}$, where generators of $\mathfrak{h}$ act on the Grothendieck group as the maps induced by induction and restriction between different symmetric groups.  The algebra of symmetric functions $\mbox{Sym}$ thus becomes the canonical level one Fock space module, which we denote by $\mathcal{V}_{1,0}$, for the Heisenberg algebra.

From this point of view, it is also desirable to understand how to categorify the algebra $\mathfrak{h}$ itself (rather than just the representation
$\mathcal{V}_{1,0}$) and seek a monoidal category which acts directly on $\bigoplus_{n=0}^\infty \mathbb{C}[S_n]\dmod$. One definition of such a monoidal category, which is the central object of interest in the current paper, is due to Khovanov \cite{K}.  His Heisenberg category $\H$, which uses a graphical calculus of planar diagrams in the definition of the morphism spaces, has a Grothendieck group which contains the Heisenberg algebra $\mathfrak{h}$.\footnote{Conjecturally, there is an algebra isomorphism $K_0(\H) \cong \mathfrak{h}$.}

The canonical level one module $\mathcal{V}_{1,0}$ of the Heisenberg algebra can be equipped with the action of even larger infinite dimensional algebras.  Much of this symmetry, including the actions of both the Heisenberg and Virasoro algebras, is unified in the action of the W-algebra $\W$ on $\mathcal{V}_{1,0}$.  The Heisenberg and Virasoro actions on $\mathcal{V}_{1,0}$ have already been interpreted in the language of symmetric group characters: the Heisenberg action is essentially determined by the Hopf algebra structure on the polynomial algebra, and is thus implicit in \cite{G} (see also \cite{W} and references therein for closely related constructions using more general finite groups). A Virasoro algebra action on $ \bigoplus_{n=0}^\infty Z(\mathbb{C}[S_n])$ was described explicitly by Frenkel and Wang in \cite{FW}.  The natural question which remains is to extend this description to obtain a character-theoretic description of the action of the entire $\W$ action on $\mathcal{V}_{1,0}$.  Better still would be to obtain a character-theoretic construction of the algebra $\W$ itself, and then as a consequence of such a construction to recover a construction of the canonical module $\mathcal{V}_{1,0}$.  Our main aim in this paper is to give such a construction, and for this aim it is important to consider Khovanov's Heisenberg category $\H$, rather than just the representation categories of all symmetric groups. In this categorical context, the notion of character is captured by the trace $\Tr(\H)$, or  zeroth Hochschild homology. 
Let $ I\subset \W $ the two-sided ideal generated by $ C-1 $ and $w_{0,0} $ for the central elements $C,w_{0,0} \in \W$ (see Section \ref{secwalgebra}).
Our main theorem is the following.



\begin{theorem}\label{thm:main}
There is an isomorphism $ \Tr(\mathcal{H}) \cong W_{1+\infty}/I $.  The action of $\Tr(\mathcal{H})$ on the center $Z(\mathcal{H})$ is irreducible and faithful, and is thus identified with the canonical level one representation $\mathcal{V}_{1,0}$.
\end{theorem}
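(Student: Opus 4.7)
The plan is to construct an algebra homomorphism $\Phi \colon W_{1+\infty}/I \to \Tr(\mathcal{H})$ by specifying images of a generating set of $W_{1+\infty}$ as explicit closed planar diagrams in $\mathcal{H}$, verify the defining relations of $W_{1+\infty}$ via the graphical calculus, and then prove $\Phi$ is an isomorphism. The claim about the action on the center $Z(\mathcal{H})$ is then obtained by transporting the classical level-one Fock space action of $W_{1+\infty}$ through $\Phi$.

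First I would identify natural candidates for the images of the generators $w_{n,k}$ of $W_{1+\infty}$ as specific dotted clockwise and counterclockwise bubble diagrams (or closed-up curls with dots) in $\Tr(\mathcal{H})$, arranged so that the relations $C = 1$ and $w_{0,0} = 0$ defining $I$ hold automatically from the normalization conventions in Khovanov's graphical calculus. Surjectivity of $\Phi$ can then be established by showing, using the defining relations of $\mathcal{H}$ (isotopy, dot-slides, bubble-slides, and reduction of curls), that any closed diagram representing a class in $\Tr(\mathcal{H})$ reduces to a polynomial in these bubble elements and hence lies in the image of $\Phi$.

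The main obstacle is verifying the $W$-algebra relations. $W_{1+\infty}$ has intricate commutator relations among its generators, and checking each directly from the planar calculus would require careful local computations involving curls and crossings. A useful reduction is to exploit a small presentation of $W_{1+\infty}/I$: the algebra is generated by a Heisenberg subalgebra together with a few additional higher-weight generators (e.g.\ a Virasoro-type field and one further $w$-generator), and the Heisenberg part of the relations lifts from the already-established categorified Heisenberg action on $\mathcal{H}$, leaving only the cross-relations involving the extra generators to be verified by explicit diagrammatic calculation.

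Finally, for injectivity and the second assertion of the theorem, I would exploit the $\Tr(\mathcal{H})$-action on $Z(\mathcal{H})$. The center decomposes as $Z(\mathcal{H}) \cong \bigoplus_{n \ge 0} Z(\mathbb{C}[S_n]) \cong \mathrm{Sym}$, and the action of $\Tr(\mathcal{H})$ on $Z(\mathcal{H})$, pulled back through $\Phi$, should agree on generators with the canonical level-one Fock space action of $W_{1+\infty}$ on $\mathcal{V}_{1,0}$; matching the Heisenberg subalgebra's action with induction/restriction and then checking agreement for the few extra generators should suffice. Since $\mathcal{V}_{1,0}$ is classically known to be irreducible and faithful over $W_{1+\infty}/I$, faithfulness of the $\Tr(\mathcal{H})$-action and injectivity of $\Phi$ follow simultaneously, completing the identification of $\Tr(\mathcal{H})$ with $W_{1+\infty}/I$ and of $Z(\mathcal{H})$ with $\mathcal{V}_{1,0}$.
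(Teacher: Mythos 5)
Your plan runs in the direction $\W/I \to \Tr(\H)$, and this forces you to verify that your chosen diagrammatic elements satisfy the defining relations of $\W$ --- which is precisely the hard content of the theorem, and is where your proposal has a genuine gap. The shortcut you invoke, ``exploit a small presentation of $W_{1+\infty}/I$'' consisting of a Heisenberg part, a Virasoro-type generator and a few cross-relations, is not available off the shelf: $\W$ is defined (Section~\ref{secwalgebra}) as an enveloping algebra with the infinite family of relations \eqref{wdef}, and while Lemma~\ref{lemmaf.5sv} says $w_{1,0},w_{-1,0},w_{0,2}$ \emph{generate}, generation is not a presentation, so it does not license defining a homomorphism out of $\W/I$ by checking finitely many relations. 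Producing a usable presentation is essentially the content of \cite{AS} combined with \cite{SV}, and even granting it you would then have to verify those relations diagrammatically, which is the bulk of the work and is not sketched. Two further points: for $l\neq 0$ the images of $w_{l,k}$ cannot be planar dotted bubbles (those only span $\Tr^0(\H)\cong\mathbb{C}[c_0,c_1,\ldots]$) but must be genuinely annular classes such as $h_{\pm l}\otimes x_1^k$; and your surjectivity step (reducing an arbitrary annular diagram to such elements) already requires the structural input on $\End(\P^m\Q^n)$ and on $\Tr(DH_n)$ (Proposition~\ref{KhovEndThmPmQn}, Theorem~\ref{HHDH}, Lemma~\ref{HHVSLEMMA}), not just isotopy, dot-slide and curl moves.

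By contrast, the paper is engineered to avoid ever checking the $\W$-relations: it computes $\Tr(\H)$ as a vector space via Solleveld's theorem, proves that $\Tr(\H)$ is generated by $h_{-1}\otimes 1$, $h_1\otimes 1$ and $c_0+c_1$, and then builds the map in the opposite direction $\psi\colon \Tr(\H)\to \W/I$ by exhibiting faithful modules $V$ and $\mathcal{V}_{1,0}$ together with an explicit vector space isomorphism intertwining the generators' actions (Lemmas~\ref{-c0w01same} and~\ref{c1c0w02}, using Lemma~\ref{nablaDAHA}); surjectivity is then exactly Lemma~\ref{lemmaf.5sv}, and injectivity follows from comparing Poincar\'e series of associated graded algebras (Proposition~\ref{poincareW} versus Corollary~\ref{poincareH}), again resting on the Solleveld-based computation. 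Your final step --- matching the action on $Z(\H)$ with the canonical Fock space action and transporting irreducibility and faithfulness of $\mathcal{V}_{1,0}$ through a surjection to get both injectivity and the statement about $Z(\H)$ --- is logically sound and close in spirit to Corollary~\ref{cor:Wrep}, but it only becomes available once the homomorphism exists; as written, the existence of your $\Phi$ is the unproved core of the argument.
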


The theorem thus gives a graphical construction of both $\W/I$ itself, and of its canonical representation.
As a corollary, any categorical representation of Khovanov's Heisenberg category $\H$ gives rise to a representation of $\W/I$.\footnote{The
quotient of $\W$ we see here is not exactly the same as the quotients of $\W$ that have appeared in closely related literature.  For example, in \cite{SV}, the quotient of $\W$ which appears has $w_{0,0}=\frac{1}{2}$ instead of $w_{0,0}=0$.}
Thus, the categorical representation of $\H$ on $\bigoplus_{n=0}^\infty \mathbb{C}[S_n]\dmod$ also immediately gives rise to an action of $\W$ on $\bigoplus_{n=0}^\infty Z(\mathbb{C}[S_n])$.  Such an action has also recently been given in independent work of Shan, Varagnolo, and Vasserot in \cite{SVV}.   A closely related categorical action of $\H$ on the category of polynomial functors~\cite{HY} allows us to realize a $\W$-algebra action in this context as well. 

In order to prove that $\Tr(\mathcal{H})$ is a quotient of $ \W$ we follow the proof in ~\cite{SV} of the isomorphism between a limit of spherical degenerate double affine Hecke algebras and a quotient of $ \W $. A complete set of generators and relations of the limit of spherical degenerate double affine Hecke algebras was given in ~\cite{AS}.  Composing the isomorphism in ~\cite{SV} with the isomorphism from Theorem \ref{thm:main}, we get a complete generators and relations description of $ \Tr(\mathcal{H})$.
Crucial in this comparison is the relationship between $ \mathcal{H} $ and the degenerate affine Hecke algebra $DH_n$.  Roughly speaking, $DH_n$ appears in the morphism spaces of in the upper half of $\mathcal{H}$. As a vector space $\Tr(DH_n)$ was determined by Solleveld ~\cite{S} and this calculation plays a role in our computation of $\Tr(\H)$.

From the point of view of Theorem \ref{thm:main}, it is perhaps equally reasonable to refer to $\H$ as a categorification of the $W$-algebra; the difference between this statement and the statement that $\H$ categorifies the Heisenberg algebra is that we have made a different choice in which decategorification procedure to invert.
It is interesting that the two natural decategorifications of the category $\H$ -- the Grothendieck group and the trace -- are quite different from one another.
This is in contrast to the situation for the category $\mathbb{C}[S_n]\dmod$, for example, where the Grothendieck group and the trace are isomorphic.  Our analysis shows that the trace decategorication can have structure missing from the Grothendieck group decategorification, since the algebra $\W$ contains the Heisenberg algebra and a lot more.   In addition to being able to directly identify $\Tr(\H)$ with $\W/I$, in the trace decategorification we are also able to find natural graphical realizations of the standard Heisenberg algebra generators.  By contrast, in the Grothendieck decategorification the standard Heisenberg generators do not admit an obvious categorical lift and therefore an alternative presentation for the Heisenberg algebra is required.  We should also mention that a completely different categorification problem -- to construct a monoidal category whose Grothendieck group is the algebra $\W$-- is open, though the Theorem \ref{thm:main} suggests that such a category might be related to Khovanov's category $\H$.

An analog of Theorem \ref{thm:main} (and of this difference between the Grothendieck group and the trace) also arises in the context of quantum groups associated to type ADE Kac-Moody algebras.  Associated to an arbitrary symmetrizable Kac-Moody algebras $\mf{g}$, a 2-category $\Ucat(\mf{g})$ was defined in \cite{KL} whose the split Grothendieck group is isomorphic to the integral form of Lusztig's indepotented form of the quantum group $\mathbf{U}_q(\mf{g})$. However, the trace of the 2-category $\Ucat(\mf{g})$ is then identified with the idempotented form of quantum current algebra $\mathbf{U}_q(\mf{g}[t])$~\cite{SVV,BHLZ,BHLW}.

\subsection{Further directions}

More generally, one can associate a Heisenberg category $\H_F$ to any finite dimensional Frobenius algebra $F$. This is the categorical analogue of the fact that one can associate a Heisenberg algebra ${\mathfrak{h}}_L$ to any $\mathbb{Z}$-lattice $L$. If one takes the simplest Frobenius algebra, namely the one dimensional algebra ${\mathbb C}$, then one recovers $\H$, and for this reason we first study the trace of $\H$. In future work we plan to compute the traces of these more general Heisenberg categories, and thus associate a W-algebra $W_F$ to any Frobenius algebra $F$.  Of particular interest are the Heisenberg categories studied in ~\cite{CL1} and associated to zig-zag algebras (these categorify the Heisenberg algebras associated to quantum lattices of type $A,D,E$). Via categorical vertex operators \cite{CL2,C} these Heisenberg categories were related to categorified quantum groups. So one might expect their trace to be related to \cite{BGHL,BHLZ} or to W-algebras associated to quantum groups \cite{FR}.  Finally, one may also consider the trace on the categorification of twisted Heisenberg algebras such as the twisted version of Khovanov's category from \cite{CS}. In this particular case the trace should be related to the twisted $\W$ algebra from ~\cite{KWY}.

In the recent work \cite{MS}, the algebra $\W$ appears in relation to the skein module of the torus.  It could also be interesting to relate directly the appearance of $\W$ in Heisenberg categorification to this skein module or to other invariants in quantum topology.

\subsection{Outline}
In Section ~\ref{hei} Khovanov's Heisenberg algebra $ \mathfrak{h} $ and its associated category $\H$ are introduced and its basic properties are reviewed.  In Section ~\ref{secwalgebra} the algebra $\W$ is defined, and some of its important features are recalled.  In Section ~\ref{secHH} the trace of $\H$ is determined in terms of $\W$.

\subsection*{Acknowledgements}
The authors would like to thank Peter Bouwknegt,  Mikhail Khovanov, Olivier Schiffmann, Davesh Maulik, and Weiqiang Wang for helpful conversations. S.C. was supported by an NSERC discovery grant and the John Templeton Foundation. A.D.L. was partially supported by NSF grant DMS-1255334 and by the John Templeton Foundation. A.M.L. was supported by an Australian Research Council Discovery Early Career fellowship. J.S. was supported by NSF grant DMS-1407394 and PSC-CUNY Award 67144-0045.

\section{The Heisenberg category $\H$}\label{hei}

\subsection{The Heisenberg algebra and Fock space}

Let $ \mathfrak{h} $ be the associative algebra over $\mathbb{C}$ with generators $ h_n$ for $ n \in \mathbb{Z}-\{0\}$
with relations $[h_m, h_n]=m \delta_{m,-n}$.

Another way to present $ \mathfrak{h}$ is as follows.  Set $ p^{(n)}=q^{(n)}=0$ for $ n<0$.
For $ n \geq 0 $ define $ p^{(n)} $ and $ q^{(n)} $ by
\begin{equation*}
\sum_{n \geq 0} p^{(n)} z^n = exp(\sum_{n \geq 1} \frac{h_{-n}}{n}z^n) \hspace{.5in}
\sum_{n \geq 0} q^{(n)} z^n = exp(\sum_{n \geq 1} \frac{h_{n}}{n}z^n).
\end{equation*}
In these generators the relations are:
\begin{align*}
p^{(n)} p^{(m)} &= p^{(m)} p^{(n)} \\
q^{(n)} q^{(m)} &= q^{(m)} q^{(n)} \\
q^{(n)}p^{(m)} &=\sum_{k \geq 0} p^{(m-k)} q^{(n-k)}.
\end{align*}

The Heisenberg algebra $\mathfrak{h}$ has a natural representation $\mathcal{F}$, known as the Fock space. Let $\mathfrak{h}^+ \subset \mathfrak{h}$ denote the subalgebra generated by the $q^{(n)}$ for $n \ge 0$. Let $\mbox{triv}_0$ denote the trivial representation of $\mathfrak{h}^+$, where all $q^{(n)}$ ($n>0$) act by zero. Then
$$\mathcal{F} := \Ind_{\mathfrak{h}^+}^{\mathfrak{h}}(\mbox{triv}_0)$$
is called the Fock space representation of $\mathfrak{h}$. It inherits a $\Z$ grading $\mathcal{F} = \oplus_{m \in \N} \mathcal{F}(m)$ by declaring $\mbox{triv}_0$ to have degree zero, $p^{(n)}$ degree $n$ and $q^{(n)}$ degree $-n$.

\subsection{Heisenberg category $\H$}\label{sec:hei}
In \cite{K} Khovanov introduced a categorical framework for the Heisenberg algebra $\mathfrak{h}$. This framework consists of a monoidal category $\H$ which is the Karoubi envelope of a monoidal category $\H'$ whose definition we now sketch (see \cite{K} for more details).

The category $\H'$ is generated by objects $\P$ and $\Q$. These can be denoted by an upward pointing strand and a downward pointing strand. Monoidal composition of such objects is then given by sideways concatenation of diagrams. The space of morphisms between products of $\P$'s and $\Q$'s is a $\mathbb{C}$-algebra described by certain string diagrams with relations. By convention, composition of morphisms is done vertically from the bottom and going up.

The morphisms are generated by crossings, caps and cups as shown below

\begin{equation}\label{eq:maps}
\begin{tikzpicture}[scale=.75]

\draw [->](0,0) -- (1,1) [thick];

\draw [->](1,0) -- (0,1) [thick];

\draw (3,.5) arc (180:360:.5)[->] [thick];

\draw (6.5,.25) arc (0:180:.5) [->][thick];

\draw (9,.5) arc (180:360:.5)[<-] [thick];

\draw (12.5,.25) arc (0:180:.5) [<-][thick];
\end{tikzpicture}
\end{equation}

Thus, for instance, the left crossing is a map in $\End(\P\P)$ while the right cap is a map $\P \Q \rightarrow \id$. These morphisms satisfy the following relations

\begin{equation}\label{eq:rel1}
\begin{tikzpicture}

\draw [shift={+(7,0)}](0,0) .. controls (1,1) .. (0,2)[->][thick] ;

\draw [shift={+(7,0)}](1,0) .. controls (0,1) .. (1,2)[->] [thick];

\draw [shift={+(7,0)}](1.5,1) node {=};

\draw [shift={+(7,0)}](2,0) --(2,2)[->][thick] ;

\draw [shift={+(7,0)}](3,0) -- (3,2)[->][thick] ;

\draw (0,0) -- (2,2)[->][thick];

\draw (2,0) -- (0,2)[->][thick];

\draw (1,0) .. controls (0,1) .. (1,2)[->][thick];

\draw (2.5,1) node {=};

\draw (3,0) -- (5,2)[->][thick];

\draw (5,0) -- (3,2)[->][thick];

\draw (4,0) .. controls (5,1) .. (4,2)[->][thick];
\end{tikzpicture}
\end{equation}

\begin{equation}\label{eq:rel2}
\begin{tikzpicture}
\draw (0,0) .. controls (1,1) .. (0,2)[<-][thick];

\draw (1,0) .. controls (0,1) .. (1,2)[->] [thick];

\draw (1.5,1) node {=};

\draw (2,0) --(2,2)[<-][thick];

\draw (3,0) -- (3,2)[->][thick];

\draw (3.8,1) node{$-$};

\draw (4,1.75) arc (180:360:.5) [thick];

\draw (4,2) -- (4,1.75) [thick];

\draw (5,2) -- (5,1.75) [thick][<-];

\draw (5,.25) arc (0:180:.5) [thick];

\draw (5,0) -- (5,.25) [thick];

\draw (4,0) -- (4,.25) [thick][<-];

\draw [shift={+(7,0)}](0,0) .. controls (1,1) .. (0,2)[->][thick];

\draw [shift={+(7,0)}](1,0) .. controls (0,1) .. (1,2)[<-] [thick];

\draw [shift={+(7,0)}](1.5,1) node {=};

\draw [shift={+(7,0)}](2,0) --(2,2)[->][thick];

\draw [shift={+(7,0)}](3,0) -- (3,2)[<-][thick];
\end{tikzpicture}
\end{equation}

\begin{equation}\label{eq:rel3}
\begin{tikzpicture}

\draw [shift={+(0,0)}](0,0) arc (180:360:0.5cm) [thick];

\draw [shift={+(0,0)}][->](1,0) arc (0:180:0.5cm) [thick];

\draw [shift={+(0,0)}](1.75,0) node{$= 1.$};

\draw  [shift={+(5,0)}](0,0) .. controls (0,.5) and (.7,.5) .. (.9,0) [thick];

\draw  [shift={+(5,0)}](0,0) .. controls (0,-.5) and (.7,-.5) .. (.9,0) [thick];

\draw  [shift={+(5,0)}](1,-1) .. controls (1,-.5) .. (.9,0) [thick];

\draw  [shift={+(5,0)}](.9,0) .. controls (1,.5) .. (1,1) [->] [thick];

\draw  [shift={+(5,0)}](1.5,0) node {$=$};

\draw  [shift={+(5,0)}](2,0) node {$0.$};
\end{tikzpicture}
\end{equation}

Moreover, two morphisms which differ by planar isotopies are equal. Relation (\ref{eq:rel1}) implies that there is a map $\mathbb{C}[S_n] \rightarrow \End(\P^n)$. Since $\H$ is assumed to be idempotent complete this means that we also get objects $\P^{(\l)}$, for any partition $\l \vdash n$, associated with the corresponding minimal idempotent $e_\l \in \mathbb{C}[S_n]$. Likewise we also have $\Q^{(\l)}$ for any $\l \vdash n$. We will denote by $(m)$ and $(1^m)$ the unique one-part and $m$-part partitions of $m$.

Let $ \H^{\geq} $ and $ \H^{\leq} $ be the full subcategories of $ \H$ generated by $ \P $ and $ \Q$ respectively.
Let $ \H^{0} $ be the full subcategory of of $ \H$ generated by the identity object $\Id$.

\begin{theorem}
[\cite{K}] \label{thm:1}

Inside $\H$ we have the following relations

\begin{enumerate}

\item $\P^{(\l)}$ and $\P^{(\mu)}$ commute for any partitions $\l,\mu$,

\item $\Q^{(\l)}$ and $\Q^{(\mu)}$ commute for any partitions $\l,\mu$,

\item $\Q^{(n)} \P^{(m)} \cong \bigoplus_{k \ge 0} \P^{(m-k)} \Q^{(n-k)}$ and $\Q^{(1^n)} \P^{(1^m)} \cong \bigoplus_{k \ge 0} \P^{(1^{m-k})} \Q^{(1^{n-k})}$,

\item $\Q^{(n)} \P^{(1^m)} \cong \P^{(1^m)} \Q^{(n)} \oplus \P^{(1^{m-1})} \Q^{(n-1)}$ and $\Q^{(1^n)} \P^{(m)} \cong \P^{(m)} \Q^{(1^n)} \oplus \P^{({m-1})} \Q^{(1^{n-1})}$.
\end{enumerate}
\end{theorem}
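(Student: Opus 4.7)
The plan is to prove all four statements by exhibiting explicit morphisms in the graphical calculus and verifying they are mutually inverse using only the defining relations \eqref{eq:rel1}--\eqref{eq:rel3}. The overall strategy is the same in each case: first establish a ``Mackey-style'' decomposition at the level of pure $\P^n$/$\Q^m$ by iterating the defining relations; then restrict to minimal idempotents and analyze how they interact with the cap/cup contractions, which collapses the sum to the stated form.

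For (1) and (2): the braid relation in \eqref{eq:rel1} produces a $\mathbb{C}[S_{n+m}]$-algebra homomorphism into $\End(\P^{n+m})$. Under the parabolic inclusion $\mathbb{C}[S_n] \otimes \mathbb{C}[S_m] \hookrightarrow \mathbb{C}[S_{n+m}]$, the idempotent $e_\l \otimes e_\mu$ cuts out $\P^{(\l)}\P^{(\mu)}$, while $e_\mu \otimes e_\l$, in the other parabolic on the first $m$ versus last $n$ strands, cuts out $\P^{(\mu)}\P^{(\l)}$. Conjugation by the $(n,m)$-shuffle element of $S_{n+m}$ intertwines these two idempotents, producing the required isomorphism. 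Statement (2) is identical with $\P$ replaced by $\Q$, using the same shuffle applied via the downward crossing.

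For (3) and (4): I would proceed by induction on $n+m$, with the base case $n=m=1$ being exactly the first equation of \eqref{eq:rel2}. For the inductive step one first resolves $\Q^n\P^m$ into a sum
\[
\Q^n\P^m \;\cong\; \bigoplus_{k\ge 0} \binom{n}{k}\binom{m}{k}\,k!\;\P^{m-k}\Q^{n-k}
\]
by iterating \eqref{eq:rel2}; the explicit maps come from choosing $k$ pairs of strands, contracting them through the cup--cap morphism, and permuting the remainder. Statement (3) follows by applying $e_{(n)} \otimes e_{(m)}$, respectively $e_{(1^n)} \otimes e_{(1^m)}$, to both sides: the residual $S_k$-action on the contracted strands is averaged by the symmetrizer/antisymmetrizer on both sides, collapsing the multiplicity $\binom{n}{k}\binom{m}{k}k!$ to $1$. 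Statement (4) is the same calculation with a symmetrizer on one side and an antisymmetrizer on the other; only the terms $k=0,1$ survive because any higher-order contraction produces a crossingless loop between $\P$ and $\Q$ which vanishes by the final relation in \eqref{eq:rel3}, leaving two summands.

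The main obstacle is verifying that the candidate maps assembled from caps, cups and crossings really are mutually inverse after idempotent averaging, rather than merely well-defined morphisms in the correct degree. Concretely, one must compute a stacked cap-cup composition inside $\End(\P^{m-k}\Q^{n-k})$ and identify it, via planar isotopy together with the bubble-evaluation relations of \eqref{eq:rel3}, with a nonzero scalar multiple of the relevant idempotent; matching the combinatorial factor $k!$ coming from the permutations of contracted strands against the normalization of $e_\l$ then yields the claimed isomorphism. An alternative, perhaps cleaner, route would be to check that both sides have the same image under the faithful functor from $\H$ to the tower $\bigoplus_n \mathbb{C}[S_n]\dmod$ coming from induction/restriction, which reduces the categorical decomposition to the classical Mackey formula for symmetric groups.
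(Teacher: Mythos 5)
A preliminary remark: the paper does not prove Theorem \ref{thm:1} at all --- it is quoted from Khovanov's paper [K] --- so the only meaningful comparison is with Khovanov's original argument, and your outline is structurally the same as his. Your treatment of (1) and (2) is complete and correct: relation \eqref{eq:rel1} gives an algebra map $\mathbb{C}[S_{n+m}]\to\End(\P^{n+m})$, the image of the block shuffle is invertible there (crossings square to the identity), and conjugate idempotents define isomorphic objects of the Karoubi envelope; the same works for $\Q$.

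For (3) and (4) you have the right skeleton (iterate $\Q\P\cong\P\Q\oplus\Id$, then truncate by idempotents), but the decisive step is exactly the one you defer as ``the main obstacle,'' and the one concrete mechanism you do offer is wrong. The truncation of (4) at $k\le 1$ is not a consequence of the left-curl relation in \eqref{eq:rel3}: that relation is what makes the cross-compositions cap$\circ$crossing and crossing$\circ$cup vanish, i.e.\ it is already spent in establishing $\Q\P\cong\P\Q\oplus\Id$ and hence in defining every contraction term, and it does not kill higher contractions. What kills them is symmetric-group algebra: in the composite of your $k$-fold contraction maps the $k$ contracted strands are forced through $e_{(n)}$ on the $\Q$ side and $e_{(1^m)}$ on the $\P$ side, so the composite contains the factor $e_{(k)}e_{(1^k)}$, which vanishes for $k\ge 2$ because the trivial and sign representations are distinct irreducibles. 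Similarly, in (3) the collapse of the multiplicity $\binom{n}{k}\binom{m}{k}k!$ to $1$ amounts to the one-dimensionality of $e_{(k)}\mathbb{C}[S_k]e_{(k)}$ together with an explicit evaluation of the stacked cap/cup composites (crossing- or curl-containing matchings must die and the surviving circles must evaluate to $1$, so that the composite is a nonzero multiple of the relevant idempotent); this computation is the actual content of Khovanov's proof and is nowhere carried out in your proposal. Finally, your fallback route is not sound as stated: showing that ``both sides have the same image under the functor to $\bigoplus_n\mathbb{C}[S_n]\dmod$'' proves nothing, since a faithful functor need not reflect isomorphism of objects. To use that functor you must still construct the candidate maps in $\H$ and check that their images compose to identities, and the faithfulness you invoke is itself a nontrivial input (essentially the nondegeneracy of Khovanov's calculus), so this route does not avoid the missing computation.
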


Thus, at the level of Grothendieck groups we have a map $\mathfrak{h} \rightarrow K_0(\H)$. This map is known to be injective but it is not known if it is surjective.

\subsection{The degenerate affine Hecke algebra}

Inside $\H$ consider the element $X_i \in \End(\P^n)$, acting on the $i$th factor $\P$, as illustrated in the left hand side of (\ref{eq:curl}). In \cite{K} this element was studied and it was encoded diagrammatically by a solid dot, as shown 

\begin{equation}\label{eq:curl}
\begin{tikzpicture}
\draw  (1.9,0) .. controls (1.9,.5) and (1.3,.5) .. (1.1,0) [thick];
\draw  (1.9,0) .. controls (1.9,-.5) and (1.3,-.5) .. (1.1,0) [thick];
\draw  (1,-1) .. controls (1,-.5) .. (1.1,0) [thick];
\draw  (1.1,0) .. controls (1,.5) .. (1,1) [->] [thick];
\draw  (2.5,0) node {$=$};
\draw (3,-1) -- (3,1) [thick][->];
\filldraw [black] (3,0) circle (2pt);
\end{tikzpicture}
\end{equation}

In \cite{K} it was shown that these $X_i$'s together with the symmetric group $\mathbb{C}[S_n] \subset \End(\P^n)$ generate a copy of the degenerate affine Hecke algebra.  In particular, using equation \eqref{eq:rel1}-- \eqref{eq:rel3} the equations
\begin{align} \label{eq:nil-dot}
\hackcenter{\begin{tikzpicture}[scale=0.8]
    \draw[thick, ->] (0,0) .. controls (0,.75) and (.75,.75) .. (.75,1.5)
        node[pos=.25, shape=coordinate](DOT){};
    \draw[thick, ->] (.75,0) .. controls (.75,.75) and (0,.75) .. (0,1.5);
    \filldraw  (DOT) circle (2.5pt);
\end{tikzpicture}}
\quad-\quad
\hackcenter{\begin{tikzpicture}[scale=0.8]
    \draw[thick, ->] (0,0) .. controls (0,.75) and (.75,.75) .. (.75,1.5)
        node[pos=.75, shape=coordinate](DOT){};
    \draw[thick, ->] (.75,0) .. controls (.75,.75) and (0,.75) .. (0,1.5);
    \filldraw  (DOT) circle (2.5pt);
\end{tikzpicture}}
&\quad=\quad
\hackcenter{\begin{tikzpicture}[scale=0.8]
    \draw[thick, ->] (0,0) -- (0,1.5);
    \draw[thick, ->] (.75,0) -- (.75,1.5);
\end{tikzpicture}}
\quad=\quad
\hackcenter{\begin{tikzpicture}[scale=0.8]
    \draw[thick, ->] (0,0) .. controls (0,.75) and (.75,.75) .. (.75,1.5);
    \draw[thick, ->] (.75,0) .. controls (.75,.75) and (0,.75) .. (0,1.5)
        node[pos=.75, shape=coordinate](DOT){};
    \filldraw  (DOT) circle (2.5pt);
\end{tikzpicture}}
\quad-\quad
\hackcenter{\begin{tikzpicture}[scale=0.8]
    \draw[thick, ->] (0,0) .. controls (0,.75) and (.75,.75) .. (.75,1.5);
    \draw[thick, ->] (.75,0) .. controls (.75,.75) and (0,.75) .. (0,1.5)
        node[pos=.25, shape=coordinate](DOT){};
      \filldraw  (DOT) circle (2.5pt);
\end{tikzpicture}}
\end{align}
follow. More precisely, denote a crossing of the $i$th and $(i+1)$st strands by $T_i$.

\begin{proposition}\cite{K}\label{prop:sergeevacting}
We have the following relations inside $\End_{\H}(\P^n)$:
\begin{align*}
T_i X_i &= X_{i+1}T_i + 1  \\
X_i T_i &= T_i X_{i+1} + 1 \\
X_i X_j &= X_j X_i \\
T_i^2 &=1 \\
T_i T_j &= T_j T_i \text{ if } |i-j|>1 \\
T_i T_{i+1} T_i &= T_{i+1} T_i T_{i+1}.
\end{align*}
\end{proposition}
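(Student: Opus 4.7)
The plan is to verify each of the six relations directly from the defining graphical relations (\ref{eq:rel1})--(\ref{eq:rel3}) of $\H$. Four of them are essentially immediate from the graphical calculus. The relation $T_i^2 = 1$ is the left identity of (\ref{eq:rel1}), applied to strands $i$ and $i+1$ with the remaining $n-2$ strands as passive bystanders. The braid relation $T_iT_{i+1}T_i = T_{i+1}T_iT_{i+1}$ is the right identity of (\ref{eq:rel1}). The distant commutation $T_iT_j = T_jT_i$ for $|i-j|>1$ is planar isotopy, since the two crossings act on disjoint pairs of strands. Similarly, $X_iX_j = X_jX_i$ for $i \ne j$ is planar isotopy: dots on distinct strands can be slid past one another vertically.

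The nontrivial content is concentrated in the mixed relations $T_iX_i = X_{i+1}T_i + 1$ and $X_iT_i = T_iX_{i+1} + 1$, which together constitute equation (\ref{eq:nil-dot}). My approach is to expand the dot on one side of the crossing using its definition (\ref{eq:curl}) as a right curl, thus replacing the dotted strand by a configuration involving a cup, a short downward segment, and a cap attached to an otherwise straight strand. One then slides the little loop through the crossing $T_i$: the loop, which represents a short-lived $\P\Q$ pair on the right of strand $i$, can be pushed through the crossing and reattached on what was originally strand $i+1$. Cleaning up using the $\P\Q \leftrightarrow \Q\P$ move of (\ref{eq:rel2}), the diagram becomes $X_{i+1}T_i$ together with a residual term in which the cap--crossing--cup collapses to a pair of straight vertical strands carrying no crossing. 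This residual piece is precisely the identity on $\P\P$, giving the $+1$. The second mixed relation is obtained by the mirror calculation, placing the curl on the opposite side of the crossing.

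The main obstacle is carrying out this slide so that the correction term comes out to exactly the identity on two strands. One must be careful that, after pushing the curl through the crossing and invoking (\ref{eq:rel2}), the leftover cup--cap configuration really simplifies via the first identity of (\ref{eq:rel3}) to two parallel upward strands, rather than being killed by the left-curl vanishing relation (the second identity of (\ref{eq:rel3})) which would yield zero. Once (\ref{eq:nil-dot}) is in place, the full list of degenerate affine Hecke relations is established and the proof is complete.
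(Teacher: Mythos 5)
You should first note that the paper itself gives no proof of this proposition: it is imported from Khovanov \cite{K}, and the surrounding text merely records that the dot-slide identities \eqref{eq:nil-dot} follow from \eqref{eq:rel1}--\eqref{eq:rel3}. Your direct diagrammatic verification is therefore essentially a reconstruction of the argument of \cite{K} rather than a genuinely different route, and its outline is correct. The easy relations are handled exactly as you say ($T_i^2=1$ and the braid relation are the two identities of \eqref{eq:rel1}, though as printed the braid relation is the left one and $T_i^2=1$ the right one, and the distant commutations and $X_iX_j=X_jX_i$ are planar isotopies). For the mixed relations your mechanism is the standard one: expand the dot as the right curl \eqref{eq:curl}; as the curl slides through the crossing, its downward return strand forms a down--up double crossing with the neighbouring upward strand, which is resolved by the first equation of \eqref{eq:rel2}; the principal term is $X_{i+1}T_i$ (note the curl stays on its own strand, which now occupies position $i+1$, rather than being ``reattached'' to the other strand as your phrasing suggests), and the single correction term reduces to the identity of $\P\P$, giving the $+1$. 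One caveat on the bookkeeping: depending on where along the slide you invoke \eqref{eq:rel2}, the residual either contains a counterclockwise bubble, erased by the first relation of \eqref{eq:rel3} as you propose, or is simply a zigzag together with an upward double crossing, which straightens by isotopy and \eqref{eq:rel1} with no bubble appearing at all; in other arrangements of the slide the unwanted resolution terms are killed by the left-curl relation, so your caution about distinguishing the two parts of \eqref{eq:rel3} is apt even though the circle relation is not forced. To turn the sketch into a complete proof one still has to display the short diagram chase confirming that exactly one correction term survives and that its sign is $+1$, together with the mirror computation for $X_iT_i=T_iX_{i+1}+1$, but the plan as stated carries this out.
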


The algebra generated by $ T_i $ for $i=1, \ldots, n-1$ and $ X_i $ for $ i=1,\ldots,n$ satisfying the relations in Proposition ~\ref{prop:sergeevacting} is the degenerate affine Hecke algebra $DH_n$.

We now define bubbles which are also endomorphisms of $\P^n$.

\begin{equation}\label{eq:bubbles}
\begin{tikzpicture}

\draw  [shift={+(0,0)}](-1,0) node {$c_n=$};
\draw  [shift={+(0,0)}](.2,.125) node {$n$};
\filldraw [shift={+(0,0)}][black] (0,.125) circle (2pt);
\draw [shift={+(0,0)}](0,0) arc (180:360:0.5cm) [thick];
\draw [shift={+(0,0)}][<-](1,0) arc (0:180:0.5cm) [thick];
\draw  [shift={+(5,0)}](-1,0) node {$\tilde{c}_n=$};
\draw  [shift={+(5,0)}](.2,.125) node {$n$};
\filldraw [shift={+(5,0)}][black] (0,.125) circle (2pt);
\draw [shift={+(5,0)}][->](0,0) arc (180:360:0.5cm) [thick];
\draw [shift={+(5,0)}][](1,0) arc (0:180:0.5cm) [thick];
\end{tikzpicture}
\end{equation}

\begin{proposition}\label{eq:bubblerelationship}\cite[Proposition 2]{K}
For $n>0$
\begin{equation*}
\tilde{c}_{n+1} = \sum_{i=0}^{n-1} \tilde{c}_i c_{n-1-i}.
\end{equation*}
\end{proposition}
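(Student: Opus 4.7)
The proof will proceed by a direct diagrammatic manipulation of $\tilde{c}_{n+1}$, using the curl identity (\ref{eq:curl}) to convert a dot into a topological feature and then resolving the resulting crossing via the mixed-orientation relation (\ref{eq:rel2}). My plan is as follows. I would start with the counterclockwise bubble $\tilde{c}_{n+1}$ and, using (\ref{eq:curl}) (or its downward-strand analog, derivable from it together with (\ref{eq:rel1})), replace one of the $n+1$ dots by a small curl around the bubble strand. This transforms $\tilde{c}_{n+1}$ into a diagram containing $n$ dots, one curl, and the original cup and cap.

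Next, I would resolve the new crossing produced by the curl using the mixed-orientation relation (\ref{eq:rel2}), which expresses such a crossing as the difference of a pair of parallel strands and a cup-cap pair. The cup-cap term has a direct geometric meaning: when it is substituted inside the bubble configuration, it physically disconnects the original bubble into two bubbles, one counterclockwise and one clockwise, whose dot counts split the original $n$ remaining dots into an $i$-and-$(n-1-i)$ distribution; this is exactly the summand $\tilde{c}_i c_{n-1-i}$ in the statement. The parallel-strand term, after reorganizing and using (\ref{eq:rel1}) together with the nil-dot slide (\ref{eq:nil-dot}), gives a remainder that can be re-expressed via the same procedure applied to a smaller bubble, driving an induction on $n$.

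To set up the induction I would verify the base case $n=1$ (i.e.\ $\tilde{c}_2 = c_0$) directly by applying (\ref{eq:curl}) to one of the two dots in $\tilde{c}_2$ and then using (\ref{eq:rel2}), (\ref{eq:rel3}), and $\tilde{c}_0=1$. Iterating the inductive step (each iteration peels off one summand $\tilde{c}_i c_{n-1-i}$ and reduces to a bubble with fewer dots) produces the entire sum $\sum_{i=0}^{n-1} \tilde{c}_i c_{n-1-i}$.

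The main obstacle will be the careful bookkeeping of orientations and dot positions in the intermediate diagrams. Because $c_n$ and $\tilde{c}_n$ carry opposite orientations and have their dots conventionally placed on the left strand, each application of (\ref{eq:rel2}) inside the bubble must be matched to the correct oriented bubble, and dots may need to be shuffled between the two strands of the bubble using (\ref{eq:nil-dot}) to match conventions. A secondary technical point is using the teardrop vanishing relation in the second part of (\ref{eq:rel3}) to kill degenerate configurations that arise in the inductive step, so that only the desired convolution terms survive.
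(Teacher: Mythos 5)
First, note that the paper does not prove Proposition~\ref{eq:bubblerelationship} at all: it is quoted from \cite[Proposition 2]{K}, so your sketch has to be measured against the standard diagrammatic argument rather than an in-paper proof. Your global plan (trade a dot for a curl via \eqref{eq:curl}, peel off terms in which the closed diagram falls apart into a counterclockwise bubble times a clockwise bubble, and let the vanishing of the undotted left curl together with $\tilde{c}_0=1$, $\tilde{c}_1=0$ control the rest) is indeed the shape of that argument. But the step you lean on fails as stated. The self-crossing created by a curl is a crossing of two strands with the \emph{same} orientation (at the crossing point of a curl both branches point upward), so the mixed-orientation relation \eqref{eq:rel2} does not apply to it at all. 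Moreover, even for oppositely oriented strands, \eqref{eq:rel2} resolves the \emph{double} crossing: it says the composite $\Q\P\to\P\Q\to\Q\P$ equals identity minus cup--cap (and the other composite equals identity). A single sideways crossing is a generator $\Q\P\to\P\Q$, while ``parallel strands minus a cup--cap pair'' is an endomorphism, so the resolution you invoke is not even type-correct. Hence ``resolve the new crossing produced by the curl using \eqref{eq:rel2}'' is not a legal move, and the cup--cap term you want cannot be produced that way; your base case $\tilde{c}_2=c_0$ has the same problem.

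The correct engine is the dot-slide relation \eqref{eq:nil-dot} (the degenerate affine Hecke relation $T_iX_i=X_{i+1}T_i+1$, in iterated form \eqref{eq:inddot}), not \eqref{eq:rel2}. Write one dot of $\tilde{c}_{n+1}$ as a right curl; the curl is the composite of a cup, an up--up crossing and a cap. Sliding the remaining $n$ dots through that up--up crossing produces (i) a term in which the crossing survives but the dots have moved off the loop, which contains an undotted left-twist curl and hence vanishes by \eqref{eq:rel3}, and (ii) correction terms in which the crossing is deleted, so that the cap closes directly onto the cup and the diagram disconnects into a small counterclockwise bubble carrying $i$ dots times the remaining circle carrying $n-1-i$ dots; these are exactly the summands $\tilde{c}_i c_{n-1-i}$, with $\tilde{c}_0=1$ and $\tilde{c}_1=0$ absorbing the degenerate cases. (So the roles in your sketch are also reversed: it is the crossing-deleted ``parallel'' terms that disconnect into two bubbles, and the crossing-retaining term that is the vanishing remainder.) This is precisely the mechanism the paper itself uses to establish \eqref{eq:dotted-curl} and Lemma~\ref{lem:mn-onetangle}; closing up the dotted-curl identity, and reading the resulting figure-eight the other way via \eqref{eq:curl}, recovers the proposition. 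With the \eqref{eq:rel2} step replaced by this dot-slide mechanism (either all $n$ dots at once, or one at a time as an induction, provided you only close up at the end), your outline becomes the standard proof.
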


\begin{proposition}
\label{KhovEndThm}
\cite[Proposition 4]{K}
There is an isomorphism of algebras
\begin{equation*}
\End(\P^n) \cong DH_n \otimes \mathbb{C}[c_0, c_1, \ldots].
\end{equation*}
In particular, when $n=0$ we have $\End(\id) \cong \mathbb{C}[c_0, c_1, \ldots]$.
\end{proposition}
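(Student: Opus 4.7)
The plan is to construct an algebra homomorphism
\[
\Phi\colon DH_n \otimes \mathbb{C}[c_0, c_1, \ldots] \longrightarrow \End(\P^n)
\]
sending $T_i$ and $X_i$ to the corresponding crossing and dot on the $n$-strand identity diagram, and sending each $c_k$ to the clockwise bubble with $k$ dots placed in a vertical strip to the right of all $n$ strands. Proposition~\ref{prop:sergeevacting} guarantees that the $T_i$'s and $X_i$'s satisfy the $DH_n$ relations, and since each $c_k \in \End(\id)$ and its graphical representative can be taken in a region disjoint from the through-strands, planar isotopy shows that $c_k$ commutes with the image of $DH_n$. Hence $\Phi$ is a well-defined algebra map, and it remains to check bijectivity.

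I would first establish the $n=0$ case $\End(\id) \cong \mathbb{C}[c_0, c_1, \ldots]$. Every endomorphism of $\id$ is a linear combination of closed planar diagrams built from cups, caps, crossings, and dots. The triangle relation on the right of \eqref{eq:rel3} kills the empty counterclockwise bubble, and Proposition~\ref{eq:bubblerelationship} inductively expresses every $\tilde c_n$ as a polynomial in the $c_k$'s. For an arbitrary closed diagram, I would repeatedly use \eqref{eq:rel2} and \eqref{eq:rel1} to isotope innermost components into isolated dotted circles, replacing counterclockwise ones via Proposition~\ref{eq:bubblerelationship}, until only disjoint clockwise dotted bubbles remain. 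Termination of this reduction is the content of the $n=0$ surjectivity claim; algebraic independence of the $c_k$ (hence injectivity) then follows from Khovanov's faithful action of $\H$ on $\bigoplus_m \mathbb{C}[S_m]\dmod$, in which $c_k$ acts as a central element whose spectra on the tower distinguish different monomials.

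For general $n$, surjectivity reduces to the $n=0$ case. Given a diagram representing a morphism $\P^n \to \P^n$, I would use the adjunction relations \eqref{eq:rel2} to slide every cup and cap past any crossings with the $n$ through-strands, introducing correction terms that are either strictly simpler (involving fewer cups/caps) or that already sit in the desired form. The braid and commutation relations \eqref{eq:rel1} then allow the through-strand part to be rewritten in the standard $DH_n$ normal form $T_w X_1^{a_1}\cdots X_n^{a_n}$. What remains is a disjoint closed diagram, which by the $n=0$ result is a polynomial in the $c_k$'s. Injectivity then follows from the faithful action of $\H$ on $\bigoplus_m \mathbb{C}[S_m]\dmod$, where the $T_i$, $X_i$ act as the usual transposition and Jucys--Murphy generators of $DH_n$ on $\Ind$ functors between symmetric groups: this action is known to realize $DH_n$ faithfully, and combined with polynomial independence of the $c_k$'s a PBW-style argument shows that the product basis of $DH_n \otimes \mathbb{C}[c_0,c_1,\ldots]$ maps to linearly independent endomorphisms of $\P^n$.

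The main obstacle is the combinatorial ``pulling bubbles out'' step in the surjectivity argument, because one must ensure the reduction terminates and that no hidden obstructions prevent separating a closed component from the through-strand data. A clean way to control this is to introduce a complexity measure --- for example, the number of cups and caps plus the number of crossings between oppositely oriented strands --- and to verify that each application of relations \eqref{eq:rel1}--\eqref{eq:rel3} either strictly decreases this measure or produces terms already in standard form. The bubble-reduction analysis in the $n=0$ case, leaning on Proposition~\ref{eq:bubblerelationship}, is the technical core on which the general case rests.
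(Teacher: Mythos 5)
This proposition is not proved in the paper at all: it is imported verbatim from Khovanov's paper as \cite[Proposition 4]{K}, so there is no in-text argument to compare against. Your sketch is, in essence, a reconstruction of Khovanov's original proof: build the map $DH_n \otimes \mathbb{C}[c_0,c_1,\ldots] \to \End(\P^n)$ from Proposition~\ref{prop:sergeevacting} and the far-away bubbles, prove surjectivity by reducing an arbitrary diagram to a normal form (a $DH_n$-element on the through-strands times a product of clockwise dotted bubbles, converting counterclockwise bubbles via Proposition~\ref{eq:bubblerelationship}), and prove injectivity by evaluating on the bimodule representation over the symmetric groups. So the route is the right one; the only place where your plan glosses over the real technical point is the injectivity step. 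For any \emph{fixed} $m$, the action of $DH_n$ (and of the $c_k$) on the induction bimodule over $\mathbb{C}[S_m]$ is \emph{not} faithful --- $DH_n\otimes\mathbb{C}[c_0,c_1,\ldots]$ is infinite dimensional while the relevant endomorphism algebra is finite dimensional --- so ``the action is known to realize $DH_n$ faithfully'' must be read as an asymptotic statement across the whole tower: one shows that a fixed nonzero element of $DH_n\otimes\mathbb{C}[c_0,c_1,\ldots]$ acts nontrivially for all sufficiently large $m$, using that $X_i$ acts by Jucys--Murphy elements and $c_k$ by central elements whose leading behavior in $m$ makes the $c_k$ algebraically independent and the products with a $DH_n$-basis linearly independent. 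This large-$m$ limit argument is exactly the content of Khovanov's proof, and your ``spectra on the tower'' remark points in that direction, but as written the PBW-style independence claim is the step that still needs to be carried out rather than quoted.
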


Let $J_{m,n} $ be the ideal of $\End_{\H'}(\P^m\Q^n)$ generated by diagrams which contain at least one arc connecting a pair of upper points.

\begin{proposition}
\label{KhovEndThmPmQn}
\cite[Equation (19)]{K}
There exists a short exact sequence
\begin{equation*}
0 \rightarrow J_{m,n} \rightarrow \End_{\H'}(\P^m\Q^n) \rightarrow DH_m \otimes DH_n^{op} \otimes \mathbb{C}[c_0,c_1,\ldots] \rightarrow 0.
\end{equation*}
Furthermore, this sequence splits.
\end{proposition}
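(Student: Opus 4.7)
The plan is to construct an explicit algebra section of the claimed quotient map and then show that its kernel is exactly $J_{m,n}$. Define a section $s\colon DH_m\otimes DH_n^{op}\otimes\mathbb{C}[c_0,c_1,\ldots]\to\End_{\mathcal{H}'}(\P^m\Q^n)$ by sending a pure tensor $\alpha\otimes\beta\otimes p(c)$ to the diagram that places the $DH_m$-diagram $\alpha$ on the $m$ upward strands, the $DH_n^{op}$-diagram $\beta$ on the $n$ downward strands (with the vertical orientation reversed to account for the ``op''), and the polynomial $p$ in the bubbles $c_0,c_1,\ldots$ in an empty region to the side. That $s$ is an algebra map follows from Proposition~\ref{KhovEndThm} applied separately to the $\P$-side and its evident $\Q$-analog, combined with the fact that non-crossing parallel $\P$ and $\Q$ strands commute via planar isotopy, and that bubbles are central in $\End(\mathrm{id})$.

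The crucial observation for the quotient direction is that relation~\eqref{eq:rel2} expresses any upward--downward crossing as the uncrossed pair plus a diagram with a cap at the top, and such a cap-at-the-top diagram lies in $J_{m,n}$ by definition. Thus modulo $J_{m,n}$, $\P$ strands slide freely past $\Q$ strands. Combined with~\eqref{eq:rel1},~\eqref{eq:rel3},~\eqref{eq:nil-dot}, and Proposition~\ref{eq:bubblerelationship}, this yields a straightening algorithm: given any diagram, if it already contains a top arc it is in $J_{m,n}$; otherwise, use isotopy together with the crossing relations to move all $\P$-strands to the left of all $\Q$-strands, reduce each remaining closed loop to a polynomial in the $c_k$ via~\eqref{eq:rel3} and Proposition~\ref{eq:bubblerelationship}, and collect the dots on each strand into the $DH_m$ and $DH_n^{op}$ factors. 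The resulting normal form lies in the image of $s$, and uniqueness on each half follows from Proposition~\ref{KhovEndThm}.

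This straightening defines the quotient map $\phi\colon\End_{\mathcal{H}'}(\P^m\Q^n)\to DH_m\otimes DH_n^{op}\otimes\mathbb{C}[c_0,c_1,\ldots]$, which satisfies $\phi\circ s=\mathrm{id}$ by construction and is therefore surjective with $\ker\phi=J_{m,n}$. The main obstacle is showing that $\phi$ is well-defined, i.e., that the straightening algorithm produces the same normal form regardless of the order in which moves are applied. The cleanest way to handle this is to avoid defining $\phi$ via an algorithm and instead define it directly on a diagrammatic basis: decompose $\End_{\mathcal{H}'}(\P^m\Q^n)$ as a sum indexed by matchings of its top and bottom endpoints (with internal data given by dots, permutations of like-oriented strands, and bubbles), let $\phi$ extract the summand corresponding to the ``identity'' matching on the top, and then invoke Proposition~\ref{KhovEndThm} and its $\Q$-analog to identify this summand with $DH_m\otimes DH_n^{op}\otimes\mathbb{C}[c_0,c_1,\ldots]$. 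One then verifies, relation by relation, that each defining relation of $\mathcal{H}'$ preserves this decomposition modulo $J_{m,n}$; the only nontrivial checks are those involving~\eqref{eq:rel2}, which by the observation above are precisely where $J_{m,n}$ enters.
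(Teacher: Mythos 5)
The spanning half of your argument is fine: modulo $J_{m,n}$, relation \eqref{eq:rel2} lets $\P$ strands pass $\Q$ strands (the correction terms acquire turn-backs and hence lie in $J_{m,n}$), left curls die, closed components reduce to polynomials in the $c_k$, and every endomorphism becomes congruent to one in the image of your section $s$; also, $s$ being a well-defined algebra map only requires that the degenerate affine Hecke relations hold diagrammatically (Proposition~\ref{prop:sergeevacting} and its downward analogue), not Proposition~\ref{KhovEndThm}. The genuine gap is the other half of exactness: injectivity of the induced map $A:=DH_m\otimes DH_n^{op}\otimes\mathbb{C}[c_0,c_1,\ldots]\to\End_{\H'}(\P^m\Q^n)/J_{m,n}$, equivalently that the image of $s$ meets $J_{m,n}$ trivially and that the dotted-permutation-plus-bubble diagrams are linearly independent modulo $J_{m,n}$. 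At the decisive step you simply posit a ``diagrammatic basis'' indexed by matchings and extract the identity-matching summand; but the existence of that direct-sum decomposition is precisely what has to be proved. The defining relations of $\H'$ only give a spanning set, and nothing you invoke excludes further dependences: Proposition~\ref{KhovEndThm} concerns $\End(\P^m)$ alone and does not control the mixed endomorphism algebra, nor the intersection of your subalgebra with $J_{m,n}$. Your fallback --- defining the retraction $\phi$ on arbitrary diagrams and checking invariance relation by relation --- is a legitimate strategy, but it is left unexecuted and is understated: $\phi$ is not even specified on diagrams where closed components encircle through strands or where dots sit between $\P$--$\Q$ crossings, and well-definedness must be checked against planar isotopy and all of \eqref{eq:rel1}--\eqref{eq:rel3} (including curl and bubble moves such as \eqref{eq:curl}), not only \eqref{eq:rel2}. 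Moreover, one cannot simultaneously ``have a basis'' and ``verify relations'': the verification is needed exactly because a priori you only have a spanning set.

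For comparison: the paper itself gives no proof, quoting the statement from \cite[Equation (19)]{K}. In \cite{K} the missing linear independence is not obtained by a diagrammatic consistency check; Khovanov evaluates the candidate basis in the categorical action on $\bigoplus_n \mathbb{C}[S_n]$-bimodules (the functor recalled in Section~\ref{sec:fock} of this paper) and verifies that the resulting operators are linearly independent, which yields both $s(A)\cap J_{m,n}=0$ and the splitting. To close your gap, either import that basis theorem from \cite{K} or run the same representation-theoretic independence argument; with that in hand, your section $s$ together with your straightening computation does produce the split short exact sequence.
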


\section{The algebra $\W$}
\label{secwalgebra}
In this section we define the $W$-algebra of interest and list several important properties.
This algebra is defined to be the universal enveloping algebra of a central extension of differential operators on $\mathbb{C}^*$.
The $W$-algebra, $\W$ comes equipped with a $\Z$-grading and a compatible $\Z_{\geq 0}$-filtration.
We will also define a faithful irreducible representation of a quotient of $\W$.  Both of these features will be crucial in proving that the $W$-algebra is related to the trace of $\H$.
This section follows closely the exposition in ~\cite[Appendix F]{SV}.

Let $ \W$ be the $\mathbb{C}$-associative algebra generated by $w_{l,k}$ for $ l \in \Z $ and $ k \in \N$ and $C$ with relations
that $w_{0,0}$ and $C$ are central, and
\begin{equation}
w_{l,k}=t^l D^k \hspace{.5in} (l,k) \neq (0,0)
\end{equation}
\begin{equation}
\label{wdef}
[t^l exp(\alpha D), t^k exp(\beta D)]=(exp(k \alpha)- exp(l \beta))t^{l+k} exp(\alpha D + \beta D)
+ \delta_{l,-k} \frac{exp(-l \alpha)-exp(-k \beta)}{1-exp(\alpha+\beta)}C
\end{equation}
where $t$ is the parameter on ${\mathbb C}^*$ and $D = t \partial_t$.

Note that for $ l \in \mathbb{Z} - \{0 \} $ the set $ \{w_{l,0}  \} $ generates a Heisenberg subalgebra because
\begin{equation}
\label{Walgheisrelations}
[w_{l,0}, w_{k,0}]=l \delta_{l,-k}C.
\end{equation}

We will need the following relations which are direct consequences of ~\eqref{wdef}.
\begin{align}
[w_{l,1}, w_{k,1}]&=(k-l)w_{l+k,1} + \frac{k^3-k}{6} C \delta_{l,-k} \label{Walgl1k1} \\
[w_{-1,a},w_{1,b}]&=\sum_{r=1}^a \binom{a}{r}w_{0,a+b-r} - \sum_{s=1}^b (-1)^s \binom{b}{s}w_{0,a+b-s} +
\delta_{a,0}(-1)^{b+1}C \label{Walg-1a1b} \\
[w_{l,1},w_{k,0}]&=k w_{l+k,0} - \delta_{l,-k}\frac{k(k-1)}{2}C \label{Walgl1k0} \\
[w_{l,0},w_{0,2}]&=-2lw_{l,1} -l^2 w_{l,0}. \label{Walgl002}
\end{align}
We define Virasoro elements
\begin{equation*}
\bar{L}_l = -w_{l,1}-\frac{1}{2}(l+1)w_{l,0}.
\end{equation*}
Then it is easy to check that ~\eqref{wdef} gives the Virasoro relations
\begin{equation*}
[\bar{L}_l, \bar{L}_k]=(l-k)\bar{L}_{l+k} + \delta_{l,-k}\frac{l^3-l}{12}C.
\end{equation*}

Let $  \W^{>}, \W^0, \W^{<} $ be subalgebras of $ \W$ generated as follows:
\begin{align*}
\W^{>} &:= \mathbb{C} \la w_{l,k} ; l \geq 1, k \geq 0 \ra \\
\W^{0} &:= \mathbb{C} \la C, w_{0,l}; l \geq 0\} \ra \\
\W^{<} &:= \mathbb{C} \la w_{-l,k} ; l \geq 1, k \geq 0 \ra.
\end{align*}

This algebra has a $ \Z$-grading called the rank grading where $w_{l,k}$ is in degree $l$ and $C$ is in degree $0$.  The algebra is also $\Z_{\geq 0}$-filtered where $ w_{l,k}$ is in degree $ \leq k$.  Let $ \W^{\omega}[r, \leq k] $ denote the set of elements in $\Z$-degree $r$ and $ \Z_{\geq 0}$-degree $k$ where $ \omega \in \{ <, >, 0, \emptyset \}$.

Denote the associated graded algebra of $ \W$ with respect to the $\Z_{\geq 0}$-filtration by
\begin{equation*}
\tW = gr(\W).
\end{equation*}
Since the $\Z$-grading on $\W$ is compatible with the $\Z_{\geq 0}$-filtering, the associated graded $ \tW$ is $ (\Z \times \Z_{\geq 0})$-graded.
Let $ \tW^{\omega}[r, k] $ denote the subspace of $ \tW^{\omega} $ in bidegree $(r,k)$.
Define a generating series for the graded dimension of $ \tW^{\omega} $ by
\begin{equation*}
P_{\tW^{\omega}}(t,q)=\sum_{r \in \Z} \sum_{k \in \Z_{\geq 0}} \dim \tW^{\omega}[r, k] t^r q^k.
\end{equation*}

\begin{proposition}
\label{poincareW}
The graded dimensions of $ \tW^>$ and $\tW^<$ are given by:
\begin{equation*}
P_{\tW^>}=\prod_{r>0} \prod_{k \geq 0} \frac{1}{1-t^r q^k}, \hspace{.3in}
P_{\tW^<}=\prod_{r<0} \prod_{k \geq 0} \frac{1}{1-t^r q^k}.
\end{equation*}
\end{proposition}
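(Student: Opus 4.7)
The plan is to identify $\tW^>$ with a bigraded polynomial algebra on the generators $\{w_{l,k} : l \geq 1, k \geq 0\}$, with $w_{l,k}$ placed in bidegree $(l,k)$. The Poincar\'e series of such a polynomial ring is precisely the stated product $\prod_{l \geq 1}\prod_{k \geq 0}(1-t^l q^k)^{-1}$.

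First, I would verify that the generators of $\tW^>$ commute with one another. Since $l,k \geq 1$ implies $l \neq -k$, the central $\delta_{l,-k}$ term in \eqref{wdef} is absent and the relation reduces to
\begin{equation*}
[t^l e^{\alpha D}, t^k e^{\beta D}] \;=\; (e^{k\alpha}-e^{l\beta})\, t^{l+k} e^{(\alpha+\beta)D}.
\end{equation*}
The coefficient of $\alpha^a \beta^b$ on the left is $[w_{l,a},w_{k,b}]/(a!\,b!)$. On the right, expand $t^{l+k}e^{(\alpha+\beta)D} = \sum_{n \geq 0}(\alpha+\beta)^n w_{l+k,n}/n!$. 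The prefactor $(e^{k\alpha}-e^{l\beta})$ vanishes at $(\alpha,\beta)=(0,0)$, and a direct check shows that the contribution of total degree $a+b$ (coming from $n=a+b$) cancels: both summands produce the coefficient $\binom{a+b}{a}$ with opposite signs. Therefore the right-hand side is a $\mathbb{C}$-linear combination of $w_{l+k,c}$ with $c \leq a+b-1$, so $[w_{l,a},w_{k,b}] \in \W^>[l+k, \leq a+b-1]$ and its image in $\tW^>[l+k, a+b]$ vanishes. The explicit relations \eqref{Walgl1k1}, \eqref{Walgl1k0}, and \eqref{Walgl002} are instances of this.

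Second, I would invoke Poincar\'e--Birkhoff--Witt. Since $\W$ is by construction the universal enveloping algebra of the central extension of differential operators on $\mathbb{C}^*$, the subalgebra $\W^>$ is the universal enveloping algebra of the Lie subalgebra $\mathfrak{w}^+$ spanned by $\{w_{l,k} : l\geq 1, k\geq 0\}$. The $\Z_{\geq 0}$-filtration on $\W^>$ is induced from the order filtration on $\mathfrak{w}^+$, whose associated graded Lie algebra is abelian by the first step. Standard PBW then gives a bigraded isomorphism $\tW^> \cong \operatorname{Sym}(\mathfrak{w}^+)$, and counting monomials by bidegree yields $P_{\tW^>}(t,q) = \prod_{l \geq 1}\prod_{k \geq 0}(1-t^l q^k)^{-1}$. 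The formula for $\tW^<$ is obtained by applying the same argument with $l \leq -1$.

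The main obstacle is the second step: one must check that the order filtration on $\W^>$ descends from a filtration on $\mathfrak{w}^+$ with abelian associated graded, so that classical PBW for filtered Lie algebras applies. All ingredients are standard once the first step is in hand; the essential computational content of the proposition is the cancellation of the leading-order term in the expansion of $(e^{k\alpha}-e^{l\beta})$.
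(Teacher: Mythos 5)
Your proposal is correct and takes essentially the same route as the paper, whose proof simply asserts that the defining relation \eqref{wdef} forces $\tW^>$ and $\tW^<$ to be freely generated (as commutative algebras) by the images of the $w_{l,k}$ in bidegree $(l,k)$, and then counts monomials. Your two steps --- the cancellation of the leading term in $(e^{k\alpha}-e^{l\beta})$ showing that commutators of generators drop the $\Z_{\geq 0}$-filtration degree, and the PBW argument (using that $\W$ is an enveloping algebra) giving freeness of the associated graded --- are precisely the details the paper leaves implicit.
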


\begin{proof}
The defining relations of $\W$ given in ~\ref{wdef} imply that the associated graded algebras $\tW^>$ and $\tW^<$ are freely generated by the images of $w_{l,k}$.  The proposition now follows easily since the image of $w_{l,k}$ has bidegree $(l,k)$.
\end{proof}

\begin{lemma}
\label{lemmaf.5sv}
\cite[Lemma F.5]{SV}
\begin{enumerate}
\item $\W$ is generated by $w_{-1,0}, w_{1,0}, w_{0,2}$.
\item $\W^>$ is generated by $w_{1,l}$ for $ l \geq 0$.
\item $\W^<$ is generated by $w_{-1,l}$ for $ l \geq 0$.
\end{enumerate}
\end{lemma}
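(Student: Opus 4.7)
The plan is to prove (2) and (3) first and then bootstrap (1) from them together with a handful of explicit low-degree commutators. The computational backbone for all three statements is the explicit formula
\begin{equation*}
[w_{l,a},\, w_{m,b}] \;=\; \sum_{p\ge 1} \!\left[\binom{a}{p} m^p - \binom{b}{p} l^p\right] w_{l+m,\, a+b-p} \;+\; (\text{central if } l+m=0),
\end{equation*}
which follows from the definition \eqref{wdef} using $D\cdot t^l = t^l(D+l)$. The crucial structural point is that the bracket of two single generators is a $\C$-linear combination of single generators (no products), so the inductions below will always stay within the span of generators rather than forcing us to consider higher-degree monomials.

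For (2), I would induct on the rank $l\ge 1$, with the base $l=1$ trivial. For $l\ge 2$, I would do an inner induction on the filtration index $k\ge 0$ using the specialization $a=k+1$, $b=0$ of the master formula:
\begin{equation*}
[w_{1,\,k+1},\, w_{l-1,\,0}] \;=\; \sum_{p=1}^{k+1}\binom{k+1}{p}(l-1)^p\, w_{l,\,k+1-p}.
\end{equation*}
The leading coefficient $(k+1)(l-1)$ is nonzero, so I can solve for $w_{l,k}$ in terms of the bracket (whose arguments are in the target subalgebra by the outer induction on $l$) and the lower-filtration terms $w_{l,j}$ with $j<k$ (in the target subalgebra by the inner induction on $k$). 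Statement (3) is proved by the mirror argument using $[w_{-1,\,k+1},\, w_{-(l-1),\,0}]$.

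For (1), starting from $\{w_{-1,0},\, w_{1,0},\, w_{0,2}\}$: relation \eqref{Walgl002} with $l=\pm 1$ produces $w_{\pm 1,1}$, and iterating $[w_{\pm 1,a},w_{0,2}] = \mp 2 w_{\pm 1,a+1} - w_{\pm 1,a}$ produces $w_{\pm 1,k}$ for every $k\ge 0$. For the central elements, \eqref{Walgl1k0} with $(l,k)=(-1,1)$ gives $[w_{-1,1},w_{1,0}] = w_{0,0}$, and then \eqref{Walg-1a1b} with $(a,b)=(0,1)$ gives $[w_{-1,0},w_{1,1}] = w_{0,0}+C$, isolating $C$. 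Finally, \eqref{Walg-1a1b} with $a=0$ yields
\begin{equation*}
[w_{-1,0},\, w_{1,\,k+1}] \;=\; (k+1)\, w_{0,k} \;+\; (\text{lower } w_{0,j} \text{ and central terms}),
\end{equation*}
so an induction on $k$ generates every $w_{0,k}$, and combining with (2) and (3) produces every $w_{l,k}$. The main thing to verify at each step is that the leading coefficients $(k+1)(l-1)$ and $(k+1)$ are nonzero in the relevant ranges, which is automatic; otherwise the entire argument is just careful bookkeeping of the triangular structure of the master commutator formula.
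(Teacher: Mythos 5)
Your proposal is correct. Note that the paper itself gives no argument for this lemma: it simply cites \cite[Lemma F.5]{SV}, so there is no in-paper proof to compare against line by line. Your master commutator formula is exactly what one gets by expanding the defining relation \eqref{wdef} in $\alpha,\beta$ (the $p=0$ terms cancel, and the central correction only enters when the ranks sum to zero), and all the specializations you use check out: $[w_{1,k+1},w_{l-1,0}]$ has leading term $(k+1)(l-1)w_{l,k}$ with no central contribution since $l\geq 2$, the iteration $[w_{\pm1,a},w_{0,2}]=\mp2w_{\pm1,a+1}-w_{\pm1,a}$ is the correct generalization of \eqref{Walgl002}, and the extraction of $w_{0,0}$, $C$, and then all $w_{0,k}$ from \eqref{Walgl1k0} and \eqref{Walg-1a1b} is consistent with the stated signs. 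The double induction (outer on rank, inner on filtration degree) is the standard triangularity argument used in \cite{SV}, so your write-up is essentially a self-contained version of their proof adapted to this paper's conventions; one small virtue of your version is that it explicitly produces the central generators $C$ and $w_{0,0}$ from the three elements, which the paper's formulation of (1) does require since both are listed among the algebra generators of $\W$.
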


For $ c,d \in \mathbb{C}$ let $\mathbb{C}_{c,d}$ be a one dimensional representation of $ \W^{\geq} $ where $w_{k,l}$ acts by
zero for $(k,l)\neq (0,0)$, $C$ acts by $c$ and $ w_{0,0} $ acts by $d$.

Let
$\mathcal{M}_{c,d} := \Ind_{\W^{\geq}}^{\W}(\mathbb{C}_{c,d})$.

\begin{proposition} \label{formofV10} \cite{AFMO, FKRW}
The induced module $ \mathcal{M}_{c,d} $ has a unique irreducible quotient $ \mathcal{V}_{c,d} $.
As a vector space $ \mathcal{V}_{1,0} $ is isomorphic to $ \mathbb{C}[w_{-1,0}, w_{-2,0}, \ldots] $.
\end{proposition}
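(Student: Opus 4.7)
The plan is to treat $\mathcal{M}_{c,d}$ as a Verma-type induced module and then to identify $\mathcal{V}_{1,0}$ with the Heisenberg Fock module via the free field realization of $W_{1+\infty}$.

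For the uniqueness of the irreducible quotient, I would argue by a standard weight-space/PBW argument. The rank grading on $\W$ equips $\mathcal{M}_{c,d} \cong \W^< \otimes \mathbb{C}_{c,d}$ with a non-positive $\mathbb{Z}$-grading whose degree-zero piece is the one-dimensional line $\mathbb{C} \cdot v_{c,d}$ spanned by the generating vector. Any submodule is stable under this grading (since $w_{0,0}$ acts semisimply via $d$ plus the rank-grading operator), and a submodule is proper precisely when it misses $v_{c,d}$. The sum of all proper graded submodules is again proper, yielding the unique maximal submodule and hence a well-defined unique irreducible quotient $\mathcal{V}_{c,d}$.

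For the identification of $\mathcal{V}_{1,0}$ as a vector space, the strategy is to construct an irreducible $\W$-action on $F := \mathbb{C}[w_{-1,0}, w_{-2,0}, \ldots]$ and exhibit it as a quotient of $\mathcal{M}_{1,0}$. The Heisenberg subalgebra generated by $\{w_{l,0}\}_{l \neq 0}$, with relations \eqref{Walgheisrelations}, acts on $F$ at level $C=1$ as the standard oscillator representation (multiplication by $w_{-l,0}$ for $l\geq 1$ and $l\,\partial_{w_{-l,0}}$ for $l\geq 1$), which is irreducible at nonzero level. The classical free field realization of $W_{1+\infty}$ then extends this to an action of the full algebra by expressing each $w_{l,k}$ with $k\geq 1$ as an explicit normal-ordered polynomial in the Heisenberg modes; the relations \eqref{wdef} are satisfied precisely when $C=1$ and $w_{0,0}=0$. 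Since $1 \in F$ is annihilated by all $w_{k,l}$ with $k\geq 0$, $(k,l)\neq(0,0)$, Frobenius reciprocity gives a surjection $\mathcal{M}_{1,0} \twoheadrightarrow F$ of $\W$-modules, and irreducibility of $F$ together with uniqueness of the irreducible quotient forces $\mathcal{V}_{1,0} \cong F$.

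The main obstacle is the bosonization itself: writing the explicit normal-ordered expressions for the higher generators $w_{l,k}$ in terms of the Heisenberg modes, and verifying that the anomaly contributions to \eqref{wdef} close correctly at the specific values $c=1$, $w_{0,0}=0$. A more algebraic, bosonization-free alternative is to combine the upper bound provided by Proposition \ref{poincareW} with iterated commutator identities such as \eqref{Walgl002} to show inductively in $k$ that every vector $w_{-l,k}\, v_{1,0}$ with $k\geq 1$ can be rewritten modulo the maximal submodule as a polynomial in the vectors $w_{-j,0}\, v_{1,0}$; together with the Heisenberg-Fock lower bound, this pins down $\mathcal{V}_{1,0}$ as $F$. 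Either route, however, hinges on the same delicate cancellation of central-extension terms that singles out the level $c=1$, $w_{0,0}=0$ quotient.
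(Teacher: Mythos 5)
The paper itself gives no proof of this proposition -- it is imported from \cite{AFMO} and \cite{FKRW} -- and your first route (free-field realization at $c=1$) is essentially the argument of those references: a standard graded highest-weight argument for the unique irreducible quotient, plus an explicit $\W$-action on $\mathbb{C}[w_{-1,0},w_{-2,0},\ldots]$ extending the Heisenberg action, Frobenius reciprocity, and irreducibility over the Heisenberg subalgebra at nonzero level. One concrete slip in your uniqueness argument: $w_{0,0}$ is \emph{central} in this presentation, so it cannot be the operator that forces submodules to be graded. The element you want is $w_{0,1}=D$: from \eqref{wdef} (or \eqref{Walgl1k0}) one gets $[w_{0,1},w_{l,k}]=l\,w_{l,k}$ with no central correction for $l\neq 0$, and $w_{0,1}$ kills the generating vector, so it acts semisimply on $\mathcal{M}_{c,d}$ with eigenvalue equal to the rank degree; with that replacement your ``sum of proper graded submodules is proper'' argument goes through verbatim.

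The ``bosonization-free alternative'' you offer, however, does not close as stated. Inside $\mathcal{M}_{1,0}$ the relation \eqref{Walgl002} gives $2l\,w_{-l,1}v=[w_{-l,0},w_{0,2}]v+l^{2}w_{-l,0}v=-w_{0,2}\,w_{-l,0}v+l^{2}w_{-l,0}v$, and expanding $w_{0,2}\,w_{-l,0}v$ by the same commutator returns you to $w_{-l,1}v$ -- the manipulation is a tautology unless you already know how $w_{0,2}$ acts on the Fock vectors of the \emph{quotient}, which is exactly the content of Lemma~\ref{nablaDAHA} (itself derived in \cite{SV} from the known structure of $\mathcal{V}_{1,0}$). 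Likewise Proposition~\ref{poincareW} only bounds the character of $\mathcal{M}_{1,0}$, whose size $\prod_{r>0}\prod_{k\ge 0}(1-t^{r}q^{k})^{-1}$ vastly exceeds the Fock character, so it supplies no upper bound on $\mathcal{V}_{1,0}$. To make that route honest you need an independent upper bound -- e.g.\ the determinant/character formulas of \cite{AFMO} at $c=1$ -- to match the Heisenberg lower bound $\prod_{n\ge1}(1-q^{n})^{-1}$. So the load-bearing step really is the \cite{FKRW} construction you describe in route A, including the normalization of the central elements (note the harmless shift of $w_{0,0}$, cf.\ the paper's footnote comparing with \cite{SV}, where $w_{0,0}=\tfrac{1}{2}$); your sketch is correct in outline once these two points are repaired.
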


\begin{proposition} \cite[Proposition F.6]{SV}
The action of $ \W/(C-1,w_{0,0}) $ is faithful on $ \mathcal{V}_{1,0} $.
\end{proposition}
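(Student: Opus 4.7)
The plan is to exploit the triangular decomposition of $\W/I$ (where $I = (C-1, w_{0,0})$) together with the explicit description of $\mathcal{V}_{1,0}$ given by Proposition~\ref{formofV10}. Proposition~\ref{poincareW} shows that the associated graded factors $\tW^{>}$ and $\tW^{<}$ are freely generated; lifting back to $\W$ and quotienting by $I$ yields a PBW-type vector-space isomorphism $\W/I \cong \W^{<} \otimes (\W^{0}/I) \otimes \W^{>}$ with $\W^{0}/I \cong \mathbb{C}[w_{0,1}, w_{0,2}, \ldots]$. The cyclic vector $v_0 \in \mathcal{V}_{1,0}$ satisfies $\W^{>} v_0 = 0$, $w_{0,k} v_0 = 0$ for all $k \geq 1$, and $C v_0 = v_0$, since $\mathcal{V}_{1,0}$ is a quotient of the induced module $\mathcal{M}_{1,0}$.

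Let $a \in \W/I$ lie in the annihilator of $\mathcal{V}_{1,0}$. Since this annihilator is a rank-graded two-sided ideal, I may assume $a$ is rank-homogeneous. Expanding $a = \sum_\alpha c_\alpha u_\alpha^{-} u_\alpha^{0} u_\alpha^{+}$ in PBW form and applying $a$ to $v_0$, every summand with $u_\alpha^{+} \neq 1$ or with $u_\alpha^{0}$ a non-constant polynomial in the $w_{0,k}$ vanishes; the relation $a v_0 = 0$ thus collapses to $a^{-} v_0 = 0$ for the residual element $a^{-} \in \W^{<}$ built from summands with $u_\alpha^{+} = u_\alpha^{0} = 1$. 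To access the discarded summands, I would use that the annihilator is a two-sided ideal closed under commutators: bracketing $a$ with $w_{m,0}$ ($m>0$) and with $w_{0,2}$ yields new elements of the annihilator, and relations \eqref{Walgl1k0}, \eqref{Walg-1a1b}, and \eqref{Walgl002} show that the highest-weight parts of these brackets encode, respectively, the $u_\alpha^{+} \neq 1$ and $u_\alpha^{0} \neq 1$ contributions to $a$. An induction on PBW word-length in the $\W^{>}$- and $(\W^{0}/I)$-factors peels these off one at a time, reducing the problem to the single claim that any $a^{-} \in \W^{<}$ with $a^{-} v_0 = 0$ vanishes in $\W^{<}$.

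For this final step, relation \eqref{Walgl002} expresses each generator $w_{-l,k}$ with $k \geq 1$ as a bracket of $w_{-l,0}$ with $w_{0,2}$ modulo lower-filtration corrections; iterating this rewrites any PBW word in $\W^{<}$, modulo the annihilator of $v_0$, as a polynomial in the Heisenberg generators $\{w_{-l,0}\}_{l \geq 1}$. Under the identification $\mathcal{V}_{1,0} \cong \mathbb{C}[w_{-1,0}, w_{-2,0}, \ldots] v_0$ from Proposition~\ref{formofV10}, such a polynomial acts as zero only when it is identically zero, forcing $a^{-}=0$ and hence $a=0$. The main obstacle will be the inductive step of the previous paragraph: the brackets coming from \eqref{Walgl1k0}--\eqref{Walgl002} produce lower-order corrections in both the rank grading and the $\Z_{\geq 0}$-filtration simultaneously, so one must choose a termination ordering on PBW monomials—combining rank, filtration, and word-length in each triangular factor—for which each reduction strictly decreases complexity.
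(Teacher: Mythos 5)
The decisive problem is your final reduction. The claim that an element $a^{-}\in \W^{<}$ with $a^{-}v_0=0$ must vanish is false: by Proposition~\ref{formofV10} the module $\mathcal{V}_{1,0}$ is, as a vector space, just $\mathbb{C}[w_{-1,0},w_{-2,0},\ldots]\,v_0$, whereas by Proposition~\ref{poincareW} the algebra $\W^{<}$ is far larger (its associated graded is free on all the classes $\overline{w_{-l,k}}$, $k\geq 0$), so the evaluation map $\W^{<}\to \mathcal{V}_{1,0}$, $u\mapsto u\,v_0$, has an enormous kernel. Concretely, $w_{-1,1}v_0=q\,v_0$ for some polynomial $q$ in the Heisenberg generators $w_{-l,0}$, and $w_{-1,1}-q$ is a nonzero element of $\W^{<}$ (nonzero because its symbol in dot-degree one is $\overline{w_{-1,1}}\neq 0$) that kills $v_0$. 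Annihilating the highest weight vector is thus much weaker than annihilating the module, and your last paragraph does not close the gap: rewriting $a^{-}$ modulo the annihilator of the \emph{vector} $v_0$ as a Heisenberg polynomial $p$ and deducing $p=0$ from $p\,v_0=0$ only reproves $a^{-}v_0=0$; it cannot yield $a^{-}=0$. The argument is circular at exactly the point where faithfulness has to be established.

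The intermediate ``peeling'' induction is also where the real work would have to happen, and it is not substantiated: no mechanism is given by which $[a,w_{m,0}]$ and $[a,w_{0,2}]$, evaluated on $v_0$, isolate the PBW summands of $a$ with $u^{+}_{\alpha}\neq 1$ or $u^{0}_{\alpha}\neq 1$, and these brackets scramble all three triangular factors, so no complexity visibly decreases. More fundamentally, any correct proof must use that $a$ kills the \emph{whole} module, which is what the argument of \cite[Proposition F.6]{SV} (carried out for $\Tr(\H)$ in Proposition~\ref{prop:faithful}) does: filter the annihilator ideal $I$ by the $\Z_{\geq 0}$-filtration; note $I_0=0$ because the filtration-degree-zero part is the Heisenberg algebra, which acts faithfully on Fock space; observe that bracketing with $w_{l,0}$ lowers the filtration degree by one, so the minimal nonzero piece $I_n$ ($n\geq 1$) would centralize the entire Heisenberg subalgebra; and then compute in the associated graded polynomial algebra that $\mathrm{ad}(w_{l,0})$ sends $\overline{w_{r,k}}$ to a nonzero multiple of $\overline{w_{r+l,k-1}}$, so the common kernel of these operators in filtration degree $\geq 1$ is zero, forcing $I_n\subseteq I_0=0$, a contradiction. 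If you want to salvage your write-up, replace the triangular/PBW bookkeeping by this filtration argument; note also that the paper's own proof is simply the observation that the proof in \cite{SV} goes through verbatim for the present values of $C$ and $w_{0,0}$.
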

\begin{proof}
This is Proposition F.6 of ~\cite{SV} since their proof does not depend on how the central elements $C$ and $w_{0,0}$ act.  The proof uses the faithfulness of the action of the Heisenberg algebra on Fock space together with the filtration on $ \W$.
\end{proof}

\begin{lemma}
\label{nablaDAHA}
\cite[Lemma F.8]{SV}
The action of $ w_{0,2} $ on $ \mathcal{V}_{1,0} $ is given by the operator
\begin{equation*}
\sum_{k,l > 0} (w_{-l,0} w_{-k,0} w_{k+l,0} + w_{-l-k,0} w_{l,0} w_{k,0})-w_{0,1}.
\end{equation*}
\end{lemma}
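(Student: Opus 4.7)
The plan is to leverage the fact that $\mathcal{V}_{1,0}$ is cyclically generated over the negative Heisenberg subalgebra $\W^{<}=\mathbb{C}\langle w_{-n,0}: n>0\rangle$ by the vacuum vector $|0\rangle$ (Proposition \ref{formofV10}). Consequently, two operators on $\mathcal{V}_{1,0}$ that annihilate $|0\rangle$ and have identical commutators with each $w_{-n,0}$, $n>0$, must coincide.

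Let $Y$ denote the right-hand side operator. First I would verify that $w_{0,2}$ and $Y$ both kill $|0\rangle$. For $w_{0,2}$ this is by definition of $\mathbb{C}_{1,0}$, since $(0,2)\neq (0,0)$. For $Y$, every summand $w_{-l,0}w_{-k,0}w_{k+l,0}$ ends on the right with $w_{k+l,0}$, $k+l>0$, so it annihilates $|0\rangle$; every summand $w_{-l-k,0}w_{l,0}w_{k,0}$ ends with $w_{k,0}$, $k>0$; and $w_{0,1}$ likewise kills $|0\rangle$.

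Next I would compare commutators with $w_{-n,0}$. Relation \eqref{Walgl002} immediately gives
\[
[w_{0,2},w_{-n,0}] \;=\; -2n\,w_{-n,1} + n^{2}w_{-n,0}.
\]
Applying the Leibniz rule to each term of $Y$, using \eqref{Walgheisrelations} with $C$ acting as $1$, together with the specialization $[w_{0,1},w_{-n,0}]=-n\,w_{-n,0}$ of \eqref{Walgl1k0}, and collecting the surviving Kronecker deltas, one finds
\[
[Y,w_{-n,0}] \;=\; n\sum_{k=1}^{n-1}w_{-(n-k),0}w_{-k,0} \;+\; 2n\sum_{k>0}w_{-n-k,0}w_{k,0} \;+\; n\,w_{-n,0}
\]
as operators on $\mathcal{V}_{1,0}$. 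Equating these two expressions reduces the lemma to the auxiliary normal-ordered identity
\[
w_{-n,1} \;=\; \tfrac{n-1}{2}\,w_{-n,0} \;-\; \tfrac{1}{2}\sum_{k=1}^{n-1}w_{-(n-k),0}w_{-k,0} \;-\; \sum_{k>0}w_{-n-k,0}w_{k,0}
\]
holding as operators on $\mathcal{V}_{1,0}$.

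I would then establish this auxiliary identity by the same cyclic-vector strategy combined with an induction on $n$. The commutator of the left-hand side with $w_{-m,0}$ is given by \eqref{Walgl1k0}, while that of the right-hand side is a routine Heisenberg computation. The action of $w_{-n,1}$ on $|0\rangle$ itself can be pinned down recursively by writing $w_{-n,1}$ as a commutator of lower-index elements via \eqref{Walgl1k1} (namely $(2-n)w_{-n,1}=[w_{-1,1},w_{-(n-1),1}]$), reducing to a base case that is checked directly using the faithfulness of the $\W$-action on $\mathcal{V}_{1,0}$ recorded in the proposition preceding this lemma. The main obstacle is the combinatorial bookkeeping required to confirm that the triple-product sums in $[Y,w_{-n,0}]$ collapse precisely to $-2n\,w_{-n,1}+n^{2}w_{-n,0}$, and to propagate this through the nested induction controlling $w_{-n,1}$; one must track carefully the split of the sums into contributions with $k<n$, $k=n$, and $k>n$ to avoid off-by-one errors.
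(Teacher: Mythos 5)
Your strategy -- compare $w_{0,2}$ with the cubic operator by checking that both kill the vacuum and have equal commutators with every $w_{-n,0}$, using that $\mathcal{V}_{1,0}\cong\mathbb{C}[w_{-1,0},w_{-2,0},\dots]$ is cyclic over the negative Heisenberg modes -- is precisely the ``straightforward computation using the relations of $\W$ and the definition of $\mathcal{V}_{1,0}$'' that the paper itself does not spell out but delegates to \cite[Lemma F.8]{SV}, with the $-w_{0,1}$ term accounting for the normalization $w_{0,0}=0$. Your commutator computations are correct: $[w_{0,2},w_{-n,0}]=-2n\,w_{-n,1}+n^2w_{-n,0}$ by \eqref{Walgl002}, the Leibniz expansion of $[Y,w_{-n,0}]$ is as you state, and the resulting auxiliary identity $w_{-n,1}=\tfrac{n-1}{2}w_{-n,0}-\tfrac12\sum_{k=1}^{n-1}w_{-(n-k),0}w_{-k,0}-\sum_{k>0}w_{-n-k,0}w_{k,0}$ on $\mathcal{V}_{1,0}$ is indeed equivalent to the lemma.

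Two points in your last step need repair. First, the recursion $(2-n)w_{-n,1}=[w_{-1,1},w_{-(n-1),1}]$ from \eqref{Walgl1k1} degenerates at $n=2$: the coefficient $2-n$ vanishes and the bracket is $[w_{-1,1},w_{-1,1}]=0$, so it gives no information about $w_{-2,1}$; you need both $n=1$ and $n=2$ as base cases before the induction can start at $n\geq 3$. Second, faithfulness of the $\W$-action is not the tool that pins down $w_{-n,1}|0\rangle$ -- faithfulness separates elements of $\W$, it does not compute their action. What does the job is the rank grading together with the positive modes: $w_{-n,1}|0\rangle$ lies in the finite-dimensional degree-$n$ piece of $\mathbb{C}[w_{-1,0},w_{-2,0},\dots]|0\rangle$, and its coefficients are forced by applying $w_{m,0}$ for $1\leq m\leq n$ and using \eqref{Walgl1k0} with $C=1$; for instance $w_{2,0}w_{-2,1}|0\rangle=|0\rangle$ and $w_{1,0}w_{-2,1}|0\rangle=-w_{-1,0}|0\rangle$, giving $w_{-2,1}|0\rangle=\tfrac12 w_{-2,0}|0\rangle-\tfrac12 w_{-1,0}^2|0\rangle$, which matches your auxiliary identity. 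In fact this pairing argument handles every $n$ at once, since $[w_{-n,1}-\mathrm{RHS},w_{-m,0}]=0$ for all $m>0$ reduces the whole auxiliary identity to the single vacuum computation, so you could dispense with the recursion via \eqref{Walgl1k1} entirely. With these repairs the argument is complete.
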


\begin{proof}
This is a straightforward computation using the relations of $\W$ and the definition of $ \mathcal{V}_{1,0}$.
For more details see the proof of \cite[Lemma F.8]{SV}.  There is a correction term of $-w_{0,1}$ in the lemma because we take $w_{0,0}=0$ while in ~\cite{SV} it is taken to be $-\frac{1}{2}$.
\end{proof}

\section{Trace of $\H$} \label{secHH}

\subsection{Definitions and conventions} \label{HHconventions}

The trace, or zeroth Hochschild homology, $\Tr(\modC )$ of a $\Bbbk$-linear category $\modC$ is the $\Bbbk$-vector space given by
\begin{gather*}
  \Tr(\C )=
\left( \bigoplus_{X\in \Ob(\modC )}\modC (X,X) \right)/\Span_\k\{fg-gf\},
\end{gather*}
where $\modC(X,X)=\End_\C(X)$, $f$ and $g$ run through all pairs of morphisms $f\col X\to Y$, $g\col Y\to X$ with $X,Y\in \Ob(\modC )$.  The trace is invariant under passage to the Karoubi envelope $Kar(\modC)$.
\begin{proposition}
\label{HH0HequalsHH0Hprime}
\cite[Proposition 3.2]{BHLZ}
The natural map $ \Tr(Kar(\modC)) \rightarrow \Tr(\modC) $ induced by inclusion of categories is an isomorphism.
\end{proposition}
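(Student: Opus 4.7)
The plan is to analyze the map $\iota_*: \Tr(\modC) \to \Tr(\Kar(\modC))$ induced by the canonical embedding $\iota: \modC \hookrightarrow \Kar(\modC)$ sending $X$ to $(X, \id_X)$, and show it is both surjective and injective (the statement reads $\Tr(\Kar(\modC)) \to \Tr(\modC)$, but the natural induced map goes in the other direction; the claim is just that this trace is Karoubi-invariant). Recall that objects of $\Kar(\modC)$ are pairs $(X,e)$ with $e \in \End_\modC(X)$ an idempotent, and $\Hom_{\Kar(\modC)}((X,e),(Y,e')) = \{f \in \Hom_\modC(X,Y) : e' \circ f \circ e = f\}$; in particular every morphism in $\Kar(\modC)$ is a morphism in $\modC$ between the underlying objects, subject to an idempotent-invariance condition.

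For surjectivity, any class in $\Tr(\Kar(\modC))$ is represented by some $f: (X,e) \to (X,e)$, i.e.\ a morphism $f \in \End_\modC(X)$ with $efe = f$. The key move is to factor $f$ through $\iota(X) = (X, \id_X)$: set $a := f$ regarded as a morphism $(X, \id_X) \to (X, e)$ (valid since $e \circ f \circ \id_X = f$) and $b := e$ regarded as a morphism $(X, e) \to (X, \id_X)$ (valid since $\id_X \circ e \circ e = e$). Then $ab = f$ as an endomorphism of $(X,e)$ and $ba = ef = f$ as an endomorphism of $\iota(X)$, and the defining trace relation $[ab]_{(X,e)} = [ba]_{\iota(X)}$ exhibits the class of $f$ as the image of $[f]_{\iota(X)}$ under $\iota_*$.

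For injectivity, suppose $f \in \End_\modC(X)$ satisfies $\iota_*[f] = 0$. By definition this means $f = \sum_i (a_i b_i - b_i a_i)$ inside $\End_{\Kar(\modC)}(\iota(X))$ for morphisms $a_i: (X_i, e_i) \to (Y_i, e'_i)$ and $b_i: (Y_i, e'_i) \to (X_i, e_i)$ in $\Kar(\modC)$. But each $a_i$ and $b_i$ is simply a morphism in $\modC$ (between the underlying objects $X_i$ and $Y_i$) satisfying extra idempotent-invariance conditions, so the identity persists in $\modC$ and already expresses $f$ as a sum of commutators there, hence $[f] = 0$ in $\Tr(\modC)$.

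No step is technically delicate; the content of the proposition is the observation that the new objects of $\Kar(\modC)$ contribute no classes beyond those already visible from $\modC$, and that any new commutator relation comes from one in $\modC$. The only thing to get right is a careful unwinding of the definitions of morphisms in the Karoubi envelope and of the equivalence relation defining the trace.
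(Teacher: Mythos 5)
Your argument is correct, and it is essentially the standard proof: the paper itself gives no argument here, simply citing \cite[Proposition 3.2]{BHLZ}, where the same two ingredients appear — factoring an endomorphism $f$ of $(X,e)$ as $f\circ e$ through $(X,\id_X)$ and invoking the trace relation $[ab]=[ba]$ for surjectivity, with the ``forgetful'' linear map $\End_{\Kar(\modC)}((X,e))=e\End_{\modC}(X)e\subseteq \End_{\modC}(X)$ providing the inverse/injectivity. One minor imprecision to fix: a relation witnessing $\iota_*[f]=0$ lives in the full direct sum $\bigoplus_{(Y,e)}\End_{\Kar(\modC)}((Y,e))$ rather than in $\End_{\Kar(\modC)}(\iota(X))$ alone, since each commutator $a_ib_i-b_ia_i$ has components in two summands; applying the forgetful map to the whole identity (which sends Karoubi commutators to commutators in $\modC$) then yields the relation in $\bigoplus_Y\End_{\modC}(Y)$ exactly as you intend.
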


\begin{proposition}\cite[Lemma 2.1]{BHLW}
  \label{prop:indecomposables}
  Let $\C$ be a $\k$-linear additive category.
  Let $S\subset\Ob(\C)$ be a subset such that every object in $\C$ is
  isomorphic to the direct sum of finitely many copies of objects in
  $S$.  Let $\C|_S$ denote the full subcategory of $\C$ with
  $\Ob(\C|_S)=S$.  Then, the inclusion functor $\C|_S\to \C$ induces
  an isomorphism
  \begin{gather}
    \label{e9}
    \Tr(\modC|_S)\cong\Tr(\modC)
  \end{gather}
\end{proposition}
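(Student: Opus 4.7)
The plan is to construct an explicit two-sided inverse to the natural map $\Tr(\C|_S)\to \Tr(\C)$. First I would reduce to a cleaner setting by replacing $\C$ with the full subcategory $\C^{\oplus S}\subset \C$ whose objects are finite direct sums of objects of $S$, each equipped with fixed structural inclusion and projection maps $\iota_i$, $\pi_i$. By hypothesis every object of $\C$ is isomorphic to such a direct sum, so the inclusion $\C^{\oplus S}\hookrightarrow \C$ is an equivalence of $\k$-linear categories; since an equivalence induces a bijection on isomorphism classes of objects together with isomorphisms on all Hom spaces, it induces an isomorphism on $\Tr=\bigoplus_X \End(X)/\Span\{fg-gf\}$. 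Thus it suffices to show $\C|_S\hookrightarrow \C^{\oplus S}$ induces an isomorphism on $\Tr$.

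The next step is to define a candidate inverse $\Phi\colon \Tr(\C^{\oplus S})\to \Tr(\C|_S)$ by
\begin{equation*}
\Phi([f]) \;=\; \textstyle\sum_i [\pi_i\circ f\circ \iota_i], \qquad f\in \End_\C(\bigoplus_i Y_i),\ Y_i\in S,
\end{equation*}
with each $\pi_i f \iota_i\in \End_{\C|_S}(Y_i)$. The central computation is the identity in $\Tr(\C^{\oplus S})$
\begin{equation*}
[f] \;=\; \textstyle\sum_i [\pi_i f \iota_i],
\end{equation*}
which follows by writing $f=\sum_{i,j}\iota_j f_{ji}\pi_i$ with $f_{ji}=\pi_j f\iota_i$ and applying the trace relation to each composable pair $(\iota_j,\,f_{ji}\pi_i)$ to obtain
\begin{equation*}
[\iota_j f_{ji}\pi_i] \;=\; [f_{ji}\pi_i \iota_j] \;=\; \delta_{ij}[f_{ii}],
\end{equation*}
using $\pi_i\iota_j=\delta_{ij}\id_{Y_j}$. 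This already shows that $\Tr(\C^{\oplus S})\to\Tr(\C|_S)\to\Tr(\C^{\oplus S})$ is the identity, and the other composite is immediate because any $Y\in S$ sits inside $\C^{\oplus S}$ as a one-term direct sum.

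The main obstacle is well-definedness of $\Phi$, which amounts to verifying $\Phi([gh])=\Phi([hg])$ in $\Tr(\C|_S)$ for any composable $g\colon X\to Y$, $h\colon Y\to X$ in $\C^{\oplus S}$. The subtlety is that the trace relation in $\C|_S$ only identifies $[gh]$ with $[hg]$ when the source and target are already in $S$, so one must break the relation into $S$-pieces. Decomposing $X=\bigoplus_l Y'_l$ and $Y=\bigoplus_m Y''_m$ with $Y'_l,Y''_m\in S$, the morphisms $g$, $h$ become matrices with entries $g_{ml}\colon Y'_l\to Y''_m$, $h_{lm}\colon Y''_m\to Y'_l$ living in $\C|_S$, and one computes
\begin{equation*}
\textstyle\sum_m [(gh)_{mm}] \;=\; \sum_{m,l}[g_{ml}h_{lm}] \;=\; \sum_{m,l}[h_{lm}g_{ml}] \;=\; \sum_l [(hg)_{ll}]
\end{equation*}
in $\Tr(\C|_S)$, with the middle equality obtained by applying the trace relation of $\C|_S$ to each pair $(g_{ml},h_{lm})$. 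This finishes the argument and gives the desired invariance of $\Tr$ under passage to the additive hull.
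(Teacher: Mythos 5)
Your proof is correct and is essentially the standard argument: the paper itself gives no proof of this statement, citing \cite[Lemma 2.1]{BHLW}, and the proof there proceeds by the same device you use, namely writing $\id_X=\sum_i\iota_i\pi_i$ and using the trace relation to reduce every class to a sum of diagonal components $[\pi_i f\iota_i]$ with the well-definedness check on commutators done entrywise. The only step you delegate to a standard fact is that the inclusion of the essentially surjective full subcategory $\C^{\oplus S}\subset\C$ induces an isomorphism on $\Tr$; this follows from the same trace-relation trick (conjugating by a chosen isomorphism $X\xrightarrow{\sim}\bigoplus_i Y_i$), or can be bypassed altogether by transporting the biproduct maps $\iota_i,\pi_i$ along these isomorphisms so that your $\Phi$ is defined directly on $\Tr(\C)$.
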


Recall that a categorical representation of the category $\modC$  is a $\Bbbk$-linear category $\cal{V}$ and a $\Bbbk$-linear functor $\modC \to \End(\cal{V})$ sending each object $X$ to an endofunctor $F(X)\maps \cal{V} \to \cal{V}$.  Each morphism $f \maps X \to Y$ is sent to a natural transformation of functors $F(f)\maps F(X) \to F(Y)$. A categorical representation gives rise to a representation
\begin{equation}
  \rho_F\maps  \Tr(\modC) \to \cat{Vect}_{\Bbbk}
\end{equation}
sending the class $[f]\in \Tr(\modC)$ corresponding to a map $f\maps X\to X$ in $\modC$ to the endomorphism of the $\Bbbk$-vector space $\Tr(\cal{V})$ defined by sending $\psi\maps U \to U$ in $\Tr(\cal{V})$ to
\begin{gather*}
  \rho_F([f])([\psi])
  =[F(X)(\psi)\circ F(f)_U]
  =[F(f)_U\circ F(X)(\psi)].
\end{gather*}
Here, note that $F(f)_U$ denotes the component of the natural transformation  $F(f) \maps
F(X) \To F(X) \maps\cal{V} \to \cal{V}$ corresponding to the object $U \in \Ob(\cal{V})$.  By naturality this map is well defined.

For $ f \in \End(\mathcal{H}) $ denote its class in $ \Tr $ by $ [f]$.
We view the element $[f] \in \Tr $ by drawing $ f $ in an annulus and then closing up the diagram for $f$.
\[
\hackcenter{\begin{tikzpicture}
    \draw[very thick] (-.55,0) -- (-.55,1.5);
    \draw[very thick] (0,0) -- (0,1.5);
    \draw[very thick] (.55,0) -- (.55,1.5);
    \draw[fill=white!20,] (-.8,.5) rectangle (.8,1);
    \node () at (0,.75) {$f$};
\end{tikzpicture}}
\quad \mapsto
\quad
\hackcenter{\begin{tikzpicture}
      \path[draw,blue, very thick, fill=blue!10]
        (-2.3,-.6) to (-2.3,.6) .. controls ++(0,1.85) and ++(0,1.85) .. (2.3,.6)
         to (2.3,-.6)  .. controls ++(0,-1.85) and ++(0,-1.85) .. (-2.3,-.6);
        \path[draw, blue, very thick, fill=white]
            (-0.2,0) .. controls ++(0,.35) and ++(0,.35) .. (0.2,0)
            .. controls ++(0,-.35) and ++(0,-.35) .. (-0.2,0);
    \draw[very thick] (-1.65,-.7) -- (-1.65, .7).. controls ++(0,.95) and ++(0,.95) .. (1.65,.7)
        to (1.65,-.7) .. controls ++(0,-.95) and ++(0,-.95) .. (-1.65,-.7);
    \draw[very thick] (-1.1,-.55) -- (-1.1,.55) .. controls ++(0,.65) and ++(0,.65) .. (1.1,.55)
        to (1.1,-.55) .. controls ++(0,-.65) and ++(0,-.65) .. (-1.1, -.55);
    \draw[very thick] (-.55,-.4) -- (-.55,.4) .. controls ++(0,.35) and ++(0,.35) .. (.55,.4)
        to (.55, -.4) .. controls ++(0,-.35) and ++(0,-.35) .. (-.55,-.4);
    \draw[fill=white!20,] (-1.8,-.25) rectangle (-.4,.25);
    \node () at (-1,0) {$f$};
\end{tikzpicture}}
 \]

Let $ w \in S_n $
Define
\begin{equation*}
h_{w} \otimes (x_1^{j_1} \cdots x_n^{j_n}) := [f_{w; j_1, \ldots, j_n}]
\end{equation*}
where $ f_{w; j_1, \ldots, j_n} \in \End(\P^n) $ is given by:

\[
\hackcenter{\begin{tikzpicture}
\node () at (-2,.75) {$f_{w; j_1, \ldots, j_n}:=$};
    \draw[very thick][->] (-.55,0) -- (-.55,1.5);
    \draw[very thick][->] (.55,0) -- (.55,1.5);
    \draw[fill=white!20,] (-.8,.5) rectangle (.8,1);
    \node () at (0,.75) {$w$};
    \node () at (0,1.25) {$\cdots$};
    \node () at (0,.25) {$\cdots$};
\filldraw  (-.55,1.25) circle (2pt);
\filldraw  (.55,1.25) circle (2pt);
\draw (-.75,1.25) node {$j_1$};
\draw (.8,1.25) node {$j_n$};
\end{tikzpicture}}
\]

Define
\begin{equation*}
h_{-w} \otimes (x_1^{j_1} \cdots x_n^{j_n}) := [f_{-w; j_1, \ldots, j_n}]
\end{equation*}
where $ f_{-w; j_1, \ldots, j_n} \in \End(\Q^n) $ is given by:

\[
\hackcenter{\begin{tikzpicture}
\node () at (-2,.75) {$f_{-w; j_1, \ldots, j_n}:=$};
    \draw[very thick][<-] (-.55,0) -- (-.55,1.5);
    \draw[very thick][<-] (.55,0) -- (.55,1.5);
    \draw[fill=white!20,] (-.8,.5) rectangle (.8,1);
    \node () at (0,.75) {$w$};
    \node () at (0,1.25) {$\cdots$};
    \node () at (0,.25) {$\cdots$};
\filldraw  (-.55,1.25) circle (2pt);
\filldraw  (.55,1.25) circle (2pt);
\draw (-.8,1.25) node {$j_1$};
\draw (.8,1.25) node {$j_n$};
\end{tikzpicture}}
\]

Of particular importance is the element $ \sigma_n = s_1 \cdots s_{n-1} \in S_n$.
We then abbreviate
\begin{align}
h_n \otimes (x_1^{j_1} \cdots x_n^{j_n}) &:= h_{\sigma_n} \otimes (x_1^{j_1} \cdots x_n^{j_n})
= \left[ \;\hackcenter{
\begin{tikzpicture}[scale=0.8]
  \draw[thick,->] (3.2,0) .. controls (3.2,1.25) and (0,.25) .. (0,2)
     node[pos=0.85, shape=coordinate](X){};
  \draw[thick,->] (0,0) .. controls (0,1) and (.8,.8) .. (.8,2);
  \draw[thick,->] (.8,0) .. controls (.8,1) and (1.6,.8) .. (1.6,2);
  \draw[thick,->] (2.4,0) .. controls (2.4,1) and (3.2,.8) .. (3.2,2);
  \node at (1.6,.35) {$\dots$};
  \node at (2.4,1.65) {$\dots$};
  \filldraw  (.05,1.6) circle (2pt);
  \filldraw  (.78,1.6) circle (2pt);  
  \filldraw  (1.58,1.6) circle (2pt);
  \filldraw  (3.18,1.6) circle (2pt);
  \node at (-0.18,1.7) {$\scs j_1$};
  \node at (.55,1.7) {$\scs j_2$};
  \node at (1.35,1.7) {$\scs j_3$};
   \node at (3.45,1.7) {$\scs j_n$};
\end{tikzpicture}}
\; \right]
\\
h_{-n} \otimes (x_1^{j_1} \cdots x_n^{j_n}) &:= h_{-\sigma_n} \otimes (x_1^{j_1} \cdots x_n^{j_n}) =
\left[ \;\hackcenter{
\begin{tikzpicture}[scale=0.8]
  \draw[thick,<-] (3.2,0) .. controls (3.2,1.25) and (0,.25) .. (0,2)
     node[pos=0.85, shape=coordinate](X){};
  \draw[thick,<-] (0,0) .. controls (0,1) and (.8,.8) .. (.8,2);
  \draw[thick,<-] (.8,0) .. controls (.8,1) and (1.6,.8) .. (1.6,2);
  \draw[thick,<-] (2.4,0) .. controls (2.4,1) and (3.2,.8) .. (3.2,2);
  \node at (1.6,.35) {$\dots$};
  \node at (2.4,1.65) {$\dots$};
  \filldraw  (.05,1.6) circle (2pt);
  \filldraw  (.78,1.6) circle (2pt);  
  \filldraw  (1.58,1.6) circle (2pt);
  \filldraw  (3.18,1.6) circle (2pt);
  \node at (-0.18,1.7) {$\scs j_1$};
  \node at (.55,1.7) {$\scs j_2$};
  \node at (1.35,1.7) {$\scs j_3$};
   \node at (3.45,1.7) {$\scs j_n$};
\end{tikzpicture}}
\; \right]\nonumber
\end{align}

The next lemma allows us to express generators $h_n \otimes (x_1^{j_1} \cdots x_n^{j_n}) $ in terms of generators $ h_r \otimes x_1^k$.
\begin{lemma} \label{dotmovelemma}
For $n \ge 1$ and $1 \le i \le n-1$ we have
\begin{equation*}
h_{\pm n} \otimes x_i=h_{\pm n} \otimes x_{i+1} \pm (h_{\pm i} \otimes 1)(h_{\pm(n-i)}\otimes 1).
\end{equation*}
\end{lemma}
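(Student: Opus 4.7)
The plan is to prove the identity via cyclicity of the trace combined with the degenerate affine Hecke algebra relations from Proposition \ref{prop:sergeevacting}. I carry out the upward case $h_n \otimes x_i$ in detail; the downward case follows by the analogous manipulation inside $\End(\Q^n)$.

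Unwinding the definitions, $h_n \otimes x_i$ is the trace class $[X_i \sigma_n]$, where $\sigma_n = s_1 s_2 \cdots s_{n-1}$ is the long cycle in $S_n \subset DH_n$ and $X_i$ is the dot on the $i$-th strand placed just above the permutation box. By cyclicity, $[X_i \sigma_n] = [\sigma_n X_i]$, and the strategy is now to push $X_i$ back through $\sigma_n$ using Proposition \ref{prop:sergeevacting}. Since $X_i$ commutes with $s_j$ whenever $j \notin \{i-1, i\}$, it slides through $s_{n-1}, s_{n-2}, \ldots, s_{i+1}$ unchanged. The one non-commuting step is the relation $s_i X_i = X_{i+1} s_i + 1$, after which $X_{i+1}$ commutes past $s_{i-1}, \ldots, s_1$ and returns to the left. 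Collecting terms yields
\begin{equation*}
\sigma_n X_i \;=\; X_{i+1} \sigma_n \;+\; s_1 \cdots s_{i-1} \, s_{i+1} \cdots s_{n-1}
\end{equation*}
inside $\End(\P^n)$.

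Taking the trace class on both sides, the left side becomes $h_n \otimes x_i$ (by cyclicity) and the first right-hand term becomes $h_n \otimes x_{i+1}$. The residual permutation $s_1 \cdots s_{i-1} \, s_{i+1} \cdots s_{n-1}$ is supported on the two disjoint blocks $\{1, \ldots, i\}$ and $\{i+1, \ldots, n\}$, on which it acts as the cycles $\sigma_i$ and $\sigma_{n-i}$ respectively; hence as an endomorphism of $\P^n = \P^i \otimes \P^{n-i}$ it is $\sigma_i \otimes \sigma_{n-i}$. Because the trace of a monoidal category is multiplicative under horizontal juxtaposition of endomorphisms, its trace class factors as $[\sigma_i] \cdot [\sigma_{n-i}] = (h_i \otimes 1)(h_{n-i} \otimes 1)$, which completes the upward case. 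The downward case runs through $\End(\Q^n)$ by exactly the same string of moves; reversing strand orientation flips the sign of the inhomogeneous term in the nil-dot relation \eqref{eq:nil-dot} (this sign is already visible in the minus sign appearing in \eqref{eq:rel2} and is encoded by the $DH_n^{op}$ factor in Proposition \ref{KhovEndThmPmQn}), which produces the $\mp$ in the lemma statement.

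The only point deserving real care is tracking this sign for the downward case, which amounts to a short diagrammatic check using the cup/cap relations \eqref{eq:rel1}--\eqref{eq:rel3}; the main algebraic content of the lemma is the one-line computation above in $DH_n$.
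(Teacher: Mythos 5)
Your argument is correct and is essentially the paper's own proof written out in full: the paper disposes of the lemma with the one-line remark that it follows from \eqref{eq:nil-dot} and properties of the trace, and your computation $\sigma_n X_i = X_{i+1}\sigma_n + s_1\cdots s_{i-1}\,s_{i+1}\cdots s_{n-1}$ in $DH_n\subset\End(\P^n)$, followed by taking trace classes and using the product $[f][g]=[fg]$ to identify the residual permutation $\sigma_i\otimes\sigma_{n-i}$ with $(h_i\otimes 1)(h_{n-i}\otimes 1)$, is exactly that argument made explicit. The upward case is complete as written.

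One correction on the downward case: your conclusion (the inhomogeneous term flips sign) is right, but neither reason offered in your parenthesis actually produces it. The $DH_n^{op}$ factor in Proposition \ref{KhovEndThmPmQn} does not flip the sign; writing $a\ast b=ba$ for the opposite product, one has $T_i\ast X_i = X_iT_i = T_iX_{i+1}+1 = X_{i+1}\ast T_i+1$, so $DH_n^{op}$ satisfies the same relations on the same generators (equivalently, $DH_n$ has an anti-automorphism fixing $T_i$ and $X_i$), and hence the opposite-algebra structure alone cannot account for a sign change. Likewise, \eqref{eq:rel2} governs sliding a downward strand past an upward strand and says nothing about a dot passing through a same-orientation crossing. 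The genuine source of the sign is that the dot and crossing on downward strands are the $180^{\circ}$ rotations (two-sided mates) of the upward ones; since all adjunctions in $\H$ are cyclic, one may rotate the relation \eqref{eq:nil-dot}, and rotation reverses both the composition order and the strand labels, turning $T_iX_i - X_{i+1}T_i = 1$ into $\tilde{X}_{i+1}\tilde{T}_i - \tilde{T}_i\tilde{X}_i = 1$, that is, $\tilde{T}_i\tilde{X}_i = \tilde{X}_{i+1}\tilde{T}_i - 1$ in $\End(\Q^n)$. Running your computation with this relation yields the minus sign in the lemma, so the proof goes through once the parenthetical justification is replaced by this short rotation check.
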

\begin{proof}
This is immediate from \eqref{eq:nil-dot} and properties of the trace.
\end{proof}



The next lemma allows us to express generators $ [f_{w;j_1,\ldots,j_n}]$ in terms of the more elementary generators
$ h_r \otimes p $ where $p$ is a polynomial in variables $ x_1, \ldots, x_r$.

\begin{lemma}
\label{getridofwlemma}
Let $ w \in S_n$ and $(n_1, \ldots, n_r) $ a sequence of natural numbers which sum to $n$.  Then
\begin{equation*}
[f_{\pm w;j_1,\ldots,j_n}]=\sum d_{n_1, \ldots, n_r} (h_{n_1} \otimes p_{n_1}) \cdots (h_{n_r} \otimes p_{n_r})
\end{equation*}
for some constants $ d_{n_1, \ldots, n_r} \in \mathbb{C} $ and polynomials $ p_{n_i} $ in $i$ variables.
\end{lemma}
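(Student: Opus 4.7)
The plan is to use cyclicity of the trace together with the degenerate affine Hecke algebra relations of Proposition~\ref{prop:sergeevacting} to reduce $[f_{\pm w; j_1, \ldots, j_n}]$ to the case where the underlying permutation is a horizontal juxtaposition of cyclic permutations on consecutive intervals of strands. Once the permutation has such block form, the annular closure visibly splits into disjoint annular diagrams and the trace factors as a product of elementary pieces of the form $h_{n_i} \otimes p_{n_i}$.

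Concretely, I would first use conjugation invariance $[\alpha\beta]=[\beta\alpha]$ in $\Tr(\H)$ to replace $w$ by a conjugate $\tau w \tau^{-1}$, with the dots permuted correspondingly. The composition $(n_1,\ldots,n_r)$ is taken to match the cycle type of $w$, so one can always conjugate $w$ to the normal form in which $\sigma_{n_i}$ acts cyclically on the $i$th consecutive interval of strands of size $n_i$. Each individual conjugation by a simple transposition is implemented in $\End(\P^n)$ (resp.\ $\End(\Q^n)$); whenever a dot has to be moved past a crossing we invoke $T_i X_i = X_{i+1} T_i + 1$ from Proposition~\ref{prop:sergeevacting}, which produces one principal term with the dot relocated and one resolution term in which the crossing is deleted. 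The resolution terms have strictly fewer crossings and are absorbed into the sum by induction. After normalization, $f_{w; j_1, \ldots, j_n}$ splits as a horizontal tensor of diagrams on the $r$ intervals, and its annular closure is a disjoint union of $r$ closed cyclic strands, the $i$th carrying a polynomial $p_{n_i}$ in its block's $n_i$ dots; this trace equals the product $(h_{n_1}\otimes p_{n_1})\cdots (h_{n_r}\otimes p_{n_r})$ directly from the definition of the elementary pieces.

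The expected main obstacle is bookkeeping the polynomial data through the normalization. Each application of the nil-dot relation \eqref{eq:nil-dot} both reduces the crossing count and can redistribute dots between strands, so the resolution terms are diagrams for smaller permutations which themselves need to be re-expressed. One must verify that every resolution term again lies in the span of products of elementary traces compatible with $(n_1,\ldots,n_r)$; here Lemma~\ref{dotmovelemma} is essential, since it lets one freely move a dot from one strand of a cycle to the adjacent one at the cost of a product of two shorter cyclic traces, each of which can be inductively expanded. The whole reduction is controlled by a double induction on the pair $(\ell(w), \sum_i j_i)$. The downward-pointing case is handled identically using the isomorphic copy of $DH_n^{\mathrm{op}}$ inside $\End(\Q^n)$ provided by Proposition~\ref{KhovEndThm}.
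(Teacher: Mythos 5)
Your proposal is correct and follows essentially the same route as the paper: conjugate $w$ (via cyclicity of the trace) to a normal-form product of cycles on consecutive blocks, use the degenerate affine Hecke relations to move the polynomial part past the conjugating crossings, and absorb the resulting correction terms by induction, after which the block-form closure factors as the desired product. The only difference is bookkeeping: the paper inducts on the total dot degree $j_1+\cdots+j_n$ alone (each resolution term strictly drops this degree, whatever permutation it carries), so the $\ell(w)$ component of your double induction is unnecessary and should in any case be the secondary parameter, since resolution terms need not have smaller length than the original $w$.
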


\begin{proof}
We prove this by induction on $j_1 + \cdots + j_n$.
The base case where all of the $j_i$ equal zero is trivial because then $ f_{w;j_1,\ldots,j_n} $ is just an element in the symmetric group and thus its class $ [f_{w;j_1,\ldots,j_n}] $ in the trace is determined by its conjucacy class in $S_n$ which of course could be written as a product of disjoint cycles.

Choose an element $ g \in S_n $ such that
\begin{equation*}
gwg^{-1}=(s_1 \cdots s_{n_1-1}) \cdots (s_{n_1+\cdots n_{r-1}} \cdots s_{n_1+\cdots n_r-1}).
\end{equation*}
Let $p=x_1^{j_1} \cdots x_n^{j_n}$.  Thus $ f_{w;j_1,\ldots,j_n}= pw$.
Now we conjugate this element by $ g $ to get
$ gpwg^{-1} = (g \centerdot p) gwg^{-1} + p_L wg^{-1} $
where $ p_L $ is a polynomial of degree less than $j_1+ \cdots j_n$ and
$ g \centerdot p $ is some other polynomial of degree $ j_1 + \cdots j_n $.
The lemma follows by applying induction to the second term and noting that in the first term $gwg^{-1} $ is a product of cycles.
\end{proof}

\begin{corollary}
Let $ w \in S_n$ and $(n_1, \ldots, n_r) $ a sequence of natural numbers which sum to $n$.  Then
\begin{equation*}
[f_{\pm w;j_1,\ldots,j_n}]=\sum d_{n_1, \ldots, n_r} (h_{\pm n_1} \otimes x_{1}^{l_1}) \cdots (h_{\pm n_r} \otimes x_{1}^{l_r})
\end{equation*}
for some constants $ d_{n_1, \ldots, n_r} \in \mathbb{C} $ and some non-negative integers $l_1, \ldots, l_r$.
\end{corollary}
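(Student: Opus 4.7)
The plan is to derive the corollary directly from Lemma~\ref{getridofwlemma} by iteratively applying Lemma~\ref{dotmovelemma} to the polynomial factors. Lemma~\ref{getridofwlemma} already gives
\[
[f_{\pm w;j_1,\ldots,j_n}] = \sum d_{n_1,\ldots,n_r}(h_{\pm n_1}\otimes p_{n_1})\cdots(h_{\pm n_r}\otimes p_{n_r}),
\]
with each $p_{n_i}$ a polynomial in $i$ variables. Since $h_{\pm m}\otimes(\cdot)$ is linear in its polynomial slot, it suffices to prove the following Claim: for every $m\ge 1$ and every monomial $p = x_1^{a_1}\cdots x_m^{a_m}$, the element $h_{\pm m}\otimes p$ can be written as a sum of products of terms of the form $h_{\pm m'}\otimes x_1^{l}$.

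I would prove the Claim by induction on the pair $(m, a_2+a_3+\cdots+a_m)$ ordered lexicographically. The base case is when $m=1$, or when all $a_i$ vanish for $i\geq 2$; in either situation $h_{\pm m}\otimes p = h_{\pm m}\otimes x_1^{a_1}$ is already of the desired form. For the inductive step, pick the largest $i\geq 2$ with $a_i>0$. One applies Lemma~\ref{dotmovelemma} in the form $h_{\pm m}\otimes x_{i-1} = h_{\pm m}\otimes x_i \pm (h_{\pm(i-1)}\otimes 1)(h_{\pm(m-i+1)}\otimes 1)$, but generalized to the case where additional dots sit on the other strands; such a generalization follows by the same argument used to prove Lemma~\ref{dotmovelemma}, namely the nil-dot relation \eqref{eq:nil-dot} combined with the cyclic invariance of the trace. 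This trades one dot on strand $i$ for a dot on strand $i-1$ together with a correction which is a product of two trace classes on shorter cycles. In the resulting expression, the leading term has strictly smaller $a_2+\cdots+a_m$, while each correction involves strictly smaller cycle length $m'<m$; both cases are handled by the inductive hypothesis.

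Carrying out this dot-migration inductively pushes every dot onto strand $1$, while correction terms are repeatedly broken into products on shorter cycles. Substituting the resulting expressions back into the output of Lemma~\ref{getridofwlemma} yields the desired formula.

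The main obstacle is the bookkeeping for the generalized dot-move: each application splits the given class into a principal term (with one dot relocated) and a correction (a product of two shorter trace classes, each still carrying some subset of the remaining dots). One must organize the induction — for instance by the pair (cycle length $m$, sum $a_2+\cdots+a_m$) in lex order — so that both pieces strictly decrease. Once this is set up, the argument terminates and produces the claimed expression as a sum of products of elementary $h_{\pm m'}\otimes x_1^{l}$ pieces.
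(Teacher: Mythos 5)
Your route is the same as the paper's: the printed proof is just the one-line observation that the statement follows from Lemma~\ref{getridofwlemma} together with (a generalized form of) Lemma~\ref{dotmovelemma}, and your fleshed-out version of the dot migration is the intended argument. Your generalized dot-move is also correct as stated: in $\End(\P^m)$ one has $X_{i}$ commuting with $s_1,\dots,s_{i-2}$ and $s_{i+1},\dots,s_{m-1}$, so the relation behind \eqref{eq:nil-dot} applied inside $p\,\sigma_m$ (with $p$ the remaining monomial) gives exactly a principal term with the dot shifted one strand and a correction which is the class of the split permutation $s_1\cdots s_{i-2}s_{i}\cdots s_{m-1}$ carrying the leftover dots, i.e.\ a product of two shorter dotted cycle classes in $\Tr(\H)$.

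The one flaw is in your termination measure. You claim the principal term has strictly smaller $a_2+\cdots+a_m$, but moving a dot from strand $i$ to strand $i-1$ only changes this sum when $i=2$; for $i\geq 3$ the dot merely shifts within the strands indexed $\geq 2$ and the sum is unchanged, so the lexicographic pair $(m,\,a_2+\cdots+a_m)$ does not strictly decrease and the inductive hypothesis cannot be invoked for the principal term. This is easily repaired: induct instead on $\bigl(m,\ \sum_{i\geq 2}(i-1)a_i\bigr)$ in lexicographic order (the second entry is the total distance of the dots from the first strand), which drops by exactly one at each dot move, while every correction term strictly decreases the first entry. With that adjustment your argument is complete and matches the paper's.
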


\begin{proof}
This follows from Lemma ~\ref{dotmovelemma} and Lemma ~\ref{getridofwlemma}.
\end{proof}

\subsection{Elements $p^{(n)}$ and $ q^{(n)}$}
\label{generatorspq}
In this section we will see how the Heisenberg algebra $ \mathfrak{h} $ in the presentation given in terms of $ p^{(n)}, q^{(n)} $ from Section ~\ref{hei} appears in $ \Tr(\H) $.

Define $ p^{(n)} \otimes 1$ and $ q^{(n)} \otimes 1 $ in $ \Tr(\H) $ by
\begin{equation}
\label{pndef}
p^{(n)} \otimes 1 = \frac{1}{n!} \sum_{w \in S_n} [f_{w; 0, \ldots, 0}]
\end{equation}
\begin{equation}
\label{qndef}
q^{(n)} \otimes 1= \frac{1}{n!} \sum_{w \in S_n} [f_{-w; 0, \ldots, 0}]
\end{equation}
We depict the elements $ p^{(n)} \otimes 1$ and $ q^{(n)} \otimes 1$ by
\[
\begin{tikzpicture}[scale=.8]
      \path[draw,blue, very thick, fill=blue!10]
        (-2.3,-.6) to (-2.3,.6) .. controls ++(0,1.85) and ++(0,1.85) .. (2.3,.6)
         to (2.3,-.6)  .. controls ++(0,-1.85) and ++(0,-1.85) .. (-2.3,-.6);
     \path[draw, blue, very thick, fill=white]
            (-0.2,0) .. controls ++(0,.35) and ++(0,.35) .. (0.2,0)
            .. controls ++(0,-.35) and ++(0,-.35) .. (-0.2,0);
\draw[very thick,->] (-1,0) .. controls ++(0,1.7) and ++(0,1.7).. (1,0);
\draw[very thick,->] (1,0) .. controls ++(0,-1.7) and ++(0,-1.7).. (-1,0);
\draw[ fill, white] (-1.5,-.35) rectangle (-.5,.35);
\draw  (-1.5,-.35) rectangle (-.5,.35);
\draw (-1,0) node {$(n)$};
\end{tikzpicture}
\;\;  \qquad \qquad
\begin{tikzpicture}[scale=.8]
      \path[draw,blue, very thick, fill=blue!10]
        (-2.3,-.6) to (-2.3,.6) .. controls ++(0,1.85) and ++(0,1.85) .. (2.3,.6)
         to (2.3,-.6)  .. controls ++(0,-1.85) and ++(0,-1.85) .. (-2.3,-.6);
     \path[draw, blue, very thick, fill=white]
            (-0.2,0) .. controls ++(0,.35) and ++(0,.35) .. (0.2,0)
            .. controls ++(0,-.35) and ++(0,-.35) .. (-0.2,0);
\draw[very thick,<-] (-1,0) .. controls ++(0,1.7) and ++(0,1.7).. (1,0);
\draw[very thick,<-] (1,0) .. controls ++(0,-1.7) and ++(0,-1.7).. (-1,0);
\draw[ fill, white] (-1.5,-.35) rectangle (-.5,.35);
\draw  (-1.5,-.35) rectangle (-.5,.35);
\draw (-1,0) node {$(n)$};
\end{tikzpicture}
\]

We have for example the relation $(q^{(1)} \otimes 1)(p^{(1)} \otimes 1)= (p^{(1)} \otimes 1)(q^{(1)} \otimes 1)+1$.
The composition $(q^{(1)} \otimes 1)(p^{(1)} \otimes 1)$ is
\[
\begin{tikzpicture}[scale=.8]
      \path[draw,blue, very thick, fill=blue!10]
        (-2.1,-.6) to (-2.1,.6) .. controls ++(0,1.85) and ++(0,1.85) .. (2.1,.6)
         to (2.1,-.6)  .. controls ++(0,-1.85) and ++(0,-1.85) .. (-2.1,-.6);
     \path[draw, blue, very thick, fill=white]
            (-0.2,0) .. controls ++(0,.35) and ++(0,.35) .. (0.2,0)
            .. controls ++(0,-.35) and ++(0,-.35) .. (-0.2,0);
      \draw[very thick ] (-1.3,.5) -- (-1.3,-.5);
      \draw[very thick ,<-] (-.7,.5) -- (-.7,-.5);
      \draw[very thick,->] (.7,.5) -- (.7,-.5);
      \draw[very thick] (1.3,.5) -- (1.3,-.5);
     \draw[very thick] (-.7,.5) .. controls ++(0,.5) and ++(0,.5).. (.7,.5);
     \draw[very thick,->] (1.3,.5) .. controls ++(0,1) and ++(0,1).. (-1.3,.5);
    \draw[very thick ] (-.7,-.5) .. controls ++(0,-.5) and ++(0,-.5).. (.7,-.5);
    \draw[very thick,<-] (1.3,-.5) .. controls ++(0,-1) and ++(0,-1).. (-1.3,-.5);
\end{tikzpicture}
\]
Now, the graphical relations in $\H$ allow us to pull the inner circle outside as follows
\[
\hackcenter{
\begin{tikzpicture}[scale=.8]
      \path[draw,blue, very thick, fill=blue!10]
        (-2.1,-.6) to (-2.1,.6) .. controls ++(0,1.85) and ++(0,1.85) .. (2.1,.6)
         to (2.1,-.6)  .. controls ++(0,-1.85) and ++(0,-1.85) .. (-2.1,-.6);
     \path[draw, blue, very thick, fill=white]
            (-0.2,0) .. controls ++(0,.35) and ++(0,.35) .. (0.2,0)
            .. controls ++(0,-.35) and ++(0,-.35) .. (-0.2,0);
      \draw[very thick ] (-1.3,.5) -- (-1.3,-.5);
      \draw[very thick ,<-] (-.7,.5) -- (-.7,-.5);
      \draw[very thick,->] (.7,.5) -- (.7,-.5);
      \draw[very thick] (1.3,.5) -- (1.3,-.5);
     \draw[very thick] (-.7,.5) .. controls ++(0,.5) and ++(0,.5).. (.7,.5);
     \draw[very thick,->] (1.3,.5) .. controls ++(0,1) and ++(0,1).. (-1.3,.5);
    \draw[very thick ] (-.7,-.5) .. controls ++(0,-.5) and ++(0,-.5).. (.7,-.5);
    \draw[very thick,<-] (1.3,-.5) .. controls ++(0,-1) and ++(0,-1).. (-1.3,-.5);
\end{tikzpicture} }
\;\; = \;\;
\hackcenter{
\begin{tikzpicture}[scale=.8]
      \path[draw,blue, very thick, fill=blue!10]
        (-2.1,-.6) to (-2.1,.6) .. controls ++(0,1.85) and ++(0,1.85) .. (2.1,.6)
         to (2.1,-.6)  .. controls ++(0,-1.85) and ++(0,-1.85) .. (-2.1,-.6);
     \path[draw, blue, very thick, fill=white]
            (-0.2,0) .. controls ++(0,.35) and ++(0,.35) .. (0.2,0)
            .. controls ++(0,-.35) and ++(0,-.35) .. (-0.2,0);
      \draw[very thick ] (-1.3,.5) .. controls ++(0,-.2) and ++(0,+.2) .. (-0.7,0);
      \draw[very thick ] (-0.7,0) .. controls ++(0,-.2) and ++(0,+.2) .. (-1.3,-.5);
      \draw[very thick ] (-0.7,.5) .. controls ++(0,-.2) and ++(0,+.2) .. (-1.3,0);
      \draw[very thick ] (-1.3,0) .. controls ++(0,-.2) and ++(0,+.2) .. (-0.7,-.5);
      \draw[very thick,->] (.7,.5) -- (.7,-.5);
      \draw[very thick] (1.3,.5) -- (1.3,-.5);
     \draw[very thick] (-.7,.5) .. controls ++(0,.5) and ++(0,.5).. (.7,.5);
     \draw[very thick,->] (1.3,.5) .. controls ++(0,1) and ++(0,1).. (-1.3,.5);
    \draw[very thick ] (-.7,-.5) .. controls ++(0,-.5) and ++(0,-.5).. (.7,-.5);
    \draw[very thick,<-] (1.3,-.5) .. controls ++(0,-1) and ++(0,-1).. (-1.3,-.5);
\end{tikzpicture} }
\;\; + \;\;
\hackcenter{
\begin{tikzpicture}[scale=.8]
      \path[draw,blue, very thick, fill=blue!10]
        (-2.1,-.6) to (-2.1,.6) .. controls ++(0,1.85) and ++(0,1.85) .. (2.1,.6)
         to (2.1,-.6)  .. controls ++(0,-1.85) and ++(0,-1.85) .. (-2.1,-.6);
     \path[draw, blue, very thick, fill=white]
            (-0.2,0) .. controls ++(0,.35) and ++(0,.35) .. (0.2,0)
            .. controls ++(0,-.35) and ++(0,-.35) .. (-0.2,0);
      \draw[very thick ] (-1.3,.5) .. controls ++(0,-.5) and ++(0,-.5) .. (-0.7,.5);
      \draw[very thick,<-] (-1.3,-.5) .. controls ++(0,.5) and ++(0,+.5) .. (-0.7,-.5);
      \draw[very thick,->] (.7,.5) -- (.7,-.5);
      \draw[very thick] (1.3,.5) -- (1.3,-.5);
     \draw[very thick] (-.7,.5) .. controls ++(0,.5) and ++(0,.5).. (.7,.5);
     \draw[very thick,->] (1.3,.5) .. controls ++(0,1) and ++(0,1).. (-1.3,.5);
    \draw[very thick ] (-.7,-.5) .. controls ++(0,-.5) and ++(0,-.5).. (.7,-.5);
    \draw[very thick,<-] (1.3,-.5) .. controls ++(0,-1) and ++(0,-1).. (-1.3,-.5);
\end{tikzpicture} }
\]
Since a counterclockwise circle equals $1$, the circle in the last term above can be erased. On the other hand, the first term on the right can be simplified by sliding in the outside circle to obtain
\[
\hackcenter{
\begin{tikzpicture}[scale=.8]
      \path[draw,blue, very thick, fill=blue!10]
        (-2.1,-.6) to (-2.1,.6) .. controls ++(0,1.85) and ++(0,1.85) .. (2.1,.6)
         to (2.1,-.6)  .. controls ++(0,-1.85) and ++(0,-1.85) .. (-2.1,-.6);
     \path[draw, blue, very thick, fill=white]
            (-0.2,0) .. controls ++(0,.35) and ++(0,.35) .. (0.2,0)
            .. controls ++(0,-.35) and ++(0,-.35) .. (-0.2,0);
      \draw[very thick] (-1.3,.5) -- (-1.3,-.5);
      \draw[very thick,->] (-.7,.5) -- (-.7,-.5);
      \draw[very thick,<-] (.7,.5) -- (.7,-.5);
      \draw[very thick] (1.3,.5) -- (1.3,-.5);
     \draw[very thick,] (-.7,.5) .. controls ++(0,.5) and ++(0,.5).. (.7,.5);
     \draw[very thick,<-] (1.3,.5) .. controls ++(0,1) and ++(0,1).. (-1.3,.5);
    \draw[very thick] (-.7,-.5) .. controls ++(0,-.5) and ++(0,-.5).. (.7,-.5);
    \draw[very thick,->] (1.3,-.5) .. controls ++(0,-1) and ++(0,-1).. (-1.3,-.5);
\end{tikzpicture} }
\]
Thus we end up with
\[
\hackcenter{
\begin{tikzpicture}[scale=.8]
      \path[draw,blue, very thick, fill=blue!10]
        (-2.1,-.6) to (-2.1,.6) .. controls ++(0,1.85) and ++(0,1.85) .. (2.1,.6)
         to (2.1,-.6)  .. controls ++(0,-1.85) and ++(0,-1.85) .. (-2.1,-.6);
     \path[draw, blue, very thick, fill=white]
            (-0.2,0) .. controls ++(0,.35) and ++(0,.35) .. (0.2,0)
            .. controls ++(0,-.35) and ++(0,-.35) .. (-0.2,0);
      \draw[very thick ] (-1.3,.5) -- (-1.3,-.5);
      \draw[very thick ,<-] (-.7,.5) -- (-.7,-.5);
      \draw[very thick,->] (.7,.5) -- (.7,-.5);
      \draw[very thick] (1.3,.5) -- (1.3,-.5);
     \draw[very thick] (-.7,.5) .. controls ++(0,.5) and ++(0,.5).. (.7,.5);
     \draw[very thick,->] (1.3,.5) .. controls ++(0,1) and ++(0,1).. (-1.3,.5);
    \draw[very thick ] (-.7,-.5) .. controls ++(0,-.5) and ++(0,-.5).. (.7,-.5);
    \draw[very thick,<-] (1.3,-.5) .. controls ++(0,-1) and ++(0,-1).. (-1.3,-.5);
\end{tikzpicture}}
\;\; = \;\;
\hackcenter{
\begin{tikzpicture}[scale=.8]
      \path[draw,blue, very thick, fill=blue!10]
        (-2.1,-.6) to (-2.1,.6) .. controls ++(0,1.85) and ++(0,1.85) .. (2.1,.6)
         to (2.1,-.6)  .. controls ++(0,-1.85) and ++(0,-1.85) .. (-2.1,-.6);
     \path[draw, blue, very thick, fill=white]
            (-0.2,0) .. controls ++(0,.35) and ++(0,.35) .. (0.2,0)
            .. controls ++(0,-.35) and ++(0,-.35) .. (-0.2,0);
      \draw[very thick] (-1.3,.5) -- (-1.3,-.5);
      \draw[very thick,->] (-.7,.5) -- (-.7,-.5);
      \draw[very thick,<-] (.7,.5) -- (.7,-.5);
      \draw[very thick] (1.3,.5) -- (1.3,-.5);
     \draw[very thick,] (-.7,.5) .. controls ++(0,.5) and ++(0,.5).. (.7,.5);
     \draw[very thick,<-] (1.3,.5) .. controls ++(0,1) and ++(0,1).. (-1.3,.5);
    \draw[very thick] (-.7,-.5) .. controls ++(0,-.5) and ++(0,-.5).. (.7,-.5);
    \draw[very thick,->] (1.3,-.5) .. controls ++(0,-1) and ++(0,-1).. (-1.3,-.5);
\end{tikzpicture} }
\;\; + \;\; 1
\]

\begin{theorem}
\label{pqrelations}
The elements $ p^{(n)} \otimes 1, q^{(n)} \otimes 1 $ for $ n \in \mathbb{Z}_{\geq 0} $ satisfy the relations:
\begin{eqnarray*}
(p^{(n)} \otimes 1)(p^{(m)} \otimes 1) &=& (p^{(m)} \otimes 1)(p^{(n)} \otimes 1)
\text{ and }
(q^{(n)} \otimes 1)(q^{(m)} \otimes 1) = (q^{(m)} \otimes 1)(q^{(n)} \otimes 1) \\
(q^{(n)} \otimes 1)(p^{(m)} \otimes 1) &=&
\sum_{k \ge 0} (p^{(m-k)} \otimes 1)(q^{(n-k)} \otimes 1).
\end{eqnarray*}
\end{theorem}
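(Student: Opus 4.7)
The plan is to identify $p^{(n)}\otimes 1$ with the trace class of $\mathrm{id}_{\P^{(n)}}\in\End_{\H}(\P^{(n)})$, identify $q^{(n)}\otimes 1$ with $[\mathrm{id}_{\Q^{(n)}}]$, and then read off the three relations directly from Khovanov's object-level isomorphisms in Theorem~\ref{thm:1}.

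For the identification, recall that $\P^{(n)}$ is, by definition, the pair $(\P^n,e_{(n)})\in\Kar(\H')=\H$, where $e_{(n)}=\tfrac{1}{n!}\sum_{w\in S_n}w$ is the symmetrizing idempotent of $\mathbb{C}[S_n]\subset\End_{\H'}(\P^n)$; the identity morphism of this object in the Karoubi envelope is $e_{(n)}$ itself. Under the isomorphism $\Tr(\H)\cong\Tr(\H')$ of Proposition~\ref{HH0HequalsHH0Hprime}, the class $[\mathrm{id}_{\P^{(n)}}]$ corresponds to $[e_{(n)}]=\tfrac{1}{n!}\sum_{w}[f_{w;0,\dots,0}]$, which is exactly the right-hand side of~\eqref{pndef}. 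The same argument applied to $\Q^{(n)}=(\Q^n,e_{(n)})$ gives $q^{(n)}\otimes 1=[\mathrm{id}_{\Q^{(n)}}]$.

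Next, the monoidal product of $\H$ equips $\Tr(\H)$ with a ring structure via $[f]\cdot[g]=[f\otimes g]$, and the graphical closure defining $[\cdot]$ makes this compatible with the concentric-annuli pictures used in the statement. Hence $(q^{(n)}\otimes 1)(p^{(m)}\otimes 1)=[\mathrm{id}_{\Q^{(n)}\P^{(m)}}]$ and similarly for the other two products. The trace is additive on finite direct sums: if $X\cong Y\oplus Z$ via inclusions $\iota_Y,\iota_Z$ and projections $\pi_Y,\pi_Z$, then $\mathrm{id}_X=\iota_Y\pi_Y+\iota_Z\pi_Z$ and cyclicity gives $[\iota_Y\pi_Y]=[\pi_Y\iota_Y]=[\mathrm{id}_Y]$, so $[\mathrm{id}_X]=[\mathrm{id}_Y]+[\mathrm{id}_Z]$. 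Combining these two facts with the object-level isomorphisms
\[
\P^{(n)}\P^{(m)}\cong\P^{(m)}\P^{(n)},\qquad \Q^{(n)}\Q^{(m)}\cong\Q^{(m)}\Q^{(n)},\qquad
\Q^{(n)}\P^{(m)}\cong\bigoplus_{k\geq 0}\P^{(m-k)}\Q^{(n-k)}
\]
provided by Theorem~\ref{thm:1}(1)--(3), and passing to the trace classes of the identity morphisms on each side, yields the three relations.

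The main obstacle is the identification step: one must carefully compare the definition of $p^{(n)}\otimes 1$ as a formal linear combination of closures of diagrams in $\H'$ with the class of the identity of an indecomposable summand $\P^{(n)}$ that lives only in the Karoubi envelope, and confirm that trace classes are genuinely invariant under the passage $[e_{(n)}]\leftrightarrow[\mathrm{id}_{(\P^n,e_{(n)})}]$. Once this identification is in hand, the theorem follows formally from the functoriality of the trace under direct sums and from Khovanov's decomposition theorem, with no further graphical manipulations required. As a sanity check, the worked example in the text recovers precisely the $n=m=1$ case of the third relation, since $\Q\P\cong\P\Q\oplus\mathbbm{1}$.
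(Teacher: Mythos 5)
Your argument is correct, but it takes a different route from the paper. The paper's proof re-runs the \emph{graphical} proof of the categorified statement on the annulus: it draws $(q^{(n)}\otimes 1)(p^{(m)}\otimes 1)$ with the symmetrizer boxes $(n)$, $(m)$ in place and manipulates these boxes directly, quoting explicit diagrammatic identities (modifications of Equations (31) and (32) of \cite{CL1}, with the coefficients $k!\binom{m}{k}\binom{n}{k}$ appearing and then cancelling), never passing through the object-level isomorphisms as such. You instead decategorify Theorem \ref{thm:1} formally: you identify $p^{(n)}\otimes 1=[e_{(n)}]=[\mathrm{id}_{\P^{(n)}}]$ (the cyclicity argument $[\mathrm{id}_{(\P^n,e)}]=[e\circ e]=[e]$ is exactly why Proposition \ref{HH0HequalsHH0Hprime} holds, so this identification is sound, granted the standard convention that the one-row partition $(n)$ labels the trivial representation so that $e_{(n)}=\tfrac{1}{n!}\sum_w w$ matches \eqref{pndef}), use $[f][g]=[f\otimes g]$ to get $(q^{(n)}\otimes1)(p^{(m)}\otimes1)=[\mathrm{id}_{\Q^{(n)}\P^{(m)}}]$, and then use isomorphism-invariance and additivity of $[\mathrm{id}_{-}]$ on (finite) direct sums together with Theorem \ref{thm:1}(1)--(3). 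The sums in Theorem \ref{thm:1}(3) are finite since the summands vanish for $k>\min(m,n)$, and the inclusion/projection data exist in $\H$ (the Karoubi envelope of the additive closure), so there is no gap there. What your approach buys is a computation-free proof that works uniformly for all the idempotents at once (e.g.\ it yields the analogous trace identities from Theorem \ref{thm:1}(4) for $\Q^{(n)}\P^{(1^m)}$ with no extra work); what the paper's approach buys is a self-contained annular computation that exercises the graphical calculus with idempotent boxes which is reused throughout Section \ref{secHH}, without invoking the decomposition theorem as a black box.
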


\begin{proof}
This follows fairly directly from the proof of the categorified statement in \cite{K} and its analagous statement in \cite{CL1}.  We sketch the details for the computation of $ (q^{(n)} \otimes 1)(p^{(m)} \otimes 1) $.
By definition $ (q^{(n)} \otimes 1)(p^{(m)} \otimes 1) $ is
\begin{equation}
\label{qpdiag}
\begin{tikzpicture}[scale=.8]
      \path[draw,blue, very thick, fill=blue!10]
        (-3.8,-.6) to (-3.8,.6) .. controls ++(0,1.85) and ++(0,1.85) .. (3.8,.6)
         to (3.8,-.6)  .. controls ++(0,-1.85) and ++(0,-1.85) .. (-3.8,-.6);
     \path[draw, blue, very thick, fill=white]
            (-0.2,0) .. controls ++(0,.35) and ++(0,.35) .. (0.2,0)
            .. controls ++(0,-.35) and ++(0,-.35) .. (-0.2,0);
\draw[very thick,->] (-1,0) .. controls ++(0,1 ) and ++(0,1 ).. (1,0);
\draw[very thick,->] (1,0) .. controls ++(0,-1 ) and ++(0,-1 ).. (-1,0);
\draw[ fill, white] (-1.5,-.35) rectangle (-.5,.35);
\draw  (-1.5,-.35) rectangle (-.5,.35);
\draw (-1,0) node {$(m)$};
\draw[very thick ] (-2.5,.5) .. controls ++(0,1.1) and ++(0,1.1).. (2.5,.5);
\draw[very thick] (2.5,-.5) .. controls ++(0,-1.1) and ++(0,-1.1).. (-2.5,-.5);
\draw[very thick,->] (2.5,-.5) -- (2.5,.5);
\draw[very thick] (-2.5,-.5) -- (-2.5,.5);
\draw[ fill, white] (-3,-.35) rectangle (-2,.35);
\draw  (-3,-.35) rectangle (-2,.35);
\draw (-2.5,0) node {$(n)$};
\end{tikzpicture}
\end{equation}
Letting $ c_{m,n}^{k} = k! \binom{m}{k} \binom{n}{k}$
we get using a straightforward modification of \cite[Equation 31]{CL1} that ~\eqref{qpdiag} is equal to:
\begin{equation}
\label{qpdiag2}
\sum_{k} c_{m,n}^{k}
\hackcenter{
\begin{tikzpicture} [scale=.8]
      \path[shift={+(-1,2)}][draw,blue, very thick, fill=blue!10]
        (-3.8,-2.6) to (-3.8,2.9) .. controls ++(0,2) and ++(0,2) .. (5.5,2.9)
         to (5.5,-2.6)  .. controls ++(0,-2) and ++(0,-2) .. (-3.8,-2.6);
     \path[draw, blue, very thick, fill=white]
            (-1,2) .. controls ++(0,.35) and ++(0,.35) .. (-1.4,2)
            .. controls ++(0,-.35) and ++(0,-.35) .. (-1,2);
\draw  (-5,2.25) node {$ $};
\draw[ fill, white] (-.75,0) rectangle (1.25,.5);
\draw (-.75,0) rectangle (1.25,.5);
\draw (.25,.25) node {$(m)$};
\draw[ fill, white] (-.75,2) rectangle (1.25,2.5);
\draw (-.75,2) rectangle (1.25,2.5);
\draw (.25,2.25) node {$(n-k)$};
\draw [very thick,<-] (.25,.5) .. controls (.25,1) and (2.25,1.5) .. (2.25,2);
\draw (2.25,.5) arc (0:180:.75cm and .5cm)[->][very thick];
\draw (1.5,1) node [anchor=north] {$k$};
\draw[ fill, white] (1.5,0) rectangle (3.5,.5);
\draw (1.5,0) rectangle (3.5,.5);
\draw (2.5,.25) node {$(n)$};
\draw[ fill, white]  (1.5,2) rectangle (3.5,2.5);
\draw (1.5,2) rectangle (3.5,2.5);
\draw (2.5,2.25) node {$(m-k)$};
\draw[very thick,->] (2.75,.5) .. controls (2.75,1) and (.75,1.5) ..(.75,2);
\draw[shift={+(0,2)}][ fill, white] (-.75,2) rectangle (1.25,2.5);
\draw [shift={+(0,2)}](-.75,2) rectangle (1.25,2.5);
\draw [shift={+(0,2)}](.25,2.25) node {$(m)$};
\draw [shift={+(0,2)}][very thick,->] (.75,.5) .. controls (.75,1) and (2.75,1.5) .. (2.75,2);
\draw  [shift={+(0,2)}](.75,2) arc (180:360:.75cm and .5cm)[->][very thick];
\draw  [shift={+(0,2)}](1.5,1.5) node [anchor=south] {$k$};
\draw[shift={+(0,2)}][ fill, white] (1.5,2) rectangle (3.5,2.5);
\draw [shift={+(0,2)}] (1.5,2) rectangle (3.5,2.5);
\draw  [shift={+(0,2)}](2.5,2.25) node {$(n)$};
\draw  [shift={+(0,2)}] [very thick,<-](2.25,.5) .. controls (2.25,1) and (.25,1.5) .. (.25,2);
\draw (.25,0) arc (0:-180:1cm and .5cm)[][very thick];
\draw (2.5,0) arc (0:-180:3cm and 1.5cm)[][very thick];
\draw (.25,4.5) arc (0:180:1cm and .5cm)[][very thick];
\draw (2.5,4.5) arc (0:180:3cm and 1.5cm)[][very thick];
\draw (-1.75,0) -- (-1.75,4.5) [->][very thick];
\draw (-3.5,0) -- (-3.5,4.5) [<-][very thick];
\end{tikzpicture}}
\end{equation}
Slide the top two rectangles and the $k$ cups connecting them counterclockwise to the bottom of the diagram and apply a modification of \cite[Equation 32]{CL1}, to get that ~\eqref{qpdiag2} is equal to:
\[
\sum_{0 \leq l \leq k}\;\;
\hackcenter{
\begin{tikzpicture}[scale=.8]
      \path[draw,blue, very thick, fill=blue!10]
        (-5.6,-.8) to (-5.6,.8) .. controls ++(0,1.85) and ++(0,1.85) .. (3.5,.8)
         to (3.5,-.8)  .. controls ++(0,-1.85) and ++(0,-1.85) .. (-5.6,-.8);
     \path[draw, blue, very thick, fill=white]
            (-0.2,0) .. controls ++(0,.35) and ++(0,.35) .. (0.2,0)
            .. controls ++(0,-.35) and ++(0,-.35) .. (-0.2,0);
\draw[very thick] (-1.5,0) .. controls ++(0,1 ) and ++(0,1 ).. (1.5,0);
\draw[very thick,<-] (1.5,0) .. controls ++(0,-1 ) and ++(0,-1 ).. (-1.5,0);
\draw[ fill, white]  (-2.49,-.25) rectangle (-.51,.25);
\draw  (-2.5,-.3) rectangle (-.5,.3);
\draw (-1.5,0) node {$ (n-k)$};
\draw[very thick ] (-4,.5) .. controls ++(0,1.3) and ++(0,1.3).. (2.5,.5);
\draw[very thick] (2.5,-.5) .. controls ++(0,-1.3) and ++(0,-1.3).. (-4,-.5);
\draw[very thick,<- ] (2.5,-.5) -- (2.5,.5);
\draw[very thick] (-4,-.5) -- (-4,.5);
\draw[ fill, white] (-5,-.3) rectangle (-3,.3);
\draw  (-5,-.35) rectangle (-3,.35);
\draw (-4,0) node {$ (m-k)$};
\end{tikzpicture}}~.
\]
By definition this is equal to
$ \sum_k (p^{(m-k)} \otimes 1)(q^{(n-k)} \otimes 1)$.

\end{proof}

\subsection{Some diagrammatic lemmas}
For convenience, we work with classes $[f]$ of endomorphisms in $\Tr(\H)$ and omit the annuli from our diagrammatic computations.  The reader should keep in mind that all diagrams inside of closed brackets are interpreted on the annulus.

\begin{lemma}
For any $a>1$ we have the following identities
\begin{align}
\hackcenter{\begin{tikzpicture}[scale=0.8]
    \draw[thick, ->] (0,0) .. controls (0,.75) and (.75,.75) .. (.75,1.5)
        node[pos=.25, shape=coordinate](DOT){};
    \draw[thick, ->] (.75,0) .. controls (.75,.75) and (0,.75) .. (0,1.5);
    \filldraw  (DOT) circle (2pt);
    \node at (-.2,.3) {$\scs a$};
\end{tikzpicture}}
\quad-\quad
\hackcenter{\begin{tikzpicture}[scale=0.8]
    \draw[thick, ->] (0,0) .. controls (0,.75) and (.75,.75) .. (.75,1.5)
        node[pos=.75, shape=coordinate](DOT){};
    \draw[thick, ->] (.75,0) .. controls (.75,.75) and (0,.75) .. (0,1.5);
    \filldraw  (DOT) circle (2pt);
    \node at (.95,1.1) {$\scs a$};
\end{tikzpicture}}
&\quad=\quad
\hackcenter{\begin{tikzpicture}[scale=0.8]
    \draw[thick, ->] (0,0) .. controls (0,.75) and (.75,.75) .. (.75,1.5);
    \draw[thick, ->] (.75,0) .. controls (.75,.75) and (0,.75) .. (0,1.5)
        node[pos=.75, shape=coordinate](DOT){};
    \filldraw  (DOT) circle (2pt);
    \node at (-.2,1.1) {$\scs a$};
\end{tikzpicture}}
\quad-\quad
\hackcenter{\begin{tikzpicture}[scale=0.8]
    \draw[thick, ->] (0,0) .. controls (0,.75) and (.75,.75) .. (.75,1.5);
    \draw[thick, ->] (.75,0) .. controls (.75,.75) and (0,.75) .. (0,1.5)
        node[pos=.25, shape=coordinate](DOT){};
      \filldraw  (DOT) circle (2pt);
      \node at (.95,.3) {$\scs a$};
\end{tikzpicture}}
\quad=\quad
\sum_{f+g = a-1}
\hackcenter{\begin{tikzpicture}[scale=0.8]
    \draw[thick, ->] (0,0) -- (0,1.5);
    \draw[thick, ->] (.75,0) -- (.75,1.5);
    \node at (-.25,.85) {$\scs f$};
    \node at (1,.85) {$\scs g$};
    \filldraw  (0,.75) circle (2pt);
    \filldraw  (.75,.75) circle (2pt);
\end{tikzpicture}} \label{eq:inddot}
\\
\label{eq:dotted-curl}
\hackcenter{
\begin{tikzpicture}[scale=0.8]
    \draw  [thick](0,0) .. controls (0,.5) and (.7,.5) .. (.9,0);
    \draw  [thick](0,0) .. controls (0,-.5) and (.7,-.5) .. (.9,0);
    \draw  [thick](1,-1) .. controls (1,-.5) .. (.9,0);
    \draw  [thick,->](.9,0) .. controls (1,.5) .. (1,1) ;
     \filldraw  (.02,0) circle (2pt);
     \node at (-.2,.1) {$\scs a$};
\end{tikzpicture}}
&\;\; = \;\;
\hackcenter{
\begin{tikzpicture}[scale=0.8]
    \draw  [thick,->](0,-1)-- (0,1);
     \filldraw  (0,0) circle (2pt);
     \node at (-.45,.1) {$\scs a-1$};
\end{tikzpicture}}
\;\; + \;\;
\sum_{f+g=a-3}\; \tilde{c}_{f+2}\;
\hackcenter{
\begin{tikzpicture}[scale=0.8]
    \draw  [thick,->](0,-1)-- (0,1);
     \filldraw  (0,0) circle (2pt);
     \node at (-.25,.1) {$\scs g$};
\end{tikzpicture}}
\end{align}
\end{lemma}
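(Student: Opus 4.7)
Both identities will be proven by induction on $a$, using the dot-slide relations already established for $\H$. The first identity \eqref{eq:inddot} is a direct generalization of the nilHecke relation \eqref{eq:nil-dot} to multiple dots, while the second identity \eqref{eq:dotted-curl} is a "curl reduction" formula that uses the first identity as its engine together with the curl relation \eqref{eq:curl} and the bubble definitions \eqref{eq:bubbles}.

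For \eqref{eq:inddot}, I would induct on $a$ with base case $a=1$ given by \eqref{eq:nil-dot}. For the inductive step, factor the dot labeled $a$ as a dot labeled $a-1$ stacked below a single dot on the same strand. Slide the lone top dot through the crossing using the base case; this produces the corresponding diagram with the single dot now on the other strand, plus a correction term consisting of two parallel vertical strands (the $\mathbf{1}$ on the right side of \eqref{eq:nil-dot}). Next apply the induction hypothesis to push the remaining $a-1$ dots through the crossing, which yields $\sum_{f+g=a-2} X_1^f X_2^g$ additional corrections (in the schematic notation where $X_i$ denotes a dot on the $i$-th strand). Combining the single-strand correction with these inductive corrections and re-indexing by shifting one variable gives exactly $\sum_{f+g=a-1} X_1^f X_2^g$. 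The right half of the identity (slides from the other side of the crossing) follows by the same argument with orientations reversed.

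For \eqref{eq:dotted-curl}, my plan is to view the curl as a crossing inside a cap-cup envelope on the vertical strand, and slide the $a$ dots across the internal crossing via \eqref{eq:inddot}. This splits the curl into two families of terms. In the principal family, all dots end up on the outer (vertical) strand and the inner cap-cup loop remains undotted; closing this loop back up using the basic curl relation \eqref{eq:curl} (combined with \eqref{eq:rel3} to eliminate spurious bare circles) collapses everything to a single vertical strand carrying $a-1$ dots, giving the first term of the right-hand side. In the correction family, the $a$ dots split as $f$ dots on the vertical segment and $g+2$ dots on the loop segment (with the shift by $2$ accounting for the dot absorbed by the curl relation itself); when the loop segment closes, the dotted fragment becomes exactly the counterclockwise bubble $\tilde{c}_{f+2}$ of \eqref{eq:bubbles}, multiplied by the vertical strand with $g$ remaining dots.

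The main obstacle will be the diagrammatic bookkeeping in the proof of \eqref{eq:dotted-curl}: verifying that the split produced by \eqref{eq:inddot} yields bubbles of type $\tilde{c}$ (counterclockwise, upward) rather than $c$ (clockwise), and confirming that the index shift is precisely $f \mapsto f+2$. Orientation-tracking through the cap and cup is the source of this shift, and Proposition \ref{eq:bubblerelationship} may be needed to rewrite intermediate expressions written naturally in one bubble type into the other. Once the correction term for a single application of \eqref{eq:inddot} is correctly identified, the remaining argument is routine expansion and collection of terms.
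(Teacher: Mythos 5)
Your proof of \eqref{eq:inddot} is fine and is exactly the paper's argument (induction on $a$ using \eqref{eq:nil-dot}), but your derivation of \eqref{eq:dotted-curl} has a genuine error in the bookkeeping. The curl on the left-hand side of \eqref{eq:dotted-curl} is the \emph{left-twist} curl, the same configuration that relation \eqref{eq:rel3} declares to be zero when undotted; it is not the right curl of \eqref{eq:curl}. Consequently, after you slide all $a$ dots through the internal crossing via \eqref{eq:inddot}, your ``principal family'' consists of an undotted left curl with $a$ dots on the vertical strand, and this term is simply zero --- it does not ``close up via \eqref{eq:curl}'' to a strand with $a-1$ dots (note also that \eqref{eq:curl} would add a dot, never remove one, so no closure could produce $a-1$ dots from $a$). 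The entire right-hand side of \eqref{eq:dotted-curl} must come from the resolution terms of \eqref{eq:inddot}: resolving the crossing closes the loop into a counterclockwise circle, so these terms are $\sum_{f+g=a-1}\tilde{c}_f\cdot(\text{strand with } g \text{ dots})$, with the shift by one coming from \eqref{eq:inddot} itself.

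From that sum the stated formula follows by peeling off the $f=0$ term, which is the strand with $a-1$ dots since $\tilde{c}_0=1$, discarding the $f=1$ term since $\tilde{c}_1=0$ (this follows from Proposition \ref{eq:bubblerelationship} and is the fact the paper's proof explicitly flags), and re-indexing $f\mapsto f+2$ in what remains. In particular the ``$+2$'' has nothing to do with a dot absorbed by the curl relation, and your proposed split of the $a$ dots as $f$ on the strand and $g+2$ on the loop does not occur; moreover without invoking $\tilde{c}_1=0$ your expansion would leave an unexplained term $\tilde{c}_1\cdot(\text{strand with } a-2 \text{ dots})$. So the overall strategy (apply \eqref{eq:inddot} to the crossing inside the curl) is the right one and is the paper's, but as written your identification of where each term on the right-hand side comes from is incorrect and the argument does not close.
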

\begin{proof}
The first claim follows inductively from \eqref{eq:nil-dot}.  The second claim follows from the first after observing that a left twist curl is zero and the $\tilde{c}_1=0$.
\end{proof}

\begin{lemma} \label{lem:mn-onetangle}
For any $a \geq 0$ we have
\[
\left[ \;
\hackcenter{
\begin{tikzpicture}[scale=0.7]
  \draw[thick,->] (6,2) .. controls ++(0,1.25) and ++(0,-1.75) .. (3,3.5);
  \draw[thick,->] (3,2) .. controls ++(0,1) and ++(0,-.7) .. (3.6,3.5);
  \draw[thick,->] (3.6,2) .. controls ++(0,1) and ++(0,-.7) .. (4.2,3.5);
  \draw[thick,->] (4.8,2) .. controls ++(0,1) and ++(0,-.7) .. (5.4,3.5);
  \draw[thick,->] (5.4,2) .. controls ++(0,1) and ++(0,-.7) .. (6,3.5);
  \node at (4.2,2.35) {$\dots$};
  \node at (4.8,3.15) {$\dots$};
  \draw[thick,<-] (2.4,2) .. controls ++(0,1.25) and ++(0,-1.75) .. (0,3.5) ;
  \draw[thick,<-] (0,2) .. controls ++(0,1) and ++(0,-.7) .. (.6,3.5);
  \draw[thick,<-] (.6,2) .. controls ++(0,1) and ++(0,-.7) .. (1.2,3.5);
  \draw[thick,<-] (1.8,2) .. controls ++(0,1) and ++(0,-.7) .. (2.4,3.5);
  \node at (1.2, 2.35) {$\dots$};
  \node at (1.8,3.15) {$\dots$};
    \draw[blue, dotted] (-0.4,2) -- (6.4,2);
   \draw[thick,<-] (6.0,1) .. controls ++(0,1.05) and ++(0,-.75) .. (2.4,2);
   \draw[thick,->] (6.0,1) .. controls ++(0,-1.05) and ++(0,.75) .. (2.4,0);
   \draw[thick,->] (3,0) .. controls ++(0,.6) and ++(0,-.5) .. (2.4,1);
   \draw[thick ]   (2.4,1) .. controls ++(0,.6) and ++(0,-.5) .. (3,2);
   \draw[thick,->] (3.6,0) .. controls ++(0,.6) and ++(0,-.5) .. (3,1);
   \draw[thick ]   (3,1) .. controls ++(0,.6) and ++(0,-.5) .. (3.6,2);
   \draw[thick,->] (4.8,0) .. controls ++(0,.6) and ++(0,-.5) .. (4.2,1);
   \draw[thick ]   (4.2,1) .. controls ++(0,.6) and ++(0,-.5) .. (4.8,2);
    \draw[thick,->] (5.4,0) .. controls ++(0,.6) and ++(0,-.5) .. (4.8,1);
   \draw[thick ]   (4.8,1) .. controls ++(0,.6) and ++(0,-.5) .. (5.4,2);
    \draw[thick,->] (6,0) .. controls ++(0,.6) and ++(0,-.5) .. (5.4,1);
   \draw[thick ]   (5.4,1) .. controls ++(0,.6) and ++(0,-.5) .. (6,2);
   \draw[thick] (1.8,0) -- (1.8,2);
   \draw[thick] (0.6,0) -- (0.6,2);
   \draw[thick] (0.0,0) -- (0,2);
   \node at (1.2,.15) {$\dots$};
   \node at (4.2,.15) {$\dots$};
     \filldraw  (.05,3.05) circle (2pt);
     \node at (-0.2,3.15) {$\scs a$};
\end{tikzpicture}} \right]
  =
\left\{
  \begin{array}{ll}
    (h_{-m}\otimes x_1^a)(h_n\otimes 1), & \hbox{if $m > n \geq 1$;} \\
    (h_{-m}\otimes x_1^a)(h_n\otimes 1)-n\tilde{c}_{a}, & \hbox{if $m=n$;} \\
    (h_{-m}\otimes x_1^a)(h_n\otimes 1)
     - n \xy
  (0,.4)*{\sum};
  (0,-5.6)*{\scs a-1};
  (0,-2.9)*{\scs f+g=};
  \endxy\tilde{c}_{f} (h_{n-m}\otimes x_1^g)
     , & \hbox{if $n>m$,}
  \end{array}
\right.
\]
where the left side involves $m$ downward strands and $n$ upward strands.
\end{lemma}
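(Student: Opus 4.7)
The plan is to interpret the LHS as a single annular trace whose top half is already recognizable as the product $(h_{-m}\otimes x_1^a)(h_n\otimes 1)$, and whose bottom half is an extra ``mixing'' tangle that we resolve using the mixed up--down relations in \eqref{eq:rel2}. The classification into the three cases $m>n$, $m=n$, $n>m$ should emerge from which resolutions survive when the annular closure is performed.

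First I would observe that above the dotted line the diagram is tautologically $f_{-\sigma_m;a,0,\dots,0}\otimes f_{\sigma_n;0,\dots,0}$: a downward cycle $\sigma_m$ with a single dot $a$ on the leftmost downward strand, sitting next to the upward cycle $\sigma_n$ with no dots. If the bottom half consisted only of vertical strands, cyclicity of the trace together with Proposition \ref{prop:indecomposables} would close this up to give exactly $(h_{-m}\otimes x_1^a)(h_n\otimes 1)$. So the entire content of the lemma is in analyzing the extra tangle below the dotted line, which consists of a rightmost upward strand pulled back across the downward block through a sequence of mixed crossings.

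Next I would resolve these mixed crossings one at a time using the second equality in \eqref{eq:rel2}, which replaces an up--down crossing by the ``swapped'' orientation minus a cup--cap pair. Expanding fully produces a sum indexed by subsets of the crossings: the empty subset gives the ``straightened'' diagram whose annular closure is precisely $(h_{-m}\otimes x_1^a)(h_n\otimes 1)$, and every other subset contributes terms containing at least one cup--cap. In the case $m>n$, each cup--cap introduces a mismatch in the downward/upward strand count at the interface, and when one closes up on the annulus these partial resolutions collapse (the cup--cap forces an immediate simplification that, after using \eqref{eq:rel2} once more together with the vanishing left-twist curl in \eqref{eq:rel3}, returns to lower-degree diagrams already accounted for by the main term). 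Thus only the main term survives.

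In the case $m=n$, the fully-resolved term survives as well: every mixed crossing becomes a cup--cap, and what remains is a single closed loop carrying the dot $a$, which by \eqref{eq:bubbles} is the clockwise bubble $\tilde c_a$; the sign and the factor $n$ come respectively from the $-$ in \eqref{eq:rel2} and from the $n$ cyclic rotations of the dot position around the annulus that produce the same class in the trace. In the case $n>m$, only $m$ cup--cap resolutions can be performed (there are not enough downward strands to absorb every upward strand), leaving $n-m$ upward strands surviving together with a closed loop; applying Lemma \ref{dotmovelemma} repeatedly to slide the dot through the remaining upward cycle, and then the dotted-curl relation \eqref{eq:dotted-curl} to split the dot between the loop (as $\tilde c_f$) and the surviving cycle (as $h_{n-m}\otimes x_1^g$), yields exactly the sum $n\sum_{f+g=a-1}\tilde c_f(h_{n-m}\otimes x_1^g)$.

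The main obstacle will be the combinatorial bookkeeping: one must verify that after expanding the $m$ mixed crossings, all partial resolutions in the regimes $m>n$ and $n>m$ either cancel pairwise or collapse to the stated correction, and that the multiplicity $n$ is produced correctly by cyclic invariance on the annulus (rather than by the expansion itself). I expect the cleanest way to handle this is by induction on $m+n$, using the relations \eqref{eq:inddot} and \eqref{eq:dotted-curl} to reduce dotted configurations and the degenerate affine Hecke relations of Proposition \ref{prop:sergeevacting} to normalize crossings before closing up.
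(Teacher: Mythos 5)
Your overall strategy is the paper's: pull the hooked downward strand through the upward strands using the relation in \eqref{eq:rel2} that carries the cup--cap correction (note this is the \emph{first} equation there, not the second), so that the fully straightened term closes up to $(h_{-m}\otimes x_1^a)(h_n\otimes 1)$ and the corrections produce the extra terms, with left-twist curls killing them when $m>n$ and \eqref{eq:dotted-curl} handling the case $n>m$. However, your bookkeeping of the correction terms has a genuine gap. The multiplicity $n$ cannot come from ``$n$ cyclic rotations of the dot position around the annulus'': a single diagram represents a single class in $\Tr(\H)$, and cyclic equivalence never multiplies a class by the number of its rotational presentations. In the correct computation the factor $n$ arises because the resolution is performed sequentially, once for each of the $n$ upward strands the downward hook passes, and \emph{each} of these $n$ resolution terms simplifies to the same correction ($0$, $-\tilde c_a$, or $-\sum_{f+g=a-1}\tilde c_f(h_{n-m}\otimes x_1^g)$, respectively), each carrying a single minus sign.

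Relatedly, your ``sum over subsets of crossings'' picture and the claim that in the $m=n$ case only the fully resolved term (all crossings turned into cup--caps) survives is incorrect: once one cup--cap is formed the hook strand is cut, so there are no further double crossings of that strand to resolve --- the surviving correction terms are the $n$ ``singleton'' resolutions, not the full one, and with your accounting the sign would be $(-1)^n$ and the factor $n$ unexplained. Similarly, in the case $n>m$ the assertion that ``only $m$ cup--cap resolutions can be performed'' misreads the diagram: below the dotted line only \emph{one} downward strand interacts with the upward strands, so there are $n$ resolution terms in all three cases; the trichotomy comes from how each single resolution term simplifies (a left-twist curl, hence $0$, when $m>n$; a dotted clockwise bubble $\tilde c_a$ when $m=n$; a dotted left-twist curl on the surviving $(n-m)$-cycle, reduced by \eqref{eq:dotted-curl}, when $n>m$). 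No induction on $m+n$ is needed once this is set up correctly.
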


\begin{proof} The claim follows by simplifying the diagram using the first equation in \eqref{eq:rel2}
\begin{equation}
\;\; = \;\;
\left[ \;
\hackcenter{
\begin{tikzpicture}[scale=0.8]
  \draw[thick,->] (6,2) .. controls ++(0,1.25) and ++(0,-1.75) .. (3,3.5);
  \draw[thick,->] (3,2) .. controls ++(0,1) and ++(0,-.7) .. (3.6,3.5);
  \draw[thick,->] (3.6,2) .. controls ++(0,1) and ++(0,-.7) .. (4.2,3.5);
  \draw[thick,->] (4.8,2) .. controls ++(0,1) and ++(0,-.7) .. (5.4,3.5);
  \draw[thick,->] (5.4,2) .. controls ++(0,1) and ++(0,-.7) .. (6,3.5);
  \node at (4.2,2.35) {$\dots$};
  \node at (4.8,3.15) {$\dots$};
  \draw[thick,<-] (2.4,2) .. controls ++(0,1.25) and ++(0,-1.75) .. (0,3.5) ;
  \draw[thick,<-] (0,2) .. controls ++(0,1) and ++(0,-.7) .. (.6,3.5);
  \draw[thick,<-] (.6,2) .. controls ++(0,1) and ++(0,-.7) .. (1.2,3.5);
  \draw[thick,<-] (1.8,2) .. controls ++(0,1) and ++(0,-.7) .. (2.4,3.5);
  \node at (1.2, 2.35) {$\dots$};
  \node at (1.8,3.15) {$\dots$};
    \draw[blue, dotted] (-0.4,2) -- (6.4,2);
   \draw[thick,<-] (5.4,1) .. controls ++(0,1.05) and ++(0,-.75) .. (2.4,2);
   \draw[thick,->] (5.4,1) .. controls ++(0,-1.05) and ++(0,.75) .. (2.4,0);
   \draw[thick,->] (3,0) .. controls ++(0,.6) and ++(0,-.5) .. (2.4,1);
   \draw[thick ]   (2.4,1) .. controls ++(0,.6) and ++(0,-.5) .. (3,2);
   \draw[thick,->] (3.6,0) .. controls ++(0,.6) and ++(0,-.5) .. (3,1);
   \draw[thick ]   (3,1) .. controls ++(0,.6) and ++(0,-.5) .. (3.6,2);
   \draw[thick,->] (4.8,0) .. controls ++(0,.6) and ++(0,-.5) .. (4.2,1);
   \draw[thick ]   (4.2,1) .. controls ++(0,.6) and ++(0,-.5) .. (4.8,2);
    \draw[thick,->] (5.4,0) .. controls ++(0,.6) and ++(0,-.5) .. (4.8,1);
   \draw[thick ]   (4.8,1) .. controls ++(0,.6) and ++(0,-.5) .. (5.4,2);
    \draw[thick,->] (6,0) .. controls ++(0,.6) and ++(0,-.5) .. (6,2);
   \draw[thick] (1.8,0) -- (1.8,2);
   \draw[thick] (0.6,0) -- (0.6,2);
   \draw[thick] (0.0,0) -- (0,2);
   \node at (1.2,.15) {$\dots$};
   \node at (4.2,.15) {$\dots$};
     \filldraw  (.05,3.05) circle (2pt);
     \node at (-0.2,3.15) {$\scs a$};
\end{tikzpicture}} \right]
\;\; -\;\;
\left[ \;
\hackcenter{
\begin{tikzpicture}[scale=0.8]
  \draw[thick,->] (6,2) .. controls ++(0,1.25) and ++(0,-1.75) .. (3,3.5);
  \draw[thick,->] (3,2) .. controls ++(0,1) and ++(0,-.7) .. (3.6,3.5);
  \draw[thick,->] (3.6,2) .. controls ++(0,1) and ++(0,-.7) .. (4.2,3.5);
  \draw[thick,->] (4.8,2) .. controls ++(0,1) and ++(0,-.7) .. (5.4,3.5);
  \draw[thick,->] (5.4,2) .. controls ++(0,1) and ++(0,-.7) .. (6,3.5);
  \node at (4.2,2.35) {$\dots$};
  \node at (4.8,3.15) {$\dots$};
  \draw[thick,<-] (2.4,2) .. controls ++(0,1.25) and ++(0,-1.75) .. (0,3.5) ;
  \draw[thick,<-] (0,2) .. controls ++(0,1) and ++(0,-.7) .. (.6,3.5);
  \draw[thick,<-] (.6,2) .. controls ++(0,1) and ++(0,-.7) .. (1.2,3.5);
  \draw[thick,<-] (1.8,2) .. controls ++(0,1) and ++(0,-.7) .. (2.4,3.5);
  \node at (1.2, 2.35) {$\dots$};
  \node at (1.8,3.15) {$\dots$};
    \draw[blue, dotted] (-0.4,2) -- (6.4,2);
   \draw[thick,<-] (6.0,2) .. controls ++(0,-.75) and ++(0,-.75) .. (2.4,2);
   \draw[thick,->] (6.0,0) .. controls ++(0,.75) and ++(0,1) .. (2.4,0);
   \draw[thick,->] (3,0) .. controls ++(0,.6) and ++(0,-.5) .. (2.4,1);
   \draw[thick ]   (2.4,1) .. controls ++(0,.6) and ++(0,-.5) .. (3,2);
   \draw[thick,->] (3.6,0) .. controls ++(0,.6) and ++(0,-.5) .. (3,1);
   \draw[thick ]   (3,1) .. controls ++(0,.6) and ++(0,-.5) .. (3.6,2);
   \draw[thick,->] (4.8,0) .. controls ++(0,.6) and ++(0,-.5) .. (4.2,1);
   \draw[thick ]   (4.2,1) .. controls ++(0,.6) and ++(0,-.5) .. (4.8,2);
    \draw[thick,->] (5.4,0) .. controls ++(0,.6) and ++(0,-.5) .. (4.8,1);
   \draw[thick ]   (4.8,1) .. controls ++(0,.6) and ++(0,-.5) .. (5.4,2);
   \draw[thick] (1.8,0) -- (1.8,2);
   \draw[thick] (0.6,0) -- (0.6,2);
   \draw[thick] (0.0,0) -- (0,2);
   \node at (1.2,.15) {$\dots$};
   \node at (4.2,.15) {$\dots$};
     \filldraw  (.05,3.05) circle (2pt);
     \node at (-0.2,3.15) {$\scs a$};
\end{tikzpicture}} \right]~. \label{eq:aaa}
\end{equation}
The second term unwinds substantially and simplifies to a diagram containing a left twist curl if $m>n$.  If $m=n$ this term reduces to a counter-clockwise bubble with $a$ dots.  Finally, if $n>m$ then the diagram reduces to $-h_{n-m} \otimes 1$ together with a left twist curl containing $a$ dots inside the curl on the first strand.  Reducing this dotted curl using \eqref{eq:dotted-curl} shows that in this case the diagram reduces to $-\sum_{f+g=a-1}\tilde{c}_f(h_{n-m}\otimes x_1^a)$.

Each time the downward oriented strand is pulled through an upward oriented strand in the bottom half of the first diagram in \eqref{eq:aaa} the resolution term will simplify to a diagram containing a left twist curl if $m>n$, to $\tilde{c}_a$ if $m=n$, and to the sum $-\sum_{f+g=a-1}\tilde{c}_f(h_{n-m}\otimes x_1^a)$ when $n>m$.  Since there are $n$ such upward oriented strands the result follows.
\end{proof}

\begin{lemma} \label{lem:alem2}
For any $a \geq 0$ we have
\[
\left[ \;
\hackcenter{
\begin{tikzpicture}[scale=0.7]
  \draw[thick] (3,0) .. controls ++(0,1.25) and ++(0,-1.75) .. (0,1.5);
  \draw[thick] (0,0) .. controls ++(0,1) and ++(0,-.7) .. (.6,1.5);
  \draw[thick] (.6,0) .. controls ++(0,1) and ++(0,-.7) .. (1.2,1.5);
  \draw[thick] (1.8,0) .. controls ++(0,1) and ++(0,-.7) .. (2.4,1.5);
  \draw[thick] (2.4,0) .. controls ++(0,1) and ++(0,-.7) .. (3,1.5);
  \node at (1.2,.35) {$\dots$};
  \node at (1.8,1.15) {$\dots$};
  \draw[thick] (6,0) .. controls ++(0,1.25) and ++(0,-1.75) .. (3.6,1.5) ;
  \draw[thick] (3.6,0) .. controls ++(0,1) and ++(0,-.7) .. (4.2,1.5);
  \draw[thick] (4.2,0) .. controls ++(0,1) and ++(0,-.7) .. (4.8,1.5);
  \draw[thick] (5.4,0) .. controls ++(0,1) and ++(0,-.7) .. (6,1.5);
  \node at (4.8,-.65) {$\dots$};
  \node at (5.4,1.85) {$\dots$};
    \draw[blue, dotted] (-0.4,0) -- (6.4,0);
    \draw[blue, dotted] (-0.4,1.5) -- (6.4,1.5);
   \draw[thick,->]  (3,1.5) .. controls ++(0,0.5) and ++(0,-.5) .. (3.6,2.5);
   \draw[thick] (3.6,1.5) .. controls ++(0,1) and ++(0,-1) .. (0,2.5);
   \draw[thick,->]  (2.4,1.5) .. controls ++(0,0.5) and ++(0,-.5) .. (3,2.5);
   \draw[thick,->]  (1.2,1.5) .. controls ++(0,0.5) and ++(0,-.5) .. (1.8,2.5);
   \draw[thick,->]  (.6,1.5) .. controls ++(0,0.5) and ++(0,-.5) .. (1.2,2.5);
   \draw[thick,->]  (0,1.5) .. controls ++(0,0.5) and ++(0,-.5) .. (.6,2.5);
   \draw[thick] (4.2,1.5) -- (4.2,2.5);
   \draw[thick] (4.8,1.5) -- (4.8,2.5);
   \draw[thick] (6,1.5) -- (6,2.5);
     \filldraw  (.15,2.2) circle (2pt);
     \node at (-0.15,2.3) {$\scs a$};
   \draw[thick]  (2.4,0) .. controls ++(0,-.5) and ++(0,.5) .. (3,-1);
   \draw[thick]  (1.8,0) .. controls ++(0,-.5) and ++(0,.5) .. (2.4,-1);
   \draw[thick]  (.6,0) .. controls ++(0,-.5) and ++(0,.5) .. (1.2,-1);
   \draw[thick]  (0,0) .. controls ++(0,-.5) and ++(0,.5) .. (.6,-1);
   \draw[thick]  (3,0) .. controls ++(0,-.5) and ++(0,.5) .. (3.6,-1);
  \draw[thick ,->] (3.6,0) .. controls ++(0,-1) and ++(0,+1) .. (0,-1);
   \draw[thick,->] (4.2,0) -- (4.2,-1);
   \draw[thick,->] (5.4,0) -- (5.4,-1);
   \draw[thick,->] (6,0) -- (6,-1);
\end{tikzpicture}}
\right]
 =
\left\{
  \begin{array}{ll}
    (h_{-m}\otimes x_1^a)(h_n\otimes 1), & \hbox{if $m > n \geq 1$;} \\
    (h_{-m}\otimes x_1^a)(h_n\otimes 1)-n\tilde{c}_{a}, & \hbox{if $m=n$;} \\
    (h_{-m}\otimes x_1^a)(h_n\otimes 1)
     - n   \xy
  (0,.4)*{\sum};
  (0,-5.6)*{\scs a-1};
  (0,-2.9)*{\scs f+g=};
  \endxy
   \tilde{c}_{f} (h_{n-m}\otimes x_1^g)
     , & \hbox{if $n>m$,}
  \end{array}
\right.
\]
where the left side involves $m$ downward strands and $n$ upward strands.
\end{lemma}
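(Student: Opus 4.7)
The plan is to reduce this lemma directly to Lemma \ref{lem:mn-onetangle} using the cyclicity of the trace together with planar isotopy on the annulus. The diagram in Lemma \ref{lem:alem2} can be read as a composition of three horizontal slabs: a bottom slab of caps (producing the lower halves of $n$ ``small'' cap--cup pairs joined to $m$ outgoing downward strands via the long arcs on the right/left), a middle slab consisting of vertical strands, and a top slab of cups (the mirror picture). After wrapping this endomorphism into its annular closure, the outer cup on one side and the outer cap on the other become isotopic to the long over-crossing arcs appearing in the diagram of Lemma \ref{lem:mn-onetangle}.

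First, I would use the trace property $[fg]=[gf]$ to cyclically permute the top slab of cups past the bottom slab of caps through the annular closure. This moves the ``outer'' long arcs (the ones joining the downward part to the upward part) into a single slab, so that the diagram becomes the composition of the upward $h_n$-pattern placed next to the downward $h_{-m}$-pattern, with a single tangle of $n$ upward-oriented strands threading behind (or in front of) $m$ downward-oriented strands. This is exactly the tangle appearing inside the trace in Lemma \ref{lem:mn-onetangle}. The single dot, being placed on the distinguished downward strand (the one marked with $a$), is carried along unchanged by the isotopy.

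Second, I would verify that the orientations and the position of the dot match: the arrow conventions in the statement (downward arrows on the left group, upward arrows on the right group, with the dot on the leftmost downward arc emerging at the top) are preserved by the isotopy described above, and the relative order of upward vs.\ downward strands in the middle tangle is the same as in Lemma \ref{lem:mn-onetangle}. Having established diagrammatic equality in $\Tr(\H)$, the result now follows immediately from Lemma \ref{lem:mn-onetangle}, yielding the three cases according to whether $m>n$, $m=n$, or $n>m$.

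The main (minor) obstacle is bookkeeping: carefully checking that the cyclic rotation does not reorder the strands in a way that introduces extra crossings. This is avoided by rotating through the ``trivial'' side of the annulus where no strands cross, so the rotation is a genuine planar isotopy on the annulus and introduces no corrections. Once this is confirmed, no further computation is needed and the lemma reduces to an invocation of the previous one.
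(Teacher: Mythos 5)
Your reduction to Lemma~\ref{lem:mn-onetangle} is the right target, but the bridge you propose -- trace cyclicity plus planar isotopy on the annulus, ``introducing no corrections'' -- does not exist. In $\H$ the crossings are generating morphisms, not topological crossings, so any rearrangement that slides one strand past a crossing or cancels/creates a pair of crossings is an application of the relations \eqref{eq:rel1}--\eqref{eq:rel2}, not an isotopy; and the oppositely-oriented double-crossing relation (the first equation in \eqref{eq:rel2}) carries a cap--cup correction term. One can see concretely that no isotopy (even combined with cyclicity) relates the two annular diagrams when $m\geq 2$: follow the single closed downward strand around the annulus and record the cyclic sequence of dots and crossing-incidences along it. In the diagram of Lemma~\ref{lem:alem2} this sequence is (dot, $n$ mixed crossings, all down--down incidences, $n$ mixed crossings), so the dot is flanked on both sides by mixed crossings and the two blocks of $n$ mixed crossings are separated by down--down crossings; in the diagram of Lemma~\ref{lem:mn-onetangle} it is (dot, down--down incidences, $2n$ consecutive mixed crossings, down--down incidences), so the dot is flanked by down--down crossings and all $2n$ mixed crossings are consecutive. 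Isotopy in the annulus and the trace relation preserve this cyclic data, so the two diagrams are genuinely different as annular diagrams, and your ``no further computation is needed'' step is exactly where the content of the lemma lives. (A symptom of the misreading: the diagram has no cups or caps at all -- every strand runs from boundary to boundary -- so there is no ``slab of cups'' to rotate through a crossing-free region.)

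What is actually required, and what the paper does, is a sequence of moves inside $\H$: the correction-free half of \eqref{eq:rel2} together with cyclicity, triple-point moves, and then repeated use of the first equation in \eqref{eq:rel2}, whose resolution terms must each be shown to vanish because they contain a left twist curl (zero by \eqref{eq:rel3}). Only after all of these does the diagram become the one in Lemma~\ref{lem:mn-onetangle}; indeed the case distinctions $m>n$, $m=n$, $n>m$ in the final formula are produced by exactly such resolution terms, so an argument that never resolves a crossing cannot be complete. To repair your proof you would need either to carry out this bookkeeping (tracking and killing each correction term), or to find a different exact invariant argument -- but not a claimed isotopy, which is false for $m\geq 2$.
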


\begin{proof}
By repeatedly applying the second equation in \eqref{eq:rel2} together with the trace relation it follows that
\[
\left[ \;
\hackcenter{
\begin{tikzpicture}[scale=0.8]
  \draw[thick] (3,0) .. controls ++(0,1.25) and ++(0,-1.75) .. (0,1.5);
  \draw[thick] (0,0) .. controls ++(0,1) and ++(0,-.7) .. (.6,1.5);
  \draw[thick] (.6,0) .. controls ++(0,1) and ++(0,-.7) .. (1.2,1.5);
  \draw[thick] (1.8,0) .. controls ++(0,1) and ++(0,-.7) .. (2.4,1.5);
  \draw[thick] (2.4,0) .. controls ++(0,1) and ++(0,-.7) .. (3,1.5);
  \node at (1.2,.35) {$\dots$};
  \node at (1.8,1.15) {$\dots$};
  \draw[thick] (6,0) .. controls ++(0,1.25) and ++(0,-1.75) .. (3.6,1.5) ;
  \draw[thick] (3.6,0) .. controls ++(0,1) and ++(0,-.7) .. (4.2,1.5);
  \draw[thick] (4.2,0) .. controls ++(0,1) and ++(0,-.7) .. (4.8,1.5);
  \draw[thick] (5.4,0) .. controls ++(0,1) and ++(0,-.7) .. (6,1.5);
  \node at (4.8,-.65) {$\dots$};
  \node at (5.4,1.85) {$\dots$};
    \draw[blue, dotted] (-0.4,0) -- (6.4,0);
    \draw[blue, dotted] (-0.4,1.5) -- (6.4,1.5);
   \draw[thick,->]  (3,1.5) .. controls ++(0,0.5) and ++(0,-.5) .. (3.6,2.5);
   \draw[thick] (3.6,1.5) .. controls ++(0,1) and ++(0,-1) .. (0,2.5);
   \draw[thick,->]  (2.4,1.5) .. controls ++(0,0.5) and ++(0,-.5) .. (3,2.5);
   \draw[thick,->]  (1.2,1.5) .. controls ++(0,0.5) and ++(0,-.5) .. (1.8,2.5);
   \draw[thick,->]  (.6,1.5) .. controls ++(0,0.5) and ++(0,-.5) .. (1.2,2.5);
   \draw[thick,->]  (0,1.5) .. controls ++(0,0.5) and ++(0,-.5) .. (.6,2.5);
   \draw[thick] (4.2,1.5) -- (4.2,2.5);
   \draw[thick] (4.8,1.5) -- (4.8,2.5);
   \draw[thick] (6,1.5) -- (6,2.5);
     \filldraw  (.15,2.2) circle (2pt);
     \node at (-0.15,2.3) {$\scs a$};
   \draw[thick]  (2.4,0) .. controls ++(0,-.5) and ++(0,.5) .. (3,-1);
   \draw[thick]  (1.8,0) .. controls ++(0,-.5) and ++(0,.5) .. (2.4,-1);
   \draw[thick]  (.6,0) .. controls ++(0,-.5) and ++(0,.5) .. (1.2,-1);
   \draw[thick]  (0,0) .. controls ++(0,-.5) and ++(0,.5) .. (.6,-1);
   \draw[thick]  (3,0) .. controls ++(0,-.5) and ++(0,.5) .. (3.6,-1);
  \draw[thick ,->] (3.6,0) .. controls ++(0,-1) and ++(0,+1) .. (0,-1);
   \draw[thick,->] (4.2,0) -- (4.2,-1);
   \draw[thick,->] (5.4,0) -- (5.4,-1);
   \draw[thick,->] (6,0) -- (6,-1);
\end{tikzpicture}} \right]
\;\; = \;\;
\left[ \;
\hackcenter{
\begin{tikzpicture}[scale=0.8]
  \draw[thick] (3,0) .. controls ++(0,1.25) and ++(0,-1.75) .. (0,1.5);
  \draw[thick] (0,0) .. controls ++(0,1) and ++(0,-.7) .. (.6,1.5);
  \draw[thick] (.6,0) .. controls ++(0,1) and ++(0,-.7) .. (1.2,1.5);
  \draw[thick] (1.8,0) .. controls ++(0,1) and ++(0,-.7) .. (2.4,1.5);
  \draw[thick] (2.4,0) .. controls ++(0,1) and ++(0,-.7) .. (3,1.5);
  \node at (1.2,.25) {$\dots$};
  \node at (1.8,1.15) {$\dots$};
  \draw[thick] (6,0) .. controls ++(0,1.25) and ++(0,-1.75) .. (3.6,1.5) ;
  \draw[thick] (3.6,0) .. controls ++(0,1) and ++(0,-.7) .. (4.2,1.5);
  \draw[thick] (4.2,0) .. controls ++(0,1) and ++(0,-.7) .. (4.8,1.5);
  \draw[thick] (5.4,0) .. controls ++(0,1) and ++(0,-.7) .. (6,1.5);
  \node at (4.8,.25) {$\dots$};
  \node at (5.4,1.15) {$\dots$};
    \draw[blue, dotted] (-0.4,0) -- (6.4,0);
    \draw[blue, dotted] (-0.4,1.5) -- (6.4,1.5);
   \draw[thick,<-] (3.6,1.5) .. controls ++(0,.55) and ++(0,-1.35) .. (0,3.5);
   \draw[thick,<-] (4.2,1.5) .. controls ++(0,.75) and ++(0,-1.15) .. (.6,3.5);
   \draw[thick,<-] (4.8,1.5) .. controls ++(0,.85) and ++(0,-.95) .. (1.2,3.5);;
   \draw[thick,<-] (6.0,1.5) .. controls ++(0,1.05) and ++(0,-.75) .. (2.4,3.5);;
   \draw[thick,->] (3.0,1.5) .. controls ++(0.1,0.65) and ++(-0.1,-.85) .. (6,3.5);
   \draw[thick,->] (2.4,1.5) .. controls ++(0.1,0.85) and ++(-0.1,-.65) .. (5.4,3.5);
   \draw[thick,->] (1.2,1.5) .. controls ++(0.1,1.05) and ++(-0.1,-.85) .. (4.2,3.5);
   \draw[thick,->] (0.6,1.5) .. controls ++(0.1,1.25) and ++(-0.1,-.65) .. (3.6,3.5);
   \draw[thick,->] (0.0,1.5) .. controls ++(0.1,1.35) and ++(-0.1,-.45) .. (3,3.5);
     \filldraw  (.05,3.2) circle (2pt);
     \node at (-0.2,3.3) {$\scs a$};
   \draw[thick,<-]  (3,0) .. controls ++(0,-.65) and ++(0,.85) .. (6,-2);
   \draw[thick,<-]  (2.4,0) .. controls ++(0,-.75) and ++(0,.75) .. (5.4,-2);
   \draw[thick,<-]  (1.8,0) .. controls ++(0,-.85) and ++(0,.65) .. (4.8,-2);
   \draw[thick,<-]  (.6,0) .. controls ++(0,-1.15) and ++(0,.65) .. (3.6,-2);
   \draw[thick,<-]  (0,0) .. controls ++(0,-1.25) and ++(0,.45) .. (3,-2);
  \draw[thick ,->] (3.6,0) .. controls ++(0,-.55) and ++(0,1.35) .. (0,-2);
   \draw[thick,->] (4.2,0) .. controls ++(0,-.75) and ++(0,1.15) .. (0.6,-2);
   \draw[thick,->] (5.4,0) .. controls ++(0,-.85) and ++(0,.95) .. (1.8,-2);
   \draw[thick,->] (6,0)   .. controls ++(0,-1.05) and ++(0,.75) .. (2.4,-2);
\end{tikzpicture}} \right] ~.
\]
Then using triple point moves to slide the crossings in the middle third of the diagram through the strands at the top third, the diagram becomes
\[
\;\; = \;\;
\left[ \;
\hackcenter{
\begin{tikzpicture}[scale=0.8]
  \draw[thick,->] (6,2) .. controls ++(0,1.25) and ++(0,-1.75) .. (3,3.5);
  \draw[thick,->] (3,2) .. controls ++(0,1) and ++(0,-.7) .. (3.6,3.5);
  \draw[thick,->] (3.6,2) .. controls ++(0,1) and ++(0,-.7) .. (4.2,3.5);
  \draw[thick,->] (4.8,2) .. controls ++(0,1) and ++(0,-.7) .. (5.4,3.5);
  \draw[thick,->] (5.4,2) .. controls ++(0,1) and ++(0,-.7) .. (6,3.5);
  \node at (4.2,2.35) {$\dots$};
  \node at (4.8,3.15) {$\dots$};
  \draw[thick] (2.4,2) .. controls ++(0,1.25) and ++(0,-1.75) .. (0,3.5) ;
  \draw[thick] (0,2) .. controls ++(0,1) and ++(0,-.7) .. (.6,3.5);
  \draw[thick] (.6,2) .. controls ++(0,1) and ++(0,-.7) .. (1.2,3.5);
  \draw[thick] (1.8,2) .. controls ++(0,1) and ++(0,-.7) .. (2.4,3.5);
  \node at (1.2, 2.35) {$\dots$};
  \node at (1.8,3.15) {$\dots$};
    \draw[blue, dotted] (-0.4,0) -- (6.4,0);
    \draw[blue, dotted] (-0.4,2) -- (6.4,2);
   \draw[thick,<-] (3.6,0) .. controls ++(0,.55) and ++(0,-1.35) .. (0,2);
   \draw[thick,<-] (4.2,0) .. controls ++(0,.75) and ++(0,-1.15) .. (.6,2);
   \draw[thick,<-] (5.4,0) .. controls ++(0,.85) and ++(0,-.95) .. (1.8,2);;
   \draw[thick,<-] (6.0,0) .. controls ++(0,1.05) and ++(0,-.75) .. (2.4,2);;
   \draw[thick] (3.0,0) .. controls ++(0.1,0.65) and ++(-0.1,-.85) .. (6,2);
   \draw[thick] (2.4,0) .. controls ++(0.1,0.85) and ++(-0.1,-.65) .. (5.4,2);
   \draw[thick] (1.8,0) .. controls ++(0.1,1.05) and ++(-0.1,-.85) .. (4.8,2);
   \draw[thick] (0.6,0) .. controls ++(0.1,1.25) and ++(-0.1,-.65) .. (3.6,2);
   \draw[thick] (0.0,0) .. controls ++(0.1,1.35) and ++(-0.1,-.45) .. (3,2);
   \node at (1.2,.15) {$\dots$};
   \node at (4.8,.15) {$\dots$};
     \filldraw  (.05,3.05) circle (2pt);
     \node at (-0.2,3.15) {$\scs a$};
   \draw[thick,<-]  (3,0) .. controls ++(0,-.65) and ++(0,.85) .. (6,-2);
   \draw[thick,<-]  (2.4,0) .. controls ++(0,-.75) and ++(0,.75) .. (5.4,-2);
   \draw[thick,<-]  (1.8,0) .. controls ++(0,-.85) and ++(0,.65) .. (4.8,-2);
   \draw[thick,<-]  (.6,0) .. controls ++(0,-1.15) and ++(0,.65) .. (3.6,-2);
   \draw[thick,<-]  (0,0) .. controls ++(0,-1.25) and ++(0,.45) .. (3,-2);
  \draw[thick ,->] (3.6,0) .. controls ++(0,-.55) and ++(0,1.35) .. (0,-2);
   \draw[thick,->] (4.2,0) .. controls ++(0,-.75) and ++(0,1.15) .. (0.6,-2);
   \draw[thick,->] (5.4,0) .. controls ++(0,-.85) and ++(0,.95) .. (1.8,-2);
   \draw[thick,->] (6,0)   .. controls ++(0,-1.05) and ++(0,.75) .. (2.4,-2);
    \node at (1.2,-1.75) {$\dots$};
   \node at (4.2,-1.75) {$\dots$};
\end{tikzpicture}} \right]
\;\; = \;\;
\left[ \;
\hackcenter{
\begin{tikzpicture}[scale=0.8]
  \draw[thick,->] (6,2) .. controls ++(0,1.25) and ++(0,-1.75) .. (3,3.5);
  \draw[thick,->] (3,2) .. controls ++(0,1) and ++(0,-.7) .. (3.6,3.5);
  \draw[thick,->] (3.6,2) .. controls ++(0,1) and ++(0,-.7) .. (4.2,3.5);
  \draw[thick,->] (4.8,2) .. controls ++(0,1) and ++(0,-.7) .. (5.4,3.5);
  \draw[thick,->] (5.4,2) .. controls ++(0,1) and ++(0,-.7) .. (6,3.5);
  \node at (4.2,2.35) {$\dots$};
  \node at (4.8,3.15) {$\dots$};
  \draw[thick] (2.4,2) .. controls ++(0,1.25) and ++(0,-1.75) .. (0,3.5) ;
  \draw[thick] (0,2) .. controls ++(0,1) and ++(0,-.7) .. (.6,3.5);
  \draw[thick] (.6,2) .. controls ++(0,1) and ++(0,-.7) .. (1.2,3.5);
  \draw[thick] (1.8,2) .. controls ++(0,1) and ++(0,-.7) .. (2.4,3.5);
  \node at (1.2, 2.35) {$\dots$};
  \node at (1.8,3.15) {$\dots$};
    \draw[blue, dotted] (-0.4,3.5) -- (6.4,3.5);
    \draw[blue, dotted] (-0.4,2) -- (6.4,2);
   \draw[thick,<-] (3.6,0) .. controls ++(0,.55) and ++(0,-1.35) .. (0,2);
   \draw[thick,<-] (4.2,0) .. controls ++(0,.75) and ++(0,-1.15) .. (.6,2);
   \draw[thick,<-] (5.4,0) .. controls ++(0,.85) and ++(0,-.95) .. (1.8,2);;
   \draw[thick,<-] (6.0,0) .. controls ++(0,1.05) and ++(0,-.75) .. (2.4,2);;
   \draw[thick] (3.0,0) .. controls ++(0.1,0.65) and ++(-0.1,-.85) .. (6,2);
   \draw[thick] (2.4,0) .. controls ++(0.1,0.85) and ++(-0.1,-.65) .. (5.4,2);
   \draw[thick] (1.8,0) .. controls ++(0.1,1.05) and ++(-0.1,-.85) .. (4.8,2);
   \draw[thick] (0.6,0) .. controls ++(0.1,1.25) and ++(-0.1,-.65) .. (3.6,2);
   \draw[thick] (0.0,0) .. controls ++(0.1,1.35) and ++(-0.1,-.45) .. (3,2);
   \node at (1.2,.15) {$\dots$};
   \node at (4.8,.15) {$\dots$};
     \filldraw  (.05,3.05) circle (2pt);
     \node at (-0.2,3.15) {$\scs a$};
   \draw[thick,<-]  (3.0,5.5) .. controls ++(0,-.65) and ++(0,.85) .. (6,3.5);
   \draw[thick,<-]  (2.4,5.5) .. controls ++(0,-.75) and ++(0,.75) .. (5.4,3.5);
   \draw[thick,<-]  (1.2,5.5) .. controls ++(0,-.85) and ++(0,.65) .. (4.2,3.5);
   \draw[thick,<-]  (0.6,5.5) .. controls ++(0,-1.15) and ++(0,.65) ..(3.6,3.5);
   \draw[thick,<-]  (0.0,5.5) .. controls ++(0,-1.25) and ++(0,.45) ..(3.0,3.5);
  \draw[thick ,->]  (3.6,5.5) .. controls ++(0,-.55) and ++(0,1.35) ..(0.0,3.5);
   \draw[thick,->]  (4.2,5.5) .. controls ++(0,-.75) and ++(0,1.15) ..(0.6,3.5);
   \draw[thick,->]  (4.8,5.5) .. controls ++(0,-.85) and ++(0,.95) .. (1.2,3.5);
   \draw[thick,->]  (6.0,5.5) .. controls ++(0,-1.05) and ++(0,.75) ..(2.4,3.5);
    \node at (1.8,3.75) {$\dots$};
   \node at (4.8,3.75) {$\dots$};
\end{tikzpicture}} \right]
\]
Sliding one layer of crossings from the top third to the bottom third using the trace relation gives
\[
\;\; = \;\;
\left[ \;
\hackcenter{
\begin{tikzpicture}[scale=0.8]
  \draw[thick,->] (6,2) .. controls ++(0,.85) and ++(0,-.9) .. (3,3);
  \draw[thick,->] (3,2) .. controls ++(0,.6) and ++(0,-.5) .. (3.6,3);
  \draw[thick,->] (3.6,2) .. controls ++(0,.6) and ++(0,-.5) .. (4.2,3);
  \draw[thick,->] (4.8,2) .. controls ++(0,.6) and ++(0,-.5) .. (5.4,3);
  \draw[thick,->] (5.4,2) .. controls ++(0,.6) and ++(0,-.5) .. (6,3);
  \node at (4.2,2.15) {$\dots$};
  \draw[thick] (2.4,2) .. controls ++(0,.85) and ++(0,-.9) .. (0,3) ;
  \draw[thick] (0,2) .. controls ++(0,.6) and ++(0,-.5) .. (.6,3);
  \draw[thick] (.6,2) .. controls ++(0,.6) and ++(0,-.5) .. (1.2,3);
  \draw[thick] (1.8,2) .. controls ++(0,.6) and ++(0,-.5) .. (2.4,3);
  \node at (1.2, 2.15) {$\dots$};
  \node at (1.8,3.15) {$\dots$};
    \draw[blue, dotted] (-0.4,3) -- (6.4,3);
    \draw[blue, dotted] (-0.4,2) -- (6.4,2);
   \draw[thick,<-] (3.6,0) .. controls ++(0,.55) and ++(0,-1.35) .. (0,2);
   \draw[thick,<-] (4.2,0) .. controls ++(0,.75) and ++(0,-1.15) .. (.6,2);
   \draw[thick,<-] (5.4,0) .. controls ++(0,.85) and ++(0,-.95) .. (1.8,2);;
   \draw[thick,<-] (6.0,0) .. controls ++(0,1.05) and ++(0,-.75) .. (2.4,2);;
   \draw[thick] (3.0,0) .. controls ++(0.1,0.65) and ++(-0.1,-.85) .. (6,2);
   \draw[thick] (2.4,0) .. controls ++(0.1,0.85) and ++(-0.1,-.65) .. (5.4,2);
   \draw[thick] (1.8,0) .. controls ++(0.1,1.05) and ++(-0.1,-.85) .. (4.8,2);
   \draw[thick] (0.6,0) .. controls ++(0.1,1.25) and ++(-0.1,-.65) .. (3.6,2);
   \draw[thick] (0.0,0) .. controls ++(0.1,1.35) and ++(-0.1,-.45) .. (3,2);
   \node at (1.2,.15) {$\dots$};
   \node at (4.8,.15) {$\dots$};
     \filldraw  (0,3.1) circle (2pt);
     \node at (-0.24,3.2) {$\scs a$};
   \draw[thick,<-]  (3.0,5) .. controls ++(0,-.65) and ++(0,.85) .. (6,3);
   \draw[thick,<-]  (2.4,5) .. controls ++(0,-.75) and ++(0,.75) .. (5.4,3);
   \draw[thick,<-]  (1.2,5) .. controls ++(0,-.85) and ++(0,.65) .. (4.2,3);
   \draw[thick,<-]  (0.6,5) .. controls ++(0,-1.15) and ++(0,.65) ..(3.6,3);
   \draw[thick,<-]  (0.0,5) .. controls ++(0,-1.25) and ++(0,.45) ..(3.0,3);
  \draw[thick ,->]  (3.6,5) .. controls ++(0,-.55) and ++(0,1.35) ..(0.0,3);
   \draw[thick,->]  (4.2,5) .. controls ++(0,-.75) and ++(0,1.15) ..(0.6,3);
   \draw[thick,->]  (4.8,5) .. controls ++(0,-.85) and ++(0,.95) .. (1.2,3);
   \draw[thick,->]  (6.0,5) .. controls ++(0,-1.05) and ++(0,.75) ..(2.4,3);
    \node at (1.8,4.75) {$\dots$};
   \node at (5.4,4.75) {$\dots$};
\end{tikzpicture}} \right]
\;\; = \;\;
\left[ \;
\hackcenter{
\begin{tikzpicture}[scale=0.8]
  \draw[thick,->] (6,2) .. controls ++(0,.85) and ++(0,-.9) .. (3,3);
  \draw[thick,->] (3,2) .. controls ++(0,.6) and ++(0,-.5) .. (3.6,3);
  \draw[thick,->] (3.6,2) .. controls ++(0,.6) and ++(0,-.5) .. (4.2,3);
  \draw[thick,->] (4.8,2) .. controls ++(0,.6) and ++(0,-.5) .. (5.4,3);
  \draw[thick,->] (5.4,2) .. controls ++(0,.6) and ++(0,-.5) .. (6,3);
  \node at (4.2,2.15) {$\dots$};
  \draw[thick] (2.4,2) .. controls ++(0,.85) and ++(0,-.9) .. (0,3) ;
  \draw[thick] (0,2) .. controls ++(0,.6) and ++(0,-.5) .. (.6,3);
  \draw[thick] (.6,2) .. controls ++(0,.6) and ++(0,-.5) .. (1.2,3);
  \draw[thick] (1.8,2) .. controls ++(0,.6) and ++(0,-.5) .. (2.4,3);
  \node at (1.2, 2.15) {$\dots$};
  \node at (1.8,3.15) {$\dots$};
    \draw[blue, dotted] (-0.4,3) -- (6.4,3);
    \draw[blue, dotted] (-0.4,2) -- (6.4,2);
   \draw[thick] (3.6,0) .. controls ++(0,.55) and ++(0,-1.35) .. (0,2);
   \draw[thick] (4.2,0) .. controls ++(0,.75) and ++(0,-1.15) .. (.6,2);
   \draw[thick] (5.4,0) .. controls ++(0,.85) and ++(0,-.95) .. (1.8,2);;
   \draw[thick] (6.0,0) .. controls ++(0,1.05) and ++(0,-.75) .. (2.4,2);;
   \draw[thick] (3.0,0) .. controls ++(0.1,0.65) and ++(-0.1,-.85) .. (6,2);
   \draw[thick] (2.4,0) .. controls ++(0.1,0.85) and ++(-0.1,-.65) .. (5.4,2);
   \draw[thick] (1.8,0) .. controls ++(0.1,1.05) and ++(-0.1,-.85) .. (4.8,2);
   \draw[thick] (0.6,0) .. controls ++(0.1,1.25) and ++(-0.1,-.65) .. (3.6,2);
   \draw[thick] (0.0,0) .. controls ++(0.1,1.35) and ++(-0.1,-.45) .. (3,2);
   \node at (1.2,.15) {$\dots$};
   \node at (4.8,.15) {$\dots$};
     \filldraw  (0,3.1) circle (2pt);
     \node at (-0.24,3.2) {$\scs a$};
   \draw[thick,<-]  (3.6,5) .. controls ++(0,-.65) and ++(0,.85) .. (6,3);
   \draw[thick,<-]  (3,5) .. controls ++(0,-.75) and ++(0,.75) .. (5.4,3);
   \draw[thick,<-]  (1.8,5) .. controls ++(0,-.85) and ++(0,.65) .. (4.2,3);
   \draw[thick,<-]  (1.2,5) .. controls ++(0,-1.15) and ++(0,.65) ..(3.6,3);
   \draw[thick,<-]  (0.6,5) .. controls ++(0,-1.25) and ++(0,.45) ..(3.0,3);
  \draw[thick ,->]  (0,5) -- (0.0,3);
   \draw[thick,->]  (4.2,5) .. controls ++(0,-.75) and ++(0,1.15) ..(0.6,3);
   \draw[thick,->]  (4.8,5) .. controls ++(0,-.85) and ++(0,.95) .. (1.2,3);
   \draw[thick,->]  (6.0,5) .. controls ++(0,-1.05) and ++(0,.75) ..(2.4,3);
    \node at (2.4,4.75) {$\dots$};
   \node at (5.4,4.75) {$\dots$};
    \draw[thick,->]  (3.6,0) .. controls ++(0,-.65) and ++(0,.85) .. (0,-1);
    \draw[thick,<-]     (0,0) .. controls ++(0,-.6) and ++(0,.5) .. (.6,-1);
    \draw[thick,<-]    (0.6,0) .. controls ++(0,-.6) and ++(0,.5) .. (1.2,-1);
    \draw[thick,<-]    (1.8,0) .. controls ++(0,-.6) and ++(0,.5) .. (2.4,-1);
    \draw[thick,<-]    (2.4,0) .. controls ++(0,-.6) and ++(0,.5) .. (3,-1);
   \draw[thick,<-]    (3,0) .. controls ++(0,-.6) and ++(0,.5) .. (3.6,-1);
    \draw[thick,<-] (4.2,-1) -- (4.2,0);
    \draw[thick,<-] (5.4,-1) -- (5.4,0);
    \draw[thick,<-] (6,-1) -- (6,0);
\end{tikzpicture}} \right]
\]
Applying the first equation in \eqref{eq:rel2} in the bottom third of the diagram only one term survives
\[
\;\; = \;\;
\left[ \;
\hackcenter{
\begin{tikzpicture}[scale=0.8]
  \draw[thick,->] (6,2) .. controls ++(0,.85) and ++(0,-.9) .. (3,3);
  \draw[thick,->] (3,2) .. controls ++(0,.6) and ++(0,-.5) .. (3.6,3);
  \draw[thick,->] (3.6,2) .. controls ++(0,.6) and ++(0,-.5) .. (4.2,3);
  \draw[thick,->] (4.8,2) .. controls ++(0,.6) and ++(0,-.5) .. (5.4,3);
  \draw[thick,->] (5.4,2) .. controls ++(0,.6) and ++(0,-.5) .. (6,3);
  \node at (4.2,2.15) {$\dots$};
  \draw[thick] (2.4,2) .. controls ++(0,.85) and ++(0,-.9) .. (0,3) ;
  \draw[thick] (0,2) .. controls ++(0,.6) and ++(0,-.5) .. (.6,3);
  \draw[thick] (.6,2) .. controls ++(0,.6) and ++(0,-.5) .. (1.2,3);
  \draw[thick] (1.8,2) .. controls ++(0,.6) and ++(0,-.5) .. (2.4,3);
  \node at (1.2, 2.15) {$\dots$};
  \node at (1.8,3.15) {$\dots$};
    \draw[blue, dotted] (-0.4,3) -- (6.4,3);
    \draw[blue, dotted] (-0.4,2) -- (6.4,2);
   \draw[thick,<-] (3.0,0) .. controls ++(0,.55) and ++(0,-1.35) .. (0,2);
   \draw[thick] (4.2,0) .. controls ++(0,.75) and ++(0,-1.15) .. (.6,2);
   \draw[thick] (5.4,0) .. controls ++(0,.85) and ++(0,-.95) .. (1.8,2);;
   \draw[thick] (6.0,0) .. controls ++(0,1.05) and ++(0,-.75) .. (2.4,2);;
   \draw[thick] (3.6,0) .. controls ++(0.1,0.65) and ++(-0.1,-.85) .. (6,2);
   \draw[thick] (2.4,0) .. controls ++(0.1,0.85) and ++(-0.1,-.65) .. (5.4,2);
   \draw[thick] (1.8,0) .. controls ++(0.1,1.05) and ++(-0.1,-.85) .. (4.8,2);
   \draw[thick] (0.6,0) .. controls ++(0.1,1.25) and ++(-0.1,-.65) .. (3.6,2);
   \draw[thick] (0.0,0) .. controls ++(0.1,1.35) and ++(-0.1,-.45) .. (3,2);
   \node at (1.2,.15) {$\dots$};
   \node at (4.8,.15) {$\dots$};
     \filldraw  (0,3.1) circle (2pt);
     \node at (-0.24,3.2) {$\scs a$};
   \draw[thick,<-]  (3.6,5) .. controls ++(0,-.65) and ++(0,.85) .. (6,3);
   \draw[thick,<-]  (3,5) .. controls ++(0,-.75) and ++(0,.75) .. (5.4,3);
   \draw[thick,<-]  (1.8,5) .. controls ++(0,-.85) and ++(0,.65) .. (4.2,3);
   \draw[thick,<-]  (1.2,5) .. controls ++(0,-1.15) and ++(0,.65) ..(3.6,3);
   \draw[thick,<-]  (0.6,5) .. controls ++(0,-1.25) and ++(0,.45) ..(3.0,3);
  \draw[thick ,->]  (0,5) -- (0.0,3);
   \draw[thick,->]  (4.2,5) .. controls ++(0,-.75) and ++(0,1.15) ..(0.6,3);
   \draw[thick,->]  (4.8,5) .. controls ++(0,-.85) and ++(0,.95) .. (1.2,3);
   \draw[thick,->]  (6.0,5) .. controls ++(0,-1.05) and ++(0,.75) ..(2.4,3);
    \node at (2.4,4.75) {$\dots$};
   \node at (5.4,4.75) {$\dots$};
    \draw[thick,->]  (3.0,0) .. controls ++(0,-.65) and ++(0,.85) .. (0,-1);
    \draw[thick,<-]     (0,0) .. controls ++(0,-.6) and ++(0,.5) .. (.6,-1);
    \draw[thick,<-]    (0.6,0) .. controls ++(0,-.6) and ++(0,.5) .. (1.2,-1);
    \draw[thick,<-]    (1.8,0) .. controls ++(0,-.6) and ++(0,.5) .. (2.4,-1);
    \draw[thick,<-]    (2.4,0) .. controls ++(0,-.6) and ++(0,.5) .. (3,-1);
   \draw[thick,<-]    (3.6,0) .. controls ++(0,-.6) and ++(0,.5) .. (3.6,-1);
    \draw[thick,<-] (4.2,-1) -- (4.2,0);
    \draw[thick,<-] (5.4,-1) -- (5.4,0);
    \draw[thick,<-] (6,-1) -- (6,0);
\end{tikzpicture}} \right]
\;\; - \;\;
\left[ \;
\hackcenter{
\begin{tikzpicture}[scale=0.8]
  \draw[very thick, red,->] (6,2) .. controls ++(0,.85) and ++(0,-.9) .. (3,3);
  \draw[thick,->] (3,2) .. controls ++(0,.6) and ++(0,-.5) .. (3.6,3);
  \draw[thick,->] (3.6,2) .. controls ++(0,.6) and ++(0,-.5) .. (4.2,3);
  \draw[thick,->] (4.8,2) .. controls ++(0,.6) and ++(0,-.5) .. (5.4,3);
  \draw[thick,->] (5.4,2) .. controls ++(0,.6) and ++(0,-.5) .. (6,3);
  \node at (4.2,2.15) {$\dots$};
  \draw[thick] (2.4,2) .. controls ++(0,.85) and ++(0,-.9) .. (0,3) ;
  \draw[very thick, red] (0,2) .. controls ++(0,.6) and ++(0,-.5) .. (.6,3);
  \draw[thick] (.6,2) .. controls ++(0,.6) and ++(0,-.5) .. (1.2,3);
  \draw[thick] (1.8,2) .. controls ++(0,.6) and ++(0,-.5) .. (2.4,3);
  \node at (1.2, 2.15) {$\dots$};
  \node at (1.8,3.15) {$\dots$};
    \draw[blue, dotted] (-0.4,3) -- (6.4,3);
    \draw[blue, dotted] (-0.4,2) -- (6.4,2);
   \draw[very thick, red] (3.0,.2) .. controls ++(0,.25) and ++(0,-1.35) .. (0,2);
   \draw[thick] (4.2,0) .. controls ++(0,.75) and ++(0,-1.15) .. (.6,2);
   \draw[thick] (5.4,0) .. controls ++(0,.85) and ++(0,-.95) .. (1.8,2);;
   \draw[thick] (6.0,0) .. controls ++(0,1.05) and ++(0,-.75) .. (2.4,2);;
   \draw[very thick, red] (3.6,.2) .. controls ++(0.1,0.25) and ++(-0.1,-.85) .. (6,2);
   \draw[thick] (2.4,0) .. controls ++(0.1,0.85) and ++(-0.1,-.65) .. (5.4,2);
   \draw[thick] (1.8,0) .. controls ++(0.1,1.05) and ++(-0.1,-.85) .. (4.8,2);
   \draw[thick] (0.6,0) .. controls ++(0.1,1.25) and ++(-0.1,-.65) .. (3.6,2);
   \draw[thick] (0.0,0) .. controls ++(0.1,1.35) and ++(-0.1,-.45) .. (3,2);
   \node at (1.2,.15) {$\dots$};
   \node at (4.8,.15) {$\dots$};
     \filldraw  (0,3.1) circle (2pt);
     \node at (-0.24,3.2) {$\scs a$};
   \draw[thick,<-]  (3.6,5) .. controls ++(0,-.65) and ++(0,.85) .. (6,3);
   \draw[thick,<-]  (3,5) .. controls ++(0,-.75) and ++(0,.75) .. (5.4,3);
   \draw[thick,<-]  (1.8,5) .. controls ++(0,-.85) and ++(0,.65) .. (4.2,3);
   \draw[thick,<-]  (1.2,5) .. controls ++(0,-1.15) and ++(0,.65) ..(3.6,3);
   \draw[very thick, red,<-]  (0.6,5) .. controls ++(0,-1.25) and ++(0,.45) ..(3.0,3);
  \draw[thick ,->]  (0,5) -- (0.0,3);
   \draw[very thick, red,->]  (4.2,5) .. controls ++(0,-.75) and ++(0,1.15) ..(0.6,3);
   \draw[thick,->]  (4.8,5) .. controls ++(0,-.85) and ++(0,.95) .. (1.2,3);
   \draw[thick,->]  (6.0,5) .. controls ++(0,-1.05) and ++(0,.75) ..(2.4,3);
    \node at (2.4,4.75) {$\dots$};
   \node at (5.4,4.75) {$\dots$};
      \draw[very thick, red]    (3.0,.2) .. controls ++(0,-.2) and ++(0,-.2) .. (3.6,.2);
      \draw[thick ]    (3.0,-.3) .. controls ++(0,.2) and ++(0,.2) .. (3.6,-.3);
    \draw[thick,->]  (3.0,-.3) .. controls ++(0,-.35) and ++(0,.85) .. (0,-1);
    \draw[thick,<-]     (0,0) .. controls ++(0,-.6) and ++(0,.5) .. (.6,-1);
    \draw[thick,<-]    (0.6,0) .. controls ++(0,-.6) and ++(0,.5) .. (1.2,-1);
    \draw[thick,<-]    (1.8,0) .. controls ++(0,-.6) and ++(0,.5) .. (2.4,-1);
    \draw[thick,<-]    (2.4,0) .. controls ++(0,-.6) and ++(0,.5) .. (3,-1);
   \draw[thick,<-]    (3.6,-.3) .. controls ++(0,-.3) and ++(0,.5) .. (3.6,-1);
    \draw[thick,<-] (4.2,-1) -- (4.2,0);
    \draw[thick,<-] (5.4,-1) -- (5.4,0);
    \draw[thick,<-] (6,-1) -- (6,0);
\end{tikzpicture}} \right]
\]
since the second diagram can be simplified to one containing a left twist curl which is zero.  It is easy to see that the same thing happens as the left most downward oriented strand at the bottom is pulled through all of the upward oriented lines.

To further simplify the diagram, slide all of the crossings involving the second downward oriented line from the top third of the diagram to the bottom third and simplify using the first equation in \eqref{eq:rel2} as above.  Each time this equation is applied the resolution term can be simplified so that it contains a left twist curl.  Hence, all of these terms vanish and we are left with the following
\[
\;\; = \;\;
\left[ \;
\hackcenter{
\begin{tikzpicture}[scale=0.8]
  \draw[thick,->] (6,2) .. controls ++(0,1.25) and ++(0,-1.75) .. (3,3.5);
  \draw[thick,->] (3,2) .. controls ++(0,1) and ++(0,-.7) .. (3.6,3.5);
  \draw[thick,->] (3.6,2) .. controls ++(0,1) and ++(0,-.7) .. (4.2,3.5);
  \draw[thick,->] (4.8,2) .. controls ++(0,1) and ++(0,-.7) .. (5.4,3.5);
  \draw[thick,->] (5.4,2) .. controls ++(0,1) and ++(0,-.7) .. (6,3.5);
  \node at (4.2,2.35) {$\dots$};
  \node at (4.8,3.15) {$\dots$};
  \draw[thick,<-] (2.4,2) .. controls ++(0,1.25) and ++(0,-1.75) .. (0,3.5) ;
  \draw[thick,<-] (0,2) .. controls ++(0,1) and ++(0,-.7) .. (.6,3.5);
  \draw[thick,<-] (.6,2) .. controls ++(0,1) and ++(0,-.7) .. (1.2,3.5);
  \draw[thick,<-] (1.8,2) .. controls ++(0,1) and ++(0,-.7) .. (2.4,3.5);
  \node at (1.2, 2.35) {$\dots$};
  \node at (1.8,3.15) {$\dots$};
    \draw[blue, dotted] (-0.4,2) -- (6.4,2);
   \draw[thick,<-] (6.0,1) .. controls ++(0,1.05) and ++(0,-.75) .. (2.4,2);
   \draw[thick,->] (6.0,1) .. controls ++(0,-1.05) and ++(0,.75) .. (2.4,0);
   \draw[thick,->] (3,0) .. controls ++(0,.6) and ++(0,-.5) .. (2.4,1);
   \draw[thick ]   (2.4,1) .. controls ++(0,.6) and ++(0,-.5) .. (3,2);
   \draw[thick,->] (3.6,0) .. controls ++(0,.6) and ++(0,-.5) .. (3,1);
   \draw[thick ]   (3,1) .. controls ++(0,.6) and ++(0,-.5) .. (3.6,2);
   \draw[thick,->] (4.8,0) .. controls ++(0,.6) and ++(0,-.5) .. (4.2,1);
   \draw[thick ]   (4.2,1) .. controls ++(0,.6) and ++(0,-.5) .. (4.8,2);
    \draw[thick,->] (5.4,0) .. controls ++(0,.6) and ++(0,-.5) .. (4.8,1);
   \draw[thick ]   (4.8,1) .. controls ++(0,.6) and ++(0,-.5) .. (5.4,2);
    \draw[thick,->] (6,0) .. controls ++(0,.6) and ++(0,-.5) .. (5.4,1);
   \draw[thick ]   (5.4,1) .. controls ++(0,.6) and ++(0,-.5) .. (6,2);
   \draw[thick] (1.8,0) -- (1.8,2);
   \draw[thick] (0.6,0) -- (0.6,2);
   \draw[thick] (0.0,0) -- (0,2);
   \node at (1.2,.15) {$\dots$};
   \node at (4.2,.15) {$\dots$};
     \filldraw  (.05,3.05) circle (2pt);
     \node at (-0.2,3.15) {$\scs a$};
\end{tikzpicture}} \right]~.
\]
The result follows by Lemma~\ref{lem:mn-onetangle}.
\end{proof}

\begin{lemma} \label{lem_nplusm}
For $1 \leq i < n$ and $a \geq 0$ the identity
\[
[(x_1^a T_{i+1}T_{i+2} \dots T_n)
(T_1 \dots T_n)
(T_{n+1} \dots T_{n+m})
(T_n T_{n-1} \dots T_i)] = h_{n+m} \otimes x_1^a
\]
holds in the trace.
\end{lemma}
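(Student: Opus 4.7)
My plan is to verify this identity by a careful diagrammatic computation in $\End_\H(\P^{n+m})$ followed by passage to the trace, using only the cyclic property of $\Tr$ and the degenerate affine Hecke algebra relations of Proposition~\ref{prop:sergeevacting}. The central observation is that the middle two blocks $(T_1\cdots T_n)(T_{n+1}\cdots T_{n+m})$ already represent a long cycle of length $n{+}m$ built by concatenating two smaller cycles along their shared strand, so the only real work is to show that the outer factors $(x_1^a T_{i+1}\cdots T_n)$ and $(T_n\cdots T_i)$ contribute only a placement of the dot and no nontrivial permutation.

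First I would treat the case $a=0$, where the statement reduces to an equality of trace classes of permutations. Using cyclicity I would move $(T_{i+1}\cdots T_n)$ to be adjacent to $(T_n\cdots T_i)$, then collapse the product from the inside out via the relation $T_j^2=1$: one checks that
\[
(T_{i+1}T_{i+2}\cdots T_n)(T_nT_{n-1}\cdots T_i)=T_i,
\]
and cyclicity plus braid relations then let me absorb this $T_i$ into $(T_1\cdots T_n)(T_{n+1}\cdots T_{n+m})$ to yield a single long cycle conjugate to $\sigma_{n+m}$. Since conjugate permutations have the same trace class, this produces $h_{n+m}\otimes 1$.

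Next I would propagate the dot $x_1^a$ through the argument. Because $x_1^a$ sits on the first strand and all $T_j$'s in the outer block $(x_1^a T_{i+1}\cdots T_n)$ have $j\geq i+1\geq 2$, the dot commutes past them trivially by the relation $T_jX_1=X_1T_j$ for $j\geq 2$. For the remaining rearrangements needed in the middle of the diagram I would use $T_iX_i=X_{i+1}T_i+1$ and $T_iX_{i+1}=X_iT_i-1$, accumulating correction terms in which one crossing has been replaced by a pair of vertical strands; the key point is that each such correction term either factors through a left twist curl (hence vanishes) or reproduces an expression of the form $(\text{shorter cycle})\otimes(\text{lower-degree dot})$ which one proves cancels pairwise using an induction on $i$. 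I would structure the inductive proof on $i$, with base case $i=n-1$ treated by direct inspection (there the outer blocks each consist of a single crossing $T_n$ and the cancellation is immediate), and inductive step $i\to i{-}1$ executed by the braid relation $T_iT_{i+1}T_i=T_{i+1}T_iT_{i+1}$.

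The main obstacle will be the combinatorial bookkeeping of the correction terms generated when dots pass through crossings: one must verify that no stray bubbles $\tilde c_k$ or residual dotted pieces survive in the trace. I expect this to be the delicate point, and the cleanest way to control it is to isolate the $a=0$ skeleton first, then show inductively on $a$ using Lemma~\ref{dotmovelemma} that the difference between the LHS and $h_{n+m}\otimes x_1^a$ is a sum of terms each containing either a left twist curl or a counterclockwise bubble attached to a strand that can be slid off, both of which vanish in $\Tr(\H)$.
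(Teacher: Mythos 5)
There is a genuine gap in the part of your plan that actually carries the content of the lemma, namely $a>0$. The diagram in question is the annular closure of an element of $DH_{n+m}$: every strand is upward oriented, there are no caps, cups or downward strands anywhere. Consequently the two vanishing mechanisms you rely on to kill correction terms --- left twist curls and counterclockwise bubbles $\tilde c_k$ --- simply cannot occur here; those only arise when relation \eqref{eq:rel2} is used on mixed-orientation diagrams, as in Lemmas~\ref{lem:mn-onetangle} and~\ref{lem:alem2}. In the purely upward setting, the resolution terms produced by sliding a dot through a crossing via \eqref{eq:nil-dot} are honest elements of $\Tr(\H)$ of the form (shorter cycles with dots), exactly the nonzero products $(h_j\otimes x_1^b)(h_{n+m-j}\otimes x_1^c)$ appearing in Lemma~\ref{dotmovelemma}; they do not vanish, and you give no computation supporting the claimed ``pairwise cancellation'' in your induction on $i$. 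The same objection applies to your proposed induction on $a$: by Lemma~\ref{dotmovelemma} the discrepancy created by moving the dot from $x_1$ to $x_2$ is $\pm(h_i\otimes 1)(h_{n-i}\otimes 1)$-type terms, not something supported on curls or bubbles.

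The missing idea, which is how the paper proceeds, is that the dot never needs to pass a crossing at all. Since $i\geq 1$, every crossing in the outer blocks involves strands $\geq i+1\geq 2$, so $x_1^a$ commutes with them, and the entire rearrangement can be performed on the symmetric-group part alone: one slides crossings of the top block around the annulus with the trace relation and re-absorbs them using only the upward relations \eqref{eq:rel1} (double crossing removal and the triple-point move), then slides the whole top block to the bottom and collapses the word; throughout, the dot stays parked on its strand and no correction terms are ever generated. Your $a=0$ skeleton is fine in spirit (for pure permutations the trace class is the conjugacy class, and the product is an $(n+m)$-cycle), though note that cyclicity of the trace makes $(T_n\cdots T_i)$ adjacent to $(T_{i+1}\cdots T_n)$ in that order, so the telescoping identity you quote is not directly what cyclic rotation hands you; the honest statement is just a conjugacy computation. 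To repair the proposal you should replace the dot-propagation scheme by the observation above, reducing the $a>0$ case to the same crossing-only manipulations as $a=0$.
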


\begin{proof}
The left side of this relation can be expressed by the diagram
\[
\left[ \;\;\;
\hackcenter{
\begin{tikzpicture}[scale=0.8]
  \draw[thick] (3,0) .. controls ++(0,1.25) and ++(0,-1.25) .. (-1.2,1.5);
  \draw[thick] (-1.2,0) .. controls ++(0,1) and ++(0,-.7) .. (-.6,1.5);
  \draw[thick] (0,0) .. controls ++(0,1) and ++(0,-.7) .. (.6,1.5);
  \draw[thick] (.6,0) .. controls ++(0,1) and ++(0,-.7) .. (1.2,1.5);
  \draw[thick] (1.2,0) .. controls ++(0,1) and ++(0,-.7) .. (1.8,1.5);
  \draw[thick] (2.4,0) .. controls (2.4,1) and (3,.8) .. (3,1.5);
  \node at (-.6,.35) {$\dots$};
  \node at (0,1.15) {$\dots$};
  \node at (1.8,.35) {$\dots$};
  \node at (2.4,1.15) {$\dots$};
  \draw[thick] (6,0) .. controls (6,1.25) and (3.6,.25) .. (3.6,1.5)
     node[pos=0.85, shape=coordinate](X){};
  \draw[thick] (3.6,0) .. controls (3.6,1) and (4.2,.8) .. (4.2,1.5);
  \draw[thick] (4.2,0) .. controls (4.2,1) and (4.8,.8) .. (4.8,1.5);
  \draw[thick] (5.4,0) .. controls (5.4,1) and (6,.8) .. (6,1.5);
  \node at (4.8,.35) {$\dots$};
  \node at (5.4,1.15) {$\dots$};
    \draw[blue, dotted] (-1.6,0) -- (6.4,0);
    \draw[blue, dotted] (-1.6,1.5) -- (6.4,1.5);
   \draw[thick,->] (3.6,1.5) .. controls ++(0,1) and ++(0,-1) .. (1.2,2.5);
   \draw[thick,->] (3,1.5) .. controls ++(0,.5) and ++(0,-.5) .. (3.6,2.5);
   \draw[thick,->] (-1.2,1.5) -- (-1.2,2.5);
   \draw[thick,->] (-0.6,1.5) -- (-0.6,2.5);
   \draw[thick,->] (.6,1.5) -- (.6,2.5);
   \draw[thick,->] (1.2,1.5) .. controls ++(0,.5) and ++(0,-.5) .. (1.8,2.5);
   \draw[thick,->] (1.8,1.5) .. controls ++(0,.5) and ++(0,-.5) .. (2.4,2.5);
   \draw[thick,->] (4.2,1.5) -- (4.2,2.5);
   \draw[thick,->] (4.8,1.5) -- (4.8,2.5);
   \draw[thick,->] (6,1.5) -- (6,2.5);
   \draw[thick]  (3.6,0) .. controls ++(0,-1) and ++(0,1) .. (.6,-1);
   \draw[thick] (-1.2,0) -- (-1.2,-1);
   \draw[thick]  (0,0) -- (0,-1);
   \draw[thick]  (.6,0) .. controls ++(0,-.5) and ++(0,+.5) .. (1.2,-1);
   \draw[thick]  (1.2,0) .. controls ++(0,-.5) and ++(0,+.5) .. (1.8,-1);
   \draw[thick]  (2.4,0) .. controls ++(0,-.5) and ++(0,+.5) .. (3,-1);
   \draw[thick]  (3,0) .. controls ++(0,-.5) and ++(0,+.5) .. (3.6,-1);
   \draw[thick] (4.2,0) -- (4.2,-1);
   \draw[thick] (5.4,0) -- (5.4,-1);
   \draw[thick] (6,0) -- (6,-1);
   \node at (-1.2,2.7) {$\scs \mathbf{1}$};
   \node at (-.6,2.7) {$\scs \mathbf{2}$};
   \node at (.6,2.7) {$\scs \mathbf{i}$};
   \node at (1.25,2.7) {$\scs \mathbf{i+1}$};
   \node at (6,2.7) {$\scs \mathbf{n+m}$};
   \node at (-1.2,-1.2) {$\scs \mathbf{1}$};
   \node at (-0.05,-1.2) {$\scs \mathbf{i-1}$};
   \node at (.6,-1.2) {$\scs \mathbf{i}$};
   \node at (1.25,-1.2) {$\scs \mathbf{i+1}$};
   \node at (3,-1.2) {$\scs \mathbf{n}$};
   \node at (6,-1.2) {$\scs \mathbf{n+m}$};
   \filldraw  (-1.2,2) circle (2pt);
   \node at (-1.45,2.05) {$\scs a$};
\end{tikzpicture}}
\right]
\]
The $T_{i+1} \dots T_n$ at the top can be reordered to $T_{n} \dots T_{i+1}$ by first sliding the $T_{i+1}$ around the annulus using the trace relation and then back to the top of diagram using equations \eqref{eq:rel1} to produce $T_{i+2}T_{i+1} T_{i+3} \dots T_n$. Next use the trace relation and equations \eqref{eq:rel1} to bring the crossings $T_{i+2}T_{i+1}$ through the bottom of the diagram back to the top portion of the diagram producing $T_{i+3}T_{i+2}T_{i+1}T_{i+4} \dots T_n$.  Continuing in this way all of the crossings at the top third of the diagram can be reordered to produce
\[
\left[ \;\;\;
\hackcenter{
\begin{tikzpicture}[scale=0.8]
  \draw[thick] (3,0) .. controls ++(0,1.25) and ++(0,-1.25) .. (-1.2,1.5);
  \draw[thick] (-1.2,0) .. controls ++(0,1) and ++(0,-.7) .. (-.6,1.5);
  \draw[thick] (0,0) .. controls ++(0,1) and ++(0,-.7) .. (.6,1.5);
  \draw[thick] (.6,0) .. controls ++(0,1) and ++(0,-.7) .. (1.2,1.5);
  \draw[thick] (1.2,0) .. controls ++(0,1) and ++(0,-.7) .. (1.8,1.5);
  \draw[thick] (2.4,0) .. controls (2.4,1) and (3,.8) .. (3,1.5);
  \node at (-.6,.35) {$\dots$};
  \node at (0,1.15) {$\dots$};
  \node at (1.8,.35) {$\dots$};
  \node at (2.4,1.15) {$\dots$};
  \draw[thick] (6,0) .. controls (6,1.25) and (3.6,.25) .. (3.6,1.5)
     node[pos=0.85, shape=coordinate](X){};
  \draw[thick] (3.6,0) .. controls (3.6,1) and (4.2,.8) .. (4.2,1.5);
  \draw[thick] (4.2,0) .. controls (4.2,1) and (4.8,.8) .. (4.8,1.5);
  \draw[thick] (5.4,0) .. controls (5.4,1) and (6,.8) .. (6,1.5);
  \node at (4.8,.35) {$\dots$};
  \node at (5.4,1.15) {$\dots$};
    \draw[blue, dotted] (-1.6,0) -- (6.4,0);
    \draw[blue, dotted] (-1.6,1.5) -- (6.4,1.5);
   \draw[thick,->] (1.2,1.5) .. controls ++(0,1) and ++(0,-1) .. (3.6,2.5);
   \draw[thick,->] (3.6,1.5) .. controls ++(0,.5) and ++(0,-.5) .. (3,2.5);
   \draw[thick,->] (3,1.5) .. controls ++(0,.5) and ++(0,-.5) .. (2.4,2.5);
   \draw[thick,->] (-1.2,1.5) -- (-1.2,2.5);
   \draw[thick,->] (-0.6,1.5) -- (-0.6,2.5);
   \draw[thick,->] (.6,1.5) -- (.6,2.5);
   \draw[thick,->] (1.8,1.5) .. controls ++(0,.5) and ++(0,-.5) .. (1.2,2.5);
   \draw[thick,->] (4.2,1.5) -- (4.2,2.5);
   \draw[thick,->] (4.8,1.5) -- (4.8,2.5);
   \draw[thick,->] (6,1.5) -- (6,2.5);
   \draw[thick]  (3.6,0) .. controls ++(0,-1) and ++(0,1) .. (.6,-1);
   \draw[thick] (-1.2,0) -- (-1.2,-1);
   \draw[thick]  (0,0) -- (0,-1);
   \draw[thick]  (.6,0) .. controls ++(0,-.5) and ++(0,+.5) .. (1.2,-1);
   \draw[thick]  (1.2,0) .. controls ++(0,-.5) and ++(0,+.5) .. (1.8,-1);
   \draw[thick]  (2.4,0) .. controls ++(0,-.5) and ++(0,+.5) .. (3,-1);
   \draw[thick]  (3,0) .. controls ++(0,-.5) and ++(0,+.5) .. (3.6,-1);
   \draw[thick] (4.2,0) -- (4.2,-1);
   \draw[thick] (5.4,0) -- (5.4,-1);
   \draw[thick] (6,0) -- (6,-1);
   \node at (-1.2,2.7) {$\scs \mathbf{1}$};
   \node at (-.6,2.7) {$\scs \mathbf{2}$};
   \node at (.6,2.7) {$\scs \mathbf{i}$};
   \node at (1.25,2.7) {$\scs \mathbf{i+1}$};
   \node at (6,2.7) {$\scs \mathbf{n+m}$};
   \node at (-1.2,-1.2) {$\scs \mathbf{1}$};
   \node at (-0.05,-1.2) {$\scs \mathbf{i-1}$};
   \node at (.6,-1.2) {$\scs \mathbf{i}$};
   \node at (1.25,-1.2) {$\scs \mathbf{i+1}$};
   \node at (3,-1.2) {$\scs \mathbf{n}$};
   \node at (6,-1.2) {$\scs \mathbf{n+m}$};
   \filldraw  (-1.2,2) circle (2pt);
   \node at (-1.45,2.05) {$\scs a$};
\end{tikzpicture}}
\right]
\;\; = \;\;
\left[ \;\;\;
\hackcenter{
\begin{tikzpicture}[scale=0.8]
  \draw[thick,->] (3,0) .. controls ++(0,1.25) and ++(0,-1.25) .. (-1.2,1.5);
  \draw[thick,->] (-1.2,0) .. controls ++(0,1) and ++(0,-.7) .. (-.6,1.5);
  \draw[thick,->] (0,0) .. controls ++(0,1) and ++(0,-.7) .. (.6,1.5);
  \draw[thick,->] (.6,0) .. controls ++(0,1) and ++(0,-.7) .. (1.2,1.5);
  \draw[thick,->] (1.2,0) .. controls ++(0,1) and ++(0,-.7) .. (1.8,1.5);
  \draw[thick,->] (2.4,0) .. controls (2.4,1) and (3,.8) .. (3,1.5);
  \node at (-.6,.35) {$\dots$};
  \node at (0,1.15) {$\dots$};
  \node at (1.8,.35) {$\dots$};
  \node at (2.4,1.15) {$\dots$};
  \draw[thick,->] (6,0) .. controls (6,1.25) and (3.6,.25) .. (3.6,1.5);
  \draw[thick,->] (3.6,0) .. controls (3.6,1) and (4.2,.8) .. (4.2,1.5);
  \draw[thick,->] (4.2,0) .. controls (4.2,1) and (4.8,.8) .. (4.8,1.5);
  \draw[thick,->] (5.4,0) .. controls (5.4,1) and (6,.8) .. (6,1.5);
  \node at (4.8,.35) {$\dots$};
  \node at (5.4,1.15) {$\dots$};
    \draw[blue, dotted] (-1.6,0) -- (6.4,0);
    \draw[blue, dotted] (-1.6,-1) -- (6.4,-1);
   \draw[thick] (1.2,-2) .. controls ++(0,1) and ++(0,-1) .. (3.6,-1);
   \draw[thick] (3.6,-2) .. controls ++(0,.5) and ++(0,-.5) .. (3,-1);
   \draw[thick] (2.4,-2) .. controls ++(0,.5) and ++(0,-.5) .. (1.8,-1);
   \draw[thick] (-1.2,-2) -- (-1.2,-1);
   \draw[thick] (-0,-2) -- (-0,-1);
   \draw[thick] (.6,-2) -- (.6,-1);
   \draw[thick] (1.8,-2) .. controls ++(0,.5) and ++(0,-.5) .. (1.2,-1);
   \draw[thick] (4.2,-2) -- (4.2,-1);
   \draw[thick] (5.4,-2) -- (5.4,-1);
   \draw[thick] (6,-2) -- (6,-1);
   \draw[thick]  (3.6,0) .. controls ++(0,-1) and ++(0,1) .. (.6,-1);
   \draw[thick] (-1.2,0) -- (-1.2,-1);
   \draw[thick]  (0,0) -- (0,-1);
   \draw[thick]  (.6,0) .. controls ++(0,-.5) and ++(0,+.5) .. (1.2,-1);
   \draw[thick]  (1.2,0) .. controls ++(0,-.5) and ++(0,+.5) .. (1.8,-1);
   \draw[thick]  (2.4,0) .. controls ++(0,-.5) and ++(0,+.5) .. (3,-1);
   \draw[thick]  (3,0) .. controls ++(0,-.5) and ++(0,+.5) .. (3.6,-1);
   \draw[thick] (4.2,0) -- (4.2,-1);
   \draw[thick] (5.4,0) -- (5.4,-1);
   \draw[thick] (6,0) -- (6,-1);
   \filldraw  (-1.2,-1.5) circle (2pt);
   \node at (-1.45,-1.55) {$\scs a$};
\end{tikzpicture}}
\right]
\]
where we have moved the entire top third of the diagram to the bottom using the trace relation.  It is now easy to see that using both relations in \eqref{eq:rel1} together with the trace relatios simplifies the diagram to $h_{n+m} \otimes x_1^a$.
\end{proof}

\subsection{A Heisenberg-Virasoro subalgebra}
\label{sec:HV}
We exhibit generators of $\Tr(\H)$ which satisfy the Heisenberg relations for the original generators $h_n$.
There is another set of generators which satisfy Virasoro relations with central charge $1$.
The Virasoro algebra naturally acts on the Heisenberg algebra and the resulting semi-direct product algebra
is known as a Heisenberg-Virasoro algebra.
These generators come from images of diagrams which contain at most one right-hand curl.

We recover the original Heisenberg generators by the following result.

\begin{lemma}
\label{origheisrelations}
The elements $ h_n \otimes 1$ for $ n \in \Z - \{0 \} $ satisfy the following Heisenberg relations
\begin{equation*}
[h_m \otimes 1, h_n \otimes 1]=n \delta_{m,-n}.
\end{equation*}
\end{lemma}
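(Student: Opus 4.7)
The plan is to treat two cases according to the signs of $m$ and $n$.

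When $m$ and $n$ have the same sign, the products $(h_m \otimes 1)(h_n \otimes 1)$ and $(h_n \otimes 1)(h_m \otimes 1)$ are both represented on the annulus by two radially-nested cyclic configurations of strands sharing a common orientation. Because the orientations agree, the two configurations can be slid past one another by a radial annular isotopy, with no crossings to resolve. This exchanges the order of the factors and shows $[h_m \otimes 1, h_n \otimes 1] = 0$.

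The interesting case is when the signs differ, say $m>0$ and $n=-n'<0$. Then $(h_m \otimes 1)(h_{-n'} \otimes 1)$ and $(h_{-n'} \otimes 1)(h_m \otimes 1)$ are drawn as two nested cycles of opposite orientation, and swapping them radially requires resolving each crossing between an upward and a downward strand via the second relation in \eqref{eq:rel2}, which produces a swap term plus a cap-cup correction. Unwinding one cycle through the other reduces each product to the other plus an explicit sum of bubble and curl corrections; these corrections are precisely those evaluated by Lemmas \ref{lem:mn-onetangle} and \ref{lem:alem2} specialized to $a = 0$, with the three branches of those statements corresponding to $m > n'$, $m = n'$, and $m < n'$. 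When $m \neq n'$, the bubble-dressed terms of the form $\tilde{c}_f (h_{n-m}\otimes x_1^g)$ appear identically on the two sides of the commutator and cancel. When $m = n'$, a single residual contribution $-n' \tilde{c}_0$ survives; since $\tilde{c}_0 = 1$ in $\H$ by the first relation in \eqref{eq:rel3}, this gives $[h_m \otimes 1, h_{-n'} \otimes 1] = -n' \delta_{m,n'} = n\,\delta_{m,-n}$, as desired.

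The main obstacle is the diagrammatic bookkeeping of the resolutions in the mixed-sign case: one must verify that the intermediate diagrams obtained while sliding one cycle past the other match exactly the form treated by Lemmas \ref{lem:mn-onetangle} and \ref{lem:alem2}, and confirm that the accumulated lower-order corrections cancel in the commutator except for the single surviving bubble contribution at $m = n'$. Once this identification is made, the computation reduces to matching the two sides branch by branch.
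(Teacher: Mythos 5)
Your proposal is correct and follows essentially the paper's own route: the equal-sign case by exchanging the two disjoint same-oriented cycles (the paper phrases this as the two endomorphisms lying in the same conjugacy class of $S_{m+n}$, the point being that up--up double crossings resolve with no correction terms by \eqref{eq:rel1}), and the mixed-sign case by pulling the cycles through each other with the second relation in \eqref{eq:rel2} and evaluating via Lemma~\ref{lem:alem2} (hence Lemma~\ref{lem:mn-onetangle}) at $a=0$. Two cosmetic points: the same-sign swap is not a literal planar isotopy but uses $T_i^2=1$ together with cyclicity of the trace, and at $a=0$ the $\tilde{c}_f(h_{n-m}\otimes x_1^g)$ corrections for $m\neq n'$ are empty sums rather than terms cancelling between the two orders, while the lone surviving term $-n'\tilde{c}_0=-n'$ at $m=n'$ is exactly the paper's $-n\delta_{n,m}$.
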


\begin{proof}
For $mn>0$ the underlying morphisms for $ (h_m \otimes 1)(h_n \otimes 1) $ and $ (h_n \otimes 1)(h_m \otimes 1) $ in the category $ \H$ are in the same conjugacy class in $S_{n+m} $. Thus their traces are equal.

Suppose then that $m<0$ and $n>0$.  Then consider the diagram for $(h_n\otimes1)(h_m\otimes 1)$.  Using the trace condition with $n$ applications of the second equation in \eqref{eq:rel2} it is easy to see that
\[
(h_n\otimes1)(h_m\otimes 1) =
\left[ \;
\hackcenter{
\begin{tikzpicture}[scale=0.8]
  \draw[thick] (3,0) .. controls ++(0,1.25) and ++(0,-1.75) .. (0,1.5);
  \draw[thick] (0,0) .. controls ++(0,1) and ++(0,-.7) .. (.6,1.5);
  \draw[thick] (.6,0) .. controls ++(0,1) and ++(0,-.7) .. (1.2,1.5);
  \draw[thick] (1.8,0) .. controls ++(0,1) and ++(0,-.7) .. (2.4,1.5);
  \draw[thick] (2.4,0) .. controls ++(0,1) and ++(0,-.7) .. (3,1.5);
  \node at (1.2,.35) {$\dots$};
  \node at (1.8,1.15) {$\dots$};
  \draw[thick] (6,0) .. controls ++(0,1.25) and ++(0,-1.75) .. (3.6,1.5) ;
  \draw[thick] (3.6,0) .. controls ++(0,1) and ++(0,-.7) .. (4.2,1.5);
  \draw[thick] (4.2,0) .. controls ++(0,1) and ++(0,-.7) .. (4.8,1.5);
  \draw[thick] (5.4,0) .. controls ++(0,1) and ++(0,-.7) .. (6,1.5);
  \node at (4.8,-.65) {$\dots$};
  \node at (5.4,1.85) {$\dots$};
    \draw[blue, dotted] (-0.4,0) -- (6.4,0);
    \draw[blue, dotted] (-0.4,1.5) -- (6.4,1.5);
   \draw[thick,->]  (3,1.5) .. controls ++(0,0.5) and ++(0,-.5) .. (3.6,2.5);
   \draw[thick] (3.6,1.5) .. controls ++(0,1) and ++(0,-1) .. (0,2.5);
   \draw[thick,->]  (2.4,1.5) .. controls ++(0,0.5) and ++(0,-.5) .. (3,2.5);
   \draw[thick,->]  (1.2,1.5) .. controls ++(0,0.5) and ++(0,-.5) .. (1.8,2.5);
   \draw[thick,->]  (.6,1.5) .. controls ++(0,0.5) and ++(0,-.5) .. (1.2,2.5);
   \draw[thick,->]  (0,1.5) .. controls ++(0,0.5) and ++(0,-.5) .. (.6,2.5);
   \draw[thick] (4.2,1.5) -- (4.2,2.5);
   \draw[thick] (4.8,1.5) -- (4.8,2.5);
   \draw[thick] (6,1.5) -- (6,2.5);
   \draw[thick]  (2.4,0) .. controls ++(0,-.5) and ++(0,.5) .. (3,-1);
   \draw[thick]  (1.8,0) .. controls ++(0,-.5) and ++(0,.5) .. (2.4,-1);
   \draw[thick]  (.6,0) .. controls ++(0,-.5) and ++(0,.5) .. (1.2,-1);
   \draw[thick]  (0,0) .. controls ++(0,-.5) and ++(0,.5) .. (.6,-1);
   \draw[thick]  (3,0) .. controls ++(0,-.5) and ++(0,.5) .. (3.6,-1);
  \draw[thick ,->] (3.6,0) .. controls ++(0,-1) and ++(0,+1) .. (0,-1);
   \draw[thick,->] (4.2,0) -- (4.2,-1);
   \draw[thick,->] (5.4,0) -- (5.4,-1);
   \draw[thick,->] (6,0) -- (6,-1);
\end{tikzpicture}}
\right]
\]
which is equal to $(h_m\otimes1)(h_n\otimes 1) -n\delta_{n,m}$ by Lemma~\ref{lem:alem2}.
\end{proof}

The next Lemma will roughly lead to ``half" of a Virasoro algebra.

\begin{lemma}
\label{lemmam1n1}
For $m,n \in \mathbb{Z} $ with $mn > 0$,
\begin{equation*}
[h_m \otimes x_1, h_n \otimes x_1]=(n-m)(h_{n+m} \otimes x_1).
\end{equation*}
\end{lemma}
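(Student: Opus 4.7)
It suffices to treat the case $m,n>0$; the case $m,n<0$ is handled by the analogous argument for downward strands using $\End_\H(\Q^n)\cong DH_n^{\mathrm{op}}$.  For $m,n>0$ write the two products in $\End_\H(\P^{m+n})$ as
\[
(h_m\otimes x_1)(h_n\otimes x_1)=[x_1\sigma_m\cdot x_{m+1}\sigma_n'],\qquad (h_n\otimes x_1)(h_m\otimes x_1)=[x_1\sigma_n\cdot x_{n+1}\sigma_m'],
\]
where $\sigma_m=T_1\cdots T_{m-1}$ acts on strands $1,\ldots,m$ and $\sigma_n'=T_{m+1}\cdots T_{m+n-1}$ on strands $m+1,\ldots,m+n$ (and similarly for $\sigma_n,\sigma_m'$).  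The two underlying permutations are conjugate in $S_{m+n}$ by the block-swap $\alpha$, so the trace identity $[f]=[\alpha f\alpha^{-1}]$ would equate the classes were it not for the Hecke relations $T_iX_i=X_{i+1}T_i+1$ of Proposition~\ref{prop:sergeevacting}, which produce correction terms whenever a dot is conjugated past a crossing.

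The main step is to lift $\alpha$ to an explicit braid word and carry out the conjugation: slide the dotted strand at position $1$ past the $n$ strands of the second block, then slide the originally dotted strand at position $m+1$ past the $m$ strands of the first block.  At each crossing encountered by a dot, identity \eqref{eq:nil-dot} produces a resolution term in which that crossing is replaced by a cap--cup pair and the dot is absorbed.  Each resolution diagram has the two original cycles joined into a single cycle of length $m+n$ decorated with one remaining dot; after simplifying curls via \eqref{eq:dotted-curl} and bubbles via Proposition~\ref{eq:bubblerelationship}, Lemma~\ref{lem_nplusm} (applied with $a=1$) identifies each such single-dot single-cycle diagram with $h_{m+n}\otimes x_1$, modulo lower order terms of the form $(h_k\otimes 1)(h_{m+n-k}\otimes 1)$, which commute by Lemma~\ref{origheisrelations} and hence appear symmetrically in the two products so as to cancel in the commutator.

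A signed count then yields the commutator: the $n$ resolutions produced by the dot on strand $1$ each contribute $+1$ copy of $h_{m+n}\otimes x_1$, while the $m$ resolutions produced by the dot originally on strand $m+1$ contribute $-1$ copies (the sign inversion arising from the opposite sliding direction in the block swap).  The net contribution is $(n-m)(h_{m+n}\otimes x_1)$, as required.

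The principal obstacle is the combinatorial bookkeeping: each slide generates nested resolution terms, and one must verify that all ``mixed'' corrections (those carrying $x_1^2$ decorations, higher bubble factors $\tilde{c}_a$ with $a\geq 1$, or multiple simultaneously resolved crossings) either cancel in pairs or reduce to products of one-cycle diagrams that are symmetric in $m$ and $n$.  Lemmas~\ref{lem:mn-onetangle} and \ref{lem:alem2} supply the key local identities needed to evaluate the intermediate one-tangle diagrams that appear midway through the braiding, and Lemma~\ref{dotmovelemma} is invoked at the end to normalize the position of the surviving dot on the length-$(m+n)$ cycle.
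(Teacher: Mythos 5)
Your strategy (conjugate by the block swap inside the trace and track the dot--slide corrections) is workable in principle -- it is essentially how the paper treats the case $m=1$ -- but as written the proposal leaves the actual content of the lemma unproved. After conjugation you are rewriting one product as the other product plus a list of correction terms, so the lemma is exactly the assertion that those corrections sum to $(n-m)(h_{m+n}\otimes x_1)$ on the nose; this is what you assert with a sign count rather than verify. Two concrete problems. First, your cancellation mechanism for the quadratic terms is misstated: since all the corrections arise on one side of the identity $(h_n\otimes x_1)(h_m\otimes x_1)=(h_m\otimes x_1)(h_n\otimes x_1)+\textrm{(corrections)}$, the terms $(h_k\otimes 1)(h_{m+n-k}\otimes 1)$ produced when you normalize the dot position via Lemma~\ref{dotmovelemma} cannot ``appear symmetrically in the two products''; they must either cancel among the correction terms themselves or never arise because each merged cycle is already in the exact configuration of Lemma~\ref{lem_nplusm}. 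Checking this is precisely the bookkeeping you defer, and it is the whole lemma (note that in the mixed-sign case, Lemma~\ref{lemma-m1n1}, such quadratic terms genuinely survive, so their absence here is not automatic). Second, the ``$n$ resolutions of $+1$ and $m$ resolutions of $-1$'' count is not justified: even in the paper's $m=1$ computation the tally is $n$ resolution terms of one sign, each evaluated \emph{exactly} (not modulo quadratic terms) by Lemma~\ref{lem_nplusm}, plus a single opposite-sign term coming from a final intra-cycle dot slide, so the combinatorics is more delicate than a symmetric $\pm$ count. A minor further point: in a purely upward-strand computation inside $\End(\P^{m+n})$ no left-twist curls or bubbles can occur, so \eqref{eq:dotted-curl} and Proposition~\ref{eq:bubblerelationship} play no role here; invoking them suggests the local analysis has not actually been carried out.

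For comparison, the paper sidesteps the general-$(m,n)$ bookkeeping entirely: it proves the base case $m=1$ diagrammatically, where each crossing resolution is identified exactly with $-(h_{n+1}\otimes x_1)$ via Lemma~\ref{lem_nplusm}, and then obtains arbitrary $m$ by induction using the Jacobi identity for the elements $\beta_n=h_n\otimes x_1$. If you want to keep your direct approach you would need to show, crossing by crossing, that every resolution diagram is exactly of the form covered by Lemma~\ref{lem_nplusm} (dot in the correct position), or else exhibit an explicit pairwise cancellation of the displaced-dot corrections; otherwise, adopt the Jacobi-identity induction, which reduces the problem to the one computation you can do cleanly.
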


\begin{proof}
Let $ \beta_n=h_n \otimes x_1 $.
We prove by induction on $m$ that
$ [\beta_m, \beta_n]=(n-m)\beta_{m+n}$.
The case $m=1$ has the following graphical proof.

Without loss of generality assume $n>0$ so that
\[
(h_n \otimes x_1)(h_1 \otimes x_1) =
\left[ \;\;\;
\hackcenter{
\begin{tikzpicture}[scale=0.8]
  \draw[thick,->] (2.4,0) .. controls (2.4,1.25) and (0,.25) .. (0,2)
     node[pos=0.85, shape=coordinate](X){};
  \draw[thick,->] (0,0) .. controls (0,1) and (0.6,.8) .. (0.6,2);
  \draw[thick,->] (0.6,0) .. controls (0.6,1) and (1.2,.8) .. (1.2,2);
  \draw[thick,->] (1.8,0) .. controls (1.8,1) and (2.4,.8) .. (2.4,2);
  \node at (1.2,.35) {$\dots$};
  \node at (1.8,1.65) {$\dots$};
  \draw[thick,->] (3,0) -- (3,2);
  \filldraw  (3,1.2) circle (2pt);
  \filldraw  (.05,1.6) circle (2pt);
\end{tikzpicture}}
\; \;\;\right] .
\]
Then conjugating by $T_n$ we have
\[
\left[ \;\;\;
\hackcenter{
\begin{tikzpicture}[scale=0.8]
  \draw[thick] (2.4,0) .. controls (2.4,1.25) and (0,.25) .. (0,1.5)
     node[pos=0.85, shape=coordinate](X){};
  \draw[thick] (0,0) .. controls (0,1) and (0.6,.8) .. (0.6,1.5);
  \draw[thick] (0.6,0) .. controls (0.6,1) and (1.2,.8) .. (1.2,1.5);
  \draw[thick] (1.8,0) .. controls (1.8,1) and (2.4,.8) .. (2.4,1.5);
  \node at (1.2,.35) {$\dots$};
  \node at (1.8,1.65) {$\dots$};
  \draw[thick] (3,0) -- (3,1.5);
  \filldraw  (3,1.2) circle (2pt);
  \filldraw  (.05,1.3) circle (2pt);
    \draw[blue, dotted] (-0.4,0) -- (3.4,0);
    \draw[blue, dotted] (-0.4,1.5) -- (3.4,1.5);
   \draw[thick,->]  (2.4,1.5) .. controls (2.4,2) and (3,2) .. (3,2.5);
   \draw[thick,->] (3,1.5) .. controls (3,2) and (2.4,2) .. (2.4,2.5);
   \draw[thick,->] (0,1.5) -- (0,2.5);
   \draw[thick,->] (.6,1.5) -- (.6,2.5);
   \draw[thick,->] (1.2,1.5) -- (1.2,2.5);
   \draw[thick]  (2.4,0) .. controls (2.4,-.5) and (3,-.5) .. (3,-1);
   \draw[thick ] (3,0) .. controls (3,-.5) and (2.4,-.5) .. (2.4,-1);
   \draw[thick] (0,0) -- (0,-1);
   \draw[thick] (.6,0) -- (.6,-1);
   \draw[thick] (1.8,0) -- (1.8,-1);
\end{tikzpicture}}
\; \;\;\right]
\;\; \refequal{\eqref{eq:nil-dot}} \;\;
\left[ \;\;\;
\hackcenter{
\begin{tikzpicture}[scale=0.8]
  \draw[thick] (2.4,0) .. controls (2.4,1.25) and (0,.25) .. (0,1.5)
     node[pos=0.85, shape=coordinate](X){};
  \draw[thick] (0,0) .. controls (0,1) and (0.6,.8) .. (0.6,1.5);
  \draw[thick] (0.6,0) .. controls (0.6,1) and (1.2,.8) .. (1.2,1.5);
  \draw[thick] (1.8,0) .. controls (1.8,1) and (2.4,.8) .. (2.4,1.5);
  \node at (1.2,.35) {$\dots$};
  \node at (1.8,1.65) {$\dots$};
  \draw[thick] (3,0) -- (3,1.5);
  \filldraw  (2.45,2.2) circle (2pt);
  \filldraw  (.03,1.3) circle (2pt);
    \draw[blue, dotted] (-0.4,0) -- (3.4,0);
    \draw[blue, dotted] (-0.4,1.5) -- (3.4,1.5);
   \draw[thick,->]  (2.4,1.5) .. controls (2.4,2) and (3,2) .. (3,2.5);
   \draw[thick,->] (3,1.5) .. controls (3,2) and (2.4,2) .. (2.4,2.5);
   \draw[thick,->] (0,1.5) -- (0,2.5);
   \draw[thick,->] (.6,1.5) -- (.6,2.5);
   \draw[thick,->] (1.2,1.5) -- (1.2,2.5);
   \draw[thick]  (2.4,0) .. controls (2.4,-.5) and (3,-.5) .. (3,-1);
   \draw[thick ] (3,0) .. controls (3,-.5) and (2.4,-.5) .. (2.4,-1);
   \draw[thick] (0,0) -- (0,-1);
   \draw[thick] (.6,0) -- (.6,-1);
   \draw[thick] (1.8,0) -- (1.8,-1);
\end{tikzpicture}}
\; \;\;\right]
\;\; - \;\;
\left[ \;\;\;
\hackcenter{
\begin{tikzpicture}[scale=0.8]
  \draw[thick] (2.4,0) .. controls (2.4,1.25) and (0,.25) .. (0,1.5)
     node[pos=0.85, shape=coordinate](X){};
  \draw[thick] (0,0) .. controls (0,1) and (0.6,.8) .. (0.6,1.5);
  \draw[thick] (0.6,0) .. controls (0.6,1) and (1.2,.8) .. (1.2,1.5);
  \draw[thick] (1.8,0) .. controls (1.8,1) and (2.4,.8) .. (2.4,1.5);
  \node at (1.2,.35) {$\dots$};
  \node at (1.8,1.65) {$\dots$};
  \draw[thick] (3,0) -- (3,1.5);
  \filldraw  (.03,1.3) circle (2pt);
    \draw[blue, dotted] (-0.4,0) -- (3.4,0);
    \draw[blue, dotted] (-0.4,1.5) -- (3.4,1.5);
   \draw[thick,->]  (2.4,1.5) -- (2.4,2.5);
   \draw[thick,->] (3,1.5) -- (3,2.5);
   \draw[thick,->] (0,1.5) -- (0,2.5);
   \draw[thick,->] (.6,1.5) -- (.6,2.5);
   \draw[thick,->] (1.2,1.5) -- (1.2,2.5);
   \draw[thick]  (2.4,0) .. controls (2.4,-.5) and (3,-.5) .. (3,-1);
   \draw[thick ] (3,0) .. controls (3,-.5) and (2.4,-.5) .. (2.4,-1);
   \draw[thick] (0,0) -- (0,-1);
   \draw[thick] (.6,0) -- (.6,-1);
   \draw[thick] (1.8,0) -- (1.8,-1);
\end{tikzpicture}}
\; \;\;\right]
\]
The second term on the right-hand side is just $-(h_{n+1} \otimes x_1)$.  The first term can be simplified further by conjugating by $T_{n-1}$, sliding the dot using \eqref{eq:nil-dot}, and observing using Lemma~\ref{lem_nplusm} that the crossing resolution term is also equal to $-(h_{n+1} \otimes x_1)$.  Continuing this procedure, conjugating by $T_{n-2}$, sliding the dot, and simplifying the crossing resolution term using Lemma~\ref{lem_nplusm}, we see that $(h_n \otimes x_1)(h_1 \otimes x_1)$ is equal to
\[
\left[ \;\;\;
\hackcenter{
\begin{tikzpicture}[scale=0.8]
  \draw[thick] (2.4,0) .. controls (2.4,1.25) and (0,.25) .. (0,1.5)
     node[pos=0.85, shape=coordinate](X){};
  \draw[thick] (0,0) .. controls (0,1) and (0.6,.8) .. (0.6,1.5);
  \draw[thick] (0.6,0) .. controls (0.6,1) and (1.2,.8) .. (1.2,1.5);
  \draw[thick] (1.8,0) .. controls (1.8,1) and (2.4,.8) .. (2.4,1.5);
  \node at (1.2,.35) {$\dots$};
  \node at (1.8,1.65) {$\dots$};
  \draw[thick] (3,0) -- (3,1.5);
  \filldraw  (0.05,2.25) circle (2pt);
  \filldraw  (0.03,1.3) circle (2pt);
    \draw[blue, dotted] (-0.4,0) -- (3.4,0);
    \draw[blue, dotted] (-0.4,1.5) -- (3.4,1.5);
   \draw[thick,->]  (3,1.5) .. controls ++(0,0.95) and ++(0,-1.15) .. (0,2.5);
   \draw[thick,->]  (2.4,1.5) .. controls ++(0,0.5) and ++(0,-0.5) .. (3,2.5);
   \draw[thick,->]  (0,1.5) .. controls ++(0,0.5) and ++(0,-0.5) .. (0.6,2.5);
   \draw[thick,->]  (0.6,1.5) .. controls ++(0,0.5) and ++(0,-0.5) .. (1.2,2.5);
   \draw[thick,->]  (1.2,1.5) .. controls ++(0,0.5) and ++(0,-0.5) .. (1.8,2.5);
   \draw[thick]  (3,0) .. controls ++(0,-0.95) and ++(0,+0.95) .. (0,-1);
   \draw[thick]  (0,0) .. controls ++(0,-0.5) and ++(0,+0.5) .. (0.6,-1);
   \draw[thick]  (0.6,0) .. controls ++(0,-0.5) and ++(0,+0.5) .. (1.2,-1);
    \draw[thick] (1.8,0) .. controls ++(0,-0.5) and ++(0,+0.5) .. (2.4,-1);
   \draw[thick]  (2.4,0) .. controls (2.4,-.5) and (3,-.5) .. (3,-1);
\end{tikzpicture}}
\; \;\;\right]
\;\; - \;\; n(h_{n+1} \otimes x_1).
\]
The first term can be simplified by sliding the lower dot
\[
\left[ \;\;\;
\hackcenter{
\begin{tikzpicture}[scale=0.8]
  \draw[thick] (2.4,0) .. controls (2.4,1.25) and (0,.25) .. (0,1.5)
     node[pos=0.85, shape=coordinate](X){};
  \draw[thick] (0,0) .. controls (0,1) and (0.6,.8) .. (0.6,1.5);
  \draw[thick] (0.6,0) .. controls (0.6,1) and (1.2,.8) .. (1.2,1.5);
  \draw[thick] (1.8,0) .. controls (1.8,1) and (2.4,.8) .. (2.4,1.5);
  \node at (1.2,.35) {$\dots$};
  \node at (1.8,1.65) {$\dots$};
  \draw[thick] (3,0) -- (3,1.5);
  \filldraw  (0.05,2.25) circle (2pt);
  \filldraw  (0.03,1.3) circle (2pt);
    \draw[blue, dotted] (-0.4,0) -- (3.4,0);
    \draw[blue, dotted] (-0.4,1.5) -- (3.4,1.5);
   \draw[thick,->]  (3,1.5) .. controls ++(0,0.95) and ++(0,-0.95) .. (0,2.5);
   \draw[thick,->]  (2.4,1.5) .. controls ++(0,0.5) and ++(0,-0.5) .. (3,2.5);
   \draw[thick,->]  (0,1.5) .. controls ++(0,0.5) and ++(0,-0.5) .. (0.6,2.5);
   \draw[thick,->]  (0.6,1.5) .. controls ++(0,0.5) and ++(0,-0.5) .. (1.2,2.5);
   \draw[thick,->]  (1.2,1.5) .. controls ++(0,0.5) and ++(0,-0.5) .. (1.8,2.5);
   \draw[thick]  (3,0) .. controls ++(0,-0.95) and ++(0,+1.15) .. (0,-1);
   \draw[thick]  (0,0) .. controls ++(0,-0.5) and ++(0,+0.5) .. (0.6,-1);
   \draw[thick]  (0.6,0) .. controls ++(0,-0.5) and ++(0,+0.5) .. (1.2,-1);
    \draw[thick] (1.8,0) .. controls ++(0,-0.5) and ++(0,+0.5) .. (2.4,-1);
   \draw[thick]  (2.4,0) .. controls (2.4,-.5) and (3,-.5) .. (3,-1);
\end{tikzpicture}}
\; \;\;\right]
\;\; = \;\;
\left[ \;\;\;
\hackcenter{
\begin{tikzpicture}[scale=0.8]
  \draw[thick] (2.4,0) .. controls (2.4,1.25) and (0,.25) .. (0,1.5)
     node[pos=0.85, shape=coordinate](X){};
  \draw[thick] (0,0) .. controls (0,1) and (0.6,.8) .. (0.6,1.5);
  \draw[thick] (0.6,0) .. controls (0.6,1) and (1.2,.8) .. (1.2,1.5);
  \draw[thick] (1.8,0) .. controls (1.8,1) and (2.4,.8) .. (2.4,1.5);
  \node at (1.2,.35) {$\dots$};
  \node at (1.8,1.65) {$\dots$};
  \draw[thick] (3,0) -- (3,1.5);
  \filldraw  (0.05,2.25) circle (2pt);
  \filldraw  (0.55,2.25) circle (2pt);
    \draw[blue, dotted] (-0.4,0) -- (3.4,0);
    \draw[blue, dotted] (-0.4,1.5) -- (3.4,1.5);
   \draw[thick,->]  (3,1.5) .. controls ++(0,0.95) and ++(0,-1.15) .. (0,2.5);
   \draw[thick,->]  (2.4,1.5) .. controls ++(0,0.5) and ++(0,-0.5) .. (3,2.5);
   \draw[thick,->]  (0,1.5) .. controls ++(0,0.5) and ++(0,-0.5) .. (0.6,2.5);
   \draw[thick,->]  (0.6,1.5) .. controls ++(0,0.5) and ++(0,-0.5) .. (1.2,2.5);
   \draw[thick,->]  (1.2,1.5) .. controls ++(0,0.5) and ++(0,-0.5) .. (1.8,2.5);
   \draw[thick]  (3,0) .. controls ++(0,-0.95) and ++(0,+0.95) .. (0,-1);
   \draw[thick]  (0,0) .. controls ++(0,-0.5) and ++(0,+0.5) .. (0.6,-1);
   \draw[thick]  (0.6,0) .. controls ++(0,-0.5) and ++(0,+0.5) .. (1.2,-1);
    \draw[thick] (1.8,0) .. controls ++(0,-0.5) and ++(0,+0.5) .. (2.4,-1);
   \draw[thick]  (2.4,0) .. controls (2.4,-.5) and (3,-.5) .. (3,-1);
\end{tikzpicture}}
\; \;\;\right]
\;\; + \;\;
\left[ \;\;\;
\hackcenter{
\begin{tikzpicture}[scale=0.8]
  \draw[thick] (2.4,0) .. controls (2.4,1.25) and (0,.25) .. (0,1.5)
     node[pos=0.85, shape=coordinate](X){};
  \draw[thick] (0,0) .. controls (0,1) and (0.6,.8) .. (0.6,1.5);
  \draw[thick] (0.6,0) .. controls (0.6,1) and (1.2,.8) .. (1.2,1.5);
  \draw[thick] (1.8,0) .. controls (1.8,1) and (2.4,.8) .. (2.4,1.5);
  \node at (1.2,.35) {$\dots$};
  \node at (1.8,1.65) {$\dots$};
  \draw[thick] (3,0) -- (3,1.5);
  \filldraw  (0.02,2.1) circle (2pt);
    \draw[blue, dotted] (-0.4,0) -- (3.4,0);
    \draw[blue, dotted] (-0.4,1.5) -- (3.4,1.5);
   \draw[thick,->]  (3,1.5) .. controls ++(0,0.95) and ++(0,-0.95) .. (0.6,2.5);
   \draw[thick,->]  (2.4,1.5) .. controls ++(0,0.5) and ++(0,-0.5) .. (3,2.5);
   \draw[thick,->]  (0,1.5) -- (0,2.5);
   \draw[thick,->]  (0.6,1.5) .. controls ++(0,0.5) and ++(0,-0.5) .. (1.2,2.5);
   \draw[thick,->]  (1.2,1.5) .. controls ++(0,0.5) and ++(0,-0.5) .. (1.8,2.5);
   \draw[thick]  (3,0) .. controls ++(0,-0.95) and ++(0,+0.95) .. (0,-1);
   \draw[thick]  (0,0) .. controls ++(0,-0.5) and ++(0,+0.5) .. (0.6,-1);
   \draw[thick]  (0.6,0) .. controls ++(0,-0.5) and ++(0,+0.5) .. (1.2,-1);
    \draw[thick] (1.8,0) .. controls ++(0,-0.5) and ++(0,+0.5) .. (2.4,-1);
   \draw[thick]  (2.4,0) .. controls (2.4,-.5) and (3,-.5) .. (3,-1);
\end{tikzpicture}}
\; \;\;\right]
\]
\[
 = \;\;
 \left[ \;\;\;
\hackcenter{
\begin{tikzpicture}[scale=0.8]
  \draw[thick,->] (2.4,0) .. controls (2.4,1.25) and (0,.25) .. (0,2)
     node[pos=0.85, shape=coordinate](X){};
  \draw[thick,->] (0,0) .. controls (0,1) and (0.6,.8) .. (0.6,2);
  \draw[thick,->] (0.6,0) .. controls (0.6,1) and (1.2,.8) .. (1.2,2);
  \draw[thick,->] (1.8,0) .. controls (1.8,1) and (2.4,.8) .. (2.4,2);
  \node at (1.2,.35) {$\dots$};
  \node at (1.8,1.65) {$\dots$};
  \draw[thick,->] (-0.6,0) -- (-0.6,2);
  \filldraw  (-0.6,1.2) circle (2pt);
  \filldraw  (.05,1.6) circle (2pt);
\end{tikzpicture}}
\; \;\;\right]
\;\; + \;\;
\left[ \;\;\;
\hackcenter{
\begin{tikzpicture}[scale=0.8]
  \draw[thick] (2.4,0) .. controls (2.4,1.25) and (0,.25) .. (0,1.5)
     node[pos=0.85, shape=coordinate](X){};
  \draw[thick] (0,0) .. controls (0,1) and (0.6,.8) .. (0.6,1.5);
  \draw[thick] (0.6,0) .. controls (0.6,1) and (1.2,.8) .. (1.2,1.5);
  \draw[thick] (1.8,0) .. controls (1.8,1) and (2.4,.8) .. (2.4,1.5);
  \node at (1.2,.35) {$\dots$};
  \node at (1.8,1.65) {$\dots$};
  \draw[thick] (3,0) -- (3,1.5);
  \filldraw  (0.02,2.1) circle (2pt);
    \draw[blue, dotted] (-0.4,0) -- (3.4,0);
    \draw[blue, dotted] (-0.4,1.5) -- (3.4,1.5);
   \draw[thick,->]  (0,1.5) -- (0,2.5);
   \draw[thick,->]  (1.2,1.5) .. controls ++(0,0.5) and ++(0,-0.5) .. (0.6,2.5);
   \draw[thick,->]  (2.4,1.5) .. controls ++(0,0.5) and ++(0,-0.5) .. (1.8,2.5);
   \draw[thick,->]  (3,1.5) .. controls ++(0,0.5) and ++(0,-0.5) .. (2.4,2.5);
   \draw[thick,->]  (0.6,1.5) .. controls ++(0,0.95) and ++(0,-0.95) .. (3,2.5);
   \draw[thick]  (3,0) .. controls ++(0,-0.95) and ++(0,+0.95) .. (0,-1);
   \draw[thick]  (0,0) .. controls ++(0,-0.5) and ++(0,+0.5) .. (0.6,-1);
   \draw[thick]  (0.6,0) .. controls ++(0,-0.5) and ++(0,+0.5) .. (1.2,-1);
    \draw[thick] (1.8,0) .. controls ++(0,-0.5) and ++(0,+0.5) .. (2.4,-1);
   \draw[thick]  (2.4,0) .. controls (2.4,-.5) and (3,-.5) .. (3,-1);
\end{tikzpicture}}
\; \;\;\right]
\]
where the trace relation was used repeatedly on the second term to reorder the crossings at the top of the diagram.  The second term simplifies further using the trace relation to reduce the number of crossings so that the second diagram is equal to $(h_{n+1} \otimes x_1)$, completing the induction step.

Now consider the Jacobi identity:
\begin{equation*}
[[\beta_1, \beta_{m-1}],\beta_n]+[[\beta_{m-1},\beta_n],\beta_1]+[[\beta_n,\beta_1],\beta_{m-1}]=0.
\end{equation*}
Using the base case and the induction hypothesis this becomes
\begin{equation*}
(m-2)[\beta_m,\beta_n]+(n-m+1)[\beta_{m+n-1},\beta_1]+(1-n)[\beta_{n+1},\beta_{m-1}]=0.
\end{equation*}
Again by using the base case and the induction hypothesis this becomes
\begin{equation*}
(m-2)[\beta_m,\beta_n]+(n-m+1)(2-m-n)\beta_{m+n}+(1-n)(m-n-2)\beta_{m+n}=0.
\end{equation*}
It now follows easily that
$ [\beta_m,\beta_n]=(n-m)\beta_{m+n}$.
\end{proof}

We now have some relations leading to an action of a Virasoro algebra on a Heisenberg algebra.

\begin{lemma}
\label{lemman1m0}
Let $m,n \in \mathbb{Z} $ with $ mn>0$.  Then
\label{posvirposheis}
\begin{equation*}
[h_{n} \otimes x_1, h_m \otimes 1]=m(h_{m+n} \otimes 1).
\end{equation*}
\end{lemma}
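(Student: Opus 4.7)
The plan is to mimic the proof of Lemma~\ref{lemmam1n1}: establish a base case by direct graphical manipulation, then extend it by a Jacobi-identity induction using Lemma~\ref{lemmam1n1}. By reversing all arrow orientations, the negative-index case reduces to the positive one, so we may assume $n,m\geq 1$.

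For the base case $n=1$ we will compute $(h_m\otimes 1)(h_1\otimes x_1) = [\sigma_m X_{m+1}]$, where $\sigma_m = T_1\cdots T_{m-1}$ acts on strands $1,\ldots,m$ and $X_{m+1}$ is the dot on strand $m+1$. Successively conjugating by $T_m,T_{m-1},\ldots,T_1$ via trace cyclicity, and applying the dot-slide relation~\eqref{eq:nil-dot} at each step, splits the diagram into a ``propagated'' term together with a resolution in which the crossing is replaced by the identity. Each such resolution matches the hypothesis of Lemma~\ref{lem_nplusm}, identifying it as $h_{m+1}\otimes 1$. After $m$ iterations the dot has been transported from strand $m+1$ around to strand $1$, so the main term becomes $(h_1\otimes x_1)(h_m\otimes 1)$ while the $m$ resolutions contribute $-m(h_{m+1}\otimes 1)$. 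Rearranging yields $[h_1\otimes x_1, h_m\otimes 1] = m(h_{m+1}\otimes 1)$.

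For the inductive step with $n\geq 3$, we will apply the Jacobi identity to $\{h_{n-1}\otimes x_1,\ h_1\otimes x_1,\ h_m\otimes 1\}$. By Lemma~\ref{lemmam1n1}, $[h_{n-1}\otimes x_1, h_1\otimes x_1] = (2-n)(h_n\otimes x_1)$; the base case together with the induction hypothesis at indices $n-1$ and $1$ evaluates the remaining two nested brackets to $-m(m+1)(h_{n+m}\otimes 1)$ and $m(n+m-1)(h_{n+m}\otimes 1)$ respectively. Substituting into the Jacobi relation and dividing by $(2-n)\neq 0$ produces the desired identity. The case $n=2$, which this Jacobi misses because $(2-n)=0$, is handled by the same graphical strategy as the base case, now starting from $(h_m\otimes 1)(h_2\otimes x_1)$ with $\sigma_2 = T_1$ in place of the trivial permutation; the $m$ resolutions are again identified via Lemma~\ref{lem_nplusm}, this time as $h_{m+2}\otimes 1$.

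The main obstacle lies in the graphical bookkeeping of the base case (and its $n=2$ analog): one must verify that each of the $m$ conjugation steps produces a resolution of precisely the shape required by Lemma~\ref{lem_nplusm}, and that the $m$ resolutions combine with the correct signs to yield the single contribution $m(h_{m+n}\otimes 1)$.
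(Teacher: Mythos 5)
Your positive-index argument is correct, but it takes a different route from the paper. The paper proves the identity for all $m,n>0$ in one diagrammatic pass: it applies the dot-sliding method of Lemma~\ref{lemmam1n1} to $(h_m\otimes 1)(h_n\otimes x_1)$, producing the $m$ correction terms $-h_{n+m}\otimes 1$ all at once, and then identifies the remaining diagram with $(h_n\otimes x_1)(h_m\otimes 1)$ by sliding the crossings of the $h_n$-block past those of the $h_m$-block (pure braid moves, no further corrections since no dots are involved). You instead prove only the cases $n=1$ and $n=2$ diagrammatically and then run a Jacobi-identity induction on $n$ using Lemma~\ref{lemmam1n1}; this mirrors the structure of the paper's proof of Lemma~\ref{lemmam1n1} itself rather than of this lemma. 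Your bookkeeping checks out: each of the $m$ conjugations involving the dotted strand uses $X_{j+1}T_j=T_jX_j-1$, so the resolution is a dotless permutation obtained by multiplying a transposition joining the two cycles, hence an $(m+n)$-cycle of class $h_{m+n}\otimes 1$, entering with a minus sign; and the Jacobi step gives $(2-n)[h_n\otimes x_1,h_m\otimes 1]=m(2-n)(h_{n+m}\otimes 1)$, which is why your separate treatment of $n=2$ is genuinely needed. (Citing Lemma~\ref{lem_nplusm} for the resolutions is overkill: they are dotless, so their class is determined by cycle type alone; Lemma~\ref{lem_nplusm} concerns dotted elements.) The trade-off: your route leans on Lemma~\ref{lemmam1n1} and an algebraic induction, while the paper's is uniform in $n$ and stays entirely inside the graphical calculus.

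The one step I would not accept as written is the reduction of the negative-index case by ``reversing all arrow orientations.'' Orientation reversal is not a symmetry of $\H$: it sends the counterclockwise circle (which equals $1$ by \eqref{eq:rel3}) to the clockwise circle $c_0$, which is a free central generator. Moreover, even granting some orientation-reversing identification $h_k\otimes p\leftrightarrow h_{-k}\otimes p$, a covariant one would transport the positive-index identity to $[h_{-n'}\otimes x_1,h_{-m'}\otimes 1]=+m'(h_{-(m'+n')}\otimes 1)$ for $m',n'>0$, whereas the statement (with $m=-m'<0$) requires $-m'(h_{-(m'+n')}\otimes 1)$; the sign flip must come either from a contravariant symmetry (an anti-automorphism of $\Tr(\H)$, e.g.\ induced by reflection in a vertical axis) or, as the paper intends, from redoing the mirror computation on downward strands, where the dot-slide corrections enter with the opposite sign --- exactly the $\pm$ appearing in Lemma~\ref{dotmovelemma}. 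Your base-case and $n=2$ computations go through verbatim for $\Q$-strands with that sign change, so the fix is routine, but as stated the reduction is not justified.
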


\begin{proof}
We only consider the case that $ m $ and $n$ are positive.  The other case is similar.
Using the same method of proof as in Lemma~\ref{lemmam1n1} it is not difficult to show
\[
(h_m \otimes 1 )(h_n \otimes x_1 )
\;\; = \;\; \left[ \;\;\;
\hackcenter{
\begin{tikzpicture}[scale=0.8]
  \draw[thick,->] (3,0) .. controls ++(0,1.25) and ++(0,-1.75) .. (0,2);
  \draw[thick,->] (0,0) .. controls ++(0,1) and ++(0,-1.2) .. (.6,2);
  \draw[thick,->] (.6,0) .. controls ++(0,1) and ++(0,-1.2) .. (1.2,2);
  \draw[thick,->] (1.8,0) .. controls ++(0,1) and ++(0,-1.2) .. (2.4,2);
  \draw[thick,->] (2.4,0) .. controls ++(0,1) and ++(0,-1.2) .. (3,2);
  \node at (1.2,.35) {$\dots$};
  \node at (1.8,1.65) {$\dots$};
  \draw[thick,->] (6,0) .. controls ++(0,1.25) and ++(0,-1.75) .. (3.6,2);
  \draw[thick,->] (3.6,0) .. controls ++(0,1) and ++(0,-1.2) .. (4.2,2);
  \draw[thick,->] (4.2,0) .. controls ++(0,1) and ++(0,-1.2) .. (4.8,2);
  \draw[thick,->] (5.4,0) .. controls ++(0,1) and ++(0,-1.2) .. (6,2);
   \filldraw  (3.7,1.4) circle (2pt);
  \node at (4.8,.35) {$\dots$};
  \node at (5.4,1.65) {$\dots$};
\end{tikzpicture}}
\; \;\;\right]
 \;\; = \;\; \hspace{1in}
\]

\[
\left[ \;\;\;
\hackcenter{
\begin{tikzpicture}[scale=0.8]
  \draw[thick] (3,0) .. controls (3,1.25) and (0,.25) .. (0,1.5)
     node[pos=0.85, shape=coordinate](X){};
  \draw[thick] (0,0) .. controls (0,1) and (.6,.8) .. (.6,1.5);
  \draw[thick] (.6,0) .. controls (.6,1) and (1.2,.8) .. (1.2,1.5);
  \draw[thick] (1.8,0) .. controls (1.8,1) and (2.4,.8) .. (2.4,1.5);
  \draw[thick] (2.4,0) .. controls (2.4,1) and (3,.8) .. (3,1.5);
  \node at (1.2,.35) {$\dots$};
  \node at (1.8,1.15) {$\dots$};
  \draw[thick] (6,0) .. controls (6,1.25) and (3.6,.25) .. (3.6,1.5)
     node[pos=0.85, shape=coordinate](X){};
  \draw[thick] (3.6,0) .. controls (3.6,1) and (4.2,.8) .. (4.2,1.5);
  \draw[thick] (4.2,0) .. controls (4.2,1) and (4.8,.8) .. (4.8,1.5);
  \draw[thick] (5.4,0) .. controls (5.4,1) and (6,.8) .. (6,1.5);
  \node at (4.8,-.65) {$\dots$};
  \node at (5.4,1.85) {$\dots$};
    \draw[blue, dotted] (-0.4,0) -- (6.4,0);
    \draw[blue, dotted] (-0.4,1.5) -- (6.4,1.5);
   \draw[thick,->]  (3,1.5) .. controls (3,2) and (3.6,2) .. (3.6,2.5);
   \draw[thick,->] (3.6,1.5) .. controls (3.6,2.5) and (0,1.5) .. (0,2.5);
   \draw[thick,->]  (2.4,1.5) .. controls (2.4,2) and (3,2) .. (3,2.5);
   \draw[thick,->]  (1.2,1.5) .. controls (1.2,2) and (1.8,2) .. (1.8,2.5);
   \draw[thick,->]  (.6,1.5) .. controls (.6,2) and (1.2,2) .. (1.2,2.5);
   \draw[thick,->]  (0,1.5) .. controls (0,2) and (.6,2) .. (.6,2.5);
   \draw[thick,->] (4.2,1.5) -- (4.2,2.5);
   \draw[thick,->] (4.8,1.5) -- (4.8,2.5);
   \draw[thick,->] (6,1.5) -- (6,2.5);
     \filldraw  (.15,2.2) circle (2pt);
   \draw[thick]  (2.4,0) .. controls (2.4,-.5) and (3,-.5) .. (3,-1);
   \draw[thick]  (1.8,0) .. controls (1.8,-.5) and (2.4,-.5) .. (2.4,-1);
   \draw[thick]  (.6,0) .. controls (.6,-.5) and (1.2,-.5) .. (1.2,-1);
   \draw[thick]  (0,0) .. controls (0,-.5) and (.6,-.5) .. (.6,-1);
   \draw[thick]  (3,0) .. controls (3,-.5) and (3.6,-.5) .. (3.6,-1);
  \draw[thick ] (3.6,0) .. controls (3.6,-1) and (0,0) .. (0,-1);
   \draw[thick] (4.2,0) -- (4.2,-1);
   \draw[thick] (5.4,0) -- (5.4,-1);
   \draw[thick] (6,0) -- (6,-1);
\end{tikzpicture}}
\right]
\;\; - \;\;
m h_{n+m} \otimes 1.
\]
Then by sliding the crossings appearing in $h_n$ to the left of those appearing in $h_m$, the first term can be written as
\[
\left[ \;\;\;
\hackcenter{
\begin{tikzpicture}[scale=0.8]
  \draw[thick] (3,0) .. controls (3,1.25) and (0,.25) .. (0,1.5)
     node[pos=0.85, shape=coordinate](X){};
  \draw[thick] (0,0) .. controls (0,1) and (.6,.8) .. (.6,1.5);
  \draw[thick] (.6,0) .. controls (.6,1) and (1.2,.8) .. (1.2,1.5);
  \draw[thick] (1.8,0) .. controls (1.8,1) and (2.4,.8) .. (2.4,1.5);
  \draw[thick] (2.4,0) .. controls (2.4,1) and (3,.8) .. (3,1.5);
  \node at (1.2,.35) {$\dots$};
  \node at (1.8,1.15) {$\dots$};
  \draw[thick] (6,0) .. controls (6,1.25) and (3.6,.25) .. (3.6,1.5)
     node[pos=0.85, shape=coordinate](X){};
  \draw[thick] (3.6,0) .. controls (3.6,1) and (4.2,.8) .. (4.2,1.5);
  \draw[thick] (4.2,0) .. controls (4.2,1) and (4.8,.8) .. (4.8,1.5);
  \draw[thick] (5.4,0) .. controls (5.4,1) and (6,.8) .. (6,1.5);
  \node at (4.8,-.65) {$\dots$};
  \node at (5.4,1.85) {$\dots$};
    \draw[blue, dotted] (-0.4,0) -- (6.4,0);
    \draw[blue, dotted] (-0.4,1.5) -- (6.4,1.5);
   \draw[thick,->]  (3,1.5) .. controls (3,2) and (3.6,2) .. (3.6,2.5);
   \draw[thick,->] (3.6,1.5) .. controls (3.6,2.5) and (0,1.5) .. (0,2.5);
   \draw[thick,->]  (2.4,1.5) .. controls (2.4,2) and (3,2) .. (3,2.5);
   \draw[thick,->]  (1.2,1.5) .. controls (1.2,2) and (1.8,2) .. (1.8,2.5);
   \draw[thick,->]  (.6,1.5) .. controls (.6,2) and (1.2,2) .. (1.2,2.5);
   \draw[thick,->]  (0,1.5) .. controls (0,2) and (.6,2) .. (.6,2.5);
   \draw[thick,->] (4.2,1.5) -- (4.2,2.5);
   \draw[thick,->] (4.8,1.5) -- (4.8,2.5);
   \draw[thick,->] (6,1.5) -- (6,2.5);
     \filldraw  (.15,2.2) circle (2pt);
   \draw[thick]  (2.4,0) .. controls (2.4,-.5) and (3,-.5) .. (3,-1);
   \draw[thick]  (1.8,0) .. controls (1.8,-.5) and (2.4,-.5) .. (2.4,-1);
   \draw[thick]  (.6,0) .. controls (.6,-.5) and (1.2,-.5) .. (1.2,-1);
   \draw[thick]  (0,0) .. controls (0,-.5) and (.6,-.5) .. (.6,-1);
   \draw[thick]  (3,0) .. controls (3,-.5) and (3.6,-.5) .. (3.6,-1);
  \draw[thick ] (3.6,0) .. controls (3.6,-1) and (0,0) .. (0,-1);
   \draw[thick] (4.2,0) -- (4.2,-1);
   \draw[thick] (5.4,0) -- (5.4,-1);
   \draw[thick] (6,0) -- (6,-1);
\end{tikzpicture}}
\right]
\;\; = \;\;
\left[ \;\;\;
\hackcenter{
\begin{tikzpicture}[scale=0.8]
  \draw[thick] (3,0) .. controls (3,1.25) and (0,.25) .. (0,1.5)
     node[pos=0.85, shape=coordinate](X){};
  \draw[thick] (0,0) .. controls (0,1) and (.6,.8) .. (.6,1.5);
  \draw[thick] (.6,0) .. controls (.6,1) and (1.2,.8) .. (1.2,1.5);
  \draw[thick] (1.8,0) .. controls (1.8,1) and (2.4,.8) .. (2.4,1.5);
  \draw[thick] (2.4,0) .. controls (2.4,1) and (3,.8) .. (3,1.5);
  \node at (1.2,.35) {$\dots$};
  \node at (1.8,1.15) {$\dots$};
  \draw[thick] (-.6,0) .. controls (-.6,1.25) and (-3,.25) .. (-3,1.5)
     node[pos=0.85, shape=coordinate](X){};
  \draw[thick] (-3,0) .. controls (-3,1) and (-2.4,.8) .. (-2.4,1.5);
  \draw[thick] (-2.4,0) .. controls (-2.4,1) and (-1.8,.8) .. (-1.8,1.5);
  \draw[thick] (-1.2,0) .. controls (-1.2,1) and (-0.6,.8) .. (-0.6,1.5);
  \node at (-1.8,.35) {$\dots$};
  \node at (-1.2,1.15) {$\dots$};
    \draw[blue, dotted] (-3.4,0) -- (3.4,0);
    \draw[blue, dotted] (-3.4,1.5) -- (3.4,1.5);
   \draw[thick, ->] (-3,1.5) -- (-3,3);
   \draw[thick,->]  (-.6,1.5) .. controls ++(0,.65) and ++(0,-1) .. (3,3);
   \draw[thick,->]  (-1.8,1.5) .. controls ++(0,.85) and ++(0,-1) .. (1.8,3);
   \draw[thick,->]  (-2.4,1.5) .. controls ++(0,1.1) and ++(0,-1) .. (1.2,3);
   \draw[thick, ->]  (0,1.5) .. controls ++(0,.5) and ++(0,-.65) .. (-2.4,3);
   \draw[thick, ->]  (.6,1.5) .. controls ++(0,.5) and ++(0,-.65) .. (-1.8,3);
   \draw[thick, ->]  (1.2,1.5) .. controls ++(0,.5) and ++(0,-.65) .. (-1.2,3);
   \draw[thick, ->]  (2.4,1.5) .. controls ++(0,.5) and ++(0,-.65) .. (0,3);
   \draw[thick, ->]  (3,1.5) .. controls ++(0,.5) and ++(0,-.65) .. (0.6,3);
        \filldraw  (-3,2.2) circle (2pt);
     \draw[thick] (-3,0) -- (-3,-1.5);
   \draw[thick]  (-.6,0) .. controls ++(0,-.65) and ++(0,1) .. (3,-1.5);
   \draw[thick]  (-1.2,0) .. controls ++(0,-.85) and ++(0,1) .. (2.4,-1.5);
   \draw[thick]  (-2.4,0) .. controls ++(0,-1.1) and ++(0,1) .. (1.2,-1.5);
      \draw[thick]  (0,0) .. controls ++(0,-.5) and ++(0,.65) .. (-2.4,-1.5);
      \draw[thick]  (.6,0) .. controls ++(0,-.5) and ++(0,.65) .. (-1.8,-1.5);
      \draw[thick]  (1.8,0) .. controls ++(0,-.5) and ++(0,.65) .. (-0.6,-1.5);
      \draw[thick]  (2.4,0) .. controls ++(0,-.5) and ++(0,.65) .. (0,-1.5);
      \draw[thick]  (3,0) .. controls ++(0,-.5) and ++(0,.65) .. (0.6,-1.5);
\end{tikzpicture}}
\right]
\]
which simplifies to $(h_n \otimes x_1)(h_m \otimes 1)$.

%
%
%
\end{proof}

\begin{lemma}
\label{-n1m1}
For integers $m,n \geq 1$ we have
\begin{equation*}
[h_{-m} \otimes x_1, h_n \otimes 1] =
\begin{cases}
n(h_{n-m} \otimes 1) & \text{ if } n > m \\
0 & \text{ if }  m \geq n  \geq 1.
\end{cases}
\end{equation*}
\end{lemma}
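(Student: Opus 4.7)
The plan is to mimic the argument of Lemma~\ref{origheisrelations}, but now applying Lemma~\ref{lem:alem2} with $a=1$ rather than with $a=0$. First, I would interpret the product $(h_n\otimes 1)(h_{-m}\otimes x_1)$ as an annular diagram consisting of the cyclic permutation $\sigma_n$ on $n$ upward strands placed side-by-side with the dotted cyclic permutation on $m$ downward strands (with the single dot decorating one of the downward strands). Using the trace relation to slide strands around the annulus and then $n$ successive applications of the second equality in \eqref{eq:rel2}, this annular diagram deforms into exactly the interlaced tangle appearing on the left-hand side of Lemma~\ref{lem:alem2} with $a=1$ — this is the same identification used in the proof of Lemma~\ref{origheisrelations}, with the extra dot simply carried along.

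Once this identification is established, Lemma~\ref{lem:alem2} with $a=1$ immediately gives
\begin{equation*}
(h_n\otimes 1)(h_{-m}\otimes x_1) \;=\;
\begin{cases}
(h_{-m}\otimes x_1)(h_n\otimes 1), & m > n \geq 1, \\[2pt]
(h_{-m}\otimes x_1)(h_n\otimes 1) - n\,\tilde{c}_1, & m = n, \\[2pt]
(h_{-m}\otimes x_1)(h_n\otimes 1) - n\sum_{f+g=0}\tilde{c}_f\,(h_{n-m}\otimes x_1^g), & n > m.
\end{cases}
\end{equation*}
Since $\tilde{c}_1=0$ (noted after \eqref{eq:dotted-curl}) and $\tilde{c}_0=1$ (the counterclockwise empty bubble), the middle case contributes zero and the last case simplifies to $-n(h_{n-m}\otimes 1)$. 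Transposing $(h_{-m}\otimes x_1)(h_n\otimes 1)$ to the other side yields
\begin{equation*}
[h_{-m}\otimes x_1,\,h_n\otimes 1] \;=\;
\begin{cases}
0, & m \geq n \geq 1, \\
n\,(h_{n-m}\otimes 1), & n > m,
\end{cases}
\end{equation*}
which is the desired identity.

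The only genuinely non-routine point is verifying that the annular diagram for $(h_n\otimes 1)(h_{-m}\otimes x_1)$ really does deform into the left-hand side of Lemma~\ref{lem:alem2} with the dot placed on the correct strand so that the hypothesis of that lemma applies verbatim. This follows from the same manipulation used in the proof of Lemma~\ref{origheisrelations}: the dot is away from the tangle being rearranged, so it is inert under the cycling and the resolutions from \eqref{eq:rel2}. Hence the main obstacle reduces to bookkeeping of the dot's position, and the rest is direct substitution into Lemma~\ref{lem:alem2}.
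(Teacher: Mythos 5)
Your proposal is correct and follows essentially the same route as the paper: both reduce the annular diagram for $(h_n\otimes 1)(h_{-m}\otimes x_1)$ to the interlaced tangle of Lemma~\ref{lem:alem2} with $a=1$ and then read off the commutator using $\tilde{c}_1=0$ and $\tilde{c}_0=1$. The only difference is presentational: where you invoke the manipulation from Lemma~\ref{origheisrelations} and argue the dot is inert, the paper redoes the strand-by-strand reduction explicitly, checking that the resolution terms from \eqref{eq:rel2} vanish because their closures contain left twist curls, which is exactly the dot/crossing bookkeeping you flag as the one nontrivial point.
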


\begin{proof}
This is proven by direct computation.
  \[
(h_n \otimes 1 )(h_{-m} \otimes x_1 )
\;\; = \;\; \left[ \;
\hackcenter{
\begin{tikzpicture}[scale=0.8]
  \draw[thick,->] (3,0) .. controls (3,1.25) and (0,.25) .. (0,2);
  \draw[thick,->] (0,0) .. controls (0,1) and (.6,.8) .. (.6,2);
  \draw[thick,->] (.6,0) .. controls (.6,1) and (1.2,.8) .. (1.2,2);
  \draw[thick,->] (1.8,0) .. controls (1.8,1) and (2.4,.8) .. (2.4,2);
  \draw[thick,->] (2.4,0) .. controls (2.4,1) and (3,.8) .. (3,2);
  \node at (1.2,.35) {$\dots$};
  \node at (1.8,1.65) {$\dots$};
  \filldraw  (3.62,1.7) circle (2pt);
  \draw[thick,<-] (6,0) .. controls (6,1.25) and (3.6,.25) .. (3.6,2);
  \draw[thick,<-] (3.6,0) .. controls (3.6,1) and (4.2,.8) .. (4.2,2);
  \draw[thick,<-] (4.2,0) .. controls (4.2,1) and (4.8,.8) .. (4.8,2);
  \draw[thick,<-] (5.4,0) .. controls (5.4,1) and (6,.8) .. (6,2);
  \node at (4.8,.35) {$\dots$};
  \node at (5.4,1.65) {$\dots$};
\end{tikzpicture}}
\; \;\;\right]
 \;\; \refequal{\eqref{eq:rel2}} \left[ \;
\hackcenter{
\begin{tikzpicture}[scale=0.8]
  \draw[thick] (3,0) .. controls (3,1.25) and (0,.25) .. (0,1.5);
  \draw[thick] (0,0) .. controls (0,1) and (.6,.8) .. (.6,1.5);
  \draw[thick] (.6,0) .. controls (.6,1) and (1.2,.8) .. (1.2,1.5);
  \draw[thick] (1.8,0) .. controls (1.8,1) and (2.4,.8) .. (2.4,1.5);
  \draw[thick] (2.4,0) .. controls (2.4,1) and (3,.8) .. (3,1.5);
  \node at (1.2,.35) {$\dots$};
  \node at (1.8,1.15) {$\dots$};
  \draw[thick] (6,0) .. controls (6,1.25) and (3.6,.25) .. (3.6,1.5)
     node[pos=0.85, shape=coordinate](X){};
  \draw[thick] (3.6,0) .. controls (3.6,1) and (4.2,.8) .. (4.2,1.5);
  \draw[thick] (4.2,0) .. controls (4.2,1) and (4.8,.8) .. (4.8,1.5);
  \draw[thick] (5.4,0) .. controls (5.4,1) and (6,.8) .. (6,1.5);
  \node at (4.8,.35) {$\dots$};
  \node at (5.4,1.15) {$\dots$};
    \draw[blue, dotted] (-0.4,0) -- (6.4,0);
    \draw[blue, dotted] (-0.4,1.5) -- (6.4,1.5);
   \draw[thick,->]  (3,1.5) .. controls (3,2) and (3.6,2) .. (3.6,2.5);
   \draw[thick] (3.6,1.5) .. controls (3.6,2) and (3,2) .. (3,2.5);
   \draw[thick,->] (0,1.5) -- (0,2.5);
   \draw[thick,->] (.6,1.5) -- (.6,2.5);
   \draw[thick,->] (1.2,1.5) -- (1.2,2.5);
   \draw[thick,->] (2.4,1.5) -- (2.4,2.5);
   \draw[thick] (4.2,1.5) -- (4.2,2.5);
   \draw[thick] (4.8,1.5) -- (4.8,2.5);
   \draw[thick] (6,1.5) -- (6,2.5);
   \filldraw  (3.55,1.7) circle (2pt);
   \draw[thick]  (3,0) .. controls (3,-.5) and (3.6,-.5) .. (3.6,-1);
  \draw[thick ,->] (3.6,0) .. controls (3.6,-.5) and (3,-.5) .. (3,-1);
   \draw[thick] (0,0) -- (0,-1);
   \draw[thick] (.6,0) -- (.6,-1);
   \draw[thick] (1.8,0) -- (1.8,-1);
   \draw[thick] (2.4,0) -- (2.4,-1);
   \draw[thick,->] (4.2,0) -- (4.2,-1);
   \draw[thick,->] (5.4,0) -- (5.4,-1);
   \draw[thick,->] (6,0) -- (6,-1);
\end{tikzpicture}}
\right]
\]
\begin{equation} \label{eq:p1}
\;\; = \;\;
\left[ \;
\hackcenter{
\begin{tikzpicture}[scale=0.8]
  \draw[thick] (3,0) .. controls (3,1.25) and (0,.25) .. (0,1.5);
  \draw[thick] (0,0) .. controls (0,1) and (.6,.8) .. (.6,1.5);
  \draw[thick] (.6,0) .. controls (.6,1) and (1.2,.8) .. (1.2,1.5);
  \draw[thick] (1.8,0) .. controls (1.8,1) and (2.4,.8) .. (2.4,1.5);
  \draw[thick] (2.4,0) .. controls (2.4,1) and (3,.8) .. (3,1.5);
  \node at (1.2,.35) {$\dots$};
  \node at (1.8,1.15) {$\dots$};
  \draw[thick] (6,0) .. controls (6,1.25) and (3.6,.25) .. (3.6,1.5)
     node[pos=0.85, shape=coordinate](X){};
  \draw[thick] (3.6,0) .. controls (3.6,1) and (4.2,.8) .. (4.2,1.5);
  \draw[thick] (4.2,0) .. controls (4.2,1) and (4.8,.8) .. (4.8,1.5);
  \draw[thick] (5.4,0) .. controls (5.4,1) and (6,.8) .. (6,1.5);
  \node at (4.8,.35) {$\dots$};
  \node at (5.4,1.15) {$\dots$};
    \draw[blue, dotted] (-0.4,0) -- (6.4,0);
    \draw[blue, dotted] (-0.4,1.5) -- (6.4,1.5);
   \draw[thick,->]  (3,1.5) .. controls (3,2) and (3.6,2) .. (3.6,2.5);
   \draw[thick] (3.6,1.5) .. controls (3.6,2) and (3,2) .. (3,2.5);
   \draw[thick,->] (0,1.5) -- (0,2.5);
   \draw[thick,->] (.6,1.5) -- (.6,2.5);
   \draw[thick,->] (1.2,1.5) -- (1.2,2.5);
   \draw[thick,->] (2.4,1.5) -- (2.4,2.5);
   \draw[thick] (4.2,1.5) -- (4.2,2.5);
   \draw[thick] (4.8,1.5) -- (4.8,2.5);
   \draw[thick] (6,1.5) -- (6,2.5);
   \filldraw  (3.05,2.25) circle (2pt);
   \draw[thick]  (3,0) .. controls (3,-.5) and (3.6,-.5) .. (3.6,-1);
  \draw[thick ,->] (3.6,0) .. controls (3.6,-.5) and (3,-.5) .. (3,-1);
   \draw[thick] (0,0) -- (0,-1);
   \draw[thick] (.6,0) -- (.6,-1);
   \draw[thick] (1.8,0) -- (1.8,-1);
   \draw[thick] (2.4,0) -- (2.4,-1);
   \draw[thick,->] (4.2,0) -- (4.2,-1);
   \draw[thick,->] (5.4,0) -- (5.4,-1);
   \draw[thick,->] (6,0) -- (6,-1);
\end{tikzpicture}}
\right]
\;\; - \;\;
\left[ \;
\hackcenter{
\begin{tikzpicture}[scale=0.8]
  \draw[thick] (3,0) .. controls (3,1.25) and (0,.25) .. (0,1.5);
  \draw[thick] (0,0) .. controls (0,1) and (.6,.8) .. (.6,1.5);
  \draw[thick] (.6,0) .. controls (.6,1) and (1.2,.8) .. (1.2,1.5);
  \draw[thick] (1.8,0) .. controls (1.8,1) and (2.4,.8) .. (2.4,1.5);
  \draw[thick] (2.4,0) .. controls (2.4,1) and (3,.8) .. (3,1.5);
  \node at (1.2,.35) {$\dots$};
  \node at (1.8,1.15) {$\dots$};
  \draw[thick] (6,0) .. controls (6,1.25) and (3.6,.25) .. (3.6,1.5)
     node[pos=0.85, shape=coordinate](X){};
  \draw[thick] (3.6,0) .. controls (3.6,1) and (4.2,.8) .. (4.2,1.5);
  \draw[thick] (4.2,0) .. controls (4.2,1) and (4.8,.8) .. (4.8,1.5);
  \draw[thick] (5.4,0) .. controls (5.4,1) and (6,.8) .. (6,1.5);
  \node at (4.8,.35) {$\dots$};
  \node at (5.4,1.15) {$\dots$};
    \draw[blue, dotted] (-0.4,0) -- (6.4,0);
    \draw[blue, dotted] (-0.4,1.5) -- (6.4,1.5);
   \draw[thick,->]  (3,1.5) .. controls ++(0,.4) and ++(0,.4) .. (3.6,1.5);
   \draw[thick,<-] (3.6,2.5) .. controls ++(0,-.4) and ++(0,-.4) .. (3,2.5);
   \draw[thick,->] (0,1.5) -- (0,2.5);
   \draw[thick,->] (.6,1.5) -- (.6,2.5);
   \draw[thick,->] (1.2,1.5) -- (1.2,2.5);
   \draw[thick,->] (2.4,1.5) -- (2.4,2.5);
   \draw[thick] (4.2,1.5) -- (4.2,2.5);
   \draw[thick] (4.8,1.5) -- (4.8,2.5);
   \draw[thick] (6,1.5) -- (6,2.5);
   \draw[thick]  (3,0) .. controls (3,-.5) and (3.6,-.5) .. (3.6,-1);
  \draw[thick ,->] (3.6,0) .. controls (3.6,-.5) and (3,-.5) .. (3,-1);
   \draw[thick] (0,0) -- (0,-1);
   \draw[thick] (.6,0) -- (.6,-1);
   \draw[thick] (1.8,0) -- (1.8,-1);
   \draw[thick] (2.4,0) -- (2.4,-1);
   \draw[thick,->] (4.2,0) -- (4.2,-1);
   \draw[thick,->] (5.4,0) -- (5.4,-1);
   \draw[thick,->] (6,0) -- (6,-1);
\end{tikzpicture}}
\right]
\end{equation}
The second diagram is zero since the upper cup can be pulled to the bottom of the diagram using the trace, thereby creating a left twist curl.   Applying the same technique inductively, the first downward oriented strand and its dot can be brought all the way to the left of the diagram.
\[
\;\; = \;\;
\left[ \;
\hackcenter{
\begin{tikzpicture}[scale=0.8]
  \draw[thick] (3,0) .. controls ++(0,1.25) and ++(0,-1.75) .. (0,1.5);
  \draw[thick] (0,0) .. controls ++(0,1) and ++(0,-.7) .. (.6,1.5);
  \draw[thick] (.6,0) .. controls ++(0,1) and ++(0,-.7) .. (1.2,1.5);
  \draw[thick] (1.8,0) .. controls ++(0,1) and ++(0,-.7) .. (2.4,1.5);
  \draw[thick] (2.4,0) .. controls ++(0,1) and ++(0,-.7) .. (3,1.5);
  \node at (1.2,.35) {$\dots$};
  \node at (1.8,1.15) {$\dots$};
  \draw[thick] (6,0) .. controls ++(0,1.25) and ++(0,-1.75) .. (3.6,1.5) ;
  \draw[thick] (3.6,0) .. controls ++(0,1) and ++(0,-.7) .. (4.2,1.5);
  \draw[thick] (4.2,0) .. controls ++(0,1) and ++(0,-.7) .. (4.8,1.5);
  \draw[thick] (5.4,0) .. controls ++(0,1) and ++(0,-.7) .. (6,1.5);
  \node at (4.8,-.65) {$\dots$};
  \node at (5.4,1.85) {$\dots$};
    \draw[blue, dotted] (-0.4,0) -- (6.4,0);
    \draw[blue, dotted] (-0.4,1.5) -- (6.4,1.5);
   \draw[thick,->]  (3,1.5) .. controls ++(0,0.5) and ++(0,-.5) .. (3.6,2.5);
   \draw[thick] (3.6,1.5) .. controls ++(0,1) and ++(0,-1) .. (0,2.5);
   \draw[thick,->]  (2.4,1.5) .. controls ++(0,0.5) and ++(0,-.5) .. (3,2.5);
   \draw[thick,->]  (1.2,1.5) .. controls ++(0,0.5) and ++(0,-.5) .. (1.8,2.5);
   \draw[thick,->]  (.6,1.5) .. controls ++(0,0.5) and ++(0,-.5) .. (1.2,2.5);
   \draw[thick,->]  (0,1.5) .. controls ++(0,0.5) and ++(0,-.5) .. (.6,2.5);
   \draw[thick] (4.2,1.5) -- (4.2,2.5);
   \draw[thick] (4.8,1.5) -- (4.8,2.5);
   \draw[thick] (6,1.5) -- (6,2.5);
     \filldraw  (.15,2.2) circle (2pt);
   \draw[thick]  (2.4,0) .. controls ++(0,-.5) and ++(0,.5) .. (3,-1);
   \draw[thick]  (1.8,0) .. controls ++(0,-.5) and ++(0,.5) .. (2.4,-1);
   \draw[thick]  (.6,0) .. controls ++(0,-.5) and ++(0,.5) .. (1.2,-1);
   \draw[thick]  (0,0) .. controls ++(0,-.5) and ++(0,.5) .. (.6,-1);
   \draw[thick]  (3,0) .. controls ++(0,-.5) and ++(0,.5) .. (3.6,-1);
  \draw[thick ,->] (3.6,0) .. controls ++(0,-1) and ++(0,+1) .. (0,-1);
   \draw[thick,->] (4.2,0) -- (4.2,-1);
   \draw[thick,->] (5.4,0) -- (5.4,-1);
   \draw[thick,->] (6,0) -- (6,-1);
\end{tikzpicture}}
\right]
\]
and the result claim follows by lemma ~\ref{lem:alem2}.
\end{proof}

\begin{lemma}
\label{n1-m1}
For integers $m,n \geq 1$ we have
\begin{equation*}
[h_{n} \otimes x_1, h_{-m} \otimes 1]=
\begin{cases}
-2m(h_{n-m} \otimes 1) & \text{ if } n  > m \geq 1 \\
0 & \text{ if } n = m \geq 1 \\
-m(h_{n-m} \otimes 1) & \text{ if } 1 \leq n < m.
\end{cases}
\end{equation*}
\end{lemma}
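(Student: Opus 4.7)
The plan is to adapt the diagrammatic argument of Lemma~\ref{-n1m1} to the case when the dot sits on an upward strand. We express $(h_{-m}\otimes 1)(h_n\otimes x_1)$ as a tangle in the trace annulus, with $m$ downward strands arranged in the cycle $\sigma_m$ above $n$ upward strands forming the cycle $\sigma_n$ and carrying a dot on the first strand. Applying the second equation in \eqref{eq:rel2} to resolve, one at a time, the crossings between upward and downward strands that arise as one cycle is pulled past the other through the annulus, each resolution splits off a planar ``uncrossing'' term together with a ``bubble'' correction consisting of a cap stacked above a cup. The collection of uncrossing terms reassembles into $(h_n\otimes x_1)(h_{-m}\otimes 1)$, so the commutator is exactly the sum of the surviving bubble corrections.

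First I would verify that the bubble corrections formed away from the dotted strand reduce, after using the trace relation and planar isotopy, to configurations containing a left twist curl, and hence vanish by \eqref{eq:rel3}. This step is essentially identical to the vanishing argument used inside the proof of Lemma~\ref{-n1m1}.

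Next I would analyze the corrections that do involve the dotted upward strand. Using \eqref{eq:nil-dot} to slide the dot across the adjacent crossing, each such correction either splits into a curl-containing piece (which vanishes) plus a residual tangle whose evaluation is governed by Lemma~\ref{lem:alem2}, or simplifies directly via Proposition~\ref{eq:bubblerelationship} and $\tilde{c}_1=0$. Each surviving piece produces either a constant (in the $n=m$ case) or a scalar multiple of $h_{n-m}\otimes 1$. The three cases of the lemma correspond to the combinatorics of how the dotted upward strand can meet the downward cycle: when $n=m$ the surviving bubble contributions pair up and cancel; when $n>m$ the dot can reach the site of a bubble correction via two inequivalent paths through the annulus, doubling its contribution to $-2m(h_{n-m}\otimes 1)$; and when $n<m$ only one such path is available, yielding $-m(h_{n-m}\otimes 1)$.

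The main obstacle will be correctly accounting for the asymmetric coefficients $-2m$ versus $-m$ in the regimes $n>m$ and $n<m$. Unlike in Lemma~\ref{-n1m1}, where the dot on the downward strand interacted with the closure in a single clean way, here the dot on an upward strand can participate in bubble corrections both before and after the cycles are exchanged, and verifying the doubling (respectively simplification) of its contribution requires careful tangle bookkeeping, most likely carried out by applying Lemma~\ref{lem:alem2} (or a slight variant with the dot placed on an upward strand) to the residual tangle that remains after all curls have been discarded.
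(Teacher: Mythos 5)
Your toolbox is the right one, and the paper itself argues exactly this way --- but only for the base case $m=1$. The genuine gap is that the quantitative content of the lemma, the coefficients $-2m$, $0$, $-m$, is precisely the step you leave open: the mechanism you offer (corrections ``pair up and cancel'' when $n=m$; the dot reaches a bubble correction by ``two inequivalent paths through the annulus'' when $n>m$, only one path when $n<m$) is a guess rather than an argument, and it does not match what actually happens in the paper's own $m=1$ computation. There the factor $2$ arises from two different \emph{types} of correction terms, not two paths of one correction: the cup--cap term shed by the first relation in \eqref{eq:rel2} (the correction-free second relation, which you cite, produces no such term) and the resolution term produced by sliding the dot through a crossing via \eqref{eq:nil-dot}, simplified with \eqref{eq:dotted-curl}; each closes up under the trace relation to one copy of $h_{n-1}\otimes 1$. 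Likewise, already for $n=m=1$ the commutator vanishes not by a pairwise cancellation but because the surviving correction is the dotted bubble $\tilde{c}_1=0$ (compare Lemma~\ref{lemma-1b1a}). Finally, Lemma~\ref{lem:alem2} cannot be invoked as stated, since there the dot sits on a \emph{downward} strand; the ``slight variant with the dot placed on an upward strand'' you appeal to is not slight --- proving it amounts to proving the present lemma, so as a plan this is circular.

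For contrast, the paper avoids the general tangle bookkeeping entirely: after the graphical $m=1$ case it only observes graphically that $[h_n\otimes x_1,\,h_{-m}\otimes 1]=\gamma_{n,-m}\,(h_{n-m}\otimes 1)$ for some scalar $\gamma_{n,-m}$, and then pins down $\gamma_{n,-m}$ by induction on $m$ using the Jacobi identity together with Lemma~\ref{lemman1m0} and the just-proved Lemma~\ref{-n1m1}. If you insist on a purely diagrammatic proof, you would need to formulate and prove an upward-dot analogue of Lemmas~\ref{lem:mn-onetangle} and \ref{lem:alem2} and then track all corrections for arbitrary $m,n$; as written, your proposal defers exactly the case analysis it is supposed to establish.
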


\begin{proof}
For this proof let $ \beta_n=h_n \otimes x_1 $ and $ \alpha_n = h_n \otimes 1$.
We prove this by induction on $m$.  For $m=1$ we give a graphical proof.  Consider $(h_{-1} \otimes 1)(h_n \otimes x_1)$
\[
 \left[ \;\;\;
\hackcenter{
\begin{tikzpicture}[scale=0.8]
  \draw[thick,->] (3,0) .. controls ++(0,1.25) and (0,.25) .. (0,2);
  \draw[thick,->] (0.0,0) .. controls ++(0,1) and ++(0,-1.2) .. (0.6,2);
  \draw[thick,->] (0.6,0) .. controls ++(0,1) and ++(0,-1.2) .. (1.2,2);
  \draw[thick,->] (1.2,0) .. controls ++(0,1) and ++(0,-1.2) .. (1.8,2);
  \draw[thick,->] (2.4,0) .. controls ++(0,1) and ++(0,-1.2) .. (3,2);
  \node at (1.8,.35) {$\dots$};
  \node at (2.4,1.65) {$\dots$};
  \draw[thick,<-] (-0.6,0) -- (-0.6,2);
  \filldraw  (.05,1.6) circle (2pt);
\end{tikzpicture}}
 \;\;\right]
\;\; \refequal{\eqref{eq:rel2}} \;\;
 \left[ \;\;\;
\hackcenter{
\begin{tikzpicture}[scale=0.8]
  \draw[thick] (3.0,0) .. controls ++(0,1.25) and ++(0,-1.1) .. (0,1.5);
  \draw[thick] (0.0,0) .. controls ++(0,.5) and ++(0,-.6) .. (0.6,1.5);
  \draw[thick] (0.6,0) .. controls ++(0,.5) and ++(0,-.6) .. (1.2,1.5);
  \draw[thick] (1.2,0) .. controls ++(0,.5) and ++(0,-.6) .. (1.8,1.5);
  \draw[thick] (2.4,0) .. controls ++(0,.5) and ++(0,-.6) .. (3,1.5);
  \node at (1.8,.35) {$\dots$};
  \node at (2.4,1.35) {$\dots$};
  \draw[thick,<-] (-0.6,0) -- (-0.6,1.5);
  \filldraw  (.1,1.2) circle (2pt);
    \draw[blue, dotted] (-1,0) -- (3.4,0);
    \draw[blue, dotted] (-1,1.5) -- (3.4,1.5);
   \draw[thick,->] (0.0,1.5) .. controls ++(0,.45) and ++(0,-.6) .. (-0.6,2.5);
   \draw[thick] (-0.6,1.5) .. controls ++(0,.45) and ++(0,-.6) .. (0.0,2.5);
   \draw[thick,->] (0.6,1.5) --  (0.6,2.5);
   \draw[thick,->] (1.2,1.5) --  (1.2,2.5);
   \draw[thick,->] (1.8,1.5) --  (1.8,2.5);
   \draw[thick,->] (3,1.5) --  (3,2.5);
   \draw[thick,<-] (0.0,0) .. controls ++(0,-.45) and ++(0,.6) .. (-0.6,-1);
   \draw[thick,->] (-0.6,0) .. controls ++(0,-.45) and ++(0,.6) .. (0.0,-1);
   \draw[thick,<-] (0.6,0) --  (0.6,-1);
   \draw[thick,<-] (1.2,0) --  (1.2,-1);
   \draw[thick,<-] (2.4,0) --  (2.4,-1);
   \draw[thick,<-] (3.0,0) --  (3,-1);
\end{tikzpicture}}
\;\;\right]
\;\; + \;\;
 \left[ \;\;\;
\hackcenter{
\begin{tikzpicture}[scale=0.8]
  \draw[thick] (3.0,0) .. controls ++(0,1.25) and ++(0,-1.1) .. (0,1.5);
  \draw[thick] (0.0,0) .. controls ++(0,.5) and ++(0,-.6) .. (0.6,1.5);
  \draw[thick] (0.6,0) .. controls ++(0,.5) and ++(0,-.6) .. (1.2,1.5);
  \draw[thick] (1.2,0) .. controls ++(0,.5) and ++(0,-.6) .. (1.8,1.5);
  \draw[thick] (2.4,0) .. controls ++(0,.5) and ++(0,-.6) .. (3,1.5);
  \node at (1.8,.35) {$\dots$};
  \node at (2.4,1.35) {$\dots$};
  \draw[thick] (-0.6,0) -- (-0.6,1.5);
  \filldraw  (.1,1.2) circle (2pt);
    \draw[blue, dotted] (-1,0) -- (3.4,0);
    \draw[blue, dotted] (-1,1.5) -- (3.4,1.5);
   \draw[thick] (-0.6,1.5) .. controls ++(0,.35) and ++(0,.35) .. (0.0,1.5);
   \draw[thick,->] (0.6,1.5) --  (0.6,2.5);
   \draw[thick,->] (1.2,1.5) --  (1.2,2.5);
   \draw[thick,->] (1.8,1.5) --  (1.8,2.5);
   \draw[thick,->] (3,1.5) --  (3,2.5);
   \draw[thick,->] (-0.6,0) .. controls ++(0,-.35) and ++(0,-.35) .. (0.0,0);
   \draw[thick,<-] (0.6,0) --  (0.6,-1);
   \draw[thick,<-] (1.2,0) --  (1.2,-1);
   \draw[thick,<-] (2.4,0) --  (2.4,-1);
   \draw[thick,<-] (3.0,0) --  (3,-1);
\end{tikzpicture}}
\;\;\right]
\]
\[
\;\; \refequal{\eqref{eq:nil-dot}} \;\;
 \left[ \;\;\;
\hackcenter{
\begin{tikzpicture}[scale=0.8]
  \draw[thick] (3.0,0) .. controls ++(0,1.25) and ++(0,-1.1) .. (0,1.5);
  \draw[thick] (0.0,0) .. controls ++(0,.5) and ++(0,-.6) .. (0.6,1.5);
  \draw[thick] (0.6,0) .. controls ++(0,.5) and ++(0,-.6) .. (1.2,1.5);
  \draw[thick] (1.2,0) .. controls ++(0,.5) and ++(0,-.6) .. (1.8,1.5);
  \draw[thick] (2.4,0) .. controls ++(0,.5) and ++(0,-.6) .. (3,1.5);
  \node at (1.8,.35) {$\dots$};
  \node at (2.4,1.35) {$\dots$};
  \draw[thick,<-] (-0.6,0) -- (-0.6,1.5);
  \filldraw  (-0.55,2.2) circle (2pt);
    \draw[blue, dotted] (-1,0) -- (3.4,0);
    \draw[blue, dotted] (-1,1.5) -- (3.4,1.5);
   \draw[thick,->] (0.0,1.5) .. controls ++(0,.45) and ++(0,-.6) .. (-0.6,2.5);
   \draw[thick] (-0.6,1.5) .. controls ++(0,.45) and ++(0,-.6) .. (0.0,2.5);
   \draw[thick,->] (0.6,1.5) --  (0.6,2.5);
   \draw[thick,->] (1.2,1.5) --  (1.2,2.5);
   \draw[thick,->] (1.8,1.5) --  (1.8,2.5);
   \draw[thick,->] (3,1.5) --  (3,2.5);
   \draw[thick,<-] (0.0,0) .. controls ++(0,-.45) and ++(0,.6) .. (-0.6,-1);
   \draw[thick,->] (-0.6,0) .. controls ++(0,-.45) and ++(0,.6) .. (0.0,-1);
   \draw[thick,<-] (0.6,0) --  (0.6,-1);
   \draw[thick,<-] (1.2,0) --  (1.2,-1);
   \draw[thick,<-] (2.4,0) --  (2.4,-1);
   \draw[thick,<-] (3.0,0) --  (3,-1);
\end{tikzpicture}}
\;\;\right]
\;\; + \;\;
 \left[ \;\;\;
\hackcenter{
\begin{tikzpicture}[scale=0.8]
  \draw[thick] (3.0,0) .. controls ++(0,1.25) and ++(0,-1.1) .. (0,1.5);
  \draw[thick] (0.0,0) .. controls ++(0,.5) and ++(0,-.6) .. (0.6,1.5);
  \draw[thick] (0.6,0) .. controls ++(0,.5) and ++(0,-.6) .. (1.2,1.5);
  \draw[thick] (1.2,0) .. controls ++(0,.5) and ++(0,-.6) .. (1.8,1.5);
  \draw[thick] (2.4,0) .. controls ++(0,.5) and ++(0,-.6) .. (3,1.5);
  \node at (1.8,.35) {$\dots$};
  \node at (2.4,1.35) {$\dots$};
  \draw[thick] (-0.6,0) -- (-0.6,1.5);
    \draw[blue, dotted] (-1,0) -- (3.4,0);
    \draw[blue, dotted] (-1,1.5) -- (3.4,1.5);
   \draw[thick,->] (0.0,2.5) .. controls ++(0,-.35) and ++(0,-.35) .. (-0.6,2.5);
   \draw[thick] (-0.6,1.5) .. controls ++(0,.35) and ++(0,.35) .. (0.0,1.5);
   \draw[thick,->] (0.6,1.5) --  (0.6,2.5);
   \draw[thick,->] (1.2,1.5) --  (1.2,2.5);
   \draw[thick,->] (1.8,1.5) --  (1.8,2.5);
   \draw[thick,->] (3,1.5) --  (3,2.5);
   \draw[thick,<-] (0.0,0) .. controls ++(0,-.45) and ++(0,.6) .. (-0.6,-1);
   \draw[thick,->] (-0.6,0) .. controls ++(0,-.45) and ++(0,.6) .. (0.0,-1);
   \draw[thick,<-] (0.6,0) --  (0.6,-1);
   \draw[thick,<-] (1.2,0) --  (1.2,-1);
   \draw[thick,<-] (2.4,0) --  (2.4,-1);
   \draw[thick,<-] (3.0,0) --  (3,-1);
\end{tikzpicture}}
\;\;\right]
\;\; + \;\; h_{n-1} \otimes 1
\]
where we used \eqref{eq:dotted-curl} to simplify the dotted left twist curl.  Using the trace relation, the second term above is also equal to $h_{n-1}\otimes 1$.  For the first term we use the first equation in \eqref{eq:rel2} to slide the upward strands to the left.
\[
 \left[ \;\;\;
\hackcenter{
\begin{tikzpicture}[scale=0.8]
  \draw[thick] (3.0,0) .. controls ++(0,1.25) and ++(0,-1.1) .. (0,1.5);
  \draw[thick] (0.0,0) .. controls ++(0,.5) and ++(0,-.6) .. (0.6,1.5);
  \draw[thick] (0.6,0) .. controls ++(0,.5) and ++(0,-.6) .. (1.2,1.5);
  \draw[thick] (1.2,0) .. controls ++(0,.5) and ++(0,-.6) .. (1.8,1.5);
  \draw[thick] (2.4,0) .. controls ++(0,.5) and ++(0,-.6) .. (3,1.5);
  \node at (1.8,.35) {$\dots$};
  \node at (2.4,1.35) {$\dots$};
  \draw[thick,<-] (-0.6,0) -- (-0.6,1.5);
  \filldraw  (-0.55,2.2) circle (2pt);
    \draw[blue, dotted] (-1,0) -- (3.4,0);
    \draw[blue, dotted] (-1,1.5) -- (3.4,1.5);
   \draw[thick,->] (0.0,1.5) .. controls ++(0,.45) and ++(0,-.6) .. (-0.6,2.5);
   \draw[thick] (-0.6,1.5) .. controls ++(0,.45) and ++(0,-.6) .. (0.0,2.5);
   \draw[thick,->] (0.6,1.5) --  (0.6,2.5);
   \draw[thick,->] (1.2,1.5) --  (1.2,2.5);
   \draw[thick,->] (1.8,1.5) --  (1.8,2.5);
   \draw[thick,->] (3,1.5) --  (3,2.5);
   \draw[thick,<-] (0.0,0) .. controls ++(0,-.45) and ++(0,.6) .. (-0.6,-1);
   \draw[thick,->] (-0.6,0) .. controls ++(0,-.45) and ++(0,.6) .. (0.0,-1);
   \draw[thick,<-] (0.6,0) --  (0.6,-1);
   \draw[thick,<-] (1.2,0) --  (1.2,-1);
   \draw[thick,<-] (2.4,0) --  (2.4,-1);
   \draw[thick,<-] (3.0,0) --  (3,-1);
\end{tikzpicture}}
\;\;\right]
\;\; = \;\;
 \left[ \;\;\;
\hackcenter{
\begin{tikzpicture}[scale=0.8]
  \draw[thick] (3.0,0) .. controls ++(0,1.25) and ++(0,-1.1) .. (0,1.5);
  \draw[thick] (0.0,0) .. controls ++(0,.5) and ++(0,-.6) .. (0.6,1.5);
  \draw[thick] (0.6,0) .. controls ++(0,.5) and ++(0,-.6) .. (1.2,1.5);
  \draw[thick] (1.2,0) .. controls ++(0,.5) and ++(0,-.6) .. (1.8,1.5);
  \draw[thick] (2.4,0) .. controls ++(0,.5) and ++(0,-.6) .. (3,1.5);
  \node at (1.8,.35) {$\dots$};
  \node at (2.4,1.35) {$\dots$};
  \draw[thick,<-] (-0.6,0) -- (-0.6,1.5);
  \filldraw  (-0.55,2.2) circle (2pt);
    \draw[blue, dotted] (-1,0) -- (3.4,0);
    \draw[blue, dotted] (-1,1.5) -- (3.4,1.5);
   \draw[thick,->] (0.0,1.5) .. controls ++(0,.45) and ++(0,-.6) .. (-0.6,2.5);
   \draw[thick] (-0.6,1.5) .. controls ++(0,.85) and ++(0,-1) .. (3.0,2.5);
   \draw[thick,->] (0.6,1.5).. controls ++(0,.45) and ++(0,-.6) .. (-0.0,2.5);
   \draw[thick,->] (1.2,1.5) .. controls ++(0,.45) and ++(0,-.6) .. (0.6,2.5);
   \draw[thick,->] (1.8,1.5) .. controls ++(0,.45) and ++(0,-.6) .. (1.2,2.5);
   \draw[thick,->] (3,1.5) .. controls ++(0,.45) and ++(0,-.6) .. (2.4,2.5);
   \draw[thick] (0.0,0) .. controls ++(0,-.45) and ++(0,.6) .. (-0.6,-1);
   \draw[thick,->]   (-0.6,0) .. controls ++(0,-.85) and ++(0,1) .. (3.0,-1);
   \draw[thick] (0.6,0) .. controls ++(0,-.45) and ++(0,.6) .. (-0.0,-1);
   \draw[thick] (1.2,0) .. controls ++(0,-.45) and ++(0,.6) .. (0.6,-1);
   \draw[thick] (2.4,0) .. controls ++(0,-.45) and ++(0,.6) .. (1.8,-1);
   \draw[thick] (3,0)   .. controls ++(0,-.45) and ++(0,.6) .. (2.4,-1);
\end{tikzpicture}}
\;\;\right]
\;\; = \;\;
 \left[ \;\;\;
\hackcenter{
\begin{tikzpicture}[scale=0.8]
  \draw[thick,->] (3,0) .. controls ++(0,1.25) and (0,.25) .. (0,2);
  \draw[thick,->] (0.0,0) .. controls ++(0,1) and ++(0,-1.2) .. (0.6,2);
  \draw[thick,->] (0.6,0) .. controls ++(0,1) and ++(0,-1.2) .. (1.2,2);
  \draw[thick,->] (1.2,0) .. controls ++(0,1) and ++(0,-1.2) .. (1.8,2);
  \draw[thick,->] (2.4,0) .. controls ++(0,1) and ++(0,-1.2) .. (3,2);
  \node at (1.8,.35) {$\dots$};
  \node at (2.4,1.65) {$\dots$};
  \draw[thick,<-] (3.6,0) -- (3.6,2);
  \filldraw  (.05,1.6) circle (2pt);
\end{tikzpicture}}
 \;\;\;\right]
\]
where the first equality holds since all the resolutions terms
\[
 \left[ \;\;\;
\hackcenter{
\begin{tikzpicture}[scale=0.8]
  \draw[thick] (5.4,0) .. controls ++(0,1.25) and ++(0,-1.1) .. (0,1.5);
  \draw[thick] (0.0,0) .. controls ++(0,.5) and ++(0,-.6) .. (0.6,1.5);
  \draw[thick] (0.6,0) .. controls ++(0,.5) and ++(0,-.6) .. (1.2,1.5);
    \node at (1.2,.35) {$\dots$};
  \node at (1.8,1.35) {$\dots$};
  \draw[thick] (1.8,0) .. controls ++(0,.5) and ++(0,-.6) .. (2.4,1.5);
  \draw[very thick,red] (2.4,0) .. controls ++(0,.5) and ++(0,-.6) .. (3,1.5);
  \draw[thick] (3,0) .. controls ++(0,.5) and ++(0,-.6) .. (3.6,1.5);
  \draw[thick] (3.6,0) .. controls ++(0,.5) and ++(0,-.6) .. (4.2,1.5);
  \node at (4.2,.35) {$\dots$};
  \node at (4.8,1.35) {$\dots$};
  \draw[thick] (4.8,0) .. controls ++(0,.5) and ++(0,-.6) .. (5.4,1.5);
  \draw[very thick,red,<-] (-0.6,0) -- (-0.6,1.5);
  \filldraw  (-0.55,2.2) circle (2pt);
    \draw[blue, dotted] (-1,0) -- (3.4,0);
    \draw[blue, dotted] (-1,1.5) -- (3.4,1.5);
   \draw[thick,->] (0.0,1.5) .. controls ++(0,.45) and ++(0,-.6) .. (-0.6,2.5);
   \draw[very thick,red,] (-0.6,1.5) .. controls ++(0,.75) and ++(0.4,1) .. (3.0,1.5);
   \draw[thick,->] (0.6,1.5).. controls ++(0,.45) and ++(0,-.6) .. (-0.0,2.5);
   \draw[thick,->] (1.2,1.5) .. controls ++(0,.45) and ++(0,-.6) .. (0.6,2.5);
   \draw[thick,->] (2.4,1.5) .. controls ++(0,.45) and ++(0,-.6) .. (1.8,2.5);
   \draw[thick,->] (3.6,1.5) -- (3.6,2.5);
   \draw[thick,->] (4.2,1.5) -- (4.2,2.5);
   \draw[thick,->] (5.4,1.5) -- (5.4,2.5);
   \draw[thick] (0.0,0) .. controls ++(0,-.45) and ++(0,.6) .. (-0.6,-1);
   \draw[very thick,red,->]   (-0.6,0) .. controls ++(0,-.75) and ++(0.2,-1) .. (3.0,0);
   \draw[thick] (0.6,0) .. controls ++(0,-.45) and ++(0,.6) .. (-0.0,-1);
   \draw[thick] (1.8,0) .. controls ++(0,-.45) and ++(0,.6) .. (1.2,-1);
   \draw[very thick,red] (2.4,0) .. controls ++(0,-.45) and ++(0,.6) .. (1.8,-1);
   \draw[thick] (3.6,0) -- (3.6,-1);
   \draw[thick] (4.8,0) -- (4.8,-1);
   \draw[thick] (5.4,0) -- (5.4,-1);
\end{tikzpicture}}
\;\;\right]
\]
contain left twist curls.  Thus, we have proven the base case of our induction.

Graphically it is easy to see that for some constant $ \gamma_{n,-m}$ that
\begin{equation*}
[\beta_n, \alpha_{-m}]=\gamma_{n,-m} \alpha_{n-m}.
\end{equation*}
In order to compute the constant $\gamma_{n,-m}$ consider the Jacobi identity
\begin{equation}
\label{n1-m1eq1}
[\beta_n,[\beta_{-1},\alpha_{-m+1}]]+[\beta_{-1},[\alpha_{-m+1},\beta_n]]+[\alpha_{-m+1},[\beta_n,\beta_{-1}]]=0.
\end{equation}
Now applying Lemma ~\ref{posvirposheis}, ~\eqref{n1-m1eq1} becomes
\begin{equation*}
(1-m)[\beta_n,\alpha_{-m}]-\gamma_{n,-m+1}[\beta_{-1},\alpha_{n-m+1}]+(n+1)[\beta_{n-1},\alpha_{-m+1}]=0.
\end{equation*}
Thus
\begin{align*}
[\beta_n,\alpha_{-m}] &= \frac{\gamma_{n,-m+1}[\beta_{-1},\alpha_{n-m+1}]-(n+1) \gamma_{n-1,-m+1} \alpha_{n-m}}   {1-m} \\
&= \frac{ (n-m+1)\gamma_{n,-m+1}-(n+1) \gamma_{n-1,-m+1}}{1-m}\alpha_{n-m}.
\end{align*}
The lemma now easily follows by induction.
\end{proof}

Along with Lemma ~\ref{lemmam1n1}, the next lemma will lead to Virasoro relations.

\begin{lemma}
\label{lemma-m1n1}
Let $m$ and $n$ be positive integers and $ T=\text{min}(m,n)$.  Then
\begin{equation*}
[h_{-m} \otimes x_1, h_n \otimes x_1]=(n+m)(h_{n-m} \otimes x_1) - \sum_{j=1}^{T-1} j (h_{n-j} \otimes 1)(h_{-m+j} \otimes 1).
\end{equation*}
\end{lemma}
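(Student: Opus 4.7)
The plan is to prove this via induction on $n$ (by symmetry, induction on $m$ would work equally well), using the Jacobi identity together with the previously established commutation relations. Let $\alpha_k := h_k \otimes 1$ and $\beta_k := h_k \otimes x_1$.

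First I would establish the base case $n=1$, where the sum $\sum_{j=1}^{0}$ is empty and the claim reduces to $[\beta_{-m}, \beta_1] = (m+1)\beta_{1-m}$. This should be proved directly by a diagrammatic argument analogous to the base case of Lemma~\ref{n1-m1}: draw $(\beta_1)(\beta_{-m})$ with one dotted upward strand composed with the $h_{-m}$ configuration carrying a dot on the first downward strand, then slide the upward strand past each downward strand using \eqref{eq:rel2}. Each resolution term produces either a dotted left twist curl (which vanishes or simplifies via \eqref{eq:dotted-curl}) or a main term to which Lemma~\ref{lem:alem2} applies, yielding the stated formula. By symmetry the case $m=1$ admits an analogous argument.

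For the inductive step I would use Lemma~\ref{lemmam1n1} to express $\beta_n = \tfrac{1}{n-2}[\beta_1, \beta_{n-1}]$ (valid for $n \neq 2$; the case $n=2$ uses $\beta_2 = \tfrac{1}{2}[\beta_3, \beta_{-1}]$ and is structurally identical). The Jacobi identity then yields
\[
(n-2)[\beta_{-m}, \beta_n] = [[\beta_{-m}, \beta_1], \beta_{n-1}] + [\beta_1, [\beta_{-m}, \beta_{n-1}]].
\]
The first bracket on the right is evaluated using the base case followed by Lemma~\ref{lemmam1n1}; the second is expanded using the inductive hypothesis, and then the Leibniz rule reduces commutators of $\beta_1$ with products $\alpha_{n-1-j}\alpha_{-m+j}$ to evaluations handled by Lemmas~\ref{lemman1m0}, \ref{-n1m1}, and \ref{n1-m1}.

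The main obstacle will be the combinatorial verification that after all the Jacobi expansions and Leibniz-rule applications, the corrections telescope precisely to produce $-\sum_{j=1}^{T-1} j\,\alpha_{n-j}\alpha_{-m+j}$ with the stated integer coefficients. One must carefully treat the boundary cases where an index $n-j$, $-m+j$, or $n-1-m$ vanishes or changes sign, and cases where Lemma~\ref{origheisrelations} contributes scalar corrections when re-ordering $\alpha$-factors. A potentially cleaner alternative is a direct diagrammatic proof in the style of Lemma~\ref{n1-m1}: pull the dotted upward component of $\beta_n$ past the dotted downward component of $\beta_{-m}$ crossing-by-crossing, collect the resolution terms from \eqref{eq:rel2}, absorb dotted curls via \eqref{eq:dotted-curl}, and identify the coefficient $j$ on $\alpha_{n-j}\alpha_{-m+j}$ directly using Lemma~\ref{lem:mn-onetangle}. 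Either route should succeed, but the combinatorial bookkeeping is where the real work lies.
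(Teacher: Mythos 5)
Your overall strategy---a diagrammatic base case at index $1$ followed by a Jacobi-identity induction that feeds in Lemmas~\ref{lemmam1n1}, \ref{lemman1m0}, \ref{-n1m1} and \ref{n1-m1}---is essentially the paper's proof: the paper inducts on $m$, does the $m=1$ case diagrammatically exactly as you sketch, and then expands the Jacobi identity using $[\beta_{-1},\beta_{-m+1}]=(2-m)\beta_{-m}$. However, two steps of your plan do not work as written. First, your fix for the degenerate case $n=2$ is wrong: $[\beta_3,\beta_{-1}]$ is a mixed-sign bracket, so Lemma~\ref{lemmam1n1} says nothing about it; its value is $[\beta_{-1},\beta_3]=4\beta_2$, which is an instance of the very lemma being proved with $n=3$, so invoking it at stage $n=2$ of an induction on $n$ is circular (and the stated coefficient $\tfrac12$ is incorrect in any case). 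Second, in the term $[[\beta_{-m},\beta_1],\beta_{n-1}]=(m+1)[\beta_{-(m-1)},\beta_{n-1}]$ the outer bracket is again mixed-sign for $m\ge 2$, so it is supplied by the inductive hypothesis (the pair $(m-1,n-1)$, legitimate since the second index dropped), not by Lemma~\ref{lemmam1n1} as you claim; this is repairable, but the citation matters because that lemma only covers $mn>0$.

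The index-$2$ degeneration is a genuine issue rather than a technicality: the paper's own induction on $m$ degenerates at $m=2$, where $[\beta_{-1},\beta_{-m+1}]=[\beta_{-1},\beta_{-1}]=0$ and the final substitution yields $0=0$, so whichever variable one inducts on, the case where the decomposition coefficient vanishes needs a separate argument---most safely a direct diagrammatic computation in the style of the base case, using \eqref{eq:rel2}, \eqref{eq:dotted-curl} and Lemma~\ref{lem:mn-onetangle}, which is the alternative route you mention at the end. Beyond that, your proposal defers exactly the part where the paper's proof spends its effort: the Leibniz/Jacobi bookkeeping in which $\beta_{\pm1}$ is repeatedly commuted through products $\alpha_{n-j}\alpha_{-m+j}$ via Lemma~\ref{-n1m1} and the coefficients are checked to telescope to $-\sum_{j=1}^{T-1} j\,\alpha_{n-j}\alpha_{-m+j}$, with care at the boundary indices $j=m-1$, $j=n-1$ and at $n=m$ where $h_0\otimes x_1$ and central corrections can enter. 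So the route is the right one, but as it stands the plan has an erroneous and circular handling of the $n=2$ step and leaves the main combinatorial verification unperformed.
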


\begin{proof}
For this proof let $ \alpha_n = h_n \otimes 1 $ and $ \beta_n = h_n \otimes x_1$.
We proceed by induction on $m$.  The base case is $m=1$. The proof is similar to Lemma~\ref{n1-m1}. We begin by using the first equation in \eqref{eq:rel2} and the trace relation.
\[
 \left[ \;\;\;
\hackcenter{
\begin{tikzpicture}[scale=0.8]
  \draw[thick,->] (3,0) .. controls ++(0,1.25) and (0,.25) .. (0,2);
  \draw[thick,->] (0.0,0) .. controls ++(0,1) and ++(0,-1.2) .. (0.6,2);
  \draw[thick,->] (0.6,0) .. controls ++(0,1) and ++(0,-1.2) .. (1.2,2);
  \draw[thick,->] (1.2,0) .. controls ++(0,1) and ++(0,-1.2) .. (1.8,2);
  \draw[thick,->] (2.4,0) .. controls ++(0,1) and ++(0,-1.2) .. (3,2);
  \node at (1.8,.35) {$\dots$};
  \node at (2.4,1.65) {$\dots$};
  \draw[thick,<-] (-0.6,0) -- (-0.6,2);
  \filldraw  (-0.6,1.2) circle (2pt);
  \filldraw  (.05,1.6) circle (2pt);
\end{tikzpicture}}
 \;\;\right]
\;\; \refequal{\eqref{eq:rel2}} \;\;
 \left[ \;\;\;
\hackcenter{
\begin{tikzpicture}[scale=0.8]
  \draw[thick] (3.0,0) .. controls ++(0,1.25) and ++(0,-1.1) .. (0,1.5);
  \draw[thick] (0.0,0) .. controls ++(0,.5) and ++(0,-.6) .. (0.6,1.5);
  \draw[thick] (0.6,0) .. controls ++(0,.5) and ++(0,-.6) .. (1.2,1.5);
  \draw[thick] (1.2,0) .. controls ++(0,.5) and ++(0,-.6) .. (1.8,1.5);
  \draw[thick] (2.4,0) .. controls ++(0,.5) and ++(0,-.6) .. (3,1.5);
  \node at (1.8,.35) {$\dots$};
  \node at (2.4,1.35) {$\dots$};
  \draw[thick,<-] (-0.6,0) -- (-0.6,1.5);
  \filldraw  (.1,1.2) circle (2pt);
    \filldraw  (-0.6,1.2) circle (2pt);
    \draw[blue, dotted] (-1,0) -- (3.4,0);
    \draw[blue, dotted] (-1,1.5) -- (3.4,1.5);
   \draw[thick,->] (0.0,1.5) .. controls ++(0,.45) and ++(0,-.6) .. (-0.6,2.5);
   \draw[thick] (-0.6,1.5) .. controls ++(0,.45) and ++(0,-.6) .. (0.0,2.5);
   \draw[thick,->] (0.6,1.5) --  (0.6,2.5);
   \draw[thick,->] (1.2,1.5) --  (1.2,2.5);
   \draw[thick,->] (1.8,1.5) --  (1.8,2.5);
   \draw[thick,->] (3,1.5) --  (3,2.5);
   \draw[thick,<-] (0.0,0) .. controls ++(0,-.45) and ++(0,.6) .. (-0.6,-1);
   \draw[thick,->] (-0.6,0) .. controls ++(0,-.45) and ++(0,.6) .. (0.0,-1);
   \draw[thick,<-] (0.6,0) --  (0.6,-1);
   \draw[thick,<-] (1.2,0) --  (1.2,-1);
   \draw[thick,<-] (2.4,0) --  (2.4,-1);
   \draw[thick,<-] (3.0,0) --  (3,-1);
\end{tikzpicture}}
\;\;\right]
\;\; + \;\;
 \left[ \;\;\;
\hackcenter{
\begin{tikzpicture}[scale=0.8]
  \draw[thick] (3.0,0) .. controls ++(0,1.25) and ++(0,-1.1) .. (0,1.5);
  \draw[thick] (0.0,0) .. controls ++(0,.5) and ++(0,-.6) .. (0.6,1.5);
  \draw[thick] (0.6,0) .. controls ++(0,.5) and ++(0,-.6) .. (1.2,1.5);
  \draw[thick] (1.2,0) .. controls ++(0,.5) and ++(0,-.6) .. (1.8,1.5);
  \draw[thick] (2.4,0) .. controls ++(0,.5) and ++(0,-.6) .. (3,1.5);
  \node at (1.8,.35) {$\dots$};
  \node at (2.4,1.35) {$\dots$};
  \draw[thick] (-0.6,0) -- (-0.6,1.5);
  \filldraw  (.1,1.2) circle (2pt);
    \filldraw  (-0.6,1.2) circle (2pt);
    \draw[blue, dotted] (-1,0) -- (3.4,0);
    \draw[blue, dotted] (-1,1.5) -- (3.4,1.5);
   \draw[thick] (-0.6,1.5) .. controls ++(0,.35) and ++(0,.35) .. (0.0,1.5);
   \draw[thick,->] (0.6,1.5) --  (0.6,2.5);
   \draw[thick,->] (1.2,1.5) --  (1.2,2.5);
   \draw[thick,->] (1.8,1.5) --  (1.8,2.5);
   \draw[thick,->] (3,1.5) --  (3,2.5);
   \draw[thick,->] (-0.6,0) .. controls ++(0,-.35) and ++(0,-.35) .. (0.0,0);
   \draw[thick,<-] (0.6,0) --  (0.6,-1);
   \draw[thick,<-] (1.2,0) --  (1.2,-1);
   \draw[thick,<-] (2.4,0) --  (2.4,-1);
   \draw[thick,<-] (3.0,0) --  (3,-1);
\end{tikzpicture}}
\;\;\right]
\]
Using \eqref{eq:dotted-curl} the second term is equal to $h_{n-1}\otimes x_1$.  For the first term we slide the dots upward  producing
\[
 \left[ \;\;\;
\hackcenter{
\begin{tikzpicture}[scale=0.8]
  \draw[thick] (3.0,0) .. controls ++(0,1.25) and ++(0,-1.1) .. (0,1.5);
  \draw[thick] (0.0,0) .. controls ++(0,.5) and ++(0,-.6) .. (0.6,1.5);
  \draw[thick] (0.6,0) .. controls ++(0,.5) and ++(0,-.6) .. (1.2,1.5);
  \draw[thick] (1.2,0) .. controls ++(0,.5) and ++(0,-.6) .. (1.8,1.5);
  \draw[thick] (2.4,0) .. controls ++(0,.5) and ++(0,-.6) .. (3,1.5);
  \node at (1.8,.35) {$\dots$};
  \node at (2.4,1.35) {$\dots$};
  \draw[thick,<-] (-0.6,0) -- (-0.6,1.5);
  \filldraw  (-0.55,2.2) circle (2pt);
  \filldraw  (-0.05,2.2) circle (2pt);
    \draw[blue, dotted] (-1,0) -- (3.4,0);
    \draw[blue, dotted] (-1,1.5) -- (3.4,1.5);
   \draw[thick,->] (0.0,1.5) .. controls ++(0,.45) and ++(0,-.6) .. (-0.6,2.5);
   \draw[thick] (-0.6,1.5) .. controls ++(0,.45) and ++(0,-.6) .. (0.0,2.5);
   \draw[thick,->] (0.6,1.5) --  (0.6,2.5);
   \draw[thick,->] (1.2,1.5) --  (1.2,2.5);
   \draw[thick,->] (1.8,1.5) --  (1.8,2.5);
   \draw[thick,->] (3,1.5) --  (3,2.5);
   \draw[thick,<-] (0.0,0) .. controls ++(0,-.45) and ++(0,.6) .. (-0.6,-1);
   \draw[thick,->] (-0.6,0) .. controls ++(0,-.45) and ++(0,.6) .. (0.0,-1);
   \draw[thick,<-] (0.6,0) --  (0.6,-1);
   \draw[thick,<-] (1.2,0) --  (1.2,-1);
   \draw[thick,<-] (2.4,0) --  (2.4,-1);
   \draw[thick,<-] (3.0,0) --  (3,-1);
\end{tikzpicture}}
\;\;\right]
\;\; + \;\;
 \left[ \;\;\;
\hackcenter{
\begin{tikzpicture}[scale=0.8]
  \draw[thick] (3.0,0) .. controls ++(0,1.25) and ++(0,-1.1) .. (0,1.5);
  \draw[thick] (0.0,0) .. controls ++(0,.5) and ++(0,-.6) .. (0.6,1.5);
  \draw[thick] (0.6,0) .. controls ++(0,.5) and ++(0,-.6) .. (1.2,1.5);
  \draw[thick] (1.2,0) .. controls ++(0,.5) and ++(0,-.6) .. (1.8,1.5);
  \draw[thick] (2.4,0) .. controls ++(0,.5) and ++(0,-.6) .. (3,1.5);
  \node at (1.8,.35) {$\dots$};
  \node at (2.4,1.35) {$\dots$};
  \draw[thick] (-0.6,0) -- (-0.6,1.5);
  \filldraw  (-0.6,1.2) circle (2pt);
    \draw[blue, dotted] (-1,0) -- (3.4,0);
    \draw[blue, dotted] (-1,1.5) -- (3.4,1.5);
   \draw[thick,->] (0.0,2.5) .. controls ++(0,-.35) and ++(0,-.35) .. (-0.6,2.5);
   \draw[thick] (-0.6,1.5) .. controls ++(0,.35) and ++(0,.35) .. (0.0,1.5);
   \draw[thick,->] (0.6,1.5) --  (0.6,2.5);
   \draw[thick,->] (1.2,1.5) --  (1.2,2.5);
   \draw[thick,->] (1.8,1.5) --  (1.8,2.5);
   \draw[thick,->] (3,1.5) --  (3,2.5);
   \draw[thick,<-] (0.0,0) .. controls ++(0,-.45) and ++(0,.6) .. (-0.6,-1);
   \draw[thick,->] (-0.6,0) .. controls ++(0,-.45) and ++(0,.6) .. (0.0,-1);
   \draw[thick,<-] (0.6,0) --  (0.6,-1);
   \draw[thick,<-] (1.2,0) --  (1.2,-1);
   \draw[thick,<-] (2.4,0) --  (2.4,-1);
   \draw[thick,<-] (3.0,0) --  (3,-1);
\end{tikzpicture}}
\;\;\right]
\;\; + \;\;
 \left[ \;\;\;
\hackcenter{
\begin{tikzpicture}[scale=0.8]
  \draw[thick] (3.0,0) .. controls ++(0,1.25) and ++(0,-1.1) .. (0,1.5);
  \draw[thick] (0.0,0) .. controls ++(0,.5) and ++(0,-.6) .. (0.6,1.5);
  \draw[thick] (0.6,0) .. controls ++(0,.5) and ++(0,-.6) .. (1.2,1.5);
  \draw[thick] (1.2,0) .. controls ++(0,.5) and ++(0,-.6) .. (1.8,1.5);
  \draw[thick] (2.4,0) .. controls ++(0,.5) and ++(0,-.6) .. (3,1.5);
  \node at (1.8,.35) {$\dots$};
  \node at (2.4,1.35) {$\dots$};
  \draw[thick] (-0.6,0) -- (-0.6,1.5);
  \filldraw  (-0.55,2.25) circle (2pt);
    \draw[blue, dotted] (-1,0) -- (3.4,0);
    \draw[blue, dotted] (-1,1.5) -- (3.4,1.5);
   \draw[thick,->] (0.0,2.5) .. controls ++(0,-.45) and ++(0,-.45) .. (-0.6,2.5);
   \draw[thick] (-0.6,1.5) .. controls ++(0,.35) and ++(0,.35) .. (0.0,1.5);
   \draw[thick,->] (0.6,1.5) --  (0.6,2.5);
   \draw[thick,->] (1.2,1.5) --  (1.2,2.5);
   \draw[thick,->] (1.8,1.5) --  (1.8,2.5);
   \draw[thick,->] (3,1.5) --  (3,2.5);
   \draw[thick,<-] (0.0,0) .. controls ++(0,-.45) and ++(0,.6) .. (-0.6,-1);
   \draw[thick,->] (-0.6,0) .. controls ++(0,-.45) and ++(0,.6) .. (0.0,-1);
   \draw[thick,<-] (0.6,0) --  (0.6,-1);
   \draw[thick,<-] (1.2,0) --  (1.2,-1);
   \draw[thick,<-] (2.4,0) --  (2.4,-1);
   \draw[thick,<-] (3.0,0) --  (3,-1);
\end{tikzpicture}}
\;\;\right]
\]
The second and third term are both equal to $h_{n-1}\otimes x_1$.  For the second diagram pull the cup to the bottom of the diagram producing a right twist curl (which is just another dot), then apply \eqref{eq:dotted-curl}.  Likewise, the third diagram simplifies using the trace relation.

To complete the proof of this claim we must slide the downward oriented line to the right of the upward oriented strands using the first equation in \eqref{eq:rel2}.  Crossing the downward oriented strand past the $i$th upward oriented strand produces the following sum.
\[
 \left[ \;\;\;
\hackcenter{
\begin{tikzpicture}[scale=0.8]
  \draw[thick] (5.4,0) .. controls ++(0,1.25) and ++(0,-1.1) .. (0,1.5);
  \draw[thick] (0.0,0) .. controls ++(0,.5) and ++(0,-.6) .. (0.6,1.5);
  \draw[thick] (0.6,0) .. controls ++(0,.5) and ++(0,-.6) .. (1.2,1.5);
    \node at (1.2,.35) {$\dots$};
  \node at (1.8,1.35) {$\dots$};
  \draw[thick] (1.8,0) .. controls ++(0,.5) and ++(0,-.6) .. (2.4,1.5);
  \draw[thick] (2.4,0) .. controls ++(0,.5) and ++(0,-.6) .. (3,1.5);
  \draw[thick] (3,0) .. controls ++(0,.5) and ++(0,-.6) .. (3.6,1.5);
  \draw[thick] (3.6,0) .. controls ++(0,.5) and ++(0,-.6) .. (4.2,1.5);
  \node at (4.2,.35) {$\dots$};
  \node at (4.8,1.35) {$\dots$};
  \draw[thick] (4.8,0) .. controls ++(0,.5) and ++(0,-.6) .. (5.4,1.5);
  \draw[thick,<-] (-0.6,0) -- (-0.6,1.5);
  \filldraw  (-0.55,2.2) circle (2pt);
    \filldraw  (2.4,2.0) circle (2pt);
    \draw[blue, dotted] (-1,0) -- (5.8,0);
    \draw[blue, dotted] (-1,1.5) -- (5.8,1.5);
   \draw[thick,->] (0.0,1.5) .. controls ++(0,.45) and ++(0,-.6) .. (-0.6,2.5);
   \draw[thick] (-0.6,1.5) .. controls ++(0,.75) and ++(0.0,-1) .. (3.0,2.5);
   \draw[thick,->] (0.6,1.5).. controls ++(0,.45) and ++(0,-.6) .. (-0.0,2.5);
   \draw[thick,->] (1.2,1.5) .. controls ++(0,.45) and ++(0,-.6) .. (0.6,2.5);
   \draw[thick,->] (2.4,1.5) .. controls ++(0,.45) and ++(0,-.6) .. (1.8,2.5);
   \draw[thick,->] (3,1.5) .. controls ++(0,.45) and ++(0,-.6) .. (2.4,2.5);
   \draw[thick,->] (3.6,1.5) -- (3.6,2.5);
   \draw[thick,->] (4.2,1.5) -- (4.2,2.5);
   \draw[thick,->] (5.4,1.5) -- (5.4,2.5);
   \draw[thick] (0.0,0) .. controls ++(0,-.45) and ++(0,.6) .. (-0.6,-1);
   \draw[thick,->]   (-0.6,0) .. controls ++(0,-.75) and ++(0.0,1) .. (3.0,-1);
   \draw[thick] (0.6,0) .. controls ++(0,-.45) and ++(0,.6) .. (-0.0,-1);
   \draw[thick] (1.8,0) .. controls ++(0,-.45) and ++(0,.6) .. (1.2,-1);
   \draw[thick] (2.4,0) .. controls ++(0,-.45) and ++(0,.6) .. (1.8,-1);
   \draw[thick] (3,0) .. controls ++(0,-.45) and ++(0,.6) .. (2.4,-1);
   \draw[thick] (3.6,0) -- (3.6,-1);
   \draw[thick] (4.8,0) -- (4.8,-1);
   \draw[thick] (5.4,0) -- (5.4,-1);
\end{tikzpicture}}
\;\;\right]
\;\; + \;\;
 \left[ \;\;\;
\hackcenter{
\begin{tikzpicture}[scale=0.8]
  \draw[thick] (5.4,0) .. controls ++(0,1.25) and ++(0,-1.1) .. (0,1.5);
  \draw[thick] (0.0,0) .. controls ++(0,.5) and ++(0,-.6) .. (0.6,1.5);
  \draw[thick] (0.6,0) .. controls ++(0,.5) and ++(0,-.6) .. (1.2,1.5);
    \node at (1.2,.35) {$\dots$};
  \node at (1.8,1.35) {$\dots$};
  \draw[thick] (1.8,0) .. controls ++(0,.5) and ++(0,-.6) .. (2.4,1.5);
  \draw[thick] (2.4,0) .. controls ++(0,.5) and ++(0,-.6) .. (3,1.5);
  \draw[thick] (3,0) .. controls ++(0,.5) and ++(0,-.6) .. (3.6,1.5);
  \draw[thick] (3.6,0) .. controls ++(0,.5) and ++(0,-.6) .. (4.2,1.5);
  \node at (4.2,.35) {$\dots$};
  \node at (4.8,1.35) {$\dots$};
  \draw[thick] (4.8,0) .. controls ++(0,.5) and ++(0,-.6) .. (5.4,1.5);
  \draw[thick,<-] (-0.6,0) -- (-0.6,1.5);
  \filldraw  (-0.55,2.2) circle (2pt);
    \filldraw  (2.5 ,2.15) circle (2pt);
    \draw[blue, dotted] (-1,0) -- (5.8,0);
    \draw[blue, dotted] (-1,1.5) -- (5.8,1.5);
   \draw[thick,->] (0.0,1.5) .. controls ++(0,.45) and ++(0,-.6) .. (-0.6,2.5);
   \draw[thick] (-0.6,1.5) .. controls ++(.4,.75) and ++(0.4,1.2) .. (3.0,1.5);
   \draw[thick,->] (0.6,1.5).. controls ++(0,.45) and ++(0,-.6) .. (-0.0,2.5);
   \draw[thick,->] (1.2,1.5) .. controls ++(0,.45) and ++(0,-.6) .. (0.6,2.5);
   \draw[thick,->] (2.4,1.5) .. controls ++(0,.45) and ++(0,-.6) .. (1.8,2.5);
   \draw[thick,->] (3.6,1.5) -- (3.6,2.5);
   \draw[thick,->] (4.2,1.5) -- (4.2,2.5);
   \draw[thick,->] (5.4,1.5) -- (5.4,2.5);
   \draw[thick] (0.0,0) .. controls ++(0,-.45) and ++(0,.6) .. (-0.6,-1);
   \draw[thick,->]   (-0.6,0) .. controls ++(0,-.75) and ++(0.2,-1) .. (3.0,0);
   \draw[thick] (0.6,0) .. controls ++(0,-.45) and ++(0,.6) .. (-0.0,-1);
   \draw[thick] (1.8,0) .. controls ++(0,-.45) and ++(0,.6) .. (1.2,-1);
   \draw[thick] (2.4,0) .. controls ++(0,-.45) and ++(0,.6) .. (1.8,-1);
   \draw[thick] (3.6,0) -- (3.6,-1);
   \draw[thick] (4.8,0) -- (4.8,-1);
   \draw[thick] (5.4,0) -- (5.4,-1);
\end{tikzpicture}}
\;\;\right]
\]
The second diagram contains a left twist curl with single interior dot.  Using using \eqref{eq:dotted-curl} with $a=1$ to simplify this dotted curl, the second term above is  equal to $h_{n-1}\otimes x_1$.  The crossing resolution term   resulting from sliding the dot upward in the first term produces a diagram containing a left twist curl, so that these terms all vanish.  Continuing this process of moving the downward strand to to the right using the first equation in \eqref{eq:rel2} and the dot slide equation we see that $(h_{-1}\otimes x_1)(h_n\otimes x_1)$ is equal to
\[
 \left[ \;\;\;
\hackcenter{
\begin{tikzpicture}[scale=0.8]
  \draw[thick] (3.0,0) .. controls ++(0,1.25) and ++(0,-1.1) .. (0,1.5);
  \draw[thick] (0.0,0) .. controls ++(0,.5) and ++(0,-.6) .. (0.6,1.5);
  \draw[thick] (0.6,0) .. controls ++(0,.5) and ++(0,-.6) .. (1.2,1.5);
  \draw[thick] (1.2,0) .. controls ++(0,.5) and ++(0,-.6) .. (1.8,1.5);
  \draw[thick] (2.4,0) .. controls ++(0,.5) and ++(0,-.6) .. (3,1.5);
  \node at (1.8,.35) {$\dots$};
  \node at (2.4,1.35) {$\dots$};
  \draw[thick,<-] (-0.6,0) -- (-0.6,1.5);
  \filldraw  (-0.55,2.2) circle (2pt);
   \filldraw  (2.95,2.3) circle (2pt);
    \draw[blue, dotted] (-1,0) -- (3.4,0);
    \draw[blue, dotted] (-1,1.5) -- (3.4,1.5);
   \draw[thick,->] (0.0,1.5) .. controls ++(0,.45) and ++(0,-.6) .. (-0.6,2.5);
   \draw[thick] (-0.6,1.5) .. controls ++(0,.85) and ++(0,-1) .. (3.0,2.5);
   \draw[thick,->] (0.6,1.5).. controls ++(0,.45) and ++(0,-.6) .. (-0.0,2.5);
   \draw[thick,->] (1.2,1.5) .. controls ++(0,.45) and ++(0,-.6) .. (0.6,2.5);
   \draw[thick,->] (1.8,1.5) .. controls ++(0,.45) and ++(0,-.6) .. (1.2,2.5);
   \draw[thick,->] (3,1.5) .. controls ++(0,.45) and ++(0,-.6) .. (2.4,2.5);
   \draw[thick] (0.0,0) .. controls ++(0,-.45) and ++(0,.6) .. (-0.6,-1);
   \draw[thick,->]   (-0.6,0) .. controls ++(0,-.85) and ++(0,1) .. (3.0,-1);
   \draw[thick] (0.6,0) .. controls ++(0,-.45) and ++(0,.6) .. (-0.0,-1);
   \draw[thick] (1.2,0) .. controls ++(0,-.45) and ++(0,.6) .. (0.6,-1);
   \draw[thick] (2.4,0) .. controls ++(0,-.45) and ++(0,.6) .. (1.8,-1);
   \draw[thick] (3,0)   .. controls ++(0,-.45) and ++(0,.6) .. (2.4,-1);
\end{tikzpicture}}
\;\;\right]
\;\; + \;\;
(n+1) h_{n-1} \otimes x_1.
\]
completing the proof of $m=1$ case.

Now assume the lemma is true for $[\beta_{-m+1}, \beta_n]$.
The Jacobi identity gives
\begin{equation}
\label{-m1n1jac1}
0= [[\beta_{-1}, \beta_{-m+1}],\beta_n]+[[\beta_{-m+1},\beta_n],\beta_{-1}]+[[\beta_n, \beta_{-1}],\beta_{-m+1}].
\end{equation}
By the base case and the inductive step ~\eqref{-m1n1jac1} is:

\begin{equation*}
0=[[\beta_{-1}, \beta_{-m+1}],\beta_n]
+[(n+m-1)\beta_{n-m+1}+\sum_{j=1}^{m-2} j \alpha_{n-j} \alpha_{-m+1+j},\beta_{-1}]
+(n+1)[\beta_{-m+1}, \beta_{n-1}].
\end{equation*}

Another application of the base case and the inductive steps gives:

\begin{align}
\label{-m1n1eq2}
0= [[\beta_{-1}, \beta_{-m+1}],\beta_n]
&+(n+m)(m-2) \beta_{n-m}
+ \sum_{j=1}^{m-2} j (\alpha_{n-j} \alpha_{-m+1+j} \beta_{-1}-\beta_{-1} \alpha_{n-j} \alpha_{-m+1+j})  \\
&+(n+1) \sum_{j=1}^{m-2} j \alpha_{n-1-j} \alpha_{-m+1+j} \nonumber
\end{align}

By Lemma ~\ref{-n1m1}, we move $\beta_{-1} $ and  ~\eqref{-m1n1eq2} becomes
\begin{align}
\label{-m1n1eq3}
0 = [[\beta_{-1}, \beta_{-m+1}],\beta_n]
& +(m+n)(m-2) \beta_{n-m}
+ \sum_{j=1}^{m-2} j \alpha_{n-j} \alpha_{-m+1+j} \beta_{-1}
-\sum_{j=1}^{m-2} j \alpha_{n-j} \beta_{-1} \alpha_{-m+1+j} \\
&-\sum_{j=1}^{m-2} j(n-j) \alpha_{n-j-1} \alpha_{-m+1+j}
+ \sum_{j=1}^{m-2} (n+1)j \alpha_{n-1-j} \alpha_{-m+1+j}. \nonumber
\end{align}
Again moving the element $\beta_{-1} $ to the right ~\ref{-m1n1eq3} becomes using Lemma ~\ref{-n1m1}
\begin{align*}
0=[[\beta_{-1}, \beta_{-m+1}],\beta_n]
&+(m+n)(m-2) \beta_{n-m}
+ \sum_{j=1}^{m-2} (n+1)j \alpha_{n-1-j} \alpha_{-m+1+j} \\
&+ \sum_{j=1}^{m-2} j \alpha_{n-j} \alpha_{-m+1+j} \beta_{-1}
-\sum_{j=1}^{m-2} j \alpha_{n-j}  \alpha_{-m+1+j} \beta_{-1} \\
&-\sum_{j=1}^{m-2} j(-m+1+j) \alpha_{n-j} \alpha_{-m+j}
-\sum_{j=1}^{m-2} j(n-j) \alpha_{n-j-1} \alpha_{-m+1+j}.
\end{align*}
Simplifying the above gives
\begin{equation*}
0= [[\beta_{-1}, \beta_{-m+1}],\beta_n]
+(m+n)(m-2) \beta_{n-m}
+\sum_{j=1}^{m-2} j(j+1) \alpha_{n-1-j} \alpha_{-m+1+j}
-\sum_{j=1}^{m-2} j(-m+1+j) \alpha_{n-j} \alpha_{-m+j}.
\end{equation*}
Combining the last two sums above gives
\begin{align*}
0= [[\beta_{-1}, \beta_{-m+1}],\beta_n]
&+(m+n)(m-2) \beta_{n-m}
+\sum_{j=1}^{m-3} (m-2)(j+1) \alpha_{n-1-j} \alpha_{-m+1+j} \\
&+(m-2) \alpha_{n-1} \alpha_{-m+1}
+(m-2)(m-1) \alpha_{n-m+1} \alpha_{-1}.
\end{align*}
Thus
\begin{equation*}
[[\beta_{-1}, \beta_{-m+1}],\beta_n]=
-(m-2)((m+n) \beta_{n-m} - \sum_{j=1}^{m-1} j \alpha_{n-j} \alpha_{-m+j}).
\end{equation*}
It follows from Lemma ~\ref{lemmam1n1} that
$ [\beta_{-1},\beta_{-m+1}]=(-m+2) \beta_{-m}$.
Substituting this into the above equation gives the desired result.
\end{proof}

\begin{remark}
A slight modification to the $m=1$ argument of Lemma~\ref{lemma-m1n1} can be used to prove that for $n,m>1$ the following relations:
\begin{align}
[h_n \otimes x_1^p, h_1 \otimes x_1^m]&=-\sum_{j=2}^n \sum_{a+b=m-1} h_{n+1} \otimes x_1^p x_j^a x_{j+1}^b
+ \sum_{a+b=p-1} h_{n+1} \otimes x_1^a x_2^{b+m} - \sum_{c+d=m-1} h_{n+1} \otimes x_1^c x_2^{p+d}, \nn
\\
[h_n \otimes x_1, h_m \otimes x_1^2] &=(m-2n)(h_{n+m} \otimes x_1^2)
+ \sum_{j=1}^n (2n-j)(h_j \otimes x_1)(h_{n+m-j} \otimes 1)
- \sum_{j=1}^{n-1} j(h_{m+j} \otimes x_1)(h_{n-j} \otimes 1),
\nn
\end{align}
hold in $\Tr(\H)$. Though we will not need these relations in $ \Tr(\H)$ in order to identify it with a quotient of $\W$, they can be helpful in computing explicit formulas for commutators.
\end{remark}

We define:
\begin{align}
\label{heivirformulas}
L_l^{} &= |\frac{1}{l}|(h_{-l} \otimes (x_1 + \cdots + x_{|l|}) \hspace{.2in} (l \neq 0) \\ \nonumber
L_0 &= c_0 \\ \nonumber
b_l &= h_{-l} \otimes 1. \nonumber
\end{align}

\begin{proposition}
\label{heisvirasoroHH0}
The elements $ L_l^{} $ for $ l \in \mathbb{Z} $ and $b_l$ for $ l \neq 0 $  generate a Heisenberg-Virasoro algebra with central charge one.  That is
\begin{align*}
[b_k, b_l]&=k \delta_{k,-l} \\
[L_k^{}, L_l^{}]&=(k-l) L_{k+l}^{} + \frac{k^3-k}{12} \delta_{k,-l} \\
[L_l^{}, b_k]&=-k(b_{l+k}).
\end{align*}
\end{proposition}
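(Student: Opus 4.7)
The plan is to verify each of the three commutation relations in turn, assembling them from the technical lemmas of this subsection. The Heisenberg bracket $[b_k,b_l]=k\delta_{k,-l}$ is simply Lemma~\ref{origheisrelations} rewritten: since $b_k=h_{-k}\otimes 1$, we get $[b_k,b_l]=[h_{-k}\otimes 1, h_{-l}\otimes 1]=-l\,\delta_{-k,l}=k\,\delta_{k,-l}$ (the last equality holding whenever the Kronecker $\delta$ is nonzero).

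For the mixed relation $[L_l,b_k]=-k\,b_{l+k}$, assume first $l>0$ (the case $l<0$ is analogous by symmetry). I would use Lemma~\ref{dotmovelemma} iteratively to rewrite the dot-sum as
\[
L_l \;=\; \frac{1}{l}\,h_{-l}\otimes(x_1+\cdots+x_l) \;=\; h_{-l}\otimes x_1 \;+\; \frac{1}{l}\sum_{j=1}^{l-1}(l-j)\,b_j\, b_{l-j}.
\]
The commutator $[h_{-l}\otimes x_1,b_k]=[h_{-l}\otimes x_1,h_{-k}\otimes 1]$ is computed either from Lemma~\ref{-n1m1} (when the signs of the indices differ) or from the negative-index analogue of Lemma~\ref{lemman1m0} (for both negative, where only the proof is sketched in the text). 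The commutators $[b_j b_{l-j},b_k]$ are expanded via the Leibniz rule using the already-proved Heisenberg bracket $[b_a,b_b]=a\delta_{a,-b}$. A case analysis on the sign of $k$ (and on whether $|k|<l$ or $|k|\ge l$) shows that the contribution of the product term exactly supplements the contribution from $h_{-l}\otimes x_1$, producing the uniform answer $-k\,b_{l+k}$.

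For the Virasoro relation $[L_k,L_l]=(k-l)L_{k+l}+\tfrac{k^3-k}{12}\delta_{k,-l}$, I would apply the same expansion to both $L_k$ and $L_l$. Same-sign brackets $[h_{\mp k}\otimes x_1,h_{\mp l}\otimes x_1]$ are given by Lemma~\ref{lemmam1n1}; opposite-sign brackets are given by Lemma~\ref{lemma-m1n1}, which crucially contributes the sum $\sum_{j=1}^{T-1}j\,\alpha_{n-j}\alpha_{-m+j}$. The ``non-diagonal'' terms $k+l\ne 0$ reassemble to $(k-l)L_{k+l}$ after using the previously proved $[L_l,b_k]$ relation to absorb the extra $b_jb_{l-j}$ cross-terms into the dot-sum defining $L_{k+l}$. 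In the diagonal case $k+l=0$, the combinatorial identity
\[
\sum_{j=1}^{|k|-1} j(|k|-j)\;=\;\frac{|k|^3-|k|}{6}
\]
converts the remaining scalar contribution into the central cocycle $\tfrac{k^3-k}{12}$. The relations involving $L_0=c_0$ are then deduced either from the Jacobi identity applied to $[L_k,[L_{-k},L_0]]$ and the already-established brackets, or by direct identification of $c_0$ with the image of the grading operator on bubbles that appears on the right-hand side of the diagonal Virasoro relation.

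The main obstacle will be the central charge computation: one must keep careful track of several overlapping contributions (the $\tilde{c}_f$ correction terms from Lemmas~\ref{lem:mn-onetangle} and~\ref{lem:alem2} that propagate through the computation of $[L_k,L_{-k}]$, the quadratic boundary terms $b_jb_{l-j}$ coming from the dot-slide expansion, and the scalar contributions from $[b_a,b_b]$ brackets), and show that all non-scalar pieces either cancel or combine into $2k\,L_0$, leaving the clean scalar $\tfrac{k^3-k}{12}$. I would cross-check the result in the small cases $k=\pm 1,\pm 2$ by direct annular diagrammatic computation before attempting the general formula.
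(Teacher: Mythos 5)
Your proposal is correct and follows essentially the same route as the paper: the paper's proof is a one-line citation of Lemmas \ref{origheisrelations}, \ref{lemmam1n1}, \ref{lemman1m0}, \ref{-n1m1}, \ref{n1-m1} and \ref{lemma-m1n1}, and your expansion of $L_l$ via Lemma \ref{dotmovelemma} into $h_{-l}\otimes x_1$ plus quadratic Heisenberg terms, together with the sign/magnitude case analysis, is exactly the verification those citations leave implicit. Your extra attention to the diagonal case $k+l=0$ (the bubble correction terms, the identity $\sum_{j=1}^{|k|-1}j(|k|-j)=\tfrac{|k|^3-|k|}{6}$, and the identification of the $c_0$ contribution) supplies bookkeeping that the cited lemmas do not make explicit, so it is a refinement of, rather than a departure from, the paper's argument.
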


\begin{proof}
This follows from Lemmas ~\ref{origheisrelations}, ~\ref{lemmam1n1},  ~\ref{lemman1m0},  ~\ref{-n1m1}, ~\ref{n1-m1} and ~\ref{lemma-m1n1}.
\end{proof}

\subsection{Trace of the degenerate affine Hecke algebra}
\label{HHDAHA}

The trace of $DH_n$ may be computed by a theorem of Solleveld ~\cite{S} which reduces the problem to computing the trace of a semi-direct product via a spectral sequence argument.

\begin{theorem}\cite[Theorem 3.4]{S}
\label{HHDH}
The trace of the degenerate affine Hecke algebra is isomorphic to the trace of the semi-direct product of the symmetric group and a polynomial algebra:
\begin{equation*}
\Tr(DH_n) \cong \Tr(S_n  \ltimes \mathbb{C}[x_1, \ldots, x_n]).
\end{equation*}
\end{theorem}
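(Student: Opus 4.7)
The plan is to use a PBW-type filtration on $DH_n$ to reduce the trace computation to that of its associated graded, which will be the semi-direct product $S_n \ltimes \mathbb{C}[x_1,\ldots,x_n]$, and then to run a spectral sequence argument to show that the comparison map is an isomorphism.

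First I would equip $DH_n$ with the increasing algebra filtration $F_\bullet$ determined by placing each $X_i$ in degree $1$ and each $T_i$ in degree $0$. The relations of Proposition~\ref{prop:sergeevacting} are manifestly filtered, and the $+1$ correction terms in $T_i X_i - X_{i+1} T_i$ and $X_i T_i - T_i X_{i+1}$ lie in strictly lower degree than the leading terms. A PBW-type argument then shows that $DH_n \cong \mathbb{C}[X_1,\ldots,X_n] \otimes \mathbb{C}[S_n]$ as a vector space, and that the associated graded algebra is
\[
\mathrm{gr}(DH_n) \;\cong\; S_n \ltimes \mathbb{C}[x_1,\ldots,x_n].
\]

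Next, the filtration on $DH_n$ induces one on the commutator subspace $[DH_n,DH_n]$ and hence on the quotient $\Tr(DH_n) = DH_n/[DH_n,DH_n]$. Right-exactness of the associated graded functor yields a natural surjection $\Tr(\mathrm{gr}\,DH_n) \twoheadrightarrow \mathrm{gr}\,\Tr(DH_n)$, and the theorem amounts to the claim that this surjection is an isomorphism. I would establish this by applying the spectral sequence of the filtered bar complex computing Hochschild homology: its $E^1$-page is $\HH_\bullet(\mathrm{gr}\,DH_n)$ and it converges to $\HH_\bullet(DH_n)$. The trace identification follows as soon as one shows that no differential into or out of the bottom row is nonzero on any subsequent page.

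The hard part will be establishing this degeneration. My approach is to exploit the explicit structure of Hochschild homology for a semi-direct product $G \ltimes R$ with $G$ finite acting on a polynomial algebra $R$: it splits according to conjugacy classes $[g]$ of $G$, and on each block reduces to the $Z_G(g)$-coinvariants of the Hochschild homology of the fixed-point subalgebra $R^g$, which admits a Koszul resolution. Combined with the PBW freeness above, this description should allow one to match basis elements of $\Tr(\mathrm{gr}\,DH_n)$ with classes in $\Tr(DH_n)$ via the lift $x_{i_1}^{a_1}\cdots x_{i_k}^{a_k}\cdot \sigma \mapsto X_{i_1}^{a_1}\cdots X_{i_k}^{a_k}\cdot \sigma$ (for $\sigma \in S_n$), showing that no differential in the spectral sequence obstructs the lift and giving the required isomorphism.
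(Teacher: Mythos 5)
Your setup is fine as far as it goes: the filtration with $\deg X_i=1$, $\deg T_i=0$ is the one the paper itself uses (see the proof of Lemma~\ref{lem:invert}), the PBW identification $\mathrm{gr}(DH_n)\cong S_n\ltimes\mathbb{C}[x_1,\ldots,x_n]$ is correct, and the induced surjection $\Tr(\mathrm{gr}\,DH_n)\twoheadrightarrow \mathrm{gr}\,\Tr(DH_n)$ is indeed automatic. But note that the paper does not prove Theorem~\ref{HHDH} at all: it is quoted as a black box from Solleveld, precisely because everything you have written so far is the easy half. The entire content of the theorem is the injectivity of that surjection (equivalently, degeneration of your spectral sequence at the bottom row), and your proposal does not contain an argument for it. Lifting a monomial basis of $\Tr(\mathrm{gr}\,DH_n)$ along $x_{i_1}^{a_1}\cdots x_{i_k}^{a_k}\sigma\mapsto X_{i_1}^{a_1}\cdots X_{i_k}^{a_k}\sigma$ only reproves surjectivity; to conclude you would need to know that the lifted classes stay linearly independent in $DH_n/[DH_n,DH_n]$, which is exactly the statement at issue, so the step ``showing that no differential in the spectral sequence obstructs the lift'' is circular as written. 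Knowing the $E^1$-page explicitly (conjugacy-class decomposition, Koszul resolutions of $R^g$) does not force the differentials into the $\HH_0$ column to vanish.

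That this cannot be repaired by formal bookkeeping is shown by the Weyl algebra $A_1$ with the analogous filtration: $\mathrm{gr}(A_1)\cong\mathbb{C}[x,y]$ has infinite-dimensional $\HH_0$, while $[\,\partial,x\,]=1$ forces $\Tr(A_1)=0$, so for filtered deformations of this shape the map $\Tr(\mathrm{gr}\,A)\to\mathrm{gr}\,\Tr(A)$ can have enormous kernel. Hence genuine input special to $DH_n$ is required. Two standard ways to supply it: (i) produce enough trace functionals on $DH_n$ separating the lifted classes --- e.g.\ characters of the induced modules $\mathrm{Ind}_{\mathbb{C}[x_1,\ldots,x_n]}^{DH_n}\mathbb{C}_{\lambda}$, $\lambda\in\mathbb{C}^n$, whose values on $X_{i_1}^{a_1}\cdots X_{i_k}^{a_k}\sigma$ can be computed and shown to give the needed lower bound on $\dim\mathrm{gr}\,\Tr(DH_n)$ in each filtration degree; or (ii) invoke the actual deformation/comparison argument of Solleveld, which is where the real work lives. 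Without one of these, your proof has a gap exactly at the theorem's main claim.
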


Now let $P(n)$ be the set of partitions of $n$ and, for a partition $ \lambda $ of $n$, let $p_i(\lambda)$ be the number of times the number $i$ occurs in $\lambda$.

\begin{theorem}\cite[Theorem 3.1]{EO}, \cite[Section 1]{S}
\label{HHSD}
We have
\begin{equation*}
\Tr(S_n \ltimes \mathbb{C}[x_1, \ldots, x_n]) \cong
\bigoplus_{\lambda \in P(n)} \bigotimes_{i \geq 1} S^{p_i(\lambda)}  \mathbb{C}[x]
\end{equation*}
where $ S^k \mathbb{C}[x]$ is the space of $ S_k $ invariants of $\mathbb{C}[x_1, \ldots, x_k]$.
\end{theorem}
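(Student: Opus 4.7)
The plan is to deduce the result from the standard decomposition of Hochschild homology of a smash product $A = R \rtimes G$ with $G$ a finite group acting on a commutative $\C$-algebra $R$, specialized to $R = \C[x_1,\ldots,x_n]$ and $G = S_n$. The key input, which I would first recall (and if necessary reprove via the bar complex using the averaging idempotents $\frac{1}{|Z_G(g)|}\sum_{h\in Z_G(g)} h\otimes h^{-1}$), is the formula
\begin{equation*}
\HH_0(R \rtimes G) \;\cong\; \bigoplus_{[g]}\bigl( R/(1-g)R \bigr)^{Z_G(g)},
\end{equation*}
where $[g]$ runs over conjugacy classes of $G$ and $R/(1-g)R$ is the space of $g$-coinvariants, on which $Z_G(g)$ acts since $Z_G(g)$ stabilizes both $R$ (through its action on $R$) and the ideal $(1-g)R$. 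In characteristic zero the distinction between coinvariants and invariants is immaterial.

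Next, I would fix a representative $w\in S_n$ of cycle type $\lambda$, with $p_i = p_i(\lambda)$ cycles of length $i$, and identify the summand explicitly. Partitioning $\{1,\dots,n\}$ into the cycles of $w$, the variables split accordingly, and on each cycle $C = (j_1\, j_2\, \cdots\, j_\ell)$ the action of $w$ is the cyclic shift on $\C[x_{j_1},\dots,x_{j_\ell}]$. The quotient by $(1-w)R$ enforces $x_{j_1} = x_{j_2} = \cdots = x_{j_\ell}$, collapsing that factor to a polynomial ring $\C[y_C]$ in a single variable. Therefore
\begin{equation*}
R/(1-w)R \;\cong\; \bigotimes_{C\text{ cycle of }w} \C[y_C].
\end{equation*}

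Third, I would compute the invariants of $Z_{S_n}(w) \cong \prod_i \bigl(\Z/i \wr S_{p_i}\bigr)$ on this tensor product. The cyclic factors $\Z/i$ already act trivially on the coinvariants (by construction), so only the permutation groups $S_{p_i}$ act nontrivially; the action of $S_{p_i}$ permutes the $p_i$ new variables $y_C$ coming from length-$i$ cycles. Taking invariants yields
\begin{equation*}
\bigl(R/(1-w)R\bigr)^{Z_{S_n}(w)} \;\cong\; \bigotimes_{i\geq 1} \C[y_1,\dots,y_{p_i}]^{S_{p_i}} \;=\; \bigotimes_{i\geq 1} S^{p_i(\lambda)}\C[x],
\end{equation*}
and summing over partitions $\lambda\in P(n)$ gives the claimed formula.

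The main obstacle is the first step: carefully justifying the conjugacy-class decomposition of $\HH_0(R\rtimes G)$ and the identification of each summand with $g$-coinvariants together with the residual $Z_G(g)$-action. Once this structural result is in place, Steps two and three are essentially bookkeeping in commutative algebra; the cleanest route is to appeal directly to the general result of Etingof--Oblomkov (and Solleveld), as the authors do.
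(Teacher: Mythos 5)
The paper gives no argument for this statement at all --- it is imported verbatim from [EO, Theorem 3.1] and [S] --- so any written proof is by definition a different route. What you propose is the standard computation underlying those citations: decompose $\Tr=\HH_0$ of the crossed product over conjugacy classes, identify the summand attached to a representative $w$ with functions on its fixed locus, and take centralizer invariants. Your Steps 2 and 3 (each cycle collapses its variables to a single one, $Z_{S_n}(w)\cong\prod_i \mathbb{Z}/i\wr S_{p_i}$ with the cyclic parts acting trivially on the quotient and $S_{p_i}$ permuting the new variables, invariants $=$ coinvariants in characteristic zero) are correct bookkeeping and reproduce the right-hand side of the theorem.

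There is, however, one point you must state correctly if you intend to prove rather than cite the structural input: the $g$-summand is \emph{not} the space of linear $g$-coinvariants $R/\mathrm{im}(1-g)$, but $R$ modulo the \emph{ideal} generated by $\{r-g(r):r\in R\}$, i.e.\ the coordinate ring of the fixed-point set. Indeed, in the $g$-component the relations coming from commutators with $R$ are $[r,\,s\cdot g]=\bigl(rs-s\,g(r)\bigr)\cdot g$, and the set $\{s(r-g(r)):r,s\in R\}$ spans exactly that ideal, which is in general strictly larger than the linear image of $1-g$. The distinction is not cosmetic: for $n=2$ and $w=(12)$, the literal coinvariants of $\mathbb{C}[x_1,x_2]$ are isomorphic to $S^2\mathbb{C}[x]$, whereas the theorem requires this summand to be $\mathbb{C}[x]$; with the coinvariant reading your decomposition would give $S^2\mathbb{C}[x]\oplus S^2\mathbb{C}[x]$ instead of $S^2\mathbb{C}[x]\oplus\mathbb{C}[x]$. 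Your Step 2, which sets $x_{j_1}=\cdots=x_{j_\ell}$, is precisely the ideal quotient, so the computation you actually carry out is the right one --- only the displayed formula for $\HH_0(R\rtimes G)$ needs to be restated accordingly. With that repair, plus the routine check (via the bar complex or direct commutator bookkeeping) that commutators $[r\cdot g_1,\,s\cdot g_2]$ with both group parts nontrivial impose nothing beyond the $R$-relations and the conjugation/centralizer identifications, the argument is complete.
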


Theorems ~\ref{HHDH} and ~\ref{HHSD} together determine the trace of the degenerate affine Hecke algebra.

\begin{example}
The degenerate affine Hecke algebra of rank two $ DH_2$ is generated by the group algebra of the symmetric group $S_2$ and the polynomial algebra $\mathbb{C}[x_1,x_2]$.  If $s_1$ is the generator of $S_2$ then the only additional relation is $s_1 x_1= x_2 s_1 +1$.

There are partitions $ \lambda=(2)$ and $\lambda=(1,1) $ of $2$.
By the above theorems $\Tr(DH_2)$ is a direct sum of subspaces
\begin{equation*}
\bigotimes_{i \geq 1} S^{p_i((2))} \mathbb{C}[x] \bigoplus
\bigotimes_{i \geq 1} S^{p_i((1,1))} \mathbb{C}[x]
\cong
\mathbb{C}[x] \bigoplus S^2 \mathbb{C}[x_1,x_2].
\end{equation*}
This corresponds to a splitting of the trace
\begin{equation*}
\mathbb{C}\langle x_1^a s_1 \rangle_{a \in \mathbb{Z}_{\geq 0}} \bigoplus
S^2 \mathbb{C}[x_1,x_2].
\end{equation*}
\end{example}

\subsection{$\Tr(\H)$ as a vector space}
\label{HHVS}

\begin{lemma}\label{lem:invert}
If $f,g\in DH_n$ with $fg = 1$, then in fact $f,g\in \mathbb{C}[S_n]\subset DH_n$.
\end{lemma}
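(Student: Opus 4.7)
The plan is to exploit the polynomial-degree filtration on $DH_n$. Define $F^d DH_n$ as the $\mathbb{C}$-span of elements $x_1^{a_1}\cdots x_n^{a_n} T_w$ with $\sum_i a_i \le d$. The commutation relation $T_i x_j = x_{s_i(j)} T_i + (\text{scalar})$ from Proposition~\ref{prop:sergeevacting} respects this filtration, and the associated graded identifies with the skew group algebra $\mathrm{gr}(DH_n) \cong R \rtimes S_n$, where $R = \mathbb{C}[x_1,\ldots,x_n]$ and $S_n$ acts on $R$ by permuting variables. Setting $d = \deg f$ and $e = \deg g$, the hypothesis $fg = 1 \in F^0$ together with $fg \in F^{d+e}$ forces the leading-term identity $\bar f\,\bar g = 0$ in $R \rtimes S_n$ whenever $d + e > 0$, with both $\bar f$ and $\bar g$ nonzero homogeneous. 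The objective is to rule this case out, forcing $d = e = 0$ and hence $f, g \in F^0 = \mathbb{C}[S_n]$.

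The main tool is the left regular representation. Since $DH_n$ is a free right $R$-module of rank $n!$ with PBW basis $\{T_w\}_{w \in S_n}$, left multiplication yields an injective ring homomorphism $\lambda\colon DH_n \hookrightarrow \mathrm{End}_{R\text{-mod}}(DH_n) \cong M_{n!}(R)$. The relation $fg = 1$ translates to $\lambda(f)\lambda(g) = I$, forcing $\det\lambda(f) \in R^\times = \mathbb{C}^\times$. Writing $f = \sum_w T_w p_w$ in PBW normal form and using the commutation $p \cdot T_w = T_w \cdot w^{-1}(p) + (\text{lower polynomial degree})$, the matrix entries take the form $\lambda(f)_{u,v} = v^{-1}(p_{uv^{-1}}) + (\text{lower})$, so when $\bar f$ is homogeneous of degree $d$ the ``leading matrix'' is exactly the matrix $\mu(\bar f) \in M_{n!}(R)$ of left multiplication by $\bar f$ on $R \rtimes S_n$ viewed as a free right $R$-module of rank $n!$; its determinant, if nonzero, is homogeneous of polynomial degree $n!\cdot d$.

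The proof then splits in two. If $\det\mu(\bar f) \neq 0$ — equivalently, $\bar f$ is not a right zero-divisor in $R \rtimes S_n$ — then $\det\lambda(f)$ has polynomial degree exactly $n!\cdot d > 0$, directly contradicting $\det\lambda(f) \in \mathbb{C}^\times$. The anticipated main obstacle is the case in which $\bar f$ is a right zero-divisor, so that $\det\mu(\bar f) = 0$ and the top-order part of $\det\lambda(f)$ vanishes. To handle this I would extend scalars to $K = \mathrm{Frac}(R)$ and invoke Galois descent: the skew algebra $K \rtimes S_n$ is central simple over $K^{S_n}$ and isomorphic to $M_{n!}(K^{S_n})$, so zero-divisors coincide with non-units there. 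One then iterates the filtered analysis, using the full strength of $fg = 1$ in $DH_n$ (not merely its leading-order shadow) to track how the sub-leading correction $p_w^{(d-1)}$ and the divided-difference residues from moving polynomials past $T_w$ must conspire to produce $1$ in $F^0$; this iteration should rule out every zero-divisor configuration and force $d = 0$. Applying the same argument (or symmetrically using $gf = 1$, which follows from $fg = 1$ via invertibility of $\lambda(f) \in M_{n!}(R)$) yields $e = 0$, completing the proof.
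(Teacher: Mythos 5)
Your first half is sound and, in fact, more careful than the paper's own one-line proof of Lemma~\ref{lem:invert}: you correctly note that when $\deg f+\deg g>0$ the relation $fg=1$ only forces the product of leading symbols $\bar f\,\bar g$ to vanish in $\mathrm{gr}(DH_n)\cong \mathbb{C}[S_n]\ltimes\mathbb{C}[x_1,\dots,x_n]$, and that this skew group algebra has zero divisors, so one cannot immediately conclude $\deg f=\deg g=0$ (the paper simply asserts $\mathrm{gr}(f)\mathrm{gr}(g)=1$, which is exactly the step that needs justification). Your determinant argument via the left regular representation $\lambda\colon DH_n\hookrightarrow M_{n!}(R)$, $R=\mathbb{C}[x_1,\dots,x_n]$, is also fine as far as it goes: $fg=1$ forces $\det\lambda(f)\in\mathbb{C}^\times$, and if $\bar f$ is not a zero divisor this contradicts $\deg f>0$.

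The genuine gap is the zero-divisor case, which you only gesture at (``this iteration should rule out every zero-divisor configuration''), and it cannot be closed: the statement of the lemma is false as written, already for $n=2$. Put $A=\tfrac12(x_1-x_2+1)(1-T_1)\in DH_2$; equivalently $A=e_+(x_1-x_2)e_-$ with $e_\pm=\tfrac12(1\pm T_1)$. Using $T_1x_1=x_2T_1+1$, $T_1x_2=x_1T_1-1$ and $T_1^2=1$ (Proposition~\ref{prop:sergeevacting}) one checks $(1-T_1)(x_1-x_2+1)=(x_1-x_2-1)(1+T_1)$, hence $A^2=\tfrac14(x_1-x_2+1)(x_1-x_2-1)(1+T_1)(1-T_1)=0$. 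Therefore $f=1+A$ and $g=1-A$ satisfy $fg=gf=1$, yet $f\notin\mathbb{C}[S_2]$ since its PBW expansion contains polynomial degree $1$ terms. This is precisely your ``obstacle'' case realized: $\bar f=\tfrac12(x_1-x_2)(1-s_1)$ squares to zero in the associated graded, $\det\mu(\bar f)=0$, while $\det\lambda(f)=\det(I+\lambda(A))=1$ because $\lambda(A)$ is nilpotent; no refinement of the filtered analysis can exclude it. (The same example shows the paper's proof breaks at ``$\mathrm{gr}(f)\mathrm{gr}(g)=1$''.) What does survive from your construction is the weaker but still useful fact that one-sided inverses in $DH_n$ are two-sided: $\lambda(f)\lambda(g)=I$ with $\det\lambda(f)$ a unit gives $\lambda(g)\lambda(f)=I$, hence $gf=1$; any repair of the surrounding indecomposability argument has to rely on statements of this kind rather than on $f,g\in\mathbb{C}[S_n]$.
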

\begin{proof}
We consider the non-negative integral filtration on $DH_n$, with associated graded $\mbox{gr}(DH_n) \cong \mathbb{C}[S_n]\rtimes \mathbb{C}[x_1,\dots,x_n]$.  The degree 0 part of this filtration is precisely $\mathbb{C}[S_n]$. Now $\mbox{gr}(f)\mbox{gr}(g) = 1$, which implies that $\mbox{gr}(f)$ and $\mbox{gr}(g)$ are in $\mathbb{C}[S_n]$. Thus $f$ and $g$ are in the degree 0 part of the filtration, as desired.
\end{proof}

\begin{lemma}
The indecomposable objects of $\H'$ are of the form $\P^m\Q^n$ for $m,n \in \mathbb{Z}_{\geq 0}$.
\end{lemma}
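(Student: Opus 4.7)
The lemma has two parts: (i) every object of $\H'$ (in its additive envelope) is isomorphic to a direct sum of objects $\P^m\Q^n$, and (ii) each $\P^m\Q^n$ is indecomposable.

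For (i), I would use the relations \eqref{eq:rel2}--\eqref{eq:rel3} to verify that the mixed-orientation crossing $T$, cap $\cap$, and cup $\cup$ of \eqref{eq:maps} assemble into mutually inverse morphisms
\[
\begin{pmatrix} T \\ \cap \end{pmatrix} \colon \Q\P \to \P\Q \oplus \id, \qquad \begin{pmatrix} T' & \cup \end{pmatrix} \colon \P\Q \oplus \id \to \Q\P,
\]
giving an isomorphism $\Q\P \cong \P\Q \oplus \id$ in $\H'$ itself (no Karoubi completion required). Specifically, $T'T + \cup\cap = 1_{\Q\P}$ is the first relation of \eqref{eq:rel2}, $TT' = 1_{\P\Q}$ is its second, $\cap\cup = 1_{\id}$ is the circle equation in \eqref{eq:rel3}, and $T\cup = \cap T' = 0$ follow from the ``left twist curl vanishes'' equation in \eqref{eq:rel3}. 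Inducting on the number of positions where $\Q$ precedes $\P$ in a word in $\P,\Q$, iterated application of this isomorphism reduces any such word to a direct sum of words of the form $\P^m\Q^n$.

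For (ii), suppose $\P^m\Q^n \cong \bigoplus_{i\in I} \P^{m_i}\Q^{n_i}$ in $\H'$ for some finite index set $I$. The categorical action of $\H'$ on $\bigoplus_k \mathbb{C}[S_k]\dmod$ (with $\P \mapsto \Ind$, $\Q \mapsto \Res$) descends to an action on $\sym$, under which this isomorphism yields the operator identity
\[
\Ind^m \Res^n \;=\; \sum_{i\in I} \Ind^{m_i} \Res^{n_i} \quad\text{in }\End(\sym).
\]
Under the Frobenius identification $\sym \cong \mathbb{C}[p_1, p_2, \ldots]$, the operator $\Ind^a \Res^b$ corresponds to $p_1^a(\partial/\partial p_1)^b$ (up to normalization), and these are linearly independent in $\End(\sym)$ for distinct $(a,b) \in \mathbb{Z}_{\geq 0}^2$: they act on $p_1^k$ as $\tfrac{k!}{(k-b)!}\,p_1^{k+a-b}$, and the pair $(a,b)$ is uniquely recovered from the shift $a-b$ and the $k$-dependence of the scalar coefficient. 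Matching coefficients in the displayed identity forces $|I| = 1$ and $(m_i, n_i) = (m,n)$; thus $\P^m\Q^n$ admits no nontrivial direct sum decomposition in $\H'$.

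The main obstacle is the diagrammatic verification in part~(i), specifically confirming the off-diagonal vanishings $T\cup = 0$ and $\cap T' = 0$ via the curl relation of \eqref{eq:rel3} --- a slightly delicate but essentially mechanical check. Everything else amounts to routine induction or a direct calculation on power-sum generators.
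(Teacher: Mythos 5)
Your proposal is correct, and part (i) is exactly the paper's (and Khovanov's) argument: the relations \eqref{eq:rel2}--\eqref{eq:rel3} are designed precisely so that the crossing, cap and cup give mutually inverse maps $\Q\P \leftrightarrow \P\Q\oplus\id$, the off-diagonal composites being left twist curls. For part (ii), however, you take a genuinely different route. The paper stays internal to $\H'$: it first rules out retractions $\P^a\Q^b\to\P^m\Q^n\to\P^a\Q^b$ with $a+b\neq m+n$ using the structure of $\End_{\H'}(\P^m\Q^n)$ from Proposition \ref{KhovEndThmPmQn}, and then shows any one-sided inverse in $\End(\P^m\Q^n)$ is two-sided by projecting to $DH_m\otimes DH_n^{op}\otimes\mathbb{C}[c_0,c_1,\dots]$, invoking Lemma \ref{lem:invert} (a filtration argument showing one-sided invertible elements of $DH_n$ lie in $\mathbb{C}[S_n]$) and semisimplicity of $\mathbb{C}[S_m]$. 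You instead decategorify: any nontrivial decomposition refines by part (i) to $\P^m\Q^n\cong\bigoplus_i\P^{m_i}\Q^{n_i}$ with at least two summands, and applying the additive Fock-space action ($\P\mapsto\Ind$, $\Q\mapsto\Res$, acting on $K_0\cong\sym$ by $p_1$ and $\partial/\partial p_1$) contradicts the linear independence of the normally ordered operators $p_1^a(\partial/\partial p_1)^b$, which your evaluation on $p_1^k$ correctly establishes. This is shorter and avoids the endomorphism-algebra analysis entirely, at the cost of importing the categorical representation on $\bigoplus_n\mathbb{C}[S_n]\dmod$ (which the paper only sets up later, in Section \ref{sec:fock}, following \cite{K}) together with its additivity and descent to $K_0$; the paper's internal argument, by contrast, needs no representation and yields extra structural information (Hom-space constraints between distinct $\P^a\Q^b$, and the fact that one-sided inverses in $\End(\P^m\Q^n)$ come from the semisimple part $\mathbb{C}[S_m]\otimes\mathbb{C}[S_n]$) that the decategorified argument cannot see. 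The only points to make explicit in a write-up are the routine ones you already flag: the curl computations for $T\cup=\cap T'=0$, and the observation that a nonzero object contributes at least one summand in its decomposition so that the index set genuinely has size at least two.
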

\begin{proof}
An indecomposable object must be of the form $\P^m\Q^n$ because a subsequence $\Q\P$ produces a decomposition
$\P\Q \oplus \Id$.

There is a morphism from $\P^m\Q^n$ to $\P^a\Q^b$ if and only if $m-n=a-b$.
If $a+b \neq m+n$ then the composition $\P^a\Q^b \rightarrow \P^m\Q^n \rightarrow \P^a\Q^b$ produces cups and caps or circles.
By Proposition ~\ref{KhovEndThmPmQn} it follows that this composition cannot be the identity.

Now suppose there are maps
$ f \colon \P^m \Q^n \rightarrow \P^m \Q^n $
and
$ g \colon \P^m \Q^n \rightarrow \P^m \Q^n $ such that $ gf$ is the identity.
We claim that in fact $gf$ is the identity, too.
Proposition ~\ref{KhovEndThmPmQn} implies $f$ is a monoidal composition of $ f_1 $ and $ f_2 $ where
$ f_1 \colon \P^m \rightarrow \P^m $ and $ f_2 \colon \Q^n \rightarrow \Q^n $.
Therefore, $ f_1 $ can be identified with an element in $DH_m$ and $ f_2$ may be identified with an element in $DH_n^{op}$.
Similarly, $g$ is a monoidal composition of $g_1$ and $g_2$ where
$ g_1 \colon \P^m \rightarrow \P^m $ and $ g_2 \colon \Q^n \rightarrow \Q^n $, so that
 $g_1$ can be identified with an element in $DH_m$ and $g_2$ may be identified with an element in
$DH_n^{op}$.
But now, by Lemma \ref{lem:invert}, $f_1$ and $g_1$ must belong to $\mathbb{C}[S_m]$.  Since $\mathbb{C}[S_m]$ is semi-simple,
$ g_1 f_1 $ is the identity if and only $f_1 g_1$ is the identity.
Thus $f$ is an isomorphism, and $\P^m\Q^n$ is indecomposable.
\end{proof}

\begin{lemma}
\label{HHVSLEMMA}
There is an isomorphism
\begin{equation*}
\Tr(\H) \cong \left( \bigoplus_{\substack{m \in \mathbb{Z}_{\geq 0} \\ n \in \mathbb{Z}_{\geq 0}}}
\bigoplus_{\substack{\mu \in P(m) \\ \lambda \in P(n)}}
\bigotimes_{\substack{i \geq 1 \\ j \geq 1}} S^{p_i(\mu)}  \mathbb{C}[x] \otimes S^{p_j(\lambda)}  \mathbb{C}[x] \right)
\otimes \mathbb{C}[c_0, c_1, \ldots].
\end{equation*}
\end{lemma}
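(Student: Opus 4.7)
The strategy is to reduce the computation to the endomorphism algebras of indecomposable objects, then apply the structure theorem for $\End_{\H'}(\P^m\Q^n)$ together with the already-established trace computation for the degenerate affine Hecke algebra.

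First, by Proposition \ref{HH0HequalsHH0Hprime} we have $\Tr(\H) \cong \Tr(\H')$. By the preceding lemma, every object of $\H'$ is a direct sum of objects of the form $\P^m\Q^n$ with $m,n\geq 0$, so Proposition \ref{prop:indecomposables} gives an isomorphism
\[
\Tr(\H') \;\cong\; \Tr(\H'|_S), \qquad S = \{\P^m\Q^n\mid m,n\geq 0\}.
\]
Thus it suffices to compute the trace of the full subcategory of $\H'$ on the $\P^m\Q^n$.

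Next I will use the split short exact sequence of Proposition \ref{KhovEndThmPmQn} to write
\[
\End_{\H'}(\P^m\Q^n) \;\cong\; J_{m,n} \;\oplus\; \bigl(DH_m \otimes DH_n^{op} \otimes \mathbb{C}[c_0,c_1,\ldots]\bigr).
\]
The key geometric observation is that any diagram in $J_{m,n}$ contains at least one cap at the top, so it factors as $gf$ with $f\col \P^m\Q^n\to \P^{m'}\Q^{n'}$ and $g\col \P^{m'}\Q^{n'}\to \P^m\Q^n$ for some $m'+n' < m+n$. By the trace relation $[gf]=[fg]$, the class of any element of $J_{m,n}$ is represented by an endomorphism of a strictly shorter object. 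Iterating this reduction shows that every class in $\Tr(\H'|_S)$ is represented by a ``clean'' diagram, i.e.\ by an element of $A_{m,n}:=DH_m \otimes DH_n^{op} \otimes \mathbb{C}[c_0,c_1,\ldots]$ for some $(m,n)$.

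To finish I need to check that the natural map
\[
\bigoplus_{m,n\geq 0}\Tr(A_{m,n}) \;\longrightarrow\; \Tr(\H'|_S)
\]
is injective. Any cross-relation $[fg]=[gf]$ with $f\col \P^m\Q^n\to \P^{m'}\Q^{n'}$ and $g$ going the other way must, if $(m,n)\neq (m',n')$, involve a cup or cap and hence produce a diagram containing an arc at the top in both $\End(\P^m\Q^n)$ and $\End(\P^{m'}\Q^{n'})$; under the splittings these lie in the $J$-summands, so they are killed by the projection onto the ``clean'' part. Commutators internal to a single $\End(\P^m\Q^n)$ decompose along the direct sum and restrict to commutators in $A_{m,n}$. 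Together this yields
\[
\Tr(\H'|_S) \;\cong\; \bigoplus_{m,n\geq 0}\Tr(A_{m,n}) \;\cong\; \bigoplus_{m,n\geq 0} \Tr(DH_m)\otimes \Tr(DH_n^{op})\otimes \mathbb{C}[c_0,c_1,\ldots],
\]
where the second isomorphism uses that trace commutes with tensor products over $\mathbb{C}$ and that $\mathbb{C}[c_0,c_1,\ldots]$ is commutative (so equal to its own trace). Since $DH_n^{op}$ has the same underlying vector space as $DH_n$ and the trace is insensitive to passing to the opposite algebra, Theorems \ref{HHDH} and \ref{HHSD} give
\[
\Tr(DH_m) \;\cong\; \bigoplus_{\mu\in P(m)} \bigotimes_{i\geq 1} S^{p_i(\mu)}\mathbb{C}[x],
\]
and symmetrically for $DH_n^{op}$. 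Substituting these into the display above yields the stated isomorphism. The main technical point, and the one I would take most care with, is the reducibility/faithfulness argument in the middle paragraph: namely that the $J_{m,n}$ summand and the off-diagonal trace relations contribute nothing beyond what the splittings already see.
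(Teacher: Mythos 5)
Your route is the same as the paper's: pass to $\Tr(\H')$ and to the full subcategory on the objects $\P^m\Q^n$ via Propositions \ref{HH0HequalsHH0Hprime} and \ref{prop:indecomposables}, split $\End_{\H'}(\P^m\Q^n)$ using Proposition \ref{KhovEndThmPmQn}, use the trace relation to rewrite classes of elements of $J_{m,n}$ as classes of ``clean'' elements on smaller objects, and finish with Theorems \ref{HHDH} and \ref{HHSD} together with the facts that $\Tr$ of a tensor product is the tensor product of the traces and that $\Tr(DH_n^{op})\cong\Tr(DH_n)$. All of that coincides with the printed proof.

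The one place where you add something to the paper --- the injectivity argument in your middle paragraph --- does not work as stated. Take $m=n=1$, let $f\colon \P\Q\to\Id$ be the cap and $g\colon\Id\to\P\Q$ the cup. Then $gf$ lies in $J_{1,1}$, but $fg$ is a closed circle in $\End(\Id)$, hence a nonzero clean element (it is $1$ or $c_0$); so it is false that a cross-relation ``produces a diagram containing an arc at the top'' in both endomorphism algebras, and false that such relations are killed by the projections onto the clean summands --- projecting both sides of $[gf]=[fg]$ would give $0$ on one side and a nonzero clean class on the other. These cross-relations are precisely the mechanism by which $J$-classes get identified with clean classes of smaller objects (your surjectivity step), and a priori they could also force new identifications among the clean classes themselves; ruling that out --- i.e.\ showing that the only relations among clean classes are the conjugation relations internal to each $DH_m\otimes DH_n^{op}\otimes\mathbb{C}[c_0,c_1,\ldots]$ --- is the actual content of injectivity. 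The paper's own proof is silent on this point as well (it asserts the direct sum decomposition immediately after the reduction step), so your sketch is no less complete than the published argument, but the specific justification you give for injectivity is wrong and would have to be replaced, for instance by a normal-form or filtration argument that tracks how $J_{m,n}$-classes reduce and verifies that the induced identifications are already consequences of conjugation in the smaller algebras.
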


\begin{proof}
By Proposition ~\ref{HH0HequalsHH0Hprime}, $ \Tr(\H) \cong \Tr(\H')$ so by the definition of $ \Tr(\H') $ and by Proposition~\ref{prop:indecomposables} it suffices to consider endomorphisms of all indecomposable objects ($m$ products of $ \P$ followed by $n$ products of $\Q$) modulo the ideal $\mathcal{I}$ defined in Section \eqref{HHconventions}.
That is
\begin{equation*}
\Tr(\H') \cong \bigoplus_{m,n \geq 0} \End(\P^m\Q^n) / \mathcal{I}.
\end{equation*}
It is clear that this ideal is equal to the ideal $J_{m,n}$ from Proposition ~\ref{KhovEndThmPmQn} plus the ideal generated by $fg-gf$ where $f,g \colon \P^mQ^n \rightarrow \P^m\Q^n$.
Using the trace relation together with the relations in $\H$, any map contained in the ideal $J_{m,n}$ can be reduced to a sum of endomorphisms of $\P^{m'} \Q^{n'}$ for some $m'$, $n'$ that are not in the ideal $J_{m,n}$.
By applying Proposition ~\ref{KhovEndThmPmQn}
\begin{equation*}
\Tr(\H) \cong \bigoplus_{m,n \geq 0} \Tr(DH_m \otimes DH_n^{op} \otimes \mathbb{C}[c_0,c_1,\ldots]).
\end{equation*}
The Lemma now follows from the results in Section ~\ref{HHDAHA}.
\end{proof}

\subsection{Towards the trace of $\H$ as an algebra}
The trace of a general category is only a vector space  The monoidal structure of $\mathcal{H}$ endows $\Tr(\H)$ with a product.  Given morphisms $ f \colon X \rightarrow X' $ and $ g \colon Y \rightarrow Y'$ then $ fg \colon XY \rightarrow X'Y'$ and we may define $[f][g]:=[fg]$.

Denote by $\Tr^{>}(\H)$ (resp. $ \Tr^{<}(\H) $) the subalgebra of $ \Tr(\H) $ generated by $ h_l \otimes x_1^k $ (resp. $ h_{-l} \otimes x_1^k $) for $ l \geq 1, k \geq 0 $. Moreover, let $\Tr^0(\H) := \Tr(\H^{0})$.

\begin{lemma}
\label{HH0graded}
The algebra $ \Tr(\H) $ is $\Z$-graded where $h_r \otimes x_1^k $ is in degree $r$.
\end{lemma}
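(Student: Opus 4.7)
The plan is to introduce a rank grading on the morphism spaces of $\H'$ and show that it descends to the trace. For an object $X=X_1X_2\cdots X_n$ of $\H'$ with each $X_i\in\{\P,\Q\}$, define $\mbox{rank}(X)$ to be the number of $\P$'s minus the number of $\Q$'s. First I would verify that each of the four generating morphisms in \eqref{eq:maps} has source and target of equal rank: the two crossings have source equal to target, and each cup or cap has a source and target which are either $\Id$ (rank $0$) or $\P\Q$/$\Q\P$ (also rank $0$). Since arbitrary morphisms in $\H'$ are built from these generators by vertical and horizontal composition (and horizontal composition adds to both source and target the same object), this forces $\Hom_{\H'}(X,Y)=0$ whenever $\mbox{rank}(X)\ne\mbox{rank}(Y)$.

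Next I would define the degree of the class $[f]\in\Tr(\H')$ of a morphism $f\in\End_{\H'}(X)$ to be $\mbox{rank}(X)$, and check this is well-defined on the quotient by the trace relation. Given $f\col X\to Y$ and $g\col Y\to X$ with neither $f$ nor $g$ zero, the previous paragraph gives $\mbox{rank}(X)=\mbox{rank}(Y)$, so $fg\in\End(X)$ and $gf\in\End(Y)$ lie in the same graded component; thus $fg-gf$ is homogeneous and $\Tr(\H')=\bigoplus_{r\in\Z}\Tr(\H')_r$. The multiplicativity of the grading with respect to the product on $\Tr(\H')$ induced by the monoidal structure is immediate from $\mbox{rank}(XY)=\mbox{rank}(X)+\mbox{rank}(Y)$: if $[f]$ and $[g]$ have degrees $r$ and $s$, then $[fg]$ is represented by an endomorphism of $XY$ whose rank is $r+s$.

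Finally, transport the grading to $\Tr(\H)$ via the isomorphism $\Tr(\H)\cong\Tr(\H')$ of Proposition \ref{HH0HequalsHH0Hprime}. It remains only to verify the degree of the stated generators. When $r>0$ the element $h_r\otimes x_1^k$ is the class of an endomorphism of $\P^r$, which has rank $r$; when $r<0$ it is the class of an endomorphism of $\Q^{|r|}$, which has rank $-|r|=r$. Either way the degree agrees with the index $r$.

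There is essentially no hard step in this argument; the only thing to be a little careful about is the first observation, that all generating morphisms of $\H'$ preserve rank, so that $\Hom$ vanishes between objects of different rank and the trace relation is homogeneous. Once this is in hand, both well-definedness of the grading and compatibility with the algebra product are formal.
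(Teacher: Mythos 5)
Your proposal is correct and fills in the details behind the paper's one-line proof ("clear by inspection of the defining relations"): the grading is exactly the rank grading by the number of $\P$'s minus the number of $\Q$'s, and your checks that the generating morphisms preserve rank, that the trace relation is homogeneous, and that the product is additive in rank are precisely what the paper leaves implicit. Same approach, just spelled out.
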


\begin{proof}
This is clear by an inspection of the defining relations of the category $\H'$.
\end{proof}
We call this $ \Z$-grading the rank grading because it is related to the degenerate affine Hecke algebra associated to a Lie group of a particular rank.

\begin{lemma}
\label{HH0filtered}
The algebra $\Tr(\H)$ is $\Z_{\geq 0}$-filtered where $ h_r \otimes x_1^k $ is in degree $ k$.
\end{lemma}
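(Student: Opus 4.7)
The plan is to define a $\Z_{\geq 0}$-filtration on each endomorphism space $\End_{\H'}(X)$ by counting the total number of dots in a diagram, and to show that this filtration is compatible with the defining relations of $\H$, with monoidal composition, and with the trace quotient. Concretely, for each object $X$ of $\H'$, let $F_k\End_{\H'}(X)$ be the subspace spanned by diagrammatic endomorphisms of $X$ that contain at most $k$ dots. Under the isomorphism of Proposition~\ref{KhovEndThmPmQn}, this restricts on the $\End_{\H'}(\P^n)\cong DH_n\otimes\mathbb{C}[c_0,c_1,\dots]$ factor to the standard filtration on $DH_n$ (with $T_i\in F_0$ and $X_i\in F_1$) tensored with the filtration on $\mathbb{C}[c_0,c_1,\dots]$ assigning $c_i$ filtration degree $i$.

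The first step is to check that the relations of $\H'$ respect this filtration: the isotopy relations and the crossing/cup-cap relations \eqref{eq:rel1}--\eqref{eq:rel3} are dot-free, so preserve the filtration trivially; the nil-dot relation \eqref{eq:nil-dot} equates two diagrams of dot count one modulo a diagram of dot count zero, which is the typical inequality needed for a filtration; the curl-reduction relation and its dotted variant \eqref{eq:dotted-curl} reduce a diagram with $a$ dots on a curl to a diagram with at most $a-1$ dots on a straight strand (times a polynomial in the bubbles $\tilde c_{f+2}$, whose filtration degrees sum correctly); and the bubble relation of Proposition~\ref{eq:bubblerelationship} expresses $\tilde c_{n+1}$ as a sum of products $\tilde c_i c_{n-1-i}$ whose total dot count is $n$, matching $\tilde c_{n+1}$'s assigned filtration degree $n+1$ up to the standard one-shift accounting.

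Next, I will observe that monoidal composition of diagrams is strictly additive in dot count, so $F_k\End(X)\cdot F_l\End(Y)\subseteq F_{k+l}\End(XY)$, and that vertical composition likewise is additive. In particular, for any $f\in F_a\End(X)$ and $g\in F_b\End(Y,X)$ both $fg$ and $gf$ lie in $F_{a+b}\End(X)$ (respectively $\End(Y)$), so the trace relation $[fg]=[gf]$ descends to a well-defined filtration on $\Tr(\H')\cong\Tr(\H)$ via Proposition~\ref{HH0HequalsHH0Hprime}. Finally, from the definition \eqref{pndef}--type construction of $h_r\otimes x_1^k$ as the class of $f_{\pm\sigma_r;k,0,\dots,0}$, which is a diagram with a single dot of multiplicity $k$ on one strand and no other dots, we see directly that $h_r\otimes x_1^k$ lies in $F_k\Tr(\H)$, proving the lemma.

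The main thing to be careful about is the interaction between the bubble generators and the filtration, since the curl-reduction relation \eqref{eq:dotted-curl} trades dots on a strand for dots absorbed into bubbles. As long as the convention $\deg c_i=i$ is used, every instance of this trade preserves the total filtration degree, and no further obstacle arises. I do not expect to need to compute anything heavier, since (as in the proof of the rank grading in Lemma~\ref{HH0graded}) the statement is essentially a matter of bookkeeping on the defining relations.
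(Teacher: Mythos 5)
Your proposal is correct and takes essentially the same route as the paper: the filtration is by dot count on diagrams, compatible with horizontal composition and the trace, and the only point of substance is that the dot-slide relation \eqref{eq:nil-dot} produces correction terms with strictly fewer dots, which is exactly the paper's one-line justification. (One minor slip: in Proposition~\ref{eq:bubblerelationship} the right-hand side has total dot count $n-1$, two less than that of $\tilde{c}_{n+1}$, not a ``one-shift''; but a strict drop in dot count is harmless for a filtration, so nothing breaks.)
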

\begin{proof}
This follows from the fact that a dot may slide through a crossing plus or minus a correction term which is a resolution of the crossing containing no dots.
\end{proof}

For $\omega \in \{<,>,0\}$, the associated graded of $ \Tr^{\omega}(\H) $ with respect to the $\Z_{\geq 0}$-filtration will be denoted by $gr \Tr^{\omega}(\H) $. This is a $ (\Z \times \Z_{\geq 0})$-graded algebra. We denote the subspace in degree $(r,k)$ by $ gr \Tr^{\omega}(\H)[r, k] $ and define the Poincar\'e series by
\begin{equation*}
P_{\Tr^{\omega}(\H)}(t,q)=\sum_{r \in \Z} \sum_{k \in \Z_{\geq 0}} \dim gr \Tr^{\omega}(\H)[r, k] t^r q^k.
\end{equation*}

\begin{corollary}
\label{poincareH}
These Poincar\'e series are given by
\begin{equation*}
P_{\Tr^>(\H)}=\prod_{r>0} \prod_{k \geq 0} \frac{1}{1-t^r q^k} \hspace{.3in}
P_{\Tr^<(\H)}=\prod_{r<0} \prod_{k \geq 0} \frac{1}{1-t^r q^k}.
\end{equation*}
\end{corollary}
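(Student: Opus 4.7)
The plan is to read off the Poincar\'e series directly from the vector-space decomposition of $\Tr(\H)$ provided by Lemma~\ref{HHVSLEMMA}, combined with a combinatorial identity. First I would identify $\Tr^{>}(\H)$ inside the decomposition of Lemma~\ref{HHVSLEMMA} as the subspace coming from endomorphisms of pure upward tensor powers $\P^m$ with no central bubbles. Since $\Tr^{>}(\H)$ is generated as a subalgebra of $\Tr(\H)$ by the classes $h_l \otimes x_1^k$ for $l \geq 1$, $k \geq 0$ (each of which is the class of an endomorphism of some $\P^l$), and since the trace product $[f][g] = [fg]$ sends such generators to an endomorphism of $\P^{l_1+l_2}$, the subalgebra $\Tr^{>}(\H)$ is contained in the summand with $n=0$ and no factors of $\mathbb{C}[c_0, c_1, \ldots]$. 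Conversely, every basis element of this summand is a product of cycle-classes $h_i \otimes x_1^k$, so we get the identification
\begin{equation*}
\Tr^{>}(\H) \;\cong\; \bigoplus_{m \geq 0}\bigoplus_{\mu \in P(m)} \bigotimes_{i \geq 1} S^{p_i(\mu)}\mathbb{C}[x].
\end{equation*}

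Next I would match the bigradings. The rank of the $(m,\mu)$-summand is exactly $m = \sum_i i\, p_i(\mu)$ by Lemma~\ref{HH0graded}, and the filtration degree (from Lemma~\ref{HH0filtered}) on $\bigotimes_i S^{p_i(\mu)}\mathbb{C}[x]$ is the total polynomial degree (i.e.\ the total number of dots). Since $\mathbb{C}[x_1,\ldots,x_n]^{S_n}$ is a polynomial algebra on generators of degrees $1,2,\ldots,n$, its Hilbert series is $\prod_{j=1}^n \frac{1}{1-q^j}$. Therefore the $(m,\mu)$-summand contributes $t^{m}\prod_{i \geq 1}\prod_{j=1}^{p_i(\mu)} \frac{1}{1-q^j}$ to $P_{\Tr^{>}(\H)}(t,q)$.

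The final step is the combinatorial identity
\begin{equation*}
\sum_{m \geq 0} \sum_{\mu \in P(m)} t^{m} \prod_{i \geq 1} \prod_{j=1}^{p_i(\mu)} \frac{1}{1-q^j}
\;=\; \prod_{r \geq 1} \prod_{k \geq 0} \frac{1}{1-t^{r}q^{k}}.
\end{equation*}
Rearranging the left side as $\prod_{i \geq 1}\!\bigl(\sum_{p_i \geq 0} t^{i p_i}\prod_{j=1}^{p_i}\frac{1}{1-q^j}\bigr)$ reduces this to the classical identity $\prod_{k \geq 0}\frac{1}{1-t^{i}q^{k}} = \sum_{n \geq 0} t^{in}\prod_{j=1}^{n}\frac{1}{1-q^j}$, which just expresses the generating function of partitions with at most $n$ (non-negative) parts in two ways. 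Note that this is exactly the combinatorial identity already used to prove Proposition~\ref{poincareW}, so one could alternatively cite that proof directly.

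The case of $\Tr^{<}(\H)$ is identical after replacing $\P$ with $\Q$ and relabelling $r \mapsto -r$. I do not anticipate any genuine obstacle here; the only point to verify carefully is that the product structure on $\Tr^{>}(\H)$ indeed respects the direct-sum decomposition appearing in Lemma~\ref{HHVSLEMMA}, i.e.\ that multiplying generators $h_l \otimes x_1^k$ together does not produce any component involving downward strands or central bubbles, which is clear from the horizontal-composition picture on the annulus.
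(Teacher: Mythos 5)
Your proposal is correct and takes essentially the same route as the paper: identify $\Tr^{>}(\H)$ with the $n=0$, bubble-free part of the decomposition in Lemma~\ref{HHVSLEMMA}, take the Hilbert series $\prod_{j=1}^{p}\frac{1}{1-q^{j}}$ of each symmetric-polynomial factor, and resum using Euler's identity (which the paper phrases as the evaluation of the basic hypergeometric series ${}_1\phi_0$). One small quibble: Proposition~\ref{poincareW} does not actually invoke this identity---there the associated graded is free on generators of bidegree $(l,k)$, so the product formula is immediate---but this does not affect your argument.
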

\begin{proof}
We prove the first equality (the second follows similarly). Under the isomorphism from Lemma ~\ref{HHVSLEMMA} $\Tr^>(\H)$ corresponds to
\begin{equation}\label{eq:1}
\bigoplus_{n \ge 0, \lambda \in P(n)} \bigotimes_{i \geq 1} S^{p_i(\lambda)}  \mathbb{C}[x].
\end{equation}
Now $S^{p_i(\lambda)} \mathbb{C}[x] \cong \mathbb{C}[h_1, \dots, h_{p_i(\lambda)}]$ where the bi-degree of the symmetric functions $h_i$ is $(|\l|,i)$. It follows that the generating series of (\ref{eq:1}) is given by $\prod_{j \ge 1} \phi(q^j,t)$ where
$$\phi(q,t) := 1 + \frac{q}{(1-t)} + \frac{q^2}{(1-t)(1-t^2)} + \frac{q^3}{(1-t)(1-t^2)(1-t^3)} + \dots$$
But $\phi(q,t)$ is just the basic hypergeometric series $_1 \phi_0(0;t,q)$ which is known to equal $\prod_{j \ge 0} \frac{1}{1-qt^j}$. The result follows.
\end{proof}

\begin{lemma} \label{eq:lemma-hnc0}
For $ n>0$ we have
\label{c0actingonhn}
\begin{align*}
[h_n \otimes x_1^a,c_0] &=-n(h_{n} \otimes x_1^a) \\
[h_{-n} \otimes x_1^a, c_0]&=n(h_{-n} \otimes x_1^a).
\end{align*}
\end{lemma}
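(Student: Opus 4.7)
The plan is to carry out an explicit graphical computation of both commutators on the annulus. Since $\sigma_n = s_1\cdots s_{n-1}$ is an $n$-cycle, the diagram representing $h_{\pm n}\otimes x_1^a$ closes up to a single oriented loop winding $n$ times around the annular hole, and this loop separates the annulus into two topologically distinct regions. The two products $(h_{\pm n}\otimes x_1^a)\cdot c_0$ and $c_0\cdot(h_{\pm n}\otimes x_1^a)$ place the bubble $c_0$ in opposite regions, and their difference is accounted for by passing $c_0$ across the $n$-strand loop.

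First I would rewrite $c_0 = \tilde c_2$ using Proposition~\ref{eq:bubblerelationship}, realising $c_0$ as a counterclockwise bubble with two dots. I then draw both products as annular diagrams and isolate their difference as the passage of this bubble across the loop. The key computational step is a local ``bubble-strand slide'': place $\tilde c_2$ adjacent to a single strand, use the cap-cup relation \eqref{eq:rel2} to absorb the bubble into a dotted curl on the strand, and then resolve the resulting curl using the dotted curl relation \eqref{eq:dotted-curl}. Using $\tilde c_1 = 0$ kills the higher-bubble corrections in \eqref{eq:dotted-curl}, leaving a net contribution of $\mp 1$ times the identity on the strand, with the sign determined by the strand's orientation (negative for upward strands, positive for downward ones). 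Iterating this slide over the $n$ strand segments of the loop and re-closing via the trace relation yields the total correction $\mp n\cdot(h_{\pm n}\otimes x_1^a)$, with the dots $x_1^a$ on the distinguished strand passing through unchanged.

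The main obstacle is the careful derivation of the local slide and the tracking of signs. Although Proposition~\ref{KhovEndThm} asserts that bubbles are central in each planar endomorphism algebra $\End(\P^m)$, the two placements of the bubble in the annulus lie in genuinely distinct regions, so the passage between them is not a free isotopy but must be carried out via the defining relations of the category. Verifying that the absorbed curl is of the correct handedness---so that \eqref{eq:dotted-curl} applied at $a=2$ produces precisely $\pm\mathrm{id}$ plus bubble terms that vanish because $\tilde c_1 = 0$---is the delicate point. Once this local slide is established, iteration over the $n$ strand segments and the orientation-dependent sign give the two stated identities simultaneously.
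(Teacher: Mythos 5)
Your global strategy -- realize the commutator as the difference between placing the bubble in the innermost and outermost annular regions, and pay for carrying it across the $n$ strand segments one local slide at a time, with the dots $x_1^a$ unaffected -- is exactly the kind of computation the paper has in mind when it cites the bubble/curl manipulations of \cite[Section 2]{K}, and the final bookkeeping ($n$ slides, each contributing $\mp 1$, sign depending on orientation) does give the stated formulas. The minor imprecision that the closure of an $n$-cycle ``separates the annulus into two regions'' is harmless: what matters is only that a radial arc meets the diagram in $n$ points, so $n$ local slides are needed.

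The genuine gap is in the local slide itself, which is where the coefficient $\mp 1$ has to come from. You replace $c_0$ by $\tilde c_2$ and claim that the correction term produced by \eqref{eq:rel2} is a left curl carrying the two dots, which \eqref{eq:dotted-curl} ``applied at $a=2$'' turns into $\mp 1$ times the identity strand, the bubble terms dying because $\tilde c_1=0$. But \eqref{eq:dotted-curl} at $a=2$ gives the strand with \emph{one} dot, and the bubble sum there is empty (it runs over $f+g=a-3=-1$), so $\tilde c_1=0$ plays no role; carried out as written, your per-strand correction would be $\pm(\text{strand with an extra dot})$, and the commutator would come out as $\pm n$ times a class with an extra dot rather than $-n\,(h_n\otimes x_1^a)$, contradicting the statement (and, for instance, the fact that under the functor of Section \ref{sec:fock} the extra dot acts by a Jucys--Murphy element, not by a scalar). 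The correct local identity is that moving $c_0$ past a single upward strand changes the diagram by exactly $\pm 1$ times the plain strand, and the cleanest way to get it is to keep $c_0$ in its clockwise, dot-free form: when the strand is pushed through the bubble, the only non-free resolution comes from the first equation of \eqref{eq:rel2}, and its cap--cup term grafts the (undotted) circle into the strand as an embedded, crossing-free arc, hence is isotopic to the identity strand -- no appeal to \eqref{eq:dotted-curl} is needed at all. (Equivalently, one can check the coefficient in Khovanov's bimodule representation, where $c_0$ acts on the $\mathbb{C}[S_n]$-summand by the scalar $n$, so that bubble-left minus bubble-right on $\P$ is $(n+1)-n=1$.) So the architecture of your argument is fine, but the key local computation must be redone; as stated it proves a different (false) identity.
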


\begin{proof}
This is an easy graphical calculation using computations from ~\cite[Section 2]{K}.
\end{proof}

\begin{remark}
Lemma~\ref{eq:lemma-hnc0} can be generalized to prove the more general identities
\begin{align}
  [h_n \otimes x_1^a,c_0^r] &=\sum_{j=1}^r (-1)^j \binom{r}{j} n^j c_0^{r-j} (h_n \otimes x_1^a), \text{ and}
\nn \\
[h_{-n} \otimes x_1^a,c_0^r] &=\sum_{j=1}^r \binom{r}{j} n^j c_0^{r-j} (h_{-n} \otimes x_1^a).
\nn
\end{align}
\end{remark}

\begin{lemma}
For $n>0$ there are equalities
\label{c1actingonhn}
\begin{align*}
[h_n \otimes x_1^a, c_1]&=-2n(h_n \otimes x_1^{a+1}) + \sum_{j=1}^{n-1} 2(n-j) (h_j \otimes x_1^a)(h_{n-j} \otimes 1) \\
[h_{-n} \otimes x_1^a, c_1]&=2n(h_{-n} \otimes x_1^{a+1}) + \sum_{j=1}^{n-1} 2(n-j) (h_{-j} \otimes x_1^a)(h_{-n+j} \otimes 1).
\end{align*}
\end{lemma}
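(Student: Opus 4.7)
The strategy is a direct graphical computation, in the spirit of Lemma~\ref{c0actingonhn} but requiring substantially more bookkeeping because of the dot on $c_1$. In $\Tr(\H)$, drawn on the annulus, the two products $c_1\cdot(h_n\otimes x_1^a)$ and $(h_n\otimes x_1^a)\cdot c_1$ differ by whether the bubble $c_1$ is placed \emph{inside} or \emph{outside} the winding strand that represents $h_n\otimes x_1^a$ (this is analogous to the nested-circle picture used in the proof of Theorem~\ref{pqrelations} for $(q^{(1)}\otimes 1)(p^{(1)}\otimes 1)$). Consequently, the commutator $[h_n\otimes x_1^a, c_1]$ measures the total effect of pulling the one-dotted clockwise bubble across the $n$ strands of the winding.

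The local move that drives the computation is the bubble-slide past a single upward-oriented strand. Sliding $c_1$ through a strand is implemented by introducing a cup-cap pair that wraps the bubble into a right-twist dotted curl, then applying the dotted curl identity~\eqref{eq:dotted-curl} together with the nil-dot relation~\eqref{eq:nil-dot}. Because the dot count in $c_1$ is small, the $\tilde c_k$ correction terms from~\eqref{eq:dotted-curl} are harmless: they involve $\tilde c_1=0$ and further simplifications provided by the bubble recursion of Proposition~\ref{eq:bubblerelationship}. The net local effect of one such slide is to add a single dot to the strand, and iterating over the $n$ strands of the winding (with both sides of the annulus contributing) produces the leading term $-2n(h_n\otimes x_1^{a+1})$.

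In parallel with this dot insertion, each bubble-slide across a strand can be preceded by a resolution of one of the winding crossings via~\eqref{eq:rel1}; this splits the $n$-cycle $\sigma_n$ into a product of cycles of lengths $j$ and $n-j$, producing the bilinear terms $(h_j\otimes x_1^a)(h_{n-j}\otimes 1)$, with coefficient $2(n-j)$ arising from the combinatorics of where along the winding the split can occur. The identity for $h_{-n}\otimes x_1^a$ follows from the same argument with orientations reversed, yielding the opposite sign in front of $(h_{-n}\otimes x_1^{a+1})$. The principal obstacle will be the careful combinatorial bookkeeping of the iterated bubble slide: verifying that all the $\tilde c_k$ correction terms produced at intermediate stages collapse via Proposition~\ref{eq:bubblerelationship}, and that the precise coefficients $-2n$ and $2(n-j)$ emerge after collecting contributions from the $n$ strand crossings.
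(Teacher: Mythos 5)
Your overall strategy (compare the bubble placed inside versus outside the annular closure and pull it across the winding circle) is the same one the paper has in mind --- its proof is just the citation of the bubble-slide computations in \cite[Section 2]{K} --- but your account of the two key mechanisms is wrong, and these are exactly where the content of the lemma lies. First, the closure of $h_n\otimes x_1^a$ winds $n$ times, so the bubble crosses the circle only $n$ times; there is no extra contribution from ``both sides of the annulus.'' The factor $2$ in $-2n(h_n\otimes x_1^{a+1})$ is intrinsic to the local bubble-slide identity: sliding the once-dotted clockwise bubble $c_1$ past a single upward strand produces the bubble on the other side plus \emph{twice} a dot on that strand (compare $c_0$, whose slide picks up $+1$, which is what gives the coefficient $-n$ in Lemma~\ref{c0actingonhn}). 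Your claim that ``the net local effect of one such slide is to add a single dot,'' patched by a spurious doubling from the annulus, would not survive an actual computation --- with your bookkeeping the leading coefficient would come out as $-n$, and the fact that no term proportional to $h_n\otimes x_1^a$ or to $c_0\,(h_n\otimes x_1^a)$ appears in the answer is also a nontrivial feature of the slide identity that has to be checked, not assumed. Second, the bilinear terms do not arise from ``resolving a winding crossing via \eqref{eq:rel1}'': the relations \eqref{eq:rel1} are equalities of crossing diagrams with no resolution terms. They arise in a separate second step, from transporting the deposited dots to the first strand.

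The correct computation runs: (i) sliding $c_1$ across the $i$th strand of the winding deposits $2x_i$, so the commutator equals $\mp 2\sum_{i=1}^n h_{\pm n}\otimes x_1^a x_i$; (ii) each $h_{\pm n}\otimes x_1^a x_i$ is rewritten with the dot on the first strand using \eqref{eq:nil-dot} (the version of Lemma~\ref{dotmovelemma} with the extra $a$ dots along for the ride), and each application of \eqref{eq:nil-dot} removes a crossing, splitting the $n$-cycle into cycles of lengths $k$ and $n-k$ with the $a$ dots staying on the factor containing strand $1$; (iii) since the split at position $k$ occurs once for every $i>k$, one gets $\sum_{i} h_{\pm n}\otimes x_1^a x_i = n\,h_{\pm n}\otimes x_1^{a+1} \mp \sum_{k=1}^{n-1}(n-k)(h_{\pm k}\otimes x_1^a)(h_{\pm(n-k)}\otimes 1)$, which after the overall factor $\mp 2$ gives exactly the stated formulas. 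So the skeleton of your argument is salvageable, but you must replace the ``one dot per slide, doubled by the annulus'' step with the actual $c_1$ bubble-slide identity (including verifying that its undotted and $c_0$-type correction terms vanish, using $\tilde c_1=0$ and Proposition~\ref{eq:bubblerelationship}), and derive the $2(n-j)$ coefficients from \eqref{eq:nil-dot} rather than from \eqref{eq:rel1}.
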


\begin{proof}
This too is an easy graphical calculation using computations from ~\cite[Section 2]{K}.
\end{proof}

\begin{lemma}
For non-negative integers $a$ and $b$ there is an equality
\label{lemma-1b1a}
\begin{equation*}
[h_{-1} \otimes x_1^b, h_1 \otimes x_1^a]=\tilde{c}_{a+b}+\sum_{l=0}^{a+b-2} (a+b-1-l) \tilde{c}_l c_{a+b-2-l}.
\end{equation*}
Here we define $c_{-2}=-1$ and $c_{-n}=0$ for $ n \in \mathbb{N}$ where $ n \neq 2$.
\end{lemma}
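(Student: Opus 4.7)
The plan is to adapt the direct diagrammatic techniques used in the proofs of Lemmas \ref{n1-m1} and \ref{lemma-m1n1}. I would realize $(h_{-1}\otimes x_1^b)(h_1\otimes x_1^a)$ as the trace class of the endomorphism $D_{a,b} \in \End(\Q\P)$ consisting of a straight $\Q$-strand with $b$ dots and a straight $\P$-strand with $a$ dots. Inserting the decomposition $\id_{\Q\P} = c_{UD}c_{DU} + \iota_{\Id}\pi_{\Id}$ from the first equation of \eqref{eq:rel2} adjacent to $D_{a,b}$ splits $[D_{a,b}]$ into two trace classes. Cyclic rotation of the $\iota_{\Id}\pi_{\Id}$ summand produces an endomorphism of $\Id$ given by a single closed loop (the cup-cap identifies the $\Q$ and $\P$ strands into one bubble) carrying all $a+b$ dots, oriented counterclockwise by the conventions on the $\Q$ and $\P$ arrows, which reduces to $\tilde{c}_{a+b}$.

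To handle the remaining $c_{UD}c_{DU}$ summand, I would first establish that the commutator $A(a,b) := [h_{-1}\otimes x_1^b, h_1\otimes x_1^a]$ depends only on $a+b$. Using the recursion $h_{\pm 1}\otimes x_1^{k+1} = \pm\tfrac{1}{2}[h_{\pm 1}\otimes x_1^k, c_1]$ extracted from Lemma \ref{c1actingonhn}, the Jacobi identity, and the commutativity of $\End(\Id) \cong \mathbb{C}[c_0, c_1, \ldots]$, one derives $A(a, b) = A(a-1, b+1)$, reducing the problem to computing $B(s) := A(0, s) = [h_{-1}\otimes x_1^s, h_1\otimes 1]$ for each $s\geq 0$. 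The base cases $s\in\{0,1\}$ follow from Lemma \ref{origheisrelations} and Lemma \ref{-n1m1} respectively.

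For $s\geq 2$, the sandwich diagram $c_{DU}(x_\Q^s)c_{UD}$ arising from the $c_{UD}c_{DU}$ summand is analyzed by inductively peeling off dots from the middle $\Q$-segment one at a time. Each peeling applies \eqref{eq:rel2} locally near the chosen dot, introducing a cup-cap pinch that, via \eqref{eq:dotted-curl} and the bubble recursion \eqref{eq:bubblerelationship}, generates a bubble pair $\tilde{c}_l \cdot c_{s-2-l}$ together with a residual diagram to which the induction hypothesis applies. Summing over the positions at which the pinch can be introduced gives the coefficient $(s-1-l)$, and the index conventions $c_{-2}=-1$, $c_{-n}=0$ for other negative $n$ cleanly handle the boundary contributions at $l\in\{s-1, s-2\}$.

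The main obstacle will be the combinatorial bookkeeping in the inductive step: counting pinch positions so the coefficient $(a+b-1-l)$ is correctly reproduced, and verifying by repeated application of \eqref{eq:dotted-curl} that each local resolution generates exactly one bubble pair $\tilde{c}_l \cdot c_{s-2-l}$ of the claimed type. This is more intricate than the dot-redistribution arguments in Lemma \ref{lemma-m1n1}, because here both a counterclockwise bubble ($\tilde{c}_l$) and a clockwise bubble ($c_{s-2-l}$) appear simultaneously in each correction, and one must track their orientation separately to distinguish which dots land in which bubble.
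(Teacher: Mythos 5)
Your route is genuinely different from the paper's (which proves the identity for arbitrary $(a,b)$ by one direct annular computation: rotate one strand past the other, slide all the dots through the crossings with \eqref{eq:inddot}, and collect the cap--cup resolution terms, whose closures give the pairs $\tilde{c}_k c_{a+b-2-k}$ and a single $\tilde{c}_{a+b}$, the coefficient $(a+b-1-l)$ coming from recombining two double sums). Your splitting off of $\tilde{c}_{a+b}$ via $\id_{\Q\P}=\mathrm{dbl}+\iota\pi$ and your base cases $s=0,1$ are fine. However, the reduction step is circular as you have ordered it. From Lemma~\ref{c1actingonhn} with $n=1$ one has $h_{1}\otimes x_1^{k+1}=-\tfrac12[h_1\otimes x_1^{k},c_1]$ and $h_{-1}\otimes x_1^{k+1}=\tfrac12[h_{-1}\otimes x_1^{k},c_1]$ (your signs are swapped), and Jacobi then gives $A(a,b)=-\tfrac12[A(a-1,b),c_1]+A(a-1,b+1)$. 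To conclude $A(a,b)=A(a-1,b+1)$ you must know $[A(a-1,b),c_1]=0$, i.e.\ that $A(a-1,b)$ lies in the image of the commutative algebra $\End(\Id)$ in $\Tr(\H)$ --- but that is essentially the assertion of the lemma in total degree $a+b-1$, not something available a priori (the rank-zero part of $\Tr(\H)$ is much bigger than the bubble algebra). So you cannot ``first establish that $A(a,b)$ depends only on $a+b$'' and then compute $B(s)$: the two steps must be interleaved in an induction on $a+b$, proving the formula for $A(0,s)$ diagrammatically at each level and then propagating in $a$, where the bracket $[A(a-1,b),c_1]$ only involves total degree $a+b-1$ and vanishes by the inductive hypothesis. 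With that reorganization the reduction is sound.

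The second weakness is that the base case $s\geq 2$, which carries all the real content, is only sketched, and the tool you name is not the one that does the work: relation \eqref{eq:rel2} applied ``locally near the chosen dot'' does not move dots. To peel dots past the sideways crossings in $[c_{DU}Dc_{UD}]$ you need the dot-slide relations \eqref{eq:nil-dot}/\eqref{eq:inddot}; it is the resolution terms of those slides whose annular closures split into the products $\tilde{c}_l\,c_{s-2-l}$ (with \eqref{eq:dotted-curl} and Proposition~\ref{eq:bubblerelationship} cleaning up dotted curls and counterclockwise bubbles), and the multiplicity count producing $(s-1-l)$ is exactly the double-sum recombination you have not checked. Once you do carry this out correctly, you will have reproduced the paper's computation in the special case $a=0$, so the $c_1$-recursion buys only a modest saving while importing the extra induction bookkeeping above; still, as a proof strategy it is viable provided the two repairs are made.
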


\begin{proof}
The element $ (h_1 \otimes x_1^a)(h_{-1} \otimes x_1^b) $ is equal to
\[
 \left[ \;
\hackcenter{
\begin{tikzpicture}[scale=0.8]
 \draw[thick,->] (-0.6,0) -- (-0.6,2);
  \draw[thick,<-] (0.0,0) -- (-0.0,2);
  \filldraw  (-0.6,1.2) circle (2pt);
  \filldraw   (0,1.2) circle (2pt);
  \node at (-.85,1.3) {$\scs a$};
  \node at (0.25,1.3) {$\scs b$};
\end{tikzpicture}}
  \;\right]
  \;\; = \;\;
  \left[ \; \;
\hackcenter{
\begin{tikzpicture}[scale=0.8]
 \draw[thick,->] (-0.6,0) .. controls ++(0,.4) and ++(0,-.5) .. (-0.0,1);
  \draw[thick,<-] (0.0,0) .. controls ++(0,.4) and ++(0,-.5) .. (-0.6,1);
   \draw[thick,<-] (-0.6,1) .. controls ++(0,.4) and ++(0,-.5) .. (-0.0,2);
  \draw[thick,->] (0.0,1) .. controls ++(0,.4) and ++(0,-.5) .. (-0.6,2);
  \filldraw  (-0.6,1.7) circle (2pt);
  \filldraw   (0,1.7) circle (2pt);
  \node at (-.85,1.8) {$\scs a$};
  \node at (0.25,1.8) {$\scs b$};
\end{tikzpicture}}
  \;\right]
 \;\; \refequal{\eqref{eq:inddot}} \;\;
  \left[ \;\;
\hackcenter{
\begin{tikzpicture}[scale=0.8]
 \draw[thick,->] (-0.6,0) .. controls ++(0,.4) and ++(0,-.5) .. (-0.0,1);
  \draw[thick,<-] (0.0,0) .. controls ++(0,.4) and ++(0,-.5) .. (-0.6,1);
   \draw[thick,<-] (-0.6,1) .. controls ++(0,.4) and ++(0,-.5) .. (-0.0,2);
  \draw[thick,->] (0.0,1) .. controls ++(0,.4) and ++(0,-.5) .. (-0.6,2);
  \filldraw  (0,1.2) circle (2pt);
  \filldraw   (0,1.7) circle (2pt);
  \node at (0.2,1.3) {$\scs a$};
  \node at (0.25,1.8) {$\scs b$};
\end{tikzpicture}}
  \;\right]
\;\; - \;\;
\sum_{j=0}^{a-1}
  \left[ \;\;
\hackcenter{
\begin{tikzpicture}[scale=0.8]
 \draw[thick,->] (-0.6,0) .. controls ++(0,.4) and ++(0,-.5) .. (-0.0,1);
  \draw[thick,<-] (0.0,0) .. controls ++(0,.4) and ++(0,-.5) .. (-0.6,1);
   \draw[thick,<-] (-0.6,1) .. controls ++(0,.4) and ++(0, .4) .. (-0.0,1);
  \draw[thick,->] (0.0,2) .. controls ++(0,-.5) and ++(0,-.5) .. (-0.6,2);
  \filldraw  (0,1.2) circle (2pt);
  \filldraw   (-0.1,1.7) circle (2pt);
  \node at (0.2,1.3) {$\scs j$};
  \node at (0.8,1.8) {$\scs a+b-1-j$};
\end{tikzpicture}}
  \;\right]
\]
\[
\;\; = \;\;
  \left[ \;\;
\hackcenter{
\begin{tikzpicture}[scale=0.8]
 \draw[thick,->] (-0.6,0) .. controls ++(0,.4) and ++(0,-.5) .. (-0.0,1);
  \draw[thick,<-] (0.0,0) .. controls ++(0,.4) and ++(0,-.5) .. (-0.6,1);
   \draw[thick ] (-0.6,1) .. controls ++(0,.4) and ++(0,-.5) .. (-0.0,2);
  \draw[thick,->] (0.0,1) .. controls ++(0,.4) and ++(0,-.5) .. (-0.6,2);
  \filldraw  (0,1.2) circle (2pt);
  \filldraw   (-0.6,1.2) circle (2pt);
  \node at (0.2,1.3) {$\scs a$};
  \node at (-0.85,1.3) {$\scs b$};
\end{tikzpicture}}
  \;\right]
\;\; - \;\;
\sum_{j=0}^{b-1}
  \left[ \;\;
\hackcenter{
\begin{tikzpicture}[scale=0.8]
 \draw[thick,->] (-0.6,0) .. controls ++(0,.4) and ++(0,-.5) .. (-0.0,1);
  \draw[thick,<-] (0.0,0) .. controls ++(0,.4) and ++(0,-.5) .. (-0.6,1);
   \draw[thick,<-] (-0.6,1) .. controls ++(0,.4) and ++(0, .4) .. (-0.0,1);
  \draw[thick,->] (0.0,2) .. controls ++(0,-.5) and ++(0,-.5) .. (-0.6,2);
  \filldraw  (0,1.2) circle (2pt);
  \filldraw   (-0.1,1.7) circle (2pt);
  \node at (0.45,1.3) {$\scs a+j$};
  \node at (0.8,1.7) {$\scs  b-1-j$};
\end{tikzpicture}}
  \;\right]
\;\; - \;\;
\sum_{j=0}^{a-1}
  \left[ \;\;
\hackcenter{
\begin{tikzpicture}[scale=0.8]
 \draw[thick,->] (-0.6,0) .. controls ++(0,.4) and ++(0,-.5) .. (-0.0,1);
  \draw[thick,<-] (0.0,0) .. controls ++(0,.4) and ++(0,-.5) .. (-0.6,1);
   \draw[thick,<-] (-0.6,1) .. controls ++(0,.4) and ++(0, .4) .. (-0.0,1);
  \draw[thick,->] (0.0,0) .. controls ++(0,-.5) and ++(0,-.5) .. (-0.6,0);
  \filldraw  (0,1.2) circle (2pt);
  \filldraw   (-0.1,-.3) circle (2pt);
  \node at (0.2,1.3) {$\scs j$};
  \node at (0.8,-.2) {$\scs a+b-1-j$};
\end{tikzpicture}}
  \;\right]
\]
which can be simplified further, again using equation \eqref{eq:inddot}.  Thus
\begin{equation*}
(h_1 \otimes x_1^a)(h_{-1} \otimes x_1^b) =
(h_1 \otimes x_1^a)(h_{-1} \otimes x_1^b)
-\tilde{c}_{a+b}
-\sum_{j=0}^{b-1} \sum_{k=0}^{a+j-1} \tilde{c}_k c_{a+b-2-k}
-\sum_{j=0}^{a-1} \sum_{k=0}^{j-1} \tilde{c}_k c_{a+b-2-k}.
\end{equation*}
Combining the last two double summations into a single summation gives the lemma.
\end{proof}

\begin{lemma}
$\Tr(\H)$ is generated by $h_{-1} \otimes 1, h_1 \otimes 1, (c_0+c_1) $.
\end{lemma}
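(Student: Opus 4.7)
The plan is to invoke Lemma~\ref{HHVSLEMMA} together with the corollary following Lemma~\ref{getridofwlemma}, which together reduce the problem to showing that the subalgebra $\mathcal{A}\subseteq \Tr(\H)$ generated by $h_{\pm 1}\otimes 1$ and $c_0+c_1$ contains $h_{\pm n}\otimes x_1^a$ for every $n\geq 1$, $a\geq 0$, and every bubble $c_k$ with $k\geq 0$. I would then carry out a short sequence of nested inductions using the explicit commutator identities proven in earlier lemmas.

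Step~1: Specialize Lemmas~\ref{c0actingonhn}--\ref{c1actingonhn} to $n=\pm 1$, where the sum in Lemma~\ref{c1actingonhn} is empty. The resulting identities
\[
[h_{1}\otimes x_1^a,\, c_0+c_1] \;=\; -(h_{1}\otimes x_1^a)-2(h_{1}\otimes x_1^{a+1}),
\]
\[
[h_{-1}\otimes x_1^a,\, c_0+c_1] \;=\; (h_{-1}\otimes x_1^a)+2(h_{-1}\otimes x_1^{a+1})
\]
let one solve for $h_{\pm 1}\otimes x_1^{a+1}$, so induction on $a$ places every $h_{\pm 1}\otimes x_1^a$ in $\mathcal{A}$. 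Step~2: Lemma~\ref{lemman1m0} with $n=\pm 1$ gives $[h_{\pm 1}\otimes x_1,\, h_{\pm m}\otimes 1]=\pm m\,(h_{\pm(m+1)}\otimes 1)$, so induction on $m$ produces every $h_{\pm m}\otimes 1$. Step~3: for $|n|\geq 2$ and $a\geq 1$, Lemma~\ref{c1actingonhn} expresses $[h_{\pm n}\otimes x_1^a, c_0+c_1]$ as a nonzero multiple of $h_{\pm n}\otimes x_1^{a+1}$ plus a multiple of $h_{\pm n}\otimes x_1^a$ and products of elements of strictly smaller rank; a double induction (outer on $|n|$, inner on $a$) then yields every $h_{\pm n}\otimes x_1^a$.

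To produce the bubbles, I would apply Lemma~\ref{lemma-1b1a}: for $N=a+b$,
\[
E_N \;:=\; [h_{-1}\otimes x_1^b,\; h_1\otimes x_1^a] \;=\; \tilde{c}_N + \sum_{l=0}^{N-2}(N-1-l)\,\tilde{c}_l\,c_{N-2-l}.
\]
Taking $N=0$ and comparing with Lemma~\ref{origheisrelations} forces $\tilde{c}_0=1$ in $\Tr(\H)$, and $\tilde{c}_1=0$ is recorded after~\eqref{eq:inddot}. Proposition~\ref{eq:bubblerelationship} can then be unwound to express each $\tilde{c}_n$ as a polynomial in $c_0,\ldots,c_{n-2}$ with coefficient $1$ on the leading term $c_{n-2}$. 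Substituting into $E_N$ yields $E_N = N\,c_{N-2} + P_N(c_0,\ldots,c_{N-3})$ for an explicit rational polynomial $P_N$, so induction on $N$ places each $c_k$ in $\mathcal{A}$. Combining with Steps~1--3 and Lemma~\ref{HHVSLEMMA} finishes the proof.

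I expect the main obstacle to be the bubble recursion in the final step: one must track the recursive substitution of the polynomials furnished by Proposition~\ref{eq:bubblerelationship} back into Lemma~\ref{lemma-1b1a} and verify that the coefficient of $c_{N-2}$ in $E_N$ is indeed nonzero for each $N\geq 2$, so that the induction closes. A secondary bookkeeping issue arises in Step~3, where at each stage one must confirm that every term on the right-hand side of the expansion of $[h_{\pm n}\otimes x_1^a, c_0+c_1]$, except the desired $h_{\pm n}\otimes x_1^{a+1}$, is already in $\mathcal{A}$ by the inductive hypothesis; the fact that the coefficient $-2n$ of the new term never vanishes for $n\geq 1$ is what makes the inner induction on $a$ proceed.
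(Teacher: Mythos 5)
Your proposal is correct and follows essentially the same route as the paper's own proof: commutators with $c_0+c_1$ (Lemmas \ref{c0actingonhn} and \ref{c1actingonhn}) introduce dots, Lemma \ref{lemman1m0} raises the rank, and Lemma \ref{lemma-1b1a} combined with the bubble recursion of Proposition \ref{eq:bubblerelationship} yields the $c_k$. The paper carries out these same steps more tersely and in a slightly different order, leaving the reduction to the generators $h_{\pm n}\otimes x_1^a$ and $c_k$ (via Lemma \ref{HHVSLEMMA} and the corollary to Lemma \ref{getridofwlemma}) implicit, so your additional bookkeeping is an elaboration of, not a departure from, its argument.
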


\begin{proof}
By Lemmas ~\ref{c0actingonhn} and ~\ref{c1actingonhn} we have
\begin{equation*}
ad(c_0+c_1)(h_1 \otimes 1) = 2(h_1 \otimes x_1) + h_1 \otimes 1
\end{equation*}
allowing us to generate $h_1 \otimes x_1$.

By Proposition ~\ref{posvirposheis} we have $ad^n(h_1 \otimes x_1)(h_1 \otimes 1)=n!(h_n \otimes 1)$ allowing us to also generate $h_n \otimes 1$. Next we could generate $h_n \otimes x_1^k$ for all $n \geq 1, k \geq 0$ inductively by using Lemmas ~\ref{c0actingonhn} and ~\ref{c1actingonhn} to calculate $[h_n \otimes x_1^{k-1}, c_0 + c_1]$.

In a similar manner using Lemmas ~\ref{c0actingonhn} and ~\ref{c1actingonhn} we can generate $h_{-n} \otimes x_1^k$ for $ n \geq 1, k \geq 0$. In particular we can generate $h_{-1} \otimes x_1^b $ and $ h_1 \otimes x_1^a$. Finally we can generate $c_n $ for all $n \geq 0 $ using Lemma ~\ref{lemma-1b1a}.
\end{proof}

\subsection{A representation of the category $\H'$} \label{sec:fock}
Let $\mathcal{S}_n'$ denote the category whose objects are all $(\mathbb{C}[S_r], \mathbb{C}[S_n])$ bimodules
for some $ r \in \mathbb{Z}_{\geq 0}$ corresponding to induction and restriction functors.
The morphisms between two bimodules $B$ and $B'$ is the space of bimodule maps between $B$ and $B'$.
Note that if the bimodules $B$ and $B'$ are not bimodules over the same algebra then the space of morphisms is zero.

For each $n$ there is a functor $ \mathcal{F}_n' \colon \H' \rightarrow \mathcal{S}_n'$.
The object $P$ is mapped to the $(\mathbb{C}[S_{n+1}], \mathbb{C}[S_n])$-bimodule $ \mathbb{C}[S_{n+1}]$.
The object $Q$ is mapped to the $(\mathbb{C}[S_{n-1}], \mathbb{C}[S_n])$-bimodule $ \mathbb{C}[S_{n}]$
where we view $ \mathbb{C}[S_{n-1}] $ as embedded in $\mathbb{C}[S_{n}]$ by mapping the simple transpositions
$s_i$ to $s_i$ for $i=1,\ldots,n-2$.
We then extend this action monoidally.
For details on how the functor $\mathcal{F}_n'$ acts on morphisms see ~\cite[Section 3.3]{K}.
Let
\begin{equation*}
V = \bigoplus_n \Tr(\mathcal{S}_n') = \bigoplus_n \Tr(\mathbb{C}[S_n]-mod).
\end{equation*}
This is naturally a module for $ \Tr(\H) \cong \Tr(\H')$ since the category $ \mathcal{S}_n' $ is a representation of the category $ \H $.  For the analogous statement for $2$-categories see ~\cite[Section 6.0.3]{BGHL}.

\begin{proposition}
\label{formofV}
As a vector space $V$ is isomorphic to $ \mathbb{C}[h_{1} \otimes 1, h_{2} \otimes 1, \ldots]$.
\end{proposition}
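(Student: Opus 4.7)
The plan is to identify $V$ with the algebra of symmetric functions $\mathrm{Sym}$, under which the operators $h_n\otimes 1$ act on the vacuum by multiplication by (scalar multiples of) the power sums $p_n$; since $\mathrm{Sym}\cong\mathbb{C}[p_1,p_2,\dots]$ as a polynomial algebra, this will yield the claimed isomorphism.

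First I would unpack the trace on each graded piece. Since $\mathbb{C}[S_n]$ is semisimple, applying Proposition~\ref{prop:indecomposables} to the irreducible summands gives
\[
\Tr(\mathbb{C}[S_n]\text{-mod})\;\cong\;\bigoplus_{L}\End_{\mathbb{C}[S_n]}(L)\;\cong\;Z(\mathbb{C}[S_n]),
\]
so that $V\cong \bigoplus_{n\ge 0}Z(\mathbb{C}[S_n])$. The classical Frobenius characteristic map identifies the right hand side with $\mathrm{Sym}$, sending the conjugacy-class sum of cycle type $\lambda\vdash n$ to a nonzero scalar multiple of the power sum $p_\lambda$. In particular, the vacuum $\mathbf{1}\in \Tr(\mathbb{C}[S_0]\text{-mod})=\mathbb{C}$ corresponds to $1\in \mathrm{Sym}$, and the graded dimension of $V$ is $\prod_{n\ge 1}(1-q^n)^{-1}$, matching that of $\mathbb{C}[h_1\otimes 1,h_2\otimes 1,\dots]$ with $\deg(h_n\otimes 1)=n$.

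Next I would compute the action of $h_n\otimes 1$ on $\mathbf{1}$ explicitly. By definition $h_n\otimes 1 = [f_{\sigma_n;0,\dots,0}]$, where $\sigma_n=s_1\cdots s_{n-1}\in S_n$ is the long cycle. Under the functor $\F_n'$ of Section~\ref{sec:fock}, $\P^n$ becomes the induction bimodule $\mathbb{C}[S_n]$ (viewed as $(\mathbb{C}[S_n],\mathbb{C}[S_0])$-bimodule) and the natural transformation $\sigma_n$ becomes left multiplication by the $n$-cycle in $\mathbb{C}[S_n]$. Tracing this out against the vacuum produces precisely the conjugacy-class sum of $n$-cycles in $Z(\mathbb{C}[S_n])$, which under the Frobenius characteristic is a nonzero scalar multiple of $p_n$. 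More generally, using the product structure on $V$ coming from iterated induction together with Theorem~\ref{pqrelations}-style computations, the monomial $(h_{n_1}\otimes 1)\cdots(h_{n_r}\otimes 1)\cdot\mathbf{1}$ is a nonzero scalar multiple of $p_{n_1}\cdots p_{n_r}=p_\lambda$ for $\lambda=(n_1,\dots,n_r)$.

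Since the power sums $\{p_\lambda\}$ form a basis of $\mathrm{Sym}$, the linear map
\[
\Phi\maps \mathbb{C}[h_1\otimes 1,h_2\otimes 1,\dots]\;\longrightarrow\;V,\qquad P\longmapsto P\cdot\mathbf{1},
\]
sends the basis of ordered monomials in the $h_n\otimes 1$ to a basis of $V$ (via the scalar-multiple identification above), hence is an isomorphism of vector spaces. The main obstacle is the explicit identification of $(h_n\otimes 1)\cdot\mathbf{1}$ with (a multiple of) $p_n$; everything else is either dimension counting or a clean consequence of the Heisenberg relations of Lemma~\ref{origheisrelations} combined with the fact that, since $h_{-n}\otimes 1$ annihilates $\mathbf{1}$ for $n>0$ (no strands to cap off at level $0$), $V$ is a quotient of the polynomial ring in the creation operators $h_n\otimes 1$ and the graded dimensions match.
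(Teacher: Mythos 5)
Your proposal is correct and follows essentially the same route as the paper: identify each $\Tr(\mathbb{C}[S_n]\dmod)$ with $Z(\mathbb{C}[S_n])$ spanned by conjugacy classes, view $V$ as symmetric functions via the characteristic map, and observe that the classes $h_n\otimes 1$ act as the single-cycle classes (power sums), which freely generate. The paper states this more tersely, while you spell out the Frobenius characteristic and the computation of $(h_n\otimes 1)\cdot\mathbf{1}$; that is just added detail, not a different argument.
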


\begin{proof}
As a vector space $\Tr(\mathbb{C}[S_n]) $ is spanned by conjugacy classes of $S_n$ which in turn is isomorphic to the span of irreducible characters of $S_n$.
Thus $V$ is spanned by the irreducible characters of all symmetric groups.  As an algebra under induction it's generated by irreducible representations corresponding to single cycles.  These are precisely the elements $h_1, h_2, \ldots $ in $ \Tr(\mathbb{C}[S_n])$.
\end{proof}

\begin{proposition}\label{prop:irrep}
The $\Tr(\H)$-module $ V $ is irreducible.
\end{proposition}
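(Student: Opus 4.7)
The plan is to reduce the irreducibility of $V$ as a $\Tr(\H)$-module to the classical irreducibility of the Fock space representation of the Heisenberg algebra. By Lemma~\ref{origheisrelations}, the subalgebra of $\Tr(\H)$ generated by $\{h_n \otimes 1 : n \in \Z \setminus \{0\}\}$ is a copy of the Heisenberg algebra $\mathfrak{h}$; since any $\Tr(\H)$-submodule of $V$ is a fortiori an $\mathfrak{h}$-submodule, it suffices to show $V$ is irreducible as an $\mathfrak{h}$-module.

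To carry this out, I would first record that $V = \bigoplus_{k \geq 0} \Tr(\mathbb{C}[S_k]\dmod)$ is $\Z_{\geq 0}$-graded, and that through the functors $\mathcal{F}_k'$ the rank grading on $\Tr(\H)$ (Lemma~\ref{HH0graded}) acts compatibly: $h_n \otimes 1$ sends the degree $k$ piece to the degree $k+n$ piece. The vacuum $v_0 \in V_0 = \mathbb{C}$ is thus annihilated by every $h_{-n} \otimes 1$ with $n > 0$ purely for degree reasons---equivalently, the functor $\mathcal{F}_0'$ sends $\Q$ to the zero bimodule since no $\mathbb{C}[S_{-1}]$ exists. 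Unwinding the definition of $\mathcal{F}_k'$ on the morphism $f_{\sigma_n; 0, \ldots, 0}$ then identifies $(h_n \otimes 1) \cdot v_0$ with the class in $V_n$ represented by the $n$-cycle $\sigma_n$, which is precisely the element labelled $h_n \otimes 1$ in the vector-space identification of Proposition~\ref{formofV}. Hence $v_0$ generates $V$ cyclically under the positive-mode subalgebra $\{h_n \otimes 1 : n > 0\}$, so $V$ is the standard highest-weight Fock space for $\mathfrak{h}$.

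The irreducibility of Fock space over $\mathfrak{h}$ is then classical: any nonzero $\mathfrak{h}$-submodule $W \subseteq V$ contains a vector of minimal grading; iteratively applying lowering operators $h_{-n} \otimes 1$ and invoking the Heisenberg bracket of Lemma~\ref{origheisrelations} produces a nonzero scalar multiple of $v_0 \in W$, after which the creation operators $h_n \otimes 1$ ($n > 0$) recover all of $V \subseteq W$. The main obstacle is the bookkeeping step identifying $(h_n \otimes 1) \cdot v_0$ with the polynomial generator $h_n \otimes 1 \in V_n$ of Proposition~\ref{formofV}: this requires tracking the morphism $f_{\sigma_n; 0, \ldots, 0}$ through the diagrammatic functor $\mathcal{F}_n'$ described in \cite[Section 3.3]{K}, and checking that the induced action on traces matches the class of the long cycle in $\Tr(\mathbb{C}[S_n])$. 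Once this compatibility is established, the rest is the standard Fock space argument.
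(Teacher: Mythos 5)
Your proposal is correct and follows essentially the same route as the paper: restrict the $\Tr(\H)$-action to the Heisenberg subalgebra $\{h_n\otimes 1\}$ of Lemma~\ref{origheisrelations} and invoke the irreducibility of Fock space. The only difference is that the paper simply cites \cite{K} for the identification of $V$ with Fock space as an $\mathfrak{h}$-module, whereas you re-derive that identification by tracking the vacuum through the functors $\mathcal{F}_k'$ and Proposition~\ref{formofV}, which is fine but not a new idea.
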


\begin{proof}
It was proved in ~\cite{K} that $V$ categorifies Fock space for the Heisenberg algebra $\mathfrak{h}$.
Since $\mathfrak{h}$ acts irreducibly on Fock space and $\mathfrak{h}$ is a subalgebra of $\Tr(\H)$,
it follows that $V$ must be an irreducible $\Tr(\H)$-module.
\end{proof}

In order to prove the next proposition we recall that the structure of $\Tr(\H)$ as a vector space was computed in section ~\ref{HHVS}.  We follow closely the proof of \cite[Proposition F.6]{SV} where the faithfullness of $\W/(C-1,w_{0,0})$ on $ \mathcal{V}_{1,0} $ is given.

\begin{proposition}\label{prop:faithful}
The action of $ \Tr(\H) $ on $V$ is faithful.
\end{proposition}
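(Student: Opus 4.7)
The plan is to follow closely the strategy of \cite[Proposition F.6]{SV}. The key ingredients are already in hand: the rank $\Z$-grading (Lemma~\ref{HH0graded}) and $\Z_{\geq 0}$-filtration (Lemma~\ref{HH0filtered}) on $\Tr(\H)$, both compatible with the action on $V$; the Poincar\'e series of Corollary~\ref{poincareH}, which furnish a PBW-style triangular decomposition
\[
\Tr(\H) \;\cong\; \Tr^<(\H)\,\otimes\,\Tr^0(\H)\,\otimes\,\Tr^>(\H)
\]
(at least at the level of the associated graded with respect to the filtration); the fact from Proposition~\ref{formofV} that $V \cong \mathbb{C}[h_n\otimes 1 : n\geq 1]\cdot|0\rangle$ is cyclic for $\Tr^>(\H)$ on the vacuum $|0\rangle \in \Tr(\mathbb{C}[S_0])$; and the classical faithfulness of the Heisenberg action on Fock space, which applies here because the Heisenberg subalgebra $\mathfrak{h}\subset \Tr(\H)$ of Lemma~\ref{origheisrelations} realizes the standard Fock representation on $V$.

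Suppose $x \in \Tr(\H)$ acts as zero on $V$; by the rank grading, reduce to the case $x$ homogeneous. Decompose $x = \sum_j x^-_j\,x^0_j\,x^+_j$ using the triangular decomposition. For any $w \in \Tr^>(\H)$ the hypothesis gives $xw\cdot|0\rangle = 0$. Using the commutation relations of Section~\ref{sec:HV} (Lemmas~\ref{origheisrelations}, \ref{lemmam1n1}, \ref{lemman1m0}, \ref{-n1m1}, \ref{n1-m1}, \ref{lemma-m1n1}) together with the bubble formulas of Lemmas~\ref{c0actingonhn}--\ref{c1actingonhn}, each commutator $[x^-_j, w]$ and $[x^0_j, w]$ has strictly smaller $\Z_{\geq 0}$-filtration degree. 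Thus modulo lower filtration, the $x^-_j$ slide past $w$, where they meet $|0\rangle$; since the augmentation ideal of $\Tr^<(\H)$ annihilates $|0\rangle$ (there is no negative rank in $V$), only the summands with scalar $x^-_j$ contribute to the leading filtration term.

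The resulting leading-degree element lies in $\Tr^0(\H)\otimes\Tr^>(\H)$ and must annihilate every vector $w\cdot|0\rangle$ as $w$ varies. Because the monomials $h_\lambda\otimes 1\cdot|0\rangle$ form a basis of $V$ (Proposition~\ref{formofV}), and $\Tr^0(\H)$ acts on these by scalars up to lower-filtration corrections, the top filtration component of $x$ must vanish; inducting on the filtration degree yields $x=0$. The principal obstacle is the bookkeeping of commutators, as each application of a relation from Section~\ref{sec:HV} can produce several terms mixing factors from $\Tr^<$, $\Tr^0$ and $\Tr^>$ of various filtration degrees. The matching of Poincar\'e series in Corollary~\ref{poincareH} with those of the expected PBW polynomial algebra in Proposition~\ref{poincareW} is precisely what guarantees the absence of hidden relations at the associated-graded level, and hence ensures that the inductive descent on filtration degree actually terminates and captures every component of $x$.
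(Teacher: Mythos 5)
Your strategy (rank grading, dot filtration, faithfulness of the Heisenberg subalgebra, Poincar\'e series) is in the right spirit, but there is a genuine gap exactly where the real work lies. The decisive step --- ``the resulting leading-degree element lies in $\Tr^0(\H)\otimes\Tr^>(\H)$ \dots and $\Tr^0(\H)$ acts on these by scalars up to lower-filtration corrections, [so] the top filtration component of $x$ must vanish'' --- is unjustified, and the scalar claim is false: $\Tr^0(\H)\cong\mathbb{C}[c_0,c_1,\dots]$, and already $c_1$ acts on the basis vectors $(h_\lambda\otimes 1)\,|0\rangle$ of Proposition~\ref{formofV} by the cubic Heisenberg expression of Lemma~\ref{c1c0w02}, which merges and splits parts of $\lambda$; no filtration on $V$ is in play that would make this a lower-order correction to a scalar, and the higher $c_k$ are worse. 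Moreover, even after disposing of $\Tr^<(\H)$ on the vacuum, what remains is precisely the hard point: one must show that an element of (the associated graded of) $\Tr^0(\H)\otimes\Tr^>(\H)$ annihilating every $w\,|0\rangle$ is zero. The positive-filtration generators $h_n\otimes x_1^k$ do not act on the PBW basis in any triangular or free fashion that lets you separate the summands $x^0_j x^+_j$, so ``inducting on the filtration degree'' conceals rather than proves the key vanishing. A smaller inaccuracy: the claim that $[x^0_j,w]$ has strictly smaller filtration degree needs care, since $[h_n\otimes 1,c_1]$ contains the term $-2n\,h_n\otimes x_1$ (Lemma~\ref{c1actingonhn}), so counting dots na\"ively does not give the drop; the filtration degrees assigned to the bubbles must be handled consistently.

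The paper closes this gap by a different mechanism, following \cite[Proposition F.6]{SV}: it considers the annihilator ideal $I$ with its induced filtration, shows $I_0=0$ from faithfulness of the Heisenberg action on Fock space, and then uses the ideal property together with the fact that $\mathrm{ad}(h_l\otimes 1)$ strictly lowers filtration degree to conclude that a nonzero element of $I$ of minimal filtration degree $n\geq 1$ would centralize all $h_l\otimes 1$. The decisive computation is then carried out on the associated graded of Lemma~\ref{HHVSLEMMA}: the induced operators $\zeta_l$ satisfy $\zeta_l(\overline{h_r\otimes x_1^k})=-kl\,\overline{h_{l+r}\otimes x_1^{k-1}}$, and their common kernel in positive filtration degree is zero, giving the contradiction. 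Some explicit computation of this kind is unavoidable; if you wish to keep your triangular-decomposition framing, you would still have to prove the triviality of the common kernel of $[h_l\otimes 1,\bullet]$ on the associated graded in positive filtration degree, at which point you have reproduced the paper's argument.
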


\begin{proof}
In what follows we denote by $\Tr(\H)[r, \le k]$ the set of elements in $\Z$-degree $r$ and $ \Z_{\geq 0}$-degree $\le k$. Let $I$ be the annihilitator of $ \Tr(\H)$ in $V$.  There is a filtration on $I$: $ I_0 \subset I_1 \subset \cdots $. Since $ \oplus_{r} \Tr(\H)[r, \leq 0] $ is isomorphic to a Heisenberg algebra and it is known that the Heisenberg algebra acts faithfully on $V$, it must be the case that
$$ I \cap (\oplus_{r} \Tr(\H)[r, \leq 0]) = \{ 0 \}.$$
Thus $I_0=\{ 0 \}$.  If we assume that $I$ is non-zero then the minimal $n$ such that $I_n \neq \{ 0 \}$ is greater than zero.

Notice that for $ n \geq 1 $ that
$$ [h_l \otimes 1, \Tr(\H)[r, \leq n]] \subset \Tr(\H)[r+l, \leq n-1].$$
It follows that for $ n \geq 1$ that
$ [I_n, \Tr(\H)[r, \leq 0]]=0$ so that each filtered component $I_n$ is contained in the centralizer the $ \oplus_r \Tr(\H)[r, \leq 0] $ in $ \Tr(\H)$.

Now we compute the centralizer of $ \oplus_r \Tr(\H)[r, \leq 0] $ in $ \Tr(\H)$ and show that it must be trivial.
If $ g \in \Tr(\H)$ then we let its image in the associated graded be denoted by $ \overline{g}$.
It follows from Lemma ~\ref{HHVSLEMMA} that the associated graded is isomorphic to a polynomial algebra $ \mathbb{C}[\overline{h_r \otimes x_1^s}]$
for $ r \in \mathbb{Z}, s \in \mathbb{Z}_{\geq 0} $ and $ (r,s) \neq (0,0)$.
This algebra is bigraded with element $ \overline{h_r \otimes x_1^s} $ in degree $(r,s)$.

Consider the map for $n \geq 1$
\begin{equation*}
\zeta_n=[h_l \otimes 1, \bullet] \colon \bigoplus_{r} \Tr(\H)[r, \leq n]/\Tr(\H)[r, \leq n-1] \rightarrow
\bigoplus_{r} \Tr(\H)[r, \leq n-1]/\Tr(\H)[r, \leq n-2].
\end{equation*}

The space $ \Tr(\H)[r, \leq n]/\Tr(\H)[r, \leq n-1] $ is the degree $(r,n)$ piece of the associated graded algebra.
It is easy to see that
$$ \zeta_l(\overline{h_r \otimes x_1^k})=-kl(\overline{h_{l+r} \otimes x_1^{k-1}}).$$
Thus the common kernel of the operators $ \cap_l \ker(\zeta_l) $ is zero if $ n \geq 1$. This implies that the centralizer of $ \oplus_r \Tr(\H)[r, \leq 0] $ in $ \Tr(\H)$ is contained in $ \oplus_r \Tr(\H)[r, \leq 0] $.  But this subalgebra has trivial center.
\end{proof}

\subsection{$\Tr(\H)$ as an algebra}

There is an isomorphism of vector spaces
\begin{equation}
\label{Fockspaceiso}
\bar{\psi} \colon
V=\mathbb{C}[h_{1} \otimes 1, h_{2} \otimes 1, \ldots] \rightarrow
\mathcal{V}_{1,0}=\mathbb{C}[w_{-1,0},w_{-2,0},\ldots]
\end{equation}
where
\begin{equation*}
(h_{l_1} \otimes 1) \cdots  (h_{l_r} \otimes 1) \mapsto (w_{-l_1,0}) \cdots (w_{-l_r,0}).
\end{equation*}

\begin{lemma}
The map $ \bar{\psi} $ in ~\eqref{Fockspaceiso} commutes with the action of Heisenberg subalgebras in $ V$ and $ \mathcal{V}_{1,0} $.  That is for any $ v \in V$
\begin{equation*}
\bar{\psi}((h_r \otimes 1) v)= w_{-r,0} \bar{\psi}(v).
\end{equation*}
\end{lemma}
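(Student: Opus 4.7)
The plan is to verify the identity directly on a spanning set of $V$, namely on monomial vectors $v = (h_{l_1}\otimes 1)\cdots(h_{l_k}\otimes 1)\cdot 1$ with $l_1,\ldots,l_k>0$, by induction on $k$, splitting into cases according to the sign of $r$. The strategy exploits the fact that both $V$ and $\mathcal{V}_{1,0}$ are Fock spaces for Heisenberg algebras, the first with generators $\{h_n\otimes 1\}_{n\in\mathbb{Z}-\{0\}}$ governed by Lemma \ref{origheisrelations}, and the second with generators $\{w_{n,0}\}_{n\in\mathbb{Z}-\{0\}}$ governed by \eqref{Walgheisrelations} with $C$ acting as $1$. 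One checks that the assignment $h_n\otimes 1 \leftrightarrow w_{-n,0}$ is a Lie algebra isomorphism of these two Heisenberg subalgebras: the bracket $[h_m\otimes 1, h_n\otimes 1] = n\delta_{m,-n}$ corresponds to $[w_{-m,0}, w_{-n,0}] = -m\delta_{-m,n}$, and the scalars $n$ and $-m$ agree on the support of the Kronecker delta.

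For $r>0$, by Lemma \ref{origheisrelations} the element $h_r\otimes 1$ commutes with each $h_{l_i}\otimes 1$ in $\Tr(\H)$, so its action on $v$ coincides with multiplication by the polynomial generator $h_r\otimes 1$ in the description $V\cong\mathbb{C}[h_1\otimes 1,h_2\otimes 1,\ldots]$ from Proposition \ref{formofV}. The analogous statement holds on $\mathcal{V}_{1,0}$: $w_{-r,0}$ commutes with each $w_{-l_i,0}$ and acts by multiplication on $\mathbb{C}[w_{-1,0},w_{-2,0},\ldots]$. The desired identity is then tautological from the definition of $\bar{\psi}$.

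For $r=-s<0$ I induct on $k$. The base case $k=0$ amounts to the statement that both operators annihilate the vacuum: on the $V$ side, the functor $\mathcal{F}_0'$ sends $\Q^s$ to the zero functor on $\mathbb{C}[S_0]\dmod$ (since there is no symmetric group $S_{-s}$), so $(h_{-s}\otimes 1)\cdot 1=0$; on the $\mathcal{V}_{1,0}$ side, $w_{s,0}\in\W^{\geq}$ acts as zero on the vacuum of $\mathcal{M}_{1,0}$ by construction of the induced module. For the inductive step one commutes $h_{-s}\otimes 1$ past the leading factor $h_{l_1}\otimes 1$ using
\[
(h_{-s}\otimes 1)(h_{l_1}\otimes 1) = (h_{l_1}\otimes 1)(h_{-s}\otimes 1) + l_1\delta_{s,l_1},
\]
and analogously $w_{s,0}w_{-l_1,0} = w_{-l_1,0}w_{s,0} + s\delta_{s,l_1}$. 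The scalars $l_1$ and $s$ coincide precisely when the Kronecker delta is nonzero, and the inductive hypothesis applied to the remaining monomial of length $k-1$ completes the step.

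The main (and mild) obstacle is verifying that the polynomial-algebra description $V\cong\mathbb{C}[h_1\otimes 1,h_2\otimes 1,\ldots]$ of Proposition \ref{formofV} is genuinely compatible with the $\Tr(\H)$-action in the sense that, for $r>0$, the element $h_r\otimes 1\in\Tr(\H)$ acts on $V$ as multiplication by the polynomial generator. This in turn follows from the commutativity in Lemma \ref{origheisrelations} together with the fact that $1\in V$ is a cyclic vector for the Heisenberg subalgebra (equivalently, $V$ is generated from the vacuum by the creation operators $h_n\otimes 1$, $n>0$, which is built into the isomorphism in Proposition \ref{formofV}). Once this compatibility is in hand the two-case induction above is entirely formal.
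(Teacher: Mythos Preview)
Your proof is correct and follows essentially the same approach as the paper: both rely on the Heisenberg relations from Lemma~\ref{origheisrelations} and \eqref{Walgheisrelations}, the Fock-space descriptions in Propositions~\ref{formofV10} and~\ref{formofV}, and the definition of $\bar{\psi}$. The paper's proof is a one-line sketch invoking these ingredients, while you have simply unpacked the verification explicitly via induction on monomial length with the $r>0$ and $r<0$ cases separated.
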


\begin{proof}
Propositions ~\ref{formofV10} and ~\ref{formofV} provide vector space realizations of $\mathcal{V}_{1,0} $ and $ V$ respectively.  The lemma now follows easily from the definition of $ \bar{\psi} $ and the Heisenberg relations calculated in ~\eqref{Walgheisrelations} and Lemma ~\ref{origheisrelations}.
\end{proof}

\begin{lemma}
\label{-c0w01same}
For any $ v \in V$ we have $\bar{\psi}(c_0 v) =  -w_{0,1} \bar{\psi}(v)$.
\end{lemma}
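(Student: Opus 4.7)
The plan is to identify the action of $c_0 \in \Tr(\H)$ on $V$ with the action of $-w_{0,1}$ on $\mathcal{V}_{1,0}$ under the Heisenberg-equivariant isomorphism $\bar{\psi}$ of the preceding lemma. By Proposition~\ref{formofV}, $V$ is realized as the polynomial algebra generated by $h_n\otimes 1$ for $n\geq 1$ applied to the vacuum $|0\rangle_V$, and $\bar{\psi}$ sends $h_n\otimes 1$ to $w_{-n,0}$. Consequently the identity will follow by induction on the length of a Heisenberg monomial once (a) the commutators of $c_0$ and $-w_{0,1}$ with the corresponding creation operators are matched, and (b) the base case $c_0|0\rangle_V = 0$ is established (with $-w_{0,1}|0\rangle_{\mathcal{V}_{1,0}} = 0$ on the other side).

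Matching the commutators is immediate. Lemma~\ref{c0actingonhn} at $a=0$ gives $[c_0, h_n\otimes 1] = n(h_n\otimes 1)$ for $n\geq 1$, while specializing the defining $\W$-relation \eqref{Walgl1k0} to $l=0$, $k=-n$ yields $[w_{0,1}, w_{-n,0}] = -n\,w_{-n,0}$, equivalently $[-w_{0,1},w_{-n,0}] = n\,w_{-n,0}$. The inductive step is then mechanical: for $v = (h_{n_r}\otimes 1)v'$, one writes $c_0 v = (h_{n_r}\otimes 1)c_0 v' + n_r(h_{n_r}\otimes 1)v'$, applies $\bar{\psi}$ using Heisenberg-equivariance and the inductive hypothesis, then commutes $-w_{0,1}$ past $w_{-n_r,0}$ using the $\W$-commutator; the two correction terms $\pm n_r w_{-n_r,0}\bar{\psi}(v')$ cancel, delivering $\bar{\psi}(c_0 v) = -w_{0,1}\bar{\psi}(v)$.

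The main obstacle is therefore the base case. On the $\W$-side, $w_{0,1}|0\rangle_{\mathcal{V}_{1,0}} = 0$ is immediate from the construction of $\mathcal{M}_{1,0}$, since $w_{0,1}$ lies in $\W^{\geq}$ with $(0,1)\neq (0,0)$ and hence annihilates the generating vector of $\mathbb{C}_{1,0}$. On the $V$-side, by Z-degree considerations $c_0|0\rangle_V$ must be a scalar multiple of $|0\rangle_V$, and to pin down this scalar I will use the identity $[h_{-1}\otimes x_1, h_1\otimes x_1] = 2c_0$ in $\Tr(\H)$, which follows from Lemma~\ref{lemma-1b1a} at $a=b=1$ together with $\tilde{c}_0 = 1$ and $\tilde{c}_2 = c_0$ via Proposition~\ref{eq:bubblerelationship}. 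Evaluating both sides on $|0\rangle_V$, the left-hand side vanishes because $h_{-1}\otimes x_1$ is a lowering operator killing the level-zero piece and because $h_1\otimes x_1$ acts through the Jucys--Murphy element $x_1$ of $S_1$, which is the empty sum and hence zero; thus $2c_0|0\rangle_V = 0$, giving the required base case and completing the proof.
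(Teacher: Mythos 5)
Your proposal is correct and takes essentially the same route as the paper's (very terse) proof: both reduce to matching $[c_0,\,h_n\otimes 1]=n\,(h_n\otimes 1)$ from Lemma~\ref{c0actingonhn} with $[-w_{0,1},\,w_{-n,0}]=n\,w_{-n,0}$ from \eqref{Walgl1k0}, and then invoking the definition of $\bar{\psi}$ together with the monomial description of $V$ from Proposition~\ref{formofV}. Your only addition is the explicit check of the vacuum case $c_0\cdot 1=0$ via the identity $[h_{-1}\otimes x_1,\,h_1\otimes x_1]=2c_0$ (equivalently $[L_1,L_{-1}]=2L_0$), a point the paper leaves implicit; that check is valid.
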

\begin{proof}
This follows easily using ~\eqref{Walgl1k0} to compute $[w_{0,1},w_{n,0}]$, Lemma \ref{c0actingonhn} to compute $ [-c_0,h_n \otimes 1]$ and the definition of the map $\bar{\psi}$.
\end{proof}

\begin{lemma} \label{c1c0w02}
For any $ v \in V$ we have $\bar{\psi}((c_1+c_0) v) =  w_{0,2} \bar{\psi}(v)$.
\end{lemma}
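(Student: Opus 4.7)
The strategy is to treat $(c_1+c_0)$ on $V$ and $w_{0,2}$ on $\mathcal{V}_{1,0}$ as operators on cyclic Heisenberg modules (Propositions~\ref{formofV} and \ref{formofV10}) and to use the fact that $\bar\psi$ intertwines the Heisenberg subalgebras to reduce the identity to (i) agreement on the vacuum and (ii) matching commutators with each creation operator $h_n\otimes 1$. Induction on the Heisenberg degree of $v$ then yields the identity for all $v \in V$.

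For (i), on the $\W$-side Lemma~\ref{nablaDAHA} writes $w_{0,2}$ as a sum of cubic monomials each containing a positively indexed $w_{k,0}$ that annihilates $1\in\mathcal{V}_{1,0}$, plus $-w_{0,1}$ which also annihilates $1$ since $w_{0,1} \in \W^{\geq}$. On the $V$-side, $c_0 v_0 = 0$ is immediate from Lemma~\ref{-c0w01same} (applied to $v=v_0$, using that $w_{0,1}$ annihilates the generator of $\mathbb{C}_{1,0}$), while $c_1 v_0 = 0$ follows from a direct diagrammatic check: under the functor $\mathcal{F}'_0$, the bubble $c_1$ descends to an element of $Z(\mathbb{C}[S_0]) \cong \mathbb{C}$ which evaluates to zero (no Jucys-Murphy elements exist in the trivial group).

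For (ii), Lemmas~\ref{c0actingonhn} and \ref{c1actingonhn} give
\[
[(c_1+c_0),\, h_n\otimes 1] = n(h_n\otimes 1) + 2n(h_n\otimes x_1) - \sum_{j=1}^{n-1}2(n-j)(h_j\otimes 1)(h_{n-j}\otimes 1).
\]
Iterating Lemma~\ref{dotmovelemma} yields $n(h_n\otimes x_1) = h_n\otimes(x_1+\cdots+x_n) + \sum_{k=1}^{n-1}(n-k)(h_k\otimes 1)(h_{n-k}\otimes 1)$, so the quadratic Heisenberg terms cancel exactly and
\[
[(c_1+c_0),\, h_n\otimes 1] = 2nL_{-n} + n(h_n\otimes 1),
\]
where $L_{-n}$ is the Virasoro generator from \eqref{heivirformulas}. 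On the $\W$-side, \eqref{Walgl002} gives $[w_{0,2}, w_{-n,0}] = -2nw_{-n,1} + n^2 w_{-n,0}$, which using $\bar L_l = -w_{l,1} - \frac{1}{2}(l+1)w_{l,0}$ simplifies to $2n\bar L_{-n} + nw_{-n,0}$. The two commutators therefore agree under $\bar\psi$, provided $\bar\psi$ intertwines $L_{-n}$ with $\bar L_{-n}$.

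The main obstacle is establishing this Virasoro compatibility $\bar\psi(L_{-n}v) = \bar L_{-n}\bar\psi(v)$. Both $L_l$ and $\bar L_l$ satisfy the Heisenberg-Virasoro relations of central charge $1$ (Proposition~\ref{heisvirasoroHH0} together with \eqref{Walgl1k0}--\eqref{Walgl002}) and both act on isomorphic level-one Fock modules, so the compatibility can be proved either by a secondary induction parallel to the main one (matching action on vacua via Lemma~\ref{nablaDAHA}) or by invoking the uniqueness of the Sugawara Virasoro action compatible with a given Heisenberg representation on Fock space. Once this compatibility is established, combining Steps (i) and (ii) with the main induction completes the proof.
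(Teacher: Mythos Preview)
Your proof is correct, but it takes a slightly longer route than the paper's. Both arguments start from the same commutator computation $[c_1,h_n\otimes 1]=2nL_{-n}$, but the paper immediately invokes the Sugawara formula to rewrite $L_{-n}$ as a quadratic Heisenberg operator on Fock space; from there it deduces that $c_1$ itself acts on $V$ by the cubic Heisenberg expression $\sum_{k,l>0}\big((h_l\otimes1)(h_k\otimes1)(h_{-l-k}\otimes1)+(h_{l+k}\otimes1)(h_{-l}\otimes1)(h_{-k}\otimes1)\big)$, and compares this directly with Lemma~\ref{nablaDAHA}, which gives $w_{0,2}$ acting by the same cubic expression minus $w_{0,1}$. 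Since $\bar\psi$ is a Heisenberg intertwiner by construction and $c_0\leftrightarrow -w_{0,1}$ by Lemma~\ref{-c0w01same}, the conclusion is immediate. Your approach instead keeps the Virasoro operators $L_{-n}$ and $\bar L_{-n}$ in the picture and matches commutators through them, which forces you to establish the separate compatibility $\bar\psi L_{-n}=\bar L_{-n}\bar\psi$; that step is exactly the Sugawara input, just packaged as an auxiliary lemma rather than used up front. The paper's route is more economical because it never needs to name or prove Virasoro compatibility as a standalone fact: everything is converted to pure Heisenberg language in one stroke, so the $\bar\psi$-intertwining is automatic.
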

\begin{proof}
By Lemma ~\ref{c1actingonhn} and ~\eqref{heivirformulas} $[c_1,h_n \otimes 1]=2nL_{-n}$. It is well known that the Virasoro operator $2nL_{-n}$ acts on Fock space by $ n \sum_{k \in \mathbb{Z}} (h_{n-k} \otimes 1)(h_k \otimes 1)$. Now it is easy to see that the operator $c_1$ on $V$ acts as
\begin{equation*}
\sum_{k,l>0} ((h_{l} \otimes 1)(h_{k} \otimes 1)(h_{-l-k} \otimes 1)+(h_{l+k} \otimes 1)(h_{-l} \otimes 1)(h_{-k} \otimes 1)).
\end{equation*}
The result now follows from Lemma ~\ref{nablaDAHA}.
\end{proof}

Now we extend the map $ \bar{\psi} $ to a map
\begin{equation}
\label{defpsi}
\psi \colon \Tr(\H) \rightarrow  \W/(C-1,w_{0,0})
\end{equation}
by mapping generators
\begin{equation*}
h_{-1} \otimes 1 \mapsto w_{1,0}
\hspace{.4in}
h_{1} \otimes 1 \mapsto w_{-1,0}
\hspace{.4in}
c_1+c_0 \mapsto w_{0,2}.
\end{equation*}

\begin{lemma}
\label{psifiltered}
The map $ \psi $ is a map of $ \Z$-graded and $ \Z_{\geq 0}$-filtered algebras.
\end{lemma}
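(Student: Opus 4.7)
The plan is to reduce the assertion to a finite check on the three algebra generators identified in the preceding lemma, namely $h_{-1}\otimes 1$, $h_1\otimes 1$, and $c_0 + c_1$, since both the rank grading and the $\Z_{\ge 0}$-filtration are compatible with multiplication on each side. First I would compare rank degrees. By Lemma~\ref{HH0graded}, $h_r \otimes x_1^k$ sits in rank degree $r$; so $h_{\pm 1}\otimes 1$ has rank degree $\pm 1$ and $c_0+c_1$ has rank degree $0$. On the $W$-algebra side, $w_{l,k}$ has rank degree $l$, so the images $w_{\mp 1,0}$ and $w_{0,2}$ have rank degrees $\mp 1$ and $0$ respectively. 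Thus $\psi$ intertwines the two $\Z$-gradings via the sign involution $r\mapsto -r$ on generators, and this extends by multiplicativity to all of $\Tr(\H)$.

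Next I would verify the filtration compatibility. By Lemma~\ref{HH0filtered}, $h_{\pm 1}\otimes 1$ has filtration degree $0$, matching $w_{\mp 1,0}$. For $c_0+c_1$ one needs to take $c_n$ to have filtration degree $n+1$---the convention dictated by the compatibility already established in Lemmas~\ref{-c0w01same} and \ref{c1c0w02} where $c_0$ corresponds to $-w_{0,1}$ and $c_0+c_1$ corresponds to $w_{0,2}$---so that $c_0+c_1$ has filtration degree $\le 2$, matching the filtration degree $2$ of $w_{0,2}$. Since filtration degrees are subadditive under multiplication on both algebras, this bound propagates from generators to arbitrary products, yielding $\psi(\Tr(\H)[r,\leq k]) \subseteq (\W/(C-1,w_{0,0}))[-r,\leq k]$.

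There is no serious obstacle here: this is a bookkeeping lemma whose content lies almost entirely in the normalizations chosen when defining $\psi$ on generators. The only point requiring care is pinning down the filtration convention for the bubbles $c_n$, where the naive dot count would give degree $n$ but consistency with the target $W$-algebra forces the shifted value $n+1$; once this convention is fixed the verification is immediate.
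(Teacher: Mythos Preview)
Your generator-check argument for the filtration has a genuine gap. Subadditivity tells you that any single \emph{product} of the three generators maps into the filtration piece indexed by the sum of their degrees; but to conclude $\psi(\Tr(\H)[r,\le k])\subseteq\W[-r,\le k]$ you would need every element of $\Tr(\H)[r,\le k]$ to be expressible as a combination of such products of total generator-degree $\le k$. That fails: the dot filtration of Lemma~\ref{HH0filtered} is strictly finer than the filtration generated by $h_{\pm 1}\otimes 1$ and $c_0+c_1$. Concretely, $h_2\otimes 1$ has filtration degree $0$, yet any expression for it in terms of the three generators must involve $c_0+c_1$, since by Lemma~\ref{origheisrelations} the elements $h_{\pm 1}\otimes 1$ alone generate only a rank-one Weyl algebra. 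Your method therefore bounds $\psi(h_2\otimes 1)$ only into $\W[-2,\le 2]$, not $\W[-2,\le 0]$, and the same obstruction recurs for every $h_r\otimes 1$ with $|r|\ge 2$. Your remark about the filtration degree of $c_n$ is correct but does not touch this issue.

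The paper takes a different route, invoking faithfulness of the actions on $V$ and $\mathcal{V}_{1,0}$. This is already what makes $\psi$ a well-defined algebra map (a point you tacitly assume, but which is not automatic since no presentation of $\Tr(\H)$ is given): one realises $\psi$ as conjugation $x\mapsto\bar\psi\circ x\circ\bar\psi^{-1}$ between the faithful operator images. The rank grading is then preserved automatically, being the operator degree on Fock space. For the filtration, the relevant mechanism (implicit here, but spelled out in the proof of Proposition~\ref{prop:faithful}) is that on both sides the filtration can be characterised intrinsically via the Heisenberg subalgebra: an element lies in degree $\le k$ precisely when every $(k{+}1)$-fold iterated commutator with Heisenberg generators vanishes. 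Since $\bar\psi$ intertwines the full Heisenberg subalgebras, $\psi$ preserves this characterisation, and hence the filtration.
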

\begin{proof}
This follows since the actions of $ \Tr(\H) $ and $ \W/(C-1,w_{0,0}) $  on $V$ and $ \mathcal{V}_{1,0} $  respectively are faithful.
\end{proof}

\begin{remark}
The map $\psi$ maps the Virasoro subalgebra of $ \W/(C-1,w_{0,0}) $ to the Virasoro subalgebra of $ \Tr(\H)$ from Proposition ~\ref{heisvirasoroHH0}.
\end{remark}

\begin{lemma}
The map $\psi$ is surjective.
\end{lemma}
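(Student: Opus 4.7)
The plan is to invoke Lemma~\ref{lemmaf.5sv}(1), which states that the whole algebra $\W$ is generated as an associative algebra by the three elements $w_{-1,0}$, $w_{1,0}$, and $w_{0,2}$. Since passing to the quotient $\W/(C-1, w_{0,0})$ only collapses the two central elements $C$ and $w_{0,0}$ (which are not among this generating set), the quotient is still generated by the images of $w_{-1,0}$, $w_{1,0}$, $w_{0,2}$.

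Next I would simply observe that these three generators are, by construction of $\psi$ in~\eqref{defpsi}, the images under $\psi$ of the three elements $h_1\otimes 1$, $h_{-1}\otimes 1$, and $c_0+c_1$ of $\Tr(\H)$. Concretely,
\[
\psi(h_1\otimes 1)=w_{-1,0},\qquad \psi(h_{-1}\otimes 1)=w_{1,0},\qquad \psi(c_0+c_1)=w_{0,2}.
\]
Therefore the image of $\psi$ contains a set that generates $\W/(C-1,w_{0,0})$, and so $\psi$ is surjective.

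The plan is essentially just to string these two observations together; there is no substantive obstacle here, since the hard work (namely, producing the three distinguished generators inside $\Tr(\H)$ and checking that they satisfy the correct identities so that $\psi$ is well-defined) has already been carried out in the preceding lemmas. The only point worth being careful about is that Lemma~\ref{lemmaf.5sv} is stated for $\W$ itself rather than for the quotient, but since the ideal $(C-1, w_{0,0})$ is generated by central elements, a generating set for $\W$ descends immediately to a generating set for any quotient by a central ideal, which finishes the argument.
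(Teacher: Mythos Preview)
Your proof is correct and follows exactly the same approach as the paper: invoke Lemma~\ref{lemmaf.5sv}(1) to see that $w_{-1,0}$, $w_{1,0}$, $w_{0,2}$ generate $\W$ (hence its quotient), and observe from the definition~\eqref{defpsi} of $\psi$ that all three lie in its image. The paper's proof is a single sentence to this effect; your added remark about generators descending to the quotient by a central ideal is a fair elaboration but not something the paper spells out.
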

\begin{proof}
This follows directly from the definition of $\psi$ and Lemma ~\ref{lemmaf.5sv} which states that $w_{1,0}, w_{-1,0}, w_{0,2} $ generate $\W$.
\end{proof}

\begin{theorem}
\label{psiiso}
The map $ \psi $ is an isomorphism of algebras.
\end{theorem}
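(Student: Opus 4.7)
The plan is to prove injectivity of $\psi$, since surjectivity is already established by the preceding lemma. The strategy is to exploit the faithful action of $\Tr(\H)$ on $V$ established in Proposition \ref{prop:faithful} together with the observation that $\psi$ intertwines the actions of $\Tr(\H)$ on $V$ and of $\W/(C-1,w_{0,0})$ on $\mathcal{V}_{1,0}$ through the vector space isomorphism $\bar\psi \colon V \to \mathcal{V}_{1,0}$ of \eqref{Fockspaceiso}.

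First I would verify the intertwining property $\bar\psi(f\cdot v) = \psi(f) \cdot \bar\psi(v)$ on the three algebra generators $h_{-1}\otimes 1,\; h_1\otimes 1,\; c_0+c_1$ of $\Tr(\H)$. On the two Heisenberg generators this is exactly the statement preceding Lemma \ref{-c0w01same}, which gives $\bar\psi((h_{\pm 1}\otimes 1)v) = w_{\mp 1,0}\bar\psi(v)$, and on $c_0+c_1$ it is Lemma \ref{c1c0w02}, which gives $\bar\psi((c_0+c_1)v) = w_{0,2}\bar\psi(v)$. Because these three elements generate $\Tr(\H)$ as an algebra (the lemma preceding Section 4.8) and their images $w_{1,0},w_{-1,0},w_{0,2}$ generate $\W$ by Lemma \ref{lemmaf.5sv}, the intertwining property propagates multiplicatively to every element of $\Tr(\H)$: for any word $g_{i_1}\cdots g_{i_k}$ in the generators we have
\begin{equation*}
\bar\psi(g_{i_1}\cdots g_{i_k}\cdot v) \;=\; \psi(g_{i_1})\cdots \psi(g_{i_k})\cdot \bar\psi(v) \;=\; \psi(g_{i_1}\cdots g_{i_k})\cdot \bar\psi(v),
\end{equation*}
and linearity does the rest.

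Now suppose $f \in \ker \psi$. Then for every $v \in V$, the intertwining gives $\bar\psi(f\cdot v) = \psi(f)\cdot \bar\psi(v) = 0$. Since $\bar\psi$ is a vector space isomorphism, this forces $f\cdot v = 0$ for every $v \in V$. By Proposition \ref{prop:faithful} the action of $\Tr(\H)$ on $V$ is faithful, so $f = 0$. This establishes injectivity, and combined with surjectivity (from the preceding lemma) proves that $\psi$ is an algebra isomorphism.

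No serious obstacle remains, since the substantive work (the Heisenberg and Virasoro relations in $\Tr(\H)$, the three-element generation of $\Tr(\H)$, the identification of the $c_0+c_1$ action with $w_{0,2}$, and most critically the faithfulness of the action on $V$) has been carried out in the lemmas leading up to this theorem. An alternative route would be to compare the Poincaré series of Corollary \ref{poincareH} with those of Proposition \ref{poincareW} to deduce that the surjection $\psi$ must be bijective on associated graded pieces (using that $\psi$ is filtered by Lemma \ref{psifiltered}); however, this approach requires extra work to also match the $\omega = 0$ components involving the $c_n$ and $w_{0,k}$, and the faithful action argument avoids this entirely.
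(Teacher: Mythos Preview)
Your argument is correct and takes a genuinely different route from the paper. The paper proceeds by decomposing both sides triangularly as $\Tr^{>}\otimes\Tr^{0}\otimes\Tr^{<}$ and $\W^{>}\otimes\W^{0}\otimes\W^{<}$, first computing explicitly that $\psi$ restricts to an isomorphism on the degree-zero parts (by showing $\psi(c_{l-2}) = -l\,w_{0,l-1} + (\text{lower order})$, a triangular form), and then invoking the Poincar\'e series comparison of Proposition~\ref{poincareW} and Corollary~\ref{poincareH} to force bijectivity on the $>$ and $<$ parts. Your approach bypasses both steps: once the intertwining $\bar\psi(f\cdot v)=\psi(f)\cdot\bar\psi(v)$ holds on generators (and it does, by the lemmas you cite) and $\psi$ is already an algebra map (Lemma~\ref{psifiltered}), faithfulness of $\Tr(\H)$ on $V$ immediately kills the kernel. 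This is shorter and conceptually cleaner; the paper's route, by contrast, yields the explicit formula for $\psi(c_n)$ as a by-product and makes the triangular decomposition compatibility manifest. You even correctly anticipate the paper's method in your closing remark about the alternative Poincar\'e series route and its complication on the zero part.
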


\begin{proof}
We first show that $\psi $ restricts to an isomorphism
\begin{equation*}
\psi^0 \colon \Tr^0(\H) \rightarrow \W^0 /(C-1,w_{0,0}).
\end{equation*}
For some $ a_0, \ldots, a_{l-1}, b_0, \ldots, b_{l-1} \in \mathbb{C}$ we have
\begin{align}
\psi(h_1 \otimes x_1^l + \sum_{k=0}^{l-1} a_k(h_1 \otimes x_1^k)) &=\psi(ad^l(\frac{c_1+c_0}{2})(h_1 \otimes 1)) \\ \nonumber
&=2^{-l} ad^l(w_{0,2})(w_{-1,0}) \\ \nonumber
&=(-1)^l w_{-1,l} + \sum_{k=0}^{l-1} b_k(w_{-1,k}).
\end{align}
The first equality follows from repeated use of Lemmas ~\ref{c0actingonhn} and ~\ref{c1actingonhn}.  The second equality follows from Lemma ~\ref{c1c0w02} and the third equality follows from \cite[(F.9)]{SV}. So
\begin{equation*}
\psi(h_1 \otimes x_1^l)=(-1)^l w_{-1,l} + \sum_{k=0}^{l-1} \alpha_k w_{-1,k}
\end{equation*}
for some $ \alpha_0, \ldots, \alpha_{l-1} \in \mathbb{C}$. By Lemma~\eqref{lemma-1b1a},
\begin{equation} \label{local-101l}
[h_{-1} \otimes 1, h_1 \otimes x_1^l]=\tilde{c}_l + \sum_{k=0}^{l-2} (l-1-k) \tilde{c}_k c_{l-2-k}.
\end{equation}
Hence we have
\begin{equation*}
\psi([h_{-1} \otimes 1, h_1 \otimes x_1^l])
= [w_{1,0}, \psi(h_1 \otimes x_1^l)]
=[w_{1,0},w_{-1,l} + \sum_{k=0}^{l-1} \alpha_k w_{-1,k}].
\end{equation*}
By Lemma ~\ref{Walg-1a1b}
\begin{equation}
\label{localw10-1l}
[w_{1,0}, w_{-1,l}]=-lw_{0,l-1} - \sum_{r=2}^l \binom{l}{r} w_{0,l-r} - 1.
\end{equation}
so that ~\eqref{local-101l} and ~\eqref{localw10-1l} give us
\begin{equation*}
\psi(c_{l-2})=-lw_{0,l-1} + \sum_{k=0}^{l-1} d_k w_{0,k-1} + d
\end{equation*}
for some $d_0,\ldots, d_{l-1}, d \in \mathbb{C}$.
Thus $ \psi $ restricts to an isomorphism $ \psi_0$.

Next observe that
\begin{align*}
\psi(\Tr^{>}(\H)) &\subset \W^> /(C-1,w_{0,0}) \\
\psi(\Tr^{<}(\H)) &\subset \W^< /(C-1,w_{0,0}).
\end{align*}
Since
\begin{align*}
\Tr(\H) &= \Tr^>(\H) \otimes \Tr^0(\H) \otimes \Tr^<(\H) \\
\W /(C-1,w_{0,0}) &= \W^> /(C-1,w_{0,0}) \otimes \W^0 /(C-1,w_{0,0}) \otimes \W^< /(C-1,w_{0,0})
\end{align*}
it suffices to show that $ \psi $ restricted to $ \Tr^>(\H) $ is an isomorphism (the restriction to $ \Tr^<(\H) $ is similar). Since we know that $ \psi $ is surjective we just need that the graded dimensions of $ \Tr^>(\H) $ and $ \W^> /(C-1,w_{0,0}) $ are the same.  This follows from Proposition ~\ref{poincareW} and Corollary ~\ref{poincareH}.
\end{proof}

\subsection{The action of $\Tr(\H)$ on the center}
Recall, by Proposition \ref{KhovEndThm}, that the center of $\H$ is isomorphic to a polynomial algebra in countably many variables:
$$
	Z(\H) = \End_{\H}(\id) \cong \mathbb{C}[c_0,c_1,c_2,\dots].
$$
Since all objects of $\H$ have biadjoints and all adjunctions in $\H$ are cyclic, it follows that the trace $\Tr(\H)$ acts on $Z(\H)$ (see for example \cite[Section 6]{BGHL} or \cite[Section 9]{BHLW}). Graphically, an element of the center $X\in Z(\H)$ can be represented by a closed diagram and an element of the trace by a diagram on the annulus:
\[
f \; = \;\;
\hackcenter{\begin{tikzpicture}[scale=0.9]
      \path[draw,blue, very thick, fill=blue!10]
        (-2.3,-.6) to (-2.3,.6) .. controls ++(0,1.85) and ++(0,1.85) .. (2.5,.6)
         to (2.5,-.6)  .. controls ++(0,-1.85) and ++(0,-1.85) .. (-2.3,-.6);
        \path[draw, blue, very thick, fill=white]
            (-0.2,0) .. controls ++(0,.4) and ++(0,.4) .. (0.4,0)
            .. controls ++(0,-.4) and ++(0,-.4) .. (-0.2,0);
    \draw[very thick] (-1.65,-.7) -- (-1.65, .7).. controls ++(0,1.15) and ++(0,1.15) .. (1.85,.7)
        to (1.85,-.7) .. controls ++(0,-1.15) and ++(0,-1.15) .. (-1.65,-.7);
    \draw[very thick] (-1.1,-.55) -- (-1.1,.55) .. controls ++(0,.75) and ++(0,.75) .. (1.3,.55)
        to (1.3,-.55) .. controls ++(0,-.75) and ++(0,-.75) .. (-1.1, -.55);
    \draw[very thick] (-.55,-.4) -- (-.55,.4) .. controls ++(0,.45) and ++(0,.45) .. (.75,.4)
        to (.75, -.4) .. controls ++(0,-.45) and ++(0,-.45) .. (-.55,-.4);
    \draw[fill=white!20,] (-1.8,-.25) rectangle (-.4,.25);
    \node () at (-1,0) {$f$};
\end{tikzpicture}} \;\; \in \Tr(\H)
\qquad \qquad
X \; =\;\;
\hackcenter{\begin{tikzpicture}[scale = 0.9]
      \path[draw,blue, dashed,very thick,fill=red!10]
        (-2.6,-.6) to (-2.6,.6) .. controls ++(0,1.85) and ++(0,1.85) .. (2.6,.6)
         to (2.6,-.6)  .. controls ++(0,-1.85) and ++(0,-1.85) .. (-2.6,-.6);
    \draw[very thick] (-2.1,-.55)-- (-2.1,.65) .. controls ++(0,1.15) and ++(0,1.15) .. (2.1,.65)--(2.1,-.55);
    \draw[very thick]  (.9, .25).. controls ++(0,.35) and ++(0,.35) .. (1.5,.25);
    \draw[very thick] (-1.5,.55) .. controls ++(0,.65) and ++(0,.65) .. (.3,.55);
    \draw[very thick] (-.3,.25) .. controls ++(0,.35) and ++(0,.35) .. (-.9,.25);
    \draw [very thick] (2.1,-.55) .. controls ++(0,-.65) and ++(0,-.65) .. (.3,-.55)-- (.3,.55);
    \draw[very thick]  (1.5,-.25) .. controls ++(0,-.35) and ++(0,-.35) .. (.9, -.25);
    \draw[very thick]  (-1.5, -.25) .. controls ++(0,-.35) and ++(0,-.35) .. (-.9,-.25);
    \draw[very thick]  (-.3,-.25) -- (-.3, -.55) .. controls ++(0,-.65) and ++(0,-.65) .. (-2.1,-.55);
    \draw [very thick] (-1.5,.25)-- (-1.5,.55);
    \draw[fill=white!20,] (-2.3,-.25) rectangle (2.3,.25);
    \node () at (0,0) {$g$};
\end{tikzpicture}}   \;\;\in Z(\H).
 \]
This action of $\Tr(\H)$ on $Z(\H)$ is given by placing the diagram for $X$ in the interior of the annulus
\[
\hackcenter{\begin{tikzpicture}[scale=0.9]
      \path[draw,blue, very thick, fill=red!10]
        (-2.3,-.6) to (-2.3,.6) .. controls ++(0,1.85) and ++(0,1.85) .. (2.5,.6)
         to (2.5,-.6)  .. controls ++(0,-1.85) and ++(0,-1.85) .. (-2.3,-.6);
        \path[draw, blue, very thick, dashed, fill=red!10]
            (-0.2,0) .. controls ++(0,.4) and ++(0,.4) .. (0.4,0)
            .. controls ++(0,-.4) and ++(0,-.4) .. (-0.2,0);
    \draw[very thick] (-1.65,-.7) -- (-1.65, .7).. controls ++(0,1.15) and ++(0,1.15) .. (1.85,.7)
        to (1.85,-.7) .. controls ++(0,-1.15) and ++(0,-1.15) .. (-1.65,-.7);
    \draw[very thick] (-1.1,-.55) -- (-1.1,.55) .. controls ++(0,.75) and ++(0,.75) .. (1.3,.55)
        to (1.3,-.55) .. controls ++(0,-.75) and ++(0,-.75) .. (-1.1, -.55);
    \draw[very thick] (-.55,-.4) -- (-.55,.4) .. controls ++(0,.45) and ++(0,.45) .. (.75,.4)
        to (.75, -.4) .. controls ++(0,-.45) and ++(0,-.45) .. (-.55,-.4);
    \draw[fill=white!20,] (-1.8,-.25) rectangle (-.4,.25);
    \node () at (-1,0) {$f$};
    \node () at (.1,0) {$X$};
\end{tikzpicture}}
 \]
and interpreting the result as a new central element in $Z(\H)$. As a consequence of Theorem \ref{thm:main} and its proof, we have the following.

\begin{corollary}\label{cor:Wrep}
The algebra $\Tr(\H)$ acts irreducibly and faithfully on $Z(\H)$.  Under the isomorphism $\Tr(\H) \cong \W/I$, this representation is isomorphic to the canonical level one representation $ \mathcal{V}_{1,0} $ of $\W$.
\end{corollary}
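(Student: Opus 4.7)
The plan is to identify $Z(\H)$ with the canonical level one representation $\mathcal{V}_{1,0}$ as a module over $\Tr(\H) \cong \W/I$, using the algebra isomorphism $\psi$ of Theorem~\ref{psiiso}. Once such an identification is made, irreducibility is immediate from the construction of $\mathcal{V}_{1,0}$ as the unique irreducible quotient of $\mathcal{M}_{1,0}$ in Proposition~\ref{formofV10}, and faithfulness follows from the cited \cite[Proposition F.6]{SV}, which is already recalled in the text.

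The strategy is to locate a highest-weight vector $v_0 \in Z(\H)$: a vector annihilated by $\psi^{-1}(w_{k,l})$ for every $w_{k,l} \in \W^{\geq}$ with $(k,l) \neq (0,0)$, on which $C$ acts by $1$ and $w_{0,0}$ by $0$. The natural candidate is $v_0 = 1 \in \End_\H(\id) = Z(\H)$. Using the diagrammatic description of the action (place the closed diagram for a central element into the hole of the annular representative of a trace element), the rank-positive generators $\psi^{-1}(w_{n,0}) = h_{-n} \otimes 1$ act on $v_0$ by wrapping $n$ downward strands around an empty interior and closing up into a clockwise-bubble configuration, which vanishes using relation~\eqref{eq:rel3} together with the bubble-reduction identity of Proposition~\ref{eq:bubblerelationship}. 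The mixed generators $\psi^{-1}(w_{n,k})$ with $k \geq 1$ reduce via the dot-slide relations~\eqref{eq:nil-dot} and the diagrammatic lemmas of Section~\ref{secHH} to analogous vanishing statements, while the action of $\psi^{-1}(w_{0,l})$ for $l \geq 1$ is handled using the explicit form of $\psi$ on $c_n$ extracted from the proof of Theorem~\ref{psiiso}.

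The highest-weight property produces a nonzero $\Tr(\H)$-module map $\mathcal{V}_{1,0} \to Z(\H)$ via the universal property of $\mathcal{M}_{1,0}$, which is injective by irreducibility of $\mathcal{V}_{1,0}$. For surjectivity it suffices to show each generator $c_n \in Z(\H)$ lies in the cyclic submodule generated by $v_0$; this follows by acting on $v_0$ with elements of $\psi^{-1}(\W^{<}/I) = \Tr^{<}(\H)$ (specifically $h_1 \otimes x_1^k$-type operators) to produce bubbles of the required type modulo lower-order terms, and then inductively reducing. Should the direct verification of the highest-weight conditions prove too delicate, a parallel strategy is to construct an explicit $\Tr(\H)$-equivariant isomorphism $Z(\H) \cong V$ using the family of monoidal functors $\mathcal{F}_n \colon \H \to \mathcal{S}_n'$ of Section~\ref{sec:fock}, and then to transfer the properties from $V \cong \mathcal{V}_{1,0}$ established in Propositions~\ref{prop:irrep} and~\ref{prop:faithful} and Lemmas~\ref{-c0w01same} and~\ref{c1c0w02}. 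I expect the main obstacle to be the careful sanity check that the diagrammatic horizontal-trace action of the rank-zero part $\psi^{-1}(\W^{0}/(C-1,w_{0,0}))$ on $v_0$ agrees with the expected highest-weight action rather than with naive multiplication in $\End(\id)$; once that is settled, the corollary follows formally from the representation-theoretic results for $\mathcal{V}_{1,0}$ already assembled.
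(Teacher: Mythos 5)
Your central step fails: $v_0=1\in Z(\H)$ is not a highest-weight vector, and the vanishing you invoke does not exist in $\H$. Undotted bubbles never vanish in this calculus: by \eqref{eq:rel3} the counterclockwise circle equals $1$, while the clockwise dotted bubbles $c_k$ are \emph{free} generators of $Z(\H)\cong\mathbb{C}[c_0,c_1,\dots]$ (Proposition \ref{KhovEndThm}); Proposition \ref{eq:bubblerelationship} only rewrites counterclockwise dotted bubbles in terms of the $c_i$, it kills nothing undotted. Concretely, inserting the empty diagram into the annular closure of $\mathrm{id}_{\Q}$ gives an undotted circle, so $(h_{-1}\otimes 1)\cdot 1$ equals $1$ (or $c_0$, depending on which side one closes), not $0$; and the rank-zero classes $[c_n]\in\Tr^0(\H)$ act on $Z(\H)$ by multiplication, so $\psi^{-1}(w_{0,l})$ does not annihilate $1$ either. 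Thus $1$ is annihilated neither by the image of $\W^{>}$ nor by the rank-zero raising part, the highest-weight verification collapses at its first step, and the actual vacuum vector of $Z(\H)$ is a different, non-obvious element which your plan gives no means of locating. There is also a structural gap independent of this: a highest-weight vector only yields a map out of the induced module $\mathcal{M}_{1,0}$, not out of $\mathcal{V}_{1,0}$; to factor through the irreducible quotient and to get injectivity and surjectivity you would already need irreducibility of $Z(\H)$ (or a graded-dimension comparison), which is precisely what you set out to prove.

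The paper takes a different and shorter route: it does not look for a highest-weight vector at all, but observes that the proofs of Propositions \ref{prop:irrep} and \ref{prop:faithful} --- irreducibility because the Heisenberg subalgebra already acts irreducibly (Fock space), faithfulness by the filtration/centralizer argument --- apply verbatim with $Z(\H)$ in place of $\bigoplus_n\Tr(\mathbb{C}[S_n]\dmod)$, after which the identification with $\mathcal{V}_{1,0}$ follows. Your fallback (an explicit $\Tr(\H)$-equivariant isomorphism $Z(\H)\cong V$ via the functors $\mathcal{F}_n'$) is closer in spirit but is not carried out: those functors only give evaluation maps $Z(\H)\to Z(\mathbb{C}[S_n])$ whose assembly lands in the direct product rather than in $V=\bigoplus_n Z(\mathbb{C}[S_n])$, and their compatibility with the annular trace action is exactly the point that would require proof. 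To repair your argument, abandon the highest-weight search for $1$ and instead rerun the two cited propositions directly on $Z(\H)$, as the paper does.
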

\begin{proof}
The proof that $\Tr(\H)$ acts irreducibly and faithfully on $Z(\H)$ follows exactly as in Propositions \ref{prop:irrep} and \ref{prop:faithful}. Those propositions used the action of $\Tr(\H)$ on $\Tr(\mathbb{C}[S_n]\dmod)$, but we could have equally well worked with the center of $\H$.  The identification $\End_{\H}(\id)\cong \mathcal{V}_{1,0}$ follows immediately.
\end{proof}

\bigskip

\end{document}